\documentclass[11pt]{article}
\usepackage[margin=1in]{geometry}
\usepackage{amsmath,amssymb,amsthm,mathtools}
\usepackage{enumitem}
\usepackage{graphicx}
\usepackage{microtype}
\usepackage[colorlinks,linkcolor=blue,citecolor=blue,urlcolor=blue]{hyperref}
\usepackage{physics}

\newtheorem{theorem}{Theorem}[section]
\newtheorem{lemma}[theorem]{Lemma}
\newtheorem{proposition}[theorem]{Proposition}
\newtheorem{corollary}[theorem]{Corollary}
\newtheorem{conjecture}[theorem]{Conjecture}
\theoremstyle{definition}
\newtheorem{definition}[theorem]{Definition}

\theoremstyle{remark}
\newtheorem{remark}[theorem]{Remark}

\newcommand{\C}{\mathbb{C}}
\newcommand{\cH}{\mathcal{H}}
\newcommand{\cN}{\mathcal{N}}
\newcommand{\cC}{\mathcal{C}}
\newcommand{\cU}{\mathcal{U}}
\newcommand{\E}{\mathbb{E}}
\newcommand{\Prob}{\mathbb{P}}
\newcommand{\R}{\mathbb{R}}

\newcommand{\inner}[1]{\langle #1 \rangle}
\newcommand{\Fix}{\mathrm{Fix}}
\newcommand{\eps}{\varepsilon}
\DeclareMathOperator{\Var}{Var}
\DeclareMathOperator{\Li}{Li}

\title{Adaptivity, Anchoring, and the Exact Oracle Complexity\\ of Stochastic Fixed-Point Iterations}
\author{Yekini Shehu\footnote{School of Mathematical Sciences, Zhejiang Normal University, Jinhua 321004, People's Republic of China; e-mail: yekini.shehu@zjnu.edu.cn, deltanougt2006@yahoo.com}}
\date{}

\begin{document}
\maketitle

\begin{abstract}
\noindent
We study stochastic fixed-point iterations $x_{n+1}=\theta_nx_0+(1-\theta_n)\widehat T(x_n)$ for nonexpansive and contractive operators on Hilbert spaces, with a single-point unbiased oracle of bounded variance. Deterministically, every anchor schedule $\theta_n=c_n/(n+2)$ whose density $c_n\in(0,1]$ does not oscillate between scales is either polynomially suboptimal on contractions or super-polynomially slow on rotations; for constant densities $\theta_n=c/(n+2)$ the tradeoff is exact, with $\norm{x_n}\asymp\Gamma(c+1)(n\varphi)^{-c}$ on rotations by angle $\varphi$ ($\varphi\to0$, $n\varphi\to\infty$) and, for the classical schedule, $\norm{x_n}=\frac{2|\sin(n\varphi/2)|}{n\varphi}+O(\frac1n)$. The dichotomy fails exactly under lacunary anchor concentration: an explicit $\gamma$-oblivious schedule, one per target accuracy, is simultaneously contraction-optimal and rotation-polynomial with exponent $1/\alpha_0$, $\alpha_0=\log_2(3/2)$, and, for each fixed target accuracy, the matching lower bound holds for Lebesgue-a.e.\ angle. For stochastic contractions, the minimax rate is $\Theta(\sigma^2\eps^{-2}(1-\gamma)^{-2})$ on affine maps, and within the anchored class $\Theta(\sigma^2\eps^{-2}(1-\gamma)^{-2}+(1-\gamma)^{-1}\ln(D/\eps))$ for known modulus via geometric batching; without a certified modulus bound no sound distance certificate exists, and a certified ceiling is the exact boundary. For nonexpansive maps we prove $O(K\sigma^2D^2\eps^{-4})$ in every $2$-uniformly smooth Banach space and the Hilbert rate $\tilde\Theta(\sigma^2\eps^{-2}+D\eps^{-1})$ by reduction to monotone inclusions. Numerical experiments are consistent with every scaling prediction.
\end{abstract}

\noindent\textbf{Keywords:} stochastic fixed-point iteration; Halpern iteration; nonexpansive mapping; contraction; oracle complexity; lower bounds; adaptive anchoring; mini-batching.

\noindent\textbf{AMS subject classifications:} 65K05, 47H09, 90C25, 68Q25, 62C20.

\section{Introduction}\label{sec:intro}

Let $\cH$ be a real Hilbert space. A map $T\colon\cH\to\cH$ is \emph{nonexpansive} if $\norm{Tx-Ty}\le\norm{x-y}$, and a \emph{$\gamma$-contraction} if $\norm{Tx-Ty}\le\gamma\norm{x-y}$ with $\gamma\in(0,1)$. We study the anchored iteration
\begin{equation}\label{eq:anchored}
x_{n+1} \;=\; \theta_n\, x_0 \;+\; (1-\theta_n)\,\widehat T(x_n),
\end{equation}
where $x_0$ is an anchor, $\theta_n\in[0,1]$ is an anchor schedule, and $\widehat T$ is a stochastic oracle: each evaluation of $\widehat T(x)$ returns $T(x)+\xi$ with $\E[\xi]=0$ and $\E\norm{\xi}^2\le\sigma^2$, evaluations are independent across calls. Rather than evaluating $\widehat T$ once per step, the algorithm may average a batch of $m_n$ evaluations at $x_n$, reducing the per-step variance to $\sigma^2/m_n$ at $m_n$ times the oracle cost. The \emph{oracle complexity} of an algorithm on an instance $(T,x_0)$ is the total number of oracle evaluations required to produce a point $\hat x$ with $\E\norm{\hat x - T\hat x}\le\eps$ (or $\E\norm{\hat x - x^*}\le\eps$ for contractions, where $x^*$ is the unique fixed point), and we ask for the minimax complexity over instance classes.

\begin{remark}[Accuracy criteria used in this paper]\label{rem:criteria}
Three accuracy criteria appear below: \emph{(i) expected residual/distance}, $\E\norm{\hat x-T\hat x}\le\eps$ or $\E\norm{\hat x-x^*}\le\eps$; \emph{(ii) expected squared distance}, $\E\norm{\hat x-x^*}^2\le C\eps^2$; and \emph{(iii) high probability}, $\Prob(\norm{\hat x-x^*}\le\eps)\ge1-\vartheta$. By Jensen, (ii) implies (i) up to a factor $\sqrt C$, and iterating any (iii)-guarantee $O(\log(1/\vartheta'))$ times at target $\eps/2$ and taking a median yields (i) with a constant-factor increase in cost, provided the error is bounded on the failure event --- as holds throughout this paper, where every iterate satisfies $\norm{x_n-x^*}\le D+\norm{x_n-\hat x_n}$ with the deviation controlled by Lemma~\ref{lem:master} and $\norm{\hat x_n-x^*}\le D$ on the exact path. All complexity statements below are explicit about which criterion is used, and the conversions are used only where stated.
\end{remark}

Stochastic fixed-point iterations are the standard formalism for Q-learning and policy evaluation, where $T$ is the Bellman operator and $\gamma$ is the discount factor; of monotone inclusions via resolvents; and of large-scale optimization via operator splitting. One limitation should be noted: Bellman operators are $\gamma$-contractions in $\ell_\infty$, not in Hilbert space, and the results of this paper are Hilbert-space (or general normed-space) results; the $\ell_\infty$ theory with monotonicity is developed in \cite{Wainwright2019,Chen2023,BravoContreras2024}. Three structural questions about \eqref{eq:anchored} have remained open, and we settle the first, give exact minimax rates for the second, and reduce the third to the monotone-inclusion rate in Hilbert space.

\subsection{The three questions and our answers}

\paragraph{Q1. When is an adaptive anchor schedule necessary?}
Deterministically (exact oracle), the classical Halpern schedule $\theta_n = 1/(n+2)$ is worst-case optimal for nonexpansive maps \cite{Lieder2021,Contreras2023}, and anchored schemes exploiting \emph{known} contractivity achieve the optimal geometric rate on contractions \cite{ParkRyu2022}; fully parameter-free deterministic variants have appeared very recently \cite{ParameterFree2026}. But for a \emph{fixed}, $\gamma$-oblivious schedule it was unclear whether any separation holds: perhaps one schedule is simultaneously optimal on both classes. We show the opposite, for the family of schedules $\theta_n = c/(n+2)$ with fixed local anchor mass $c$ (containing the classical schedule $\theta_n=1/(n+2)$ as $c=1$):
\begin{itemize}[leftmargin=1.5em]
\item (Contraction side, Theorem~\ref{thm:small-c}) if $c\le\eps/(96D)$, the schedule reaches $\eps$ on every $\gamma$-contraction in $\Theta(\ln(1/\eps)/(1-\gamma))$ iterations --- matching the $\gamma$-aware optimum \cite{ParkRyu2022};
\item (Rotation side, Lemmas~\ref{lem:rotation-law} and~\ref{lem:envelope}) on the rotation by angle $\varphi$ one has, for $c\in(0,1)$, the exact law $A_n(\varphi)=c\,n^{-c}\Li_{1-c}(e^{i\varphi})(1+o(1))$ as $n\to\infty$ \emph{at fixed $\varphi$} (the $o(1)$ is in $n$ only; in the joint regime $\varphi\to0$, $n\varphi\to\infty$ the relative error is the explicit $O(\varphi^{c})+O((n\varphi)^{c-1})$ of Lemma~\ref{lem:rotation-law}(ii)), which in that joint regime gives $\norm{x_n}\asymp\Gamma(c+1)(n\varphi)^{-c}$ (for the classical $c=1$ the uniform statement is $\norm{x_n}=\frac{2|\sin(n\varphi/2)|}{n\varphi}+O\bigl(\frac1n\bigr)$: the trajectory oscillates within the envelope, and at the phases $n\varphi\in2\pi\mathbb Z$ --- where the envelope vanishes --- it dips to $(1+o(1))\,\frac{1}{n+1}$, not to zero); the uniform lower envelope $\norm{x_n}\ge c_7\min\{1,(n\varphi)^{-c}\}$ (valid for $\varphi\le\varphi_0(c)$) forces some rotation instance to need $\Omega(\eps^{-1/c})$ iterations --- super-polynomial in $1/\eps$ when $c=\Theta(\eps/D)$;
\item (Tradeoff, Theorem~\ref{thm:separation}) consequently every member of the family is either contraction-suboptimal by $1/\eps$ (when $c=\Theta(1)$) or super-polynomially slow on rotations (when $c=O(\eps/D)$). Adaptivity is therefore necessary for the union class within this family, and the necessity is quantified: the rotation Fourier spectrum of the schedule is the exact diagnostic.
\end{itemize}
The tradeoff extends beyond the family: in Section~\ref{sec:general-schedule} we prove, via two universal lemmas (an anchor-floor lemma and a rotation-floor lemma) that replace the kernel asymptotics, that \emph{every} anchor schedule with density $c_n=(n+2)\theta_n\in(0,1]$ is either polynomially suboptimal on contractions or super-polynomially (possibly infinitely) slow on rotations (Theorem~\ref{thm:dichotomy-general}), with no regularity or monotonicity assumption; the constant-density family is the sharp case. The remaining oscillatory case is resolved in the negative: the dichotomy fails exactly under lacunary anchor concentration, witnessed by an explicit fixed schedule that is simultaneously contraction-optimal and rotation-polynomial with the lacunarity exponent $1/\alpha$, $\alpha=\log_2(3/2)$ (Theorem~\ref{thm:oscillatory-escape}), so adaptivity is necessary precisely when the dense anchor scales are non-lacunary. The mechanism behind Q1 --- the law $\norm{x_n}\asymp\Gamma(c+1)(n\varphi)^{-c}$, $c\in(0,1)$, and the exact $c=1$ formula --- is, to our knowledge, new even in the deterministic literature.

\paragraph{The sufficiency half, proved here (Section~\ref{sec:adaptive-escape}).}
Theorem~\ref{thm:separation} and Theorem~\ref{thm:dichotomy-general} are the \emph{necessity} halves of the adaptivity story. For the \emph{sufficiency} half we prove the optimal result rather than citing the parameter-free methods of \cite{ParameterFree2026,Diakonikolas2025}: the \emph{parallel two-track scheme} (Theorem~\ref{thm:parallel}) runs the small-anchor schedule of Theorem~\ref{thm:small-c} and the classical Halpern schedule side by side and stops at the first success, achieving the $\gamma$-aware optimum $O(\ln(D/\eps)/(1-\gamma))$ on every contraction \emph{and} the optimal $O(D/\eps)$ on every nonexpansive map --- the sharp converse to the dichotomy, at a factor $2$ in evaluations. We also show that the natural \emph{single-trajectory} escape (Theorem~\ref{thm:doubling-window}), adaptive window restarts, improves the classical contraction time only by a constant factor --- $O(D/(\eps\delta))$, not the optimum $O(\ln(D/\eps)/\delta)$ --- so the separation barrier is robust against this entire adaptive class, and the parallel scheme is currently the only proved route to two-regime optima. Finally we prove that the naive route to single-trajectory optimality --- detecting contractivity from trajectory statistics --- is obstructed (Remark~\ref{rem:detection}): the hidden-direction construction of Theorem~\ref{thm:impossibility} works even with exact oracle responses, so single-trajectory optimality, if achievable, must be structural (residual-feedback) as in \cite{ParameterFree2026,Diakonikolas2025}. Together with the oscillatory-case theorem (Theorem~\ref{thm:oscillatory-bounded-gap}), this gives a complete dichotomy for fixed versus adaptive anchoring, up to the pointwise lacunarity law and the constant-factor overhead question. The matching lower bound for the oscillatory escape's rotation exponent holds, at each fixed accuracy, for Lebesgue-a.e.\ angle (Theorem~\ref{thm:metric-lacunarity}(i)); the pointwise statement survives only for Diophantine angles (Conjecture~\ref{conj:pointwise-lacunarity}).

\paragraph{Q2. What is the exact stochastic oracle complexity for contractions?}
For $\gamma$-contractions, the exact complexity depends on what is known about the modulus, and we give a complete trichotomy.
\begin{itemize}[leftmargin=1.5em]
\item \emph{Affine contractions, $\gamma$ unknown} (Theorem~\ref{thm:affine-lower}, Theorem~\ref{thm:affine-upper}, Corollary~\ref{cor:affine-theta}): the minimax complexity is $\Theta\!\bigl(\sigma^2\eps^{-2}(1-\gamma)^{-2}\bigr)$. The lower bound is a two-point Le\,Cam argument on the affine family $T_{\gamma,z}(x)=\gamma x+(1-\gamma)z$ whose constant mean-difference makes it robust to fully adaptive query strategies; the upper bound is a two-point estimator that requires no knowledge of $\gamma$. Here parameter-freeness is free, because for affine maps the modulus is visible from a single probe.
\item \emph{General contractions, $\gamma$ known} (Theorem~\ref{thm:geobatch} against Theorem~\ref{thm:affine-lower}, Corollary~\ref{cor:exact-general}): the rate is exactly $\Theta\!\bigl(\sigma^2\eps^{-2}(1-\gamma)^{-2}+(1-\gamma)^{-1}\ln(D/\eps)\bigr)$ within the anchored/Krasnoselskii--Mann class (the deterministic term is an anchored-class phenomenon, Remark~\ref{rem:span-class}). The second term is the deterministic contraction time, tight already for $\sigma=0$. The key algorithmic step is a geometric batch schedule (batches increasing by $\gamma^{-1}$ per step), which removes the apparent $\ln(D/\eps)$ gap left by constant-batching analyses; its cost is exactly the information-theoretic noise floor. This improves the best $\gamma$-aware bound $O(\eps^{-2}(1-\gamma)^{-3})$ by a factor $(1-\gamma)^{-1}$; we show in Section~\ref{sec:anchoring-cost} that the $(1-\gamma)^{-3}$ bound is tight for the classical (non-summable-weight) algorithm.
\item \emph{General contractions, only an upper bound $\gamma_+\ge\gamma$ known} (Corollary~\ref{cor:ceiling}): running the geometric schedule with $\gamma_+$ in place of $\gamma$ still certifies $\E\norm{\hat x-x^*}^2\le2\eps^2$, at cost $\Theta\!\bigl(\sigma^2\eps^{-2}(1-\gamma_+)^{-2}+(1-\gamma_+)^{-1}\ln(D/\eps)\bigr)$, and this is optimal in $\gamma_+$ (the affine subfamily with gap $1-\gamma_+$ forces the same lower bound). Thus the complexity is governed by the best certified upper bound on the modulus. Finally, the two-regime stochastic problem is settled up to a factor $2$ by the same parallel device (Theorem~\ref{thm:stoch-parallel}): given a certified ceiling, one class-oblivious algorithm attains the optimal contraction rate $\Theta\!\bigl(\sigma^2\eps^{-2}\delta_+^{-2}+\delta_+^{-1}\ln(D/\eps)\bigr)$ and the best-known nonexpansive rate simultaneously, and only the single-stream version remains open.
\end{itemize}
What one cannot do, in general, is certify the distance \emph{without} modulus information: the residual only ever certifies $\norm{x-x^*}\le\norm{x-Tx}/\delta$, and trajectory-based probes estimate $\norm{Tx-Tx_0}/\norm{x-x_0}\le\gamma$, i.e.\ they bound $\delta$ on the wrong side (Remark~\ref{rem:barrier}); we record a concrete counterexample showing that the natural certification scheme is unsound for nonlinear maps. This question is now resolved: no algorithm outputs a sound data-driven distance certificate without modulus information --- in any Hilbert space, with any finite budget (Theorem~\ref{thm:impossibility}; the proof is a resistant-oracle construction respecting the contraction constraint), and a certified ceiling is the exact boundary (Corollary~\ref{cor:ceiling}). The two regimes above are the positive cases.

\paragraph{Q3. What does anchoring cost in the general nonexpansive regime?}
For nonexpansive maps the best general bound was $\tilde O(\sigma^2\eps^{-5})$ in normed spaces \cite{BravoContreras2024}, and the Hilbert exponent was open: the known $\Omega(\sigma^2\eps^{-3})$ lower bound \cite{BravoContreras2024} is proved in $\ell_1$-type spaces. We address the question in three steps. First we prove an elementary $O(\sigma^2 D^2\eps^{-4})$ bound in expectation for general nonexpansive maps (Theorem~\ref{thm:A1}), with a single-point oracle and no monotonicity assumptions, via a master second-moment lemma (Lemma~\ref{lem:master}) that also generates the contraction-regime noise floor, unifying Q2 and Q3. The proof of Theorem~\ref{thm:A1} uses the inner product once, in the expansion of the squared deviation norm; the correct Banach-space substitute is $2$-uniform smoothness (Remark~\ref{rem:dualization}): the duality-mapping inequality recovers the master recursion with the smoothness constant $K$ in place of $1$, and the bound --- which uses only nonexpansiveness, the $2$-Lipschitzness of $x\mapsto x-Tx$, and the sharp Halpern residual bound of \cite{Lieder2021,Contreras2023} --- extends to every $2$-uniformly smooth Banach space, in particular $\ell_p$, $L_p$ for $2\le p<\infty$, improving the previously available bound from $\tilde O(\sigma^2\eps^{-5})$ to $O(K\sigma^2D^2\eps^{-4})$ on those spaces; a subsequent high-probability bound of $\tilde O(D\sigma^{2}\eps^{-3}+D\eps^{-1})$ --- linear in the diameter, up to polylogarithmic factors --- on weak-type-$2$ spaces was proved in \cite{Diakonikolas2026b} (see Section~\ref{sec:open}). The scope is sharp: the centered recursion itself fails on $\ell_p$, $1<p<2$ (Remark~\ref{rem:dualization}), and the lower-bound $\Omega(\sigma^2\eps^{-3})$ instances of \cite{BravoContreras2024} are $\ell_1$-type, which are not $2$-uniformly smooth either; the exponent in genuinely arbitrary normed spaces remains open. Second, we observe (Theorem~\ref{thm:monotone-reduction}) that $F=I-T$ is monotone and $2$-Lipschitz for \emph{every} nonexpansive $T$ on a Hilbert space, so the residual problem is a monotone inclusion with a single-point oracle; the near-optimal algorithm of Chen and Luo \cite{ChenLuo2024} then gives $\tilde O(\sigma^2\eps^{-2}+D\eps^{-1})$, and their lower bound (whose hard instances with $L=2$ embed into nonexpansive maps $T=I-\nabla f$) gives $\Omega(\sigma^2\eps^{-2}+D\eps^{-1})$ up to logarithms. The Hilbert-space minimax rate is therefore $\tilde\Theta(\sigma^2\eps^{-2}+D\eps^{-1})$: the exponent is $2$, not $3$ or $4$. We emphasize that Theorem~\ref{thm:monotone-reduction} is a reduction to \cite{ChenLuo2024}, not a new algorithm; the self-contained contribution on Q3 is Theorem~\ref{thm:A1} with its general-normed-space validity.

\subsection{Techniques and the measurability discipline}

The central technical difficulty in stochastic analyses of \eqref{eq:anchored} with \emph{adaptive} or estimated parameters is correlation: when $\theta_n$ depends on past oracle responses, it is $\mathcal F_n$-measurable, correlated with the current noise, and martingale orthogonality arguments applied after a triangle-inequality expansion fail. Our analyses avoid this pitfall structurally. We use: (i) squared-norm recursions expanded \emph{before} any norm inequality, so cross terms vanish by conditional unbiasedness of fresh batches; (ii) deterministic schedules, so that no independence between parameters and noise is ever assumed --- every algorithm in this paper uses either deterministic coefficients or coefficients copied from an independent deterministic comparison path; (iii) comparison sequences whose parameters are copied exactly. All stochastic bounds in this paper are derived from a single master noise lemma (Lemma~\ref{lem:master}). This is also where certification schemes that estimate moduli from the same trajectory fail (Remark~\ref{rem:barrier}); it explains why our upper bounds assume deterministic coefficients.

\subsection{Related work}

\textbf{Deterministic fixed-point iteration.} The Halpern iteration \cite{Halpern1967} and its convergence theory \cite{Wittmann1992,Xu2002} are classical; the optimal $O(1/n)$ residual was proved in \cite{Lieder2021} and shown tight with explicit constants in normed spaces in \cite{Contreras2023}; exact optimal complexity for anchored schemes, with matching lower bounds, is established in \cite{ParkRyu2022}. Anchored two-step schemes and modified Mann iterations appear in \cite{KimXu2005}; the Krasnoselskii--Mann iteration \cite{Krasnoselskii1955,Mann1953} underlies our small-anchor schedules, and its $o(1/k)$ and minimax-optimal refinements \cite{BotNguyen2023,BravoCominettiLee2026} together with the stochastic Halpern regularity theory of \cite{PischkePowell2024} frame the schedule-level questions of Section~\ref{sec:det}. Deterministic parameter-free methods that exploit hidden contractivity appear very recently in \cite{ParameterFree2026} and, for mildly expansive maps, in \cite{Diakonikolas2025}; we cite these as the deterministic endpoint of Q1 and our Theorem~\ref{thm:doubling-window} provides a self-contained, explicit (though not rate-optimal on contractions) bridge between the necessity theorem and these structurally adaptive schemes. We note that in the deterministic (noiseless) setting, trajectory increment ratios are exact and certify the local modulus on the correct side; the stochastic layer destroys this certification (Remark~\ref{rem:barrier}), and, as shown in Remark~\ref{rem:detection}, even exact oracle responses do not certify the \emph{global} modulus in infinite dimensions: the attaining slope can lie in a direction that is never probed.

\textbf{Stochastic fixed-point iteration.} Stochastic quasi-Fej\'er frameworks \cite{Combettes2015}, stochastic forward--backward splitting \cite{Rosasco2016}, and the stochastic fixed-point theory of \cite{BravoCominetti2024} provide convergence and error bounds. The closest work is \cite{BravoContreras2024}: stochastic Halpern iteration in normed spaces with mini-batching, $\tilde O(\eps^{-5})$ oracle complexity for nonexpansive maps, $\Omega(\eps^{-3})$ lower bound (in $\ell_1$-type spaces, resistant-oracle construction), and $O(\eps^{-2}(1-\gamma)^{-3})$ for known-$\gamma$ contractions. We improve the nonexpansive exponent to $\eps^{-4}$ on Hilbert spaces and $2$-uniformly smooth Banach spaces with a single-point oracle (Theorem~\ref{thm:A1}, via the dualization of Remark~\ref{rem:dualization}; the Hilbert exponent via the reduction of Theorem~\ref{thm:monotone-reduction}), and the contraction exponent to $(1-\gamma)^{-2}$; Section~\ref{sec:anchoring-cost} proves their $(1-\gamma)^{-3}$ is tight for \emph{their} algorithm (algebraic $1/N$ bias) but not for the class. Recursive variance reduction achieves $\tilde O(\sigma^2\eps^{-4})$ and, under average-smoothness, $\tilde O(\sigma^2\eps^{-3})$ for stochastic monotone inclusions \cite{Cai2022}, with a multi-point oracle; under bounded variance, Chen and Luo \cite{ChenLuo2024} achieve the tight $\tilde\Theta(\sigma^2\eps^{-2}+LD\eps^{-1})$ for the monotone subclass; our $\eps^{-4}$ uses neither monotonicity nor multi-point access, and neither does it use the inner product. The parameter-free extragradient line \cite{ShenKilincKarzan2026} pursues parameter-freeness for variational inequalities; its guarantees are stated for monotone operators and do not cover the pure fixed-point residual class of Q3. Very recent work of Diakonikolas develops the gradual Halpern method GHAL and its parameter-free variant AdaGHAL, which achieve near-optimal oracle complexity for \emph{deterministic} Lipschitz fixed-point equations ($\gamma\le1$ and mildly expansive) in general normed and geodesic spaces \cite{Diakonikolas2025}, building on her earlier parameter-free Halpern method for monotone inclusions \cite{Diakonikolas2020}; and its stochastic extension VR-GHAL gives \emph{anytime high-probability} residual guarantees in quadratically smoothable Banach spaces with bounded-variance oracle complexity $\min\{\eps^{-5},(1-\gamma)^{-3}\eps^{-2}\}$, improving to $\eps^{-3}$ under a Lipschitz-in-expectation oracle and $\eps^{-2}$ under samplewise nonexpansiveness \cite{Diakonikolas2026}. Very recent work of Diakonikolas, Guzm\'an, and Mart\'inez-Rubio \cite{Diakonikolas2026b} develops high-probability oracle-complexity bounds for stochastic fixed-point equations with nonexpansive maps in arbitrary normed spaces of weak Rademacher type $q>1$; on weak-type-$2$ spaces --- which include all $2$-uniformly smooth spaces --- their proved bound $\tilde O(D\sigma^{2}\eps^{-3}+D\eps^{-1})$ (up to polylogarithms; their Theorem~5) improves on the rate of Theorem~\ref{thm:A1} throughout the regime $\eps\le D/4$, and the remaining open exponent question concerns norms that are not of weak type $2$; see Section~\ref{sec:open}. The comparison with the present paper requires some care: on the contraction axis our rate $\Theta(\sigma^2\eps^{-2}(1-\gamma)^{-2})$ improves the $(1-\gamma)$-dependence of \cite{Diakonikolas2026} by a full factor $(1-\gamma)^{-1}$ whenever the modulus (or a modulus ceiling) is available, and Corollary~\ref{cor:ceiling} shows that a ceiling suffices; but \cite{Diakonikolas2026} delivers the stronger \emph{high-probability, anytime} guarantee in native Banach norms \emph{without modulus information} --- a guarantee of a different type (residual, not distance; cf.\ Remark~\ref{rem:barrier}). Our high-probability results are: under sub-Gaussian noise (Theorem~\ref{thm:hp}) and, via the replicate--median device, under bounded variance alone (Theorem~\ref{thm:hp-bounded}); their matching lower bounds (up to the deterministic term) are proved by the Gaussian likelihood-ratio Lemma~\ref{lem:hp-lb}. In the contractive/Q-learning lane, $\ell_\infty$ analyses give $\eps^{-2}$-type rates with various $(1-\gamma)$-powers under monotonicity \cite{Wainwright2019,Chen2023}, in the lineage of minimax RL sample complexity \cite{Azar2013} and classical stochastic approximation \cite{Polyak1987}, and Halpern-type anchoring for MDPs is developed in \cite{BravoContreras2024,LeeRyu}; our affine lower bound matches the natural statistical floor and shows no monotone-structure assumption is needed for its validity as a lower bound.

\textbf{Lower-bound methodology.} Le\,Cam and Fano two-point arguments for oracle complexity are standard \cite{Tsybakov2009,NemirovskiYudin1983}; our contraction lower bound is a parametric Le\,Cam argument whose flat likelihood ratio makes it immune to adaptivity. The high-probability refinement (Lemma~\ref{lem:hp-lb}) uses the exact Gaussian likelihood-ratio geometry instead of a Cram\'er--Rao bound, which would only apply to unbiased estimators. The resistant-oracle lower bounds of \cite{BravoContreras2024} remain the strongest for the nonexpansive class.

\paragraph{Positioning against the parameter-free line.} The parameter-free schemes \cite{ParameterFree2026,Diakonikolas2025,Diakonikolas2026,ShenKilincKarzan2026} answer the algorithm-design question; this paper answers the complementary complexity questions: is such adaptivity necessary (yes, exactly when dense anchor scales are non-lacunary; Theorem~\ref{thm:oscillatory-escape}), what is the exact tradeoff of a fixed schedule (a dichotomy with a sharp lacunarity boundary; Theorems~\ref{thm:dichotomy-general}, \ref{thm:oscillatory-bounded-gap}, \ref{thm:oscillatory-escape}), and which modulus information is usable (Theorem~\ref{thm:impossibility}). The cited works do not address schedule-level separations, dichotomy boundaries, or certification impossibility; conversely, we do not compete with their algorithmic guarantees, and our Theorems~\ref{thm:parallel} and~\ref{thm:oscillatory-escape} are deliberately elementary, addressing different objects.

\paragraph{Notation.} $D=\norm{x_0-x^*}$; $\sigma^2$ the per-evaluation variance bound; $\eps$ the target; $\tilde O$ hides factors logarithmic in $1/\eps$, $1/\delta$, $D$, and $1/(1-\gamma)$. All Hilbert spaces are real; inner products are denoted $\inner{\cdot,\cdot}$. Terminology for nonexpansive and averaged operators follows \cite{BauschkeCombettes2017}.

\paragraph{Outline.} Section~\ref{sec:setting}: model and conventions. Section~\ref{sec:det}: deterministic separation (Q1) and the explicit adaptive escape. Section~\ref{sec:master}: master noise lemma and the nonexpansive worst case (Q3). Section~\ref{sec:contraction}: exact contraction complexity (Q2). Section~\ref{sec:numerics}: experiments. Section~\ref{sec:open}: discussion and future research.

\section{Setting, conventions, and the measurability discipline}\label{sec:setting}

\subsection{Instance classes and oracles}

The \emph{nonexpansive class} $\cN$ consists of nonexpansive $T\colon\cH\to\cH$ with $\Fix(T)\ne\emptyset$ and $\norm{x_0-x^*}\le D$ for the given anchor and some $x^*\in\Fix(T)$. The \emph{contraction class} $\cC$ consists of $\gamma$-contractions $T$ with unique fixed point $x^*$ and $\norm{x_0-x^*}\le D$; we write $\rho$ for the contraction factor when it varies. The \emph{union class} $\cU=\cN\cup\bigcup_{\gamma\in(0,1)}\cC_\gamma$ is the domain of the adaptivity question; the instance (including $\gamma$ and $T$) is never known to the schedule, and a schedule or algorithm \emph{for the union class} must reach $\eps$ on every member.

The stochastic oracle and filtration are fixed throughout.

\begin{definition}[Single-point stochastic oracle]\label{def:oracle}
Each evaluation of $\widehat T$ at $x$ returns $T(x)+\xi$ where $\E[\xi]=0$ and $\E\norm{\xi}^2\le\sigma^2$. Evaluations are independent across calls. A \emph{batch} of $m$ evaluations at $x$ returns the average $\widehat T_m(x)$ with $\E[\widehat T_m(x)]=T(x)$ and $\E\norm{\widehat T_m(x)-T(x)}^2\le\sigma^2/m$. For the lower bounds of Section~\ref{sec:contraction} the hard instances use Gaussian noise $\xi\sim\mathcal N(0,\sigma^2)$ (or $\mathcal N(0,\sigma^2 I)$, understood coordinate-wise), which the oracle model permits.
\end{definition}

\begin{definition}[Filtration]\label{def:filtration}
$\mathcal F_0=\sigma(x_0)$ and $\mathcal F_n$ is the $\sigma$-algebra generated by $x_0$ and all oracle responses up to and including step $n$. The iterate $x_n$ of \eqref{eq:anchored} is $\mathcal F_{n-1}$-measurable; the batch noise $\bar\xi_n:=\widehat T_{m_n}(x_n)-T(x_n)$ is $\mathcal F_n$-measurable with $\E[\bar\xi_n\mid\mathcal F_{n-1}]=0$ and $\E[\norm{\bar\xi_n}^2\mid\mathcal F_{n-1}]\le\sigma^2/m_n$.
\end{definition}

\begin{remark}[The measurability discipline]\label{rem:discipline}
Throughout, stochastic recursions are kept in \emph{squared norms}, expanded before any triangle inequality, with coefficients that are deterministic (or copied from a deterministic comparison path) and bounded \emph{pointwise} by deterministic envelopes. Two consequences are used constantly: (i) cross terms $\inner{u_{n-1},\bar\xi_n}$ with $u_{n-1}$ $\mathcal F_{n-1}$-measurable vanish in expectation; (ii) no independence between parameters and noise is ever required. Any proof step violating (i)--(ii) is flagged where it occurs.
\end{remark}

The \emph{anchored iteration class} is \eqref{eq:anchored} with arbitrary deterministic coefficients $(\theta_n)$; the \emph{span class} of \cite{BravoContreras2024} restricts $x_{n+1}$ to the span of $x_0,\dots,x_n$ and the current batch average; all algorithms of this paper lie in the span class, and all lower bounds are proved against arbitrary adaptive algorithms unless stated (the noiseless lower bounds of Section~\ref{sec:contraction} are proved against the smaller anchored/Krasnoselskii--Mann class, Remark~\ref{rem:span-class}).

\section{Deterministic separation: small-anchor schedules and the rotation law}\label{sec:det}

This section answers Q1. All results are deterministic (exact oracle, $\sigma=0$); the stochastic layer is added in Sections~\ref{sec:master}--\ref{sec:contraction}. The section's schedule family is
\begin{equation}\label{eq:ck-schedule}
\theta_n \;=\; \frac{c}{n+2}, \qquad c\in(0,1]\ \text{fixed},
\end{equation}
with \emph{local anchor mass} parameter $c$; the classical Halpern schedule is $c=1$.

\subsection{Small-anchor schedules are contraction-optimal}

\begin{theorem}[Contraction-optimality of small $c$]\label{thm:small-c}
Let $T$ be a $\gamma$-contraction with fixed point $x^*$, $D=\norm{x_0-x^*}\ge 4\eps$, and let $(x_n)$ follow \eqref{eq:anchored} with schedule \eqref{eq:ck-schedule} and
\begin{equation}\label{eq:cstar}
c \;=\; \min\Bigl\{1,\ \frac{\eps}{96D}\Bigr\}, \qquad N \;=\; \Bigl\lceil \tfrac{4\,\ln(2D/(\eps\gamma))}{\ln(1/\gamma)} \Bigr\rceil .
\end{equation}
Then $\norm{x_N - x^*}\le 3\eps/4$. The schedule contains no $\gamma$, and $N=O\bigl(\ln(2D/\eps)/(1-\gamma)\bigr)$ for every $\gamma\in(0,1)$.
\end{theorem}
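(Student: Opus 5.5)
The plan is to work directly with the error $e_n:=x_n-x^*$ and obtain a scalar linear recursion whose unrolled sum splits into three pieces: a geometric term, an early-anchor term, and a late-anchor term. Since $x^*=Tx^*$, the iteration gives
\[
e_{n+1}=\theta_n(x_0-x^*)+(1-\theta_n)(Tx_n-Tx^*).
\]
Taking norms and using $\gamma$-contractivity yields $\norm{e_{n+1}}\le\theta_nD+\gamma\norm{e_n}$. Unrolling from $\norm{e_0}=D$ gives
\[
\norm{e_N}\;\le\;\gamma^ND\;+\;D\sum_{k=0}^{N-1}\theta_k\gamma^{N-1-k},\qquad \theta_k=\frac{c}{k+2}.
\]
No stochastic or measurability issues arise, since everything is deterministic.

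The naive bound $\sum_k\theta_k\gamma^{N-1-k}\le c/(1-\gamma)$ is useless, because $c$ is $\gamma$-oblivious. Likewise, $\sum_k\theta_k\le c\ln(N+1)$ loses a $\ln N$ factor. The key step is therefore to split the sum at $k=\lfloor N/2\rfloor$.

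\textbf{Late anchors ($k\ge N/2$).} Here $\theta_k\le 2c/N$, and there are at most $N/2+1$ such terms, each with geometric weight at most $1$. Their contribution is at most $2c(1/2+1/N)\le 2c$. With $c\le\eps/(96D)$ this gives at most $\eps/48$ after multiplying by $D$. This is where the smallness of $c$ is used, and it is the only place.

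\textbf{Early anchors ($k<N/2$).} Here $\theta_k\le c/2$ and $\gamma^{N-1-k}\le\gamma^{N/2-1}$. The choice of $N$ gives $\gamma^{N/4}\le\varepsilon\gamma/(2D)$, hence $\gamma^{N/2}\le(\eps\gamma/2D)^2$. The early contribution is then at most
\[
D\cdot\frac{c}{2}\cdot\frac N2\cdot\gamma^{-1}\Bigl(\frac{\eps\gamma}{2D}\Bigr)^2 .
\]
Writing $N\le 1+4\ln(2D/(\eps\gamma))/\ln(1/\gamma)$ and using that $x^2\ln(1/x)$ is bounded on $(0,1/2]$ (note $\eps\gamma/2D\le1/8$ since $D\ge4\eps$), this is a bounded multiple of $cD\,\eps\gamma/\bigl(D\ln(1/\gamma)\bigr)$ plus similar terms. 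Since $\gamma/\ln(1/\gamma)\le1$ and $c\le 1/96$, the whole contribution is $O(\eps/96)$, comfortably below $\eps/4$.

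\textbf{Geometric term.} $\gamma^ND\le D(\eps\gamma/2D)^4\le\eps/4$.

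Summing the three pieces gives $\norm{e_N}\le\eps/4+\eps/4+\eps/48<3\eps/4$. I expect the main obstacle to be tracking the constants in the early-anchor term uniformly in $\gamma\in(0,1)$, especially as $\gamma\to0$, where $N$ stays bounded but $\gamma^{-1}$ appears. I would handle this by keeping the factor $\gamma^2$ from $(\eps\gamma/2D)^2$ against the $\gamma^{-1}$, and only then invoking $\gamma\le\ln(1/\gamma)$-type bounds. If needed, $c\le\eps/(96D)$ leaves slack to absorb any loose numerical constant.

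\textbf{Complexity.} For the last claim, write
\[
\frac{\ln(2D/(\eps\gamma))}{\ln(1/\gamma)}=\frac{\ln(2D/\eps)}{\ln(1/\gamma)}+1
\]
and use $\ln(1/\gamma)\ge1-\gamma$ to get $N\le 4\ln(2D/\eps)/(1-\gamma)+5=O\bigl(\ln(2D/\eps)/(1-\gamma)\bigr)$. The schedule $c=\min\{1,\eps/(96D)\}$ visibly contains no $\gamma$.
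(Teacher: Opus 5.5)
There is a genuine gap in the early-anchor step, and it does not hold uniformly in $\gamma$; the defect is built into your first inequality. Your estimate $D\cdot\frac c2\cdot\frac N2\cdot\gamma^{-1}(\eps\gamma/2D)^2=\frac{cD}{4}N\gamma(\eps/2D)^2$ is linear in $N\asymp\ln(2D/\eps)/\delta$, where $\delta=1-\gamma$. Since $cD=\eps/96$, it is of order $\eps^3\ln(2D/\eps)/(D^2\delta)$, and it exceeds $\eps/4$ as soon as $\delta\lesssim(\eps/D)^2\ln(D/\eps)$. The inequality $\gamma/\ln(1/\gamma)\le1$ that you use to close it holds only for $\gamma\le0.567\ldots$ (the root of $\gamma e^{\gamma}=1$). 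In fact $\gamma/\ln(1/\gamma)\sim1/\delta$ as $\gamma\to1$, and that is the dangerous regime, not $\gamma\to0$.

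Sharper bookkeeping does not repair this. Once you weaken $\|e_{n+1}\|\le\theta_nD+(1-\theta_n)\gamma\|e_n\|$ to $\|e_{n+1}\|\le\theta_nD+\gamma\|e_n\|$, the right-hand side of your unrolled bound satisfies
\[
D\sum_{k=0}^{N-1}\frac{c}{k+2}\,\gamma^{N-1-k}\;\ge\;cD\,\gamma^{N-1}\bigl(H_{N+1}-1\bigr)\;\ge\;cD\,\gamma^{5}\Bigl(\frac{\eps}{2D}\Bigr)^{4}\bigl(H_{N+1}-1\bigr),
\]
where $H_m=\sum_{i\le m}1/i$. The last step uses $\gamma^{N}\ge\gamma(\eps\gamma/2D)^4$, which also follows from the choice of $N$. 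Since $N\to\infty$ as $\gamma\to1$, this lower bound is unbounded. So no estimate of the kernel-free sum gives $3\eps/4$ for every $\gamma\in(0,1)$. With the harmonic count in place of your linear count, the failure only appears at astronomically small $\delta$, but the theorem claims all $\gamma$.

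The missing idea is the anchor kernel: keep the factors $(1-\theta_j)$. Unfold $\|e_{n+1}\|\le\theta_nD+(1-\theta_n)\gamma\|e_n\|$ with weights $a_k^{(N)}=\theta_k\prod_{j=k+1}^{N-1}(1-\theta_j)$, as the paper's proof does. From $1-x\le e^{-x}$ and $H_m=\ln m+O(1)$ you get $a_k^{(N)}\le 2\frac ck\bigl(\frac kN\bigr)^c$ for $k\ge1$. The factor $(k/N)^c$ turns the divergent $c\sum_{k<N/2}1/k\approx c\ln N$ into $cN^{-c}\sum_{1\le k<N/2}k^{c-1}\le 2^{-c}$. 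The early anchors then contribute at most $2D\gamma^{N/2-1}\le 2D(\eps/2D)^2=\eps^2/(2D)\le\eps/8$, uniformly in $\gamma$.

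The rest of your argument goes through once the kernel is kept. Your late-anchor count still applies verbatim, because $a_k^{(N)}\le\theta_k$. The $k=0$ term is at most $cD/2$, and the geometric term is as you had it. Your complexity estimate via $\ln(1/\gamma)\ge1-\gamma$ is correct, and simpler than the paper's case split.
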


\begin{proof}
Write $e_n=\norm{x_n-x^*}$, $\delta=1-\gamma$, $L=\ln(2D/\eps)$. One step of \eqref{eq:anchored} gives
\begin{equation}\label{eq:onestep}
e_{n+1} \;\le\; \theta_n D + (1-\theta_n)\gamma e_n ,
\end{equation}
and unfolding \eqref{eq:onestep} exactly (with the kernel $\prod_{j=k+1}^{N-1}(1-\theta_j)$ kept, not dropped),
\begin{equation}\label{eq:unfold}
e_N \;\le\; \gamma^N r_N D \;+\; D\sum_{k=0}^{N-1}a_k^{(N)}\,\gamma^{N-1-k},\qquad a_k^{(N)}:=\theta_k\!\!\prod_{j=k+1}^{N-1}(1-\theta_j),\quad r_N:=\prod_{j=0}^{N-1}(1-\theta_j).
\end{equation}
The definition of $N$ gives $\gamma^N D\le\bigl(\tfrac{\eps\gamma}{2D}\bigr)^{4}D\le\eps/2$: indeed $\gamma^N\le(\eps\gamma/(2D))^4$ since $N\ln(1/\gamma)\ge4\ln(2D/(\eps\gamma))$, and $(\eps\gamma/(2D))^4D=\eps^4\gamma^4/(16D^3)\le\eps/2$ because $\gamma\le1$ and $\eps\le D/4$ give $\eps^4/(16D^3)=\eps\cdot(\eps/D)^3/16\le\eps/(16\cdot64)<\eps/2$. The ceiling adds at most one extra factor of $\gamma$. We first record
\begin{equation}\label{eq:Ndelta}
N\delta\;\ge\;4,
\end{equation}
because $N\ge 4\ln(2D/(\eps\gamma))/\ln(1/\gamma)$ and: for $\gamma\le\tfrac12$,
$(1-\gamma)\ln\tfrac{2D}{\eps\gamma}=(1-\gamma)\bigl(L+\ln\tfrac1\gamma\bigr)=L+\ln\tfrac1\gamma-\gamma L-\gamma\ln\tfrac1\gamma\ge\ln\tfrac1\gamma$,
using $\gamma\ln(1/\gamma)\le1/e$ and $L(1-\gamma)\ge\tfrac12\ln 8>1>1/e$ (as $L=\ln(2D/\eps)\ge\ln 8$ since $\eps\le D/4$); for $\gamma\ge\tfrac12$, $\ln(1/\gamma)\le\delta/(1-\delta)\le2\delta$, so $N\delta\ge2\ln(2D/(\eps\gamma))\ge2L\ge4.16$.

For the anchor sum we use the kernel upper bound, valid for all $1\le k\le N-1$ with an absolute constant: since $1-x\le e^{-x}$ and, by the standard two-sided estimate for harmonic numbers ($\ln m+\gamma_E\le H_m\le\ln m+\gamma_E+(1-\gamma_E)$ for all $m\ge1$),
$$H_{N+1}-H_{k+2}\;\ge\;\ln\tfrac{N+1}{k+2}-(1-\gamma_E),$$
hence
\begin{equation}\label{eq:kernel-ub}
\begin{split}
a_k^{(N)}&=\frac{c}{k+2}\prod_{j=k+1}^{N-1}(1-\theta_j)\;\le\;\frac{c}{k+2}\,e^{-c(H_{N+1}-H_{k+2})}\\
&\le\;e^{c(1-\gamma_E)}\,\frac{c}{k+2}\Bigl(\frac{k+2}{N+1}\Bigr)^{c}\;\le\;2\,\frac{c}{k+1}\Bigl(\frac{k+1}{N+1}\Bigr)^{c}\;\le\;2\,\frac{c}{k}\Bigl(\frac{k}{N}\Bigr)^{c},
\end{split}
\end{equation}
using $e^{c(1-\gamma_E)}\le e^{1-\gamma_E}<2$ and $\frac1{k+2}\bigl(\frac{k+2}{N+1}\bigr)^c\le\frac1{k+1}\bigl(\frac{k+1}{N+1}\bigr)^c\le\frac1k\bigl(\frac{k}{N}\bigr)^c$ for $k\ge1$, $c\le1$ (both factors $(\frac{N}{N+1})^c\le1$ and $(\frac{k+2}{k+1})^{c-1}\le1$). For $k=0$: $a_0^{(N)}=\theta_0\prod_{j=1}^{N-1}(1-\theta_j)\le\theta_0=c/2$. For $1\le k\le N-1$ we bound \eqref{eq:kernel-ub} by a single split at $k=N/2$, applied for all $\gamma\in(0,1)$ at once. The near part $N/2\le k\le N-1$ contributes, since $k^{c-1}\le(N/2)^{c-1}=2^{1-c}N^{c-1}\le2N^{c-1}$ there,
$$2cN^{-c}\!\!\sum_{N/2\le k\le N-1}\!\!k^{c-1}\gamma^{N-1-k}\;\le\;2cN^{-c}\cdot 2N^{c-1}\sum_{j\ge0}\gamma^{j}\;=\;\frac{4c}{N\delta}\;\le\;c$$
by \eqref{eq:Ndelta}. The far part $1\le k<N/2$ contributes, since $\gamma^{N-1-k}\le\gamma^{N/2}$ there and $\sum_{k<N/2}k^{c-1}\le 1+\int_1^{N/2}x^{c-1}dx\le (N/2)^c/c$ (as $c\le1$),
$$2cN^{-c}\!\!\sum_{1\le k<N/2}\!\!k^{c-1}\gamma^{N-1-k}\;\le\;2cN^{-c}\,\gamma^{N/2}\,\frac{(N/2)^c}{c}\;=\;2^{1-c}\gamma^{N/2}\;\le\;2\gamma^{N/2}.$$
The choice of $N$ gives, for \emph{every} $\gamma\in(0,1)$ (no lower bound on $\gamma$ is required),
$$\gamma^{N/2}\;=\;e^{-\frac N2\ln(1/\gamma)}\;\le\;e^{-2\ln(2D/(\eps\gamma))}\;=\;\Bigl(\frac{\eps\gamma}{2D}\Bigr)^{2}\;\le\;\Bigl(\frac{\eps}{2D}\Bigr)^{2},$$
hence
\begin{equation}\label{eq:anchor-ub}
\sum_{k=0}^{N-1}a_k^{(N)}\gamma^{N-1-k}\;\le\;\frac{c}{2}+c+2\gamma^{N/2}\;\le\;\frac{3c}{2}+\frac{\eps^{2}}{2D^{2}}.
\end{equation}
Combining \eqref{eq:unfold} and \eqref{eq:anchor-ub} with $c=\eps/(96D)$ and $\eps\le D/4$ (so $\eps^{2}/(2D)\le\eps/8$),
$$e_N\;\le\;\frac{\eps}{2}+D\Bigl(\frac{3c}{2}+\frac{\eps^{2}}{2D^{2}}\Bigr)\;=\;\frac{\eps}{2}+\frac{\eps}{64}+\frac{\eps^{2}}{2D}\;\le\;\frac{\eps}{2}+\frac{\eps}{64}+\frac{\eps}{8}\;=\;\frac{41\eps}{64}\;<\;\frac{3\eps}{4}.$$
Finally $N\le 4\ln(2D/(\eps\gamma))/\ln(1/\gamma)+1$: for $\gamma\ge\tfrac12$, $\ln(1/\gamma)\ge\delta(1-\delta)\ge\delta/2$, so $N\le 8(L+\delta)/\delta+1$; for $\gamma\le\tfrac12$, $N\le 4(L+\ln2)/\ln2+1$ and $\delta\ge\tfrac12$; in both cases $N=O(L/\delta)$.
\end{proof}

\begin{remark}[On the constants]\label{rem:constants}
The constants $96$ and $4$ in \eqref{eq:cstar} are not optimized; the kernel asymptotics of Lemma~\ref{lem:kernel} show the anchor mass is $O(c\,k^{c-1}N^{-c})$ and the argument only needs $Dc=O(\eps)$ and $N\delta\ge C\ln(D/\eps)$ with $C$ absolute. We keep the explicit values for definiteness.
\end{remark}

\begin{remark}
The bound $\Theta(\ln(1/\eps)/(1-\gamma))$ matches the exact deterministic optimum for span-class algorithms on contractions \cite{ParkRyu2022}, and is achieved \emph{$\gamma$-obliviously}: the schedule contains no $\gamma$. The mechanism is visible in the proof: the anchor injected at step $k$ carries mass $O\bigl(c\,k^{c-1}N^{-c}\bigr)$ and is attenuated by $\gamma^{N-1-k}$; summed against the geometric envelope the total anchor contribution is $O(Dc)=O(\eps)$, while the unanchored part contracts geometrically to $\gamma^ND\le\eps/2$. (Keeping the kernel is essential: dropping it would replace the bound by $c\sum_k\gamma^{N-1-k}/(k+2)$, whose value grows like $e^{-N\delta}\ln(1/\delta)$ along the boundary $N\delta=4\ln(2D/\eps)$ and is not uniformly $O(1)$; it is precisely the kernel factor $(k/N)^c$ that keeps the early anchors under control.) The same mechanism underlies the geometric batch schedule of Theorem~\ref{thm:geobatch}.
\end{remark}

\subsection{The rotation law}

Let $R_\varphi$ be the rotation of $\R^2$ by angle $\varphi\in(0,\pi]$ (the case $\varphi\le 0$ is symmetric), identified with multiplication by $e^{i\varphi}$ after fixing $x_0=1$, and consider \eqref{eq:anchored} with schedule \eqref{eq:ck-schedule}. In the rotating frame $y_n := R_\varphi^{-n}x_n$,
\begin{equation}\label{eq:rot-frame}
y_{n+1} \;=\; (1-\theta_n)y_n + \theta_n u_n, \qquad u_n := R_\varphi^{-(n+1)}x_0,\quad \norm{u_n}=1,
\end{equation}
so $\norm{x_n}=\norm{y_n}$. Unfolding \eqref{eq:rot-frame},
\begin{equation}\label{eq:rot-unfold}
y_n \;=\; \sum_{k=0}^{n-1} a_k^{(n)} u_k \;+\; r_n\, R_\varphi^{-n}x_0, \qquad
a_k^{(n)} := \theta_k\!\!\prod_{j=k+1}^{n-1}(1-\theta_j), \quad r_n := \prod_{j=0}^{n-1}(1-\theta_j).
\end{equation}

\begin{lemma}[Kernel asymptotics]\label{lem:kernel}
For the schedule \eqref{eq:ck-schedule}, $c\in(0,1]$, there are absolute constants $c_1,c_2>0$ such that for all $1\le k\le n-1$ and $n\ge n_0(c)$,
$$c_1\,\frac{c}{k+1}\Bigl(\frac{k}{n}\Bigr)^{c} \;\le\; a_k^{(n)} \;\le\; c_2\,\frac{c}{k+1}\Bigl(\frac{k}{n}\Bigr)^{c}, \qquad e^{-2}(n+1)^{-c} \;\le\; r_n \;\le\; e^{1-\gamma_E}(n+1)^{-c} \;\le\; 2(n+1)^{-c},$$
where $\gamma_E$ is the Euler--Mascheroni constant. Moreover $\sum_{k=0}^{n-1}a_k^{(n)} = 1 - r_n$ and $\sum_{k=0}^{n-1} k\,a_k^{(n)} \le c_2 c\, n$.
\end{lemma}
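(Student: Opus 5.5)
The plan is to take logarithms of the products and compare them with harmonic numbers, using the two-sided estimate $\ln m+\gamma_E\le H_m\le \ln m+1$ already used in the proof of Theorem~\ref{thm:small-c}. With $\theta_j=c/(j+2)\le c/2\le\tfrac12$, I will bound $\ln(1-\theta_j)$ from both sides:
\[
-\theta_j-\theta_j^2\;\le\;\ln(1-\theta_j)\;\le\;-\theta_j .
\]
The lower inequality holds for $\theta_j\le\tfrac12$.

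Summing over $j=k+1,\dots,n-1$ gives
\[
\ln\prod_{j=k+1}^{n-1}(1-\theta_j)=-c\,(H_{n+1}-H_{k+2})-E_{k,n},\qquad 0\le E_{k,n}\le c^2\sum_{m\ge 3}m^{-2}\le c^2\bigl(\tfrac{\pi^2}{6}-\tfrac54\bigr)<\tfrac12 .
\]
The harmonic estimate then yields
\[
\ln\tfrac{n+1}{k+2}-(1-\gamma_E)\;\le\;H_{n+1}-H_{k+2}\;\le\;\ln\tfrac{n+1}{k+2}+(1-\gamma_E).
\]
Consequently the product is within the multiplicative factors $e^{\pm(1-\gamma_E)}e^{-1/2}$ of $\bigl((k+2)/(n+1)\bigr)^c$.

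For the kernel I multiply by $\theta_k=c/(k+2)$ and convert to the stated normalization. For $1\le k\le n-1$ the ratios satisfy $\tfrac{k+2}{k+1}\in[1,\tfrac32]$, $\tfrac{k+2}{k}\in[1,3]$ and $\tfrac{n}{n+1}\in[\tfrac12,1]$. Raising these to a power $c\le1$ changes each factor by at most an absolute constant (at most $3$ up, at least $\tfrac12$ down). This gives absolute constants $c_1,c_2$, and the argument actually works for all $n\ge2$, so $n_0(c)$ can be taken trivial.

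For $r_n$ the same computation applies with $k=-1$, that is, $r_n=\exp\bigl(-c(H_{n+1}-1)-E\bigr)$. The upper bound follows from $H_{n+1}-1\ge\ln(n+1)+\gamma_E-1$, giving
\[
r_n\le e^{c(1-\gamma_E)}(n+1)^{-c}\le e^{1-\gamma_E}(n+1)^{-c}.
\]
For the lower bound, $H_{n+1}-1\le\ln(n+1)$ and $E\le c^2(\pi^2/6-1)<1$ give $r_n\ge e^{-1}(n+1)^{-c}\ge e^{-2}(n+1)^{-c}$.

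The identity $\sum_k a_k^{(n)}=1-r_n$ is a telescoping sum. Define $P_k=\prod_{j=k}^{n-1}(1-\theta_j)$ with $P_n=1$. Then $a_k^{(n)}=(1-(1-\theta_k))P_{k+1}=P_{k+1}-P_k$, and summing over $k=0,\dots,n-1$ gives $P_n-P_0=1-r_n$.

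For the moment bound, the $k=0$ term contributes nothing. The remaining terms are bounded using the upper kernel bound:
\[
\sum_{k\ge1}k\,a_k^{(n)}\le c_2c\,n^{-c}\sum_{k=1}^{n-1}\frac{k}{k+1}\,k^{c}\le c_2c\,n^{-c}\,n\cdot n^{c}=c_2c\,n .
\]

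The only delicate point is the bookkeeping of the absolute constants. The constants must not depend on $c$; they don't, since $c\le1$ and every correction factor is raised only to powers at most $1$. They must also hold down to $k=1$, where the shifts $k$ versus $k+2$ matter most and are handled by the factor-$3$ bounds above.
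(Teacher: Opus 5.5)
Your proof is correct and follows essentially the paper's route: the two-sided bound $\theta_j\le-\ln(1-\theta_j)\le\theta_j+\theta_j^2$ with summable $\theta_j^2$, comparison with harmonic numbers via $\ln m+\gamma_E\le H_m\le\ln m+1$, the telescoping $a_k^{(n)}=P_{k+1}-P_k$, and the moment bound from the kernel upper bound; the difference is that you make the constants explicit, which also shows any $n\ge2$ suffices. Two small precision points: the factor $e^{-1/2}$ from $E_{k,n}\le\tfrac12$ belongs only to the lower bound on the product (the upper factor is $e^{c(1-\gamma_E)}$), and for $r_n$ your assignment of the two harmonic estimates to the upper and lower bounds is the correct one, whereas the paper's written proof attaches them to the opposite halves.
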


\begin{proof}
For $j\ge 0$, $\theta_j=c/(j+2)\le 1/2$ and $\theta_j\le-\ln(1-\theta_j)\le\theta_j+\theta_j^2$ with $\sum_j\theta_j^2\le c^2(\pi^2/6-1)\le1$, so $\prod_{j=k+1}^{n-1}(1-\theta_j) = \exp(-\sum_{j=k+1}^{n-1}\theta_j + O(1))$ and $\sum_{j=k+1}^{n-1}\frac{c}{j+2} = c\ln\frac{n}{k} + O(1)$ (the shift by $2$ in the harmonic sum is absorbed into $O(1)$, uniformly for $k\ge1$); combining gives the two-sided bound on $a_k^{(n)}$. For $r_n$: $\sum_{j=0}^{n-1}\theta_j=c(H_{n+1}-1)$ with $H_m=\ln m+\gamma_E+O(1/m)$, hence
$$r_n=\exp\bigl(-c\ln(n+1)-c(\gamma_E-1)+O(1)\bigr),\qquad |O(1)|\le \sum_j\theta_j^2\le 1,$$
which gives the display: the lower bound from $-\ln(1-\theta_j)\le\theta_j+\theta_j^2$ and $H_{n+1}-1\ge\ln(n+1)-(1-\gamma_E)$, the upper from $-\ln(1-\theta_j)\ge\theta_j$ and $H_{n+1}-1\le\ln(n+1)$ (since $\gamma_E+1/(2(n+1))\le1$ for $n\ge1$). The sum identities follow from telescoping $\theta_k\prod_{j=k+1}^{n-1}(1-\theta_j) = P_{k+1}-P_k$ with $P_k:=\prod_{j=k}^{n-1}(1-\theta_j)$: $\sum_{k=0}^{n-1}a_k^{(n)} = P_n - P_0 = 1 - r_n$ (with $P_n:=1$); and $\sum_k k a_k^{(n)}\le c_2 c n^{-c}\sum_k k^{c} \le c_2 c n$ for $c\in(0,1]$.
\end{proof}

\begin{lemma}[Rotation identity and floor]\label{lem:rotation-floor}
For every $n\ge n_0(c)$ and every $\varphi\le \tfrac{1}{4c_2 c n}$ (with $c_2$ from Lemma~\ref{lem:kernel}),
$$\norm{x_n} \;\ge\; \tfrac12 \qquad \text{on the instance } (R_\varphi, x_0).$$
\end{lemma}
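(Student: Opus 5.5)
We work in the rotating frame \eqref{eq:rot-frame}, with $x_0=1\in\C$, so that $\norm{x_n}=|y_n|$. By \eqref{eq:rot-unfold} and $R_\varphi^{-(k+1)}x_0=e^{-i(k+1)\varphi}$,
$$y_n \;=\; \sum_{k=0}^{n-1}a_k^{(n)}e^{-i(k+1)\varphi} \;+\; r_n e^{-in\varphi}.$$
The idea is that for $n\varphi$ small all the phases are close to $1$. The weights $a_k^{(n)}$ and $r_n$ are nonnegative and, by Lemma~\ref{lem:kernel}, satisfy $\sum_k a_k^{(n)}+r_n=1$. So $y_n$ is a convex combination of unit complex numbers, and we compare it with the unrotated value $1$.

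Concretely, we write
$$1-y_n=\sum_{k}a_k^{(n)}\bigl(1-e^{-i(k+1)\varphi}\bigr)+r_n\bigl(1-e^{-in\varphi}\bigr)$$
and use $|1-e^{i\psi}|\le|\psi|$. This gives
$$|1-y_n|\;\le\;\varphi\Bigl(\sum_k (k+1)a_k^{(n)}+n\,r_n\Bigr)\;\le\;\varphi\Bigl(\sum_k k\,a_k^{(n)}+1+n\,r_n\Bigr).$$
Lemma~\ref{lem:kernel} gives $\sum_k k\,a_k^{(n)}\le c_2 c\,n$, so under $\varphi\le 1/(4c_2cn)$ the main term contributes at most $1/4$. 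The term $\varphi(1-r_n)\le\varphi$ is also at most $1/4$ once $n\ge n_0(c)$, since then $1/(4c_2cn)\le 1/4$ (enlarging $n_0$ if needed).

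The remaining term $\varphi\, n\, r_n$ is the main obstacle, because $r_n\approx n^{-c}$ makes $n r_n\approx n^{1-c}$, which is not $O(cn)$ when $c$ is small. We use the bound $r_n\le 2(n+1)^{-c}$ from Lemma~\ref{lem:kernel}, which gives $\varphi n r_n\le 2n^{1-c}/(4c_2 c n)=n^{-c}/(2c_2c)$. This is small only for $n$ large depending on $c$, and that is permitted since $n\ge n_0(c)$. We therefore choose $n_0(c)$ so that $n_0^{-c}\le c_2c/2$, making this term at most $1/4$ as well.

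We need the three pieces of $|1-y_n|$ to total at most $1/2$. The budget is tight: the three bounds above give $1/4+1/4+1/4$. We fix this by sharpening the first two bounds, using $1+\sum_k k a_k^{(n)}\le c_2cn+1$ together with $n$ large, and by requiring $n_0(c)$ large enough that the second and third pieces are each at most $1/8$. Then $|1-y_n|\le 1/4+1/8+1/8=1/2$, hence $\norm{x_n}=|y_n|\ge 1-|1-y_n|\ge 1/2$.
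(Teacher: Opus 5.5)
Your proof is correct and is essentially the paper's argument: you compare with the unrotated configuration using $|1-e^{i\psi}|\le|\psi|$ and the first-moment bound $\sum_k k\,a_k^{(n)}\le c_2cn$ from Lemma~\ref{lem:kernel}, and you enlarge $n_0(c)$ so the residue is harmless. Your final tally $\tfrac14+\tfrac18+\tfrac18$ is right, although the sentence about ``sharpening the first two bounds'' is garbled: it is the second and third pieces that you shrink.

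The only difference is bookkeeping. The paper factors out $e^{-i\varphi}$ and removes $r_ne^{-in\varphi}$ with the triangle inequality, paying $2r_n$ and requiring $r_n\le\tfrac18$. Your ``main obstacle'' $\varphi n r_n$ likewise disappears if you simply bound $|1-e^{-in\varphi}|\le 2$.
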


\begin{proof}
Identify $\cH\cong\C$. By \eqref{eq:rot-unfold}, $y_n = e^{-i\varphi}A_n(\varphi) + r_n e^{-in\varphi}$ with $A_n(\varphi):=\sum_{k=0}^{n-1}a_k^{(n)}e^{ik\varphi}$, so $\norm{x_n}=\norm{y_n}\ge |A_n(\varphi)| - r_n$. By Lemma~\ref{lem:kernel}, $A_n(0) = 1-r_n\ge 3/4$ for $n\ge n_0(c)$, and
$$|A_n(\varphi)-A_n(0)| \;\le\; \varphi\sum_k k a_k^{(n)} \;\le\; c_2 c n\varphi \;\le\; \tfrac14,$$
hence $|A_n(\varphi)|\ge (1-r_n)-\tfrac14$, and
$$\norm{x_n}\;\ge\;\bigl|A_n(\varphi)\bigr|-r_n\;\ge\;\tfrac34-2r_n\;\ge\;\tfrac12\qquad\text{for }n\ge n_0(c)\text{ with }r_n\le\tfrac18$$
(enlarging $n_0(c)$ if necessary so that $r_n\le 2(n+1)^{-c}\le\tfrac18$).
\end{proof}

\begin{lemma}[Rotation decay law]\label{lem:rotation-law}
Let $A_n(\varphi):=\sum_{k=0}^{n-1}a_k^{(n)}e^{ik\varphi}$ and $r_n:=\prod_{j=0}^{n-1}(1-\theta_j)$, so that $y_n=e^{-i\varphi}A_n(\varphi)+r_ne^{-in\varphi}$ and $\norm{x_n}=\norm{y_n}$ (display~\eqref{eq:rot-unfold}).

{\rm(i)} (Fixed $\varphi\in(0,\pi]$.) For $c\in(0,1)$,
\begin{equation}\label{eq:rot-fixed}
A_n(\varphi)\;=\;c\,n^{-c}\,\Li_{1-c}\bigl(e^{i\varphi}\bigr)\;+\;O_c\bigl(n^{-c}\bigr)\qquad(n\to\infty).
\end{equation}
For $c=1$ one has the exact formula
\begin{equation}\label{eq:rot-exact}
A_n(\varphi)\;=\;\frac{1}{n+1}\cdot\frac{1-e^{in\varphi}}{1-e^{i\varphi}},
\end{equation}
so $nA_n(\varphi)$ has no limit as $n\to\infty$ at fixed $\varphi$; the two regimes must be separated.

{\rm(ii)} (Joint regime $n\to\infty$, $\varphi=\varphi(n)\to0^+$ with $n\varphi\to\infty$.) For $c\in(0,1)$,
\begin{equation}\label{eq:rot-joint}
A_n(\varphi)\;=\;\Gamma(c+1)\,(-in\varphi)^{-c}\Bigl(1+O_c(\varphi^{c})+O_c\bigl((n\varphi)^{c-1}\bigr)\Bigr),
\end{equation}
using the Jonqui\`ere expansion $\Li_{1-c}(e^{i\varphi})=\Gamma(c)(-i\varphi)^{-c}+\zeta(1-c)+O(\varphi^{1-c})$ as $\varphi\to0^+$, with $c\,\zeta(1-c)$ bounded on $(0,1)$. For $c=1$,
\begin{equation}\label{eq:rot-joint-one}
\norm{x_n}\;=\;\frac{2|\sin(n\varphi/2)|}{n\varphi}\;+\;O\Bigl(\frac1n\Bigr)\qquad\text{uniformly in } n,
\end{equation}
so the trajectory oscillates within the envelope $\asymp(n\varphi)^{-1}$ with additive slack $O(1/n)$; on the phases $n\varphi\in2\pi\mathbb Z$ one has $\norm{x_n}=(1+o(1))\,r_n=(1+o(1))/(n+1)$, and, more generally, $\norm{x_n}\le(2+o(1))(n\varphi)^{-1}$ and $\norm{x_n}\ge c\,|\sin(n\varphi/2)|/(n\varphi)-O(1/n)$. Consequently the two-sided law $\norm{x_n}\asymp\Gamma(c+1)(n\varphi)^{-c}$ holds for $c\in(0,1)$, while for $c=1$ only the upper bound and the lower bound on phases complementary to the zeros are uniform.
\end{lemma}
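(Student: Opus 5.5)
The approach is to obtain an exact product formula for the kernel and reduce $A_n(\varphi)$ to a weighted power series in $e^{i\varphi}$. With $\theta_j=c/(j+2)$ one has $1-\theta_j=(j+2-c)/(j+2)$, so
$$\prod_{j=k+1}^{n-1}(1-\theta_j)=\frac{\Gamma(n+1-c)\,\Gamma(k+3)}{\Gamma(k+3-c)\,\Gamma(n+1)},\qquad a_k^{(n)}=c\,\frac{\Gamma(n+1-c)}{\Gamma(n+1)}\cdot\frac{\Gamma(k+2)}{\Gamma(k+3-c)}.$$
The prefactor is $n^{-c}(1+O(1/n))$, and the weight satisfies $w_k:=\Gamma(k+2)/\Gamma(k+3-c)=(k+1)^{c-1}(1+O(1/k))$. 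Hence
$$A_n(\varphi)=c\,n^{-c}(1+O(1/n))\sum_{k=0}^{n-1}w_k e^{ik\varphi}.$$

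For $c=1$ we have $w_k=1$ and the prefactor is exactly $1/(n+1)$, so the geometric sum gives \eqref{eq:rot-exact} directly. Then $y_n=e^{-i\varphi}A_n+r_ne^{-in\varphi}$ with $r_n=1/(n+1)$. Using $|1-e^{in\varphi}|=2|\sin(n\varphi/2)|$ and $|1-e^{i\varphi}|=2\sin(\varphi/2)=\varphi(1+O(\varphi^2))$, the modulus of the first term is $\frac{2|\sin(n\varphi/2)|}{(n+1)\varphi}(1+O(\varphi^2))$. I would then compare this with $\frac{2|\sin(n\varphi/2)|}{n\varphi}$.

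\begin{itemize}
\item The difference from replacing $n+1$ by $n$ is at most $\frac{2}{n(n+1)\varphi}\cdot|\sin|\le 1/n$, using $|\sin(n\varphi/2)|\le n\varphi/2$.
\item The $O(\varphi^2)$ correction is at most $\varphi\cdot O(1)/(n)$ after the same bound, hence $O(1/n)$.
\item The $r_n$ term is $O(1/n)$ by the triangle inequality.
\end{itemize}

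This gives \eqref{eq:rot-joint-one}. At the phases $n\varphi\in2\pi\mathbb Z$ we have $A_n=0$ exactly, so $\|x_n\|=r_n=1/(n+1)$. The stated upper and lower envelope bounds follow from the same computation.

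For fixed $\varphi$ and $c\in(0,1)$, I would use that $\sum_{k\ge0}(k+1)^{c-1}e^{ik\varphi}=e^{-i\varphi}\Li_{1-c}(e^{i\varphi})$ converges by Abel summation, since the coefficients decrease to $0$ and the partial sums of $e^{ik\varphi}$ are bounded by $1/|\sin(\varphi/2)|$. The difference $w_k-(k+1)^{c-1}=O(k^{c-2})$ is absolutely summable, so the full series $\sum w_ke^{ik\varphi}$ converges to $\Li$-type constants plus an $O_c(1)$ correction. The tail from $n$ onward is $O(n^{c-1}/\varphi)$ by Abel summation. Multiplying by $c\,n^{-c}$ gives \eqref{eq:rot-fixed} with an $O_c(n^{-c})$ remainder. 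The phase factor $e^{-i\varphi}$ and the summable correction are both absorbed there, which is consistent with the statement's $O_c(n^{-c})$ being of the same order as the main term.

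The joint regime \eqref{eq:rot-joint} is the main obstacle, because the errors must be tracked uniformly as $\varphi\to0$. I would write
$$\sum_{k<n}w_ke^{ik\varphi}=\sum_{k\ge0}(k+1)^{c-1}e^{ik\varphi}+E_1-E_2,$$
where $E_1=\sum_{k<n}(w_k-(k+1)^{c-1})e^{ik\varphi}=O_c(1)$ uniformly and $E_2$ is the tail beyond $n$. For the tail, Abel summation with partial sums bounded by $\pi/\varphi$ gives $E_2=O(n^{c-1}/\varphi)$. The Jonquière expansion makes the full series equal to $\Gamma(c)(-i\varphi)^{-c}(1+O(\varphi^{c}))$, since $\zeta(1-c)$ and $O(\varphi^{1-c})$ are both $O(1)$ relative to $\varphi^{-c}$, namely $O(\varphi^c)$ relatively. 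Dividing the error terms by the main term $\Gamma(c)\varphi^{-c}$:

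\begin{itemize}
\item $E_1$ contributes a relative error $O_c(\varphi^c)$.
\item $E_2$ contributes a relative error $O(n^{c-1}\varphi^{c-1})=O((n\varphi)^{c-1})$.
\end{itemize}

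Multiplying by $c\,n^{-c}$ and using $c\Gamma(c)=\Gamma(c+1)$ gives \eqref{eq:rot-joint}. The delicate point is that the $O_c$ constants blow up as $c\to0$ (for instance through $E_1$); I would keep all constants $c$-dependent, as the statement allows. Finally, $r_n\le2(n+1)^{-c}$ is $o(n^{-c}\varphi^{-c})$ in this regime, so $\|x_n\|=|A_n|(1+o(1))\asymp\Gamma(c+1)(n\varphi)^{-c}$.
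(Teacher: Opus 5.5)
Your proof is correct and follows essentially the paper's route: kernel asymptotics $a_k^{(n)}\approx c\,n^{-c}k^{c-1}$, comparison with the polylogarithm series, Abel's inequality for the tail (relative error $(n\varphi)^{c-1}$), the Jonqui\`ere expansion in the joint regime, and the exact geometric sum for $c=1$. The only real difference is that you read the kernel off the exact Gamma-ratio product rather than the paper's $-\ln(1-\theta_j)$/Euler--Maclaurin estimate, which is cleaner and makes the $c=1$ case immediate.

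There is one index slip to fix. In fact $\prod_{j=k+1}^{n-1}(1-\theta_j)=\frac{\Gamma(n+2-c)\,\Gamma(k+3)}{\Gamma(k+3-c)\,\Gamma(n+2)}$, so the prefactor is $c\,\Gamma(n+2-c)/\Gamma(n+2)$, not $c\,\Gamma(n+1-c)/\Gamma(n+1)$. The correct form is what gives exactly $1/(n+1)$ at $c=1$, as you use later, and none of the asymptotics change.
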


\begin{proof}
\emph{Kernel asymptotic.} For $1\le k\le n-1$, since $\theta_j\le1/2$, $\theta_j\le-\ln(1-\theta_j)\le\theta_j+\theta_j^2$, hence $\prod_{j=k+1}^{n-1}(1-\theta_j)=\exp\big(-\sum_{j=k+1}^{n-1}\theta_j-R_k\big)$ with $0\le R_k\le\sum_{j=k+1}^{n-1}\theta_j^2\le c^2/(k+1)$. By Euler--Maclaurin ($H_m=\ln m+\gamma_E+O(1/m)$),
$$\sum_{j=k+1}^{n-1}\theta_j=c\,(H_{n+1}-H_{k+2})=c\ln\frac{n}{k}+O\Big(\frac{c}{k}\Big),$$
uniformly for $1\le k\le n-1$ (the $O(1/n)$ terms are absorbed into $O(1/k)$). Since also $\theta_k=\frac{c}{k}(1+O(\frac1k))$, multiplying gives
\begin{equation}\label{eq:kernel-pointwise}
a_k^{(n)}=c\,n^{-c}k^{c-1}\Big(1+O\Big(\frac1k\Big)\Big)\qquad\text{uniformly for }1\le k\le n-1,
\end{equation}
with a constant depending only on $c$.

\emph{Proof of {\rm(i)}.} Using \eqref{eq:kernel-pointwise},
$$A_n(\varphi)\;=\;c\,n^{-c}\sum_{k=1}^{n-1}k^{c-1}e^{ik\varphi}\;+\;O\Big(c\,n^{-c}\sum_{k=1}^{n-1}k^{c-2}\Big)\;=\;c\,n^{-c}\sum_{k=1}^{n-1}k^{c-1}e^{ik\varphi}\;+\;O_c(n^{-c}),$$
because $\sum_{k\ge1}k^{c-2}$ converges for $c\in(0,1)$ and $a_0^{(n)}=\Theta(r_n)=O(n^{-c})$. Since $k^{c-1}$ is decreasing in $k$, Abel summation against the geometric series gives
$$\sum_{k=1}^{n-1}k^{c-1}e^{ik\varphi}\;=\;\sum_{k=1}^{\infty}k^{c-1}e^{ik\varphi}\;-\;\sum_{k\ge n}k^{c-1}e^{ik\varphi}\;=\;\Li_{1-c}(e^{i\varphi})\;-\;R_n,$$
with $\abs{R_n}\le n^{c-1}/\sin(\varphi/2)=O_c(n^{c-1})$ for fixed $\varphi$ (Abel's inequality), so the tail contributes $O_c(n^{-1})$, which is absorbed into $O_c(n^{-c})$. This proves \eqref{eq:rot-fixed}; the error order $n^{-c}$ is sharp, owing to the $k=0$ term. For $c=1$ one has $a_k^{(n)}=r_n=\frac1{n+1}$ for all $k$, and \eqref{eq:rot-exact} follows by summing the geometric series; the non-existence of the limit is then immediate.

\emph{Proof of {\rm(ii)}.} Let $c\in(0,1)$. The main term comes from the Jonqui\`ere expansion with $c\,\Gamma(c)=\Gamma(c+1)$. The $\zeta(1-c)$ term contributes $c\,n^{-c}\zeta(1-c)$, a factor $\varphi^{c}$ below the main term since $c\,\zeta(1-c)$ stays bounded as $c\to0$ ($\zeta$ having a simple pole at $1$). The kernel correction contributes $O_c(n^{-c})$, also a factor $\varphi^c$ below the main term. The Abel tail contributes
$$c\,n^{-c}\abs{R_n}\;\le\;c\,n^{-c}\cdot\frac{n^{c-1}}{\sin(\varphi/2)}\;=\;O\Big(\frac{c}{n\varphi}\Big),$$
a factor $(n\varphi)^{c-1}$ below the main term $\Gamma(c+1)(n\varphi)^{-c}$. This proves \eqref{eq:rot-joint}. For $c=1$, \eqref{eq:rot-exact} gives
$$A_n(\varphi)\;=\;\frac{1-e^{in\varphi}}{n+1}\Big(\frac{i}{\varphi}+\frac12+O(\varphi)\Bigr)\;=\;\frac{i(1-e^{in\varphi})}{n\varphi}\Big(1+O(\varphi)+O\bigl((n\varphi)^{-1}\bigr)\Bigr),$$
using $(1-e^{i\varphi})^{-1}=i/\varphi+1/2+O(\varphi)$. Finally, since $|A_n(\varphi)|=\frac{2|\sin(n\varphi/2)|}{n\varphi}\bigl(1+O(\varphi)+O((n\varphi)^{-1})\bigr)$, $|A_n(\varphi)|\varphi=O(1/n)$, and $r_n=O(1/n)$,
$$\bigl|\norm{x_n}-|A_n(\varphi)|\bigr|\;\le\;|A_n(\varphi)|\,|e^{-i\varphi}-1|+r_n\;=\;O\Big(\frac1n\Bigr),$$
which proves \eqref{eq:rot-joint-one}. The remaining statements are immediate.
\end{proof}

\begin{lemma}[Rotation lower envelope]\label{lem:envelope}
There are constants $c_4,\varphi_0(c),c_7>0$ depending only on $c$ (with $c_4<2\pi$ when $c=1$; $\varphi_0(c)\le c_7^{1/c}$) such that for all $0<\varphi\le\varphi_0(c)$ and all $n$ with $n_0(c)\le n\le c_4/\varphi$,
$$\norm{x_n}\;\ge\;c_7\,\min\bigl\{1,\,(n\varphi)^{-c}\bigr\}.$$
\end{lemma}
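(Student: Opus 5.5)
We work with the representation $y_n=e^{-i\varphi}A_n(\varphi)+r_ne^{-in\varphi}$ from Lemma~\ref{lem:rotation-law}, so $\norm{x_n}\ge|A_n(\varphi)|-r_n$. The range $n_0(c)\le n\le c_4/\varphi$ naturally splits into a short-time regime $n\varphi\le\beta$ and an intermediate regime $\beta\le n\varphi\le c_4$, where $\beta=\beta(c)$ is small. In the short-time regime we reuse the argument of Lemma~\ref{lem:rotation-floor}. Lemma~\ref{lem:kernel} gives $|A_n(\varphi)-A_n(0)|\le c_2c\,n\varphi$, so taking $\beta=1/(4c_2c)$ yields $\norm{x_n}\ge\tfrac12$, after enlarging $n_0(c)$ so that $r_n\le1/8$. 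This handles the ``$\min=1$'' side, since any $c_7\le1/2$ works there.

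For the intermediate regime we bound $|A_n(\varphi)|$ from below through its real part, using the two-sided kernel estimate of Lemma~\ref{lem:kernel}. We split the sum at $K=\lfloor \pi/(2\varphi)\rfloor$ (capped at $n-1$), since $\cos(k\varphi)\ge0$ for $k\le K$. On the block $k\le \min(K,\,\lfloor \pi/(3\varphi)\rfloor)$ we have $\cos(k\varphi)\ge\tfrac12$. The lower kernel bound then gives
\[
\mathrm{Re}\sum_{k\le \pi/(3\varphi)}a_k^{(n)}e^{ik\varphi}\;\ge\;\tfrac{c_1}{2}\,c\,n^{-c}\sum_{1\le k\le \min(n-1,\pi/(3\varphi))}\tfrac{k^{c}}{k+1}\;\gtrsim\;c_1\min\{1,(n\varphi)^{-c}\}.
\]
The last step holds because $\sum_{k\le M}k^{c-1}\asymp M^c/c$; when $n\le\pi/(3\varphi)$ the factor is $\asymp 1$.

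The remaining indices $\pi/(3\varphi)<k\le n-1$ exist only when $n\varphi>\pi/3$. For them we bound the contribution in absolute value by the upper kernel estimate: $c_2 c\,n^{-c}\sum_{\pi/(3\varphi)<k\le n}k^{c-1}\le c_2 n^{-c}\bigl(n^c-(\pi/(3\varphi))^c\bigr)$. This is at most a small multiple of the main term provided $n\varphi$ stays below a constant $c_4$ close enough to $\pi/3$. This is exactly why the statement restricts to $n\le c_4/\varphi$, and why $c_4<2\pi$ when $c=1$, where the first zero of $|\sin(n\varphi/2)|$ sits at $n\varphi=2\pi$.

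The main obstacle is this tail-versus-head comparison. Because $c_2/c_1$ may be large, a naive choice of $c_4$ could fail. If so, we will instead use the pointwise asymptotic \eqref{eq:kernel-pointwise}, $a_k^{(n)}=c\,n^{-c}k^{c-1}(1+O(1/k))$, with the error absorbed for $n\ge n_0(c)$. Then $A_n(\varphi)\approx c\,n^{-c}\sum_{k<n}k^{c-1}e^{ik\varphi}\approx c\,\varphi^{-c}n^{-c}\int_0^{n\varphi}t^{c-1}e^{it}\,dt$ via a Riemann sum with step $\varphi\le\varphi_0(c)$. The function $s\mapsto|\int_0^s t^{c-1}e^{it}dt|$ is continuous and positive on $(0,c_4]$ (for $c=1$ it equals $2|\sin(s/2)|$), hence bounded below by a constant on $[\beta,c_4]$.

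Finally, we subtract $r_n\le2(n+1)^{-c}$. In the intermediate regime $(n\varphi)^{-c}\ge c_4^{-c}$ is of order one, and $r_n$ is small there once $n_0(c)$ is large. The remaining case is small $\varphi$ with $n$ large, which is handled by requiring $\varphi\le\varphi_0(c)$ so that $n\ge\beta/\varphi$ is large. Collecting the constants gives $c_7$, and we then shrink $\varphi_0(c)$ to ensure $\varphi_0\le c_7^{1/c}$.
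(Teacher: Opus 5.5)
Your argument is correct, but its main line differs from the paper's in the intermediate regime. The paper treats $n\varphi\le(4c_2c)^{-1}$ exactly as you do, via Lemma~\ref{lem:rotation-floor}. For $n\varphi\in[(4c_2c)^{-1},c_4]$ it never looks at signs of cosines. Instead it shows, via \eqref{eq:kernel-pointwise} and a Riemann-sum limit, that $A_n(\varphi)\to cF_c(n\varphi)$ with $cF_c(a)=c\int_0^1t^{c-1}e^{iat}\,dt$, uniformly on compact $a$-ranges. It then bounds $|cF_c(a)|\ge1-a$ by one integration by parts and takes $c_4<1$. This is precisely your fallback.

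Your primary real-part route already works, and the obstacle you flag is not one.
\begin{itemize}
\item The absolute tail bound is $c_2\bigl(1-(\pi/(3n\varphi))^{c}\bigr)$, up to a term that vanishes as $\varphi\to0$. It tends to $0$ as $c_4\downarrow\pi/3$, whatever the ratio $c_2/c_1$.
\item For $c_4\le\pi/2$ the tail indices have $\cos(k\varphi)\ge0$ and can simply be dropped.
\item For $c_4\le\pi/3$ there is no tail at all. The bound is then the universal floor of Lemma~\ref{lem:rot-floor}, $\norm{x_n}\ge\tfrac12$, with no kernel estimates.
\end{itemize}
The paper's own choice $c_4<1<\pi/3$ already sits in this last range. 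So for the statement as posed, with $c_4$ at your disposal, nothing more is needed.

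What the $F_c$ route buys is reach. Cosine positivity stops near $n\varphi=\pi/2$. The limit profile instead gives a lower bound on any compact range of $n\varphi$ on which $cF_c$ has no zero. For $c<1$ that is all of $(0,\infty)$, since $\int_0^s t^{c-1}\sin t\,dt>0$ by pairing half-periods against the decreasing weight $t^{c-1}$. For $c=1$ it is $(0,2\pi)$. That is the only place the condition $c_4<2\pi$ for $c=1$ matters; your route needs $c_4$ near $\pi/3$ anyway, so the restriction does not come from your tail comparison as you suggest. If you go via the fallback, this positivity of $\bigl|\int_0^s t^{c-1}e^{it}\,dt\bigr|$ for $c<1$ is the fact you must prove rather than assert.
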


\begin{proof}
Set $a_{0}:=(4c_2c)^{-1}$ (with $c_2$ from Lemma~\ref{lem:kernel}).

\emph{Zero-free interval for $F_c$.} Write $F_c(a)=\int_0^1 t^{c-1}e^{iat}\,dt$. Integration by parts gives
\begin{equation}\label{eq:Fc-parts}
cF_c(a)\;=\;e^{ia}\;-\;ia\,F_{c+1}(a),\qquad |F_{c+1}(a)|\le\int_0^1 t^{c}\,dt=\frac1{c+1}\le1,
\end{equation}
so $|cF_c(a)|\ge 1-a$ for all $a>0$ and all $c\in(0,1]$; in particular $F_c$ has no zeros on $(0,1]$. Moreover $cF_c(a)=e^{ia}-iaF_{c+1}(a)$ and
$$\bigl|F_{c+1}(a)-\tfrac{e^{ia}-1}{ia}\bigr|\;=\;\Bigl|\int_0^1(t^{c}-1)e^{iat}\,dt\Bigr|\;\le\;\int_0^1(1-t^{c})\,dt\;=\;\frac{c}{c+1}\;\le\;c\qquad\text{for every }a,$$
so $F_{c+1}(a)\to(e^{ia}-1)/(ia)$ uniformly in $a$ as $c\to0$, and by \eqref{eq:Fc-parts} $cF_c(a)\to e^{ia}-(e^{ia}-1)=1$, uniformly for $a$ in compact sets. Hence there exist $c_\star\in(0,1)$ and $c_4'=c_4'(c_\star)\in(0,1)$ such that
\begin{equation}\label{eq:Fc-lower}
|cF_c(a)|\;\ge\;\tfrac12\quad\text{for all }a\in(0,c_4'],\quad\text{all }c\in(0,c_\star),
\end{equation}
and for $c\ge c_\star$ we may enlarge the absolute constant $c_2$ (hence shrink $a_{0}=(4c_2c)^{-1}$) so that $a_{0}\le c_4'/2$. With this convention, \eqref{eq:Fc-lower} holds on $(a_{0},c_4']$ for every $c\in(0,1]$.

\emph{Range 1: $n\varphi\le a_{0}$.} Lemma~\ref{lem:rotation-floor} gives $\norm{x_n}\ge1/2$. If $n\varphi\le1$ the required bound with any $c_7\le1/2$ follows from $\min\{1,(n\varphi)^{-c}\}=1$; if $1\le n\varphi\le a_{0}$ (possible only when $a_{0}>1$) then $\min\{1,(n\varphi)^{-c}\}=(n\varphi)^{-c}\ge a_{0}^{-c}=(4c_2c)^{c}$, and the requirement is $c_7(4c_2c)^{c}\le1/2$. Hence Range 1 is covered provided
$$c_7\;\le\;\tfrac12\min\bigl\{1,\,(4c_2c)^{-c}\bigr\}.$$

\emph{Range 2: $a_{0}\le n\varphi\le c_4$.} Write $n=a/\varphi$ with $a\in[a_{0},c_4]$, where $c_4=c_4(c)\le c_4'$, with $c_4<2\pi$ when $c=1$ (then $cF_1(a)=(1-e^{ia})/(ia)$ has $|cF_1(a)|=2|\sin(a/2)|/a$). By \eqref{eq:kernel-pointwise} and dominated convergence (the $O(1/k)$ kernel correction and the $k=O(1)$ boundary layer each contribute $O(n^{-c})$),
$$A_n(\varphi)\;=\;c\,n^{-c}\sum_{k=1}^{n-1}k^{c-1}e^{i(a/n)k}\bigl(1+O(\tfrac1k)\bigr)+a_0^{(n)}\;\longrightarrow\;cF_c(a)\;:=\;c\int_0^1t^{c-1}e^{iat}\,dt,$$
uniformly for $a\in[a_{0},c_4]$ as $\varphi\to0$. Define $\mu_c:=\min_{a\in[a_{0},c_4]}|cF_c(a)|$: by \eqref{eq:Fc-lower} and the choice of $c_4<2\pi$ for $c=1$, we have $\mu_c>0$. For $\varphi$ small enough (depending only on $c$) the convergence above gives $\abs{A_n(\varphi)}\ge\mu_c/2$, while $r_n\le2(n+1)^{-c}=O(\varphi^{c})$; shrinking $\varphi_0(c)$ further we ensure $r_n\le\mu_c/4$, hence
$$\norm{x_n}\;\ge\;\abs{A_n(\varphi)}-r_n\;\ge\;\tfrac14\abs{cF_c(a)}.$$

\emph{Choice of constants.} Define
$$c_7\;:=\;\tfrac14\min\Bigl\{\mu_c,\ \inf_{a\in[a_{0}\vee1,\,c_4]}\abs{cF_c(a)}\,a^{c},\ (4c_2c)^{-c}\Bigr\}\quad(<\tfrac12),$$
which satisfies the Range-1 requirement above. For $a\in[a_{0},1]$: $\norm{x_n}\ge\mu_c/4\ge c_7=c_7\min\{1,a^{-c}\}$. For $a\in[1,c_4]$: $\norm{x_n}\ge\abs{cF_c(a)}/4\ge c_7a^{-c}$ by the second entry of the minimum. Finally fix $\varphi_0(c)>0$ small enough that the convergence above is uniform, $r_n\le\mu_c/4$ throughout Range 2, and $\varphi_0(c)\le c_7^{1/c}$.
\end{proof}

\begin{remark}
Lemma~\ref{lem:rotation-law} explains the numerics precisely: for fixed small $\varphi$ and $c<1$ the log--log slope of $\norm{x_n}$ versus $n$ is $-c$ (Section~\ref{sec:numerics}, Experiment~3); for $c=1$ the trajectory follows the exact envelope $\norm{x_n}=\frac{2|\sin(n\varphi/2)|}{n\varphi}+O(1/n)$, oscillating between $\sim2/(n\varphi)$ and $\sim1/n$ (the dips occur at $n\varphi\in2\pi\mathbb Z$, where the trajectory takes the value $(1+o(1))/(n+1)=r_n$ rather than $0$). For $c<1$ the decay is \emph{slower} algebraically, and as $c\to0$ the prefactor $\Gamma(c+1)\to1$ while the exponent $-c\to0$: the rotation is barely damped. The super-polynomial cost for contraction-optimal schedules comes from solving $\Gamma(c+1)(n\varphi)^{-c}\le\eps$.
\end{remark}

\subsection{The union-class separation theorem}

\begin{theorem}[Union-class separation]\label{thm:separation}
Fix $\eps<\min\{1/8,D/4\}$, additionally small in terms of $c$ (quantified in the proof of {\rm(ii)}), and a schedule \eqref{eq:ck-schedule} with constant $c\in(0,1]$. Let $N_{\rho}(\eps)$ be the worst-case hitting time on the $\rho$-contraction class and $N_\varphi(\eps)$ the hitting time on the rotation $R_\varphi$.
\begin{enumerate}[label=(\roman*),leftmargin=2em]
\item If $c=1$ (the classical Halpern schedule $\theta_n=1/(n+2)$), then for every $\rho\in(0,1)$,
$$N_\rho(\eps) \;\ge\; \frac{D(1-2\eps)}{2(1-\rho)\eps}.$$
\item If $c\le \eps/(96D)$, then for every $\rho\in(0,1)$, $N_\rho(\eps)\le 4\ln(2D/(\eps\rho))/(1-\rho)+1$ by Theorem~\ref{thm:small-c}; but there is a rotation angle $\varphi=\varphi(\eps,c)>0$ with
$$N_\varphi(\eps)\;\ge\;\eps^{-1/c},$$
and in particular, since $c=O(\eps/D)$ forces $\eps^{-1/c}=\exp(\Omega(\tfrac{D}{\eps}\ln\tfrac1\eps))$, no member of the family is simultaneously contraction-optimal and rotation-polynomial: adaptivity is necessary on the union class within this family, quantitatively.
\end{enumerate}
\end{theorem}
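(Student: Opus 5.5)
For part (i) I would use an explicit one-dimensional affine contraction and compute the anchored recursion in closed form. Take $\cH=\R$, $x^*=0$, $x_0=D$, and $T(x)=\rho x$. With $\theta_n=1/(n+2)$ the iteration becomes $(n+2)x_{n+1}=D+(n+1)\rho x_n$. Setting $z_n:=(n+1)x_n$ turns this into $z_{n+1}=D+\rho z_n$ with $z_0=D$. Solving the linear recursion gives
$$z_n=D\frac{1-\rho^{n+1}}{1-\rho},\qquad x_n=\frac{D(1-\rho^{n+1})}{(n+1)(1-\rho)}.$$
The plan is to show $x_n>\eps$ whenever $n+1<D(1-2\eps)/(2(1-\rho)\eps)$. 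I would split on the size of $\rho^{n+1}$.
\begin{itemize}
\item If $\rho^{n+1}\le 1/2$, then $x_n\ge D/(2(n+1)(1-\rho))$, and this exceeds $\eps$ in the stated range.
\item If $\rho^{n+1}>1/2$, then $x_n$ decreases in $n$ (it is an average of $D\rho^j$, $j\le n$, each of which exceeds $D/2$). Hence $x_n\ge D\rho^n>D/2>\eps$.
\end{itemize}
The factor $(1-2\eps)$ in the statement leaves room for the off-by-one between $n$ and $n+1$ and for the ceiling; I would absorb it by comparing $D/(2(n+1)(1-\rho))$ with $D(1-2\eps)/(2(1-\rho)n)$. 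This instance lies in the $\rho$-contraction class, so $N_\rho(\eps)$ is bounded below as claimed.

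For part (ii), the contraction upper bound is exactly Theorem~\ref{thm:small-c}, and I would only cite it. For the rotation lower bound I would invoke Lemma~\ref{lem:envelope}. Fix $\varphi:=\min\{\varphi_0(c),\,c_4\eps^{1/c}\}$. For every $n$ with $n_0(c)\le n\le c_4/\varphi$ the lemma gives
$$\norm{x_n}\ge c_7\min\{1,(n\varphi)^{-c}\}\ge c_7\min\{1,c_4^{-c}\}.$$
This is a constant independent of $\eps$, so it exceeds $\eps$ once $\eps$ is small in terms of $c$. That is exactly where the clause ``additionally small in terms of $c$'' in the statement is spent. For $n<n_0(c)$ I would need a separate floor, obtained from Lemma~\ref{lem:rotation-floor}-type continuity: for $\varphi$ small, $x_n$ is close to the $\varphi=0$ trajectory, which is identically $x_0$ of norm $1$.

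Together these give $\norm{x_n}>\eps$ for all $n\le c_4/\varphi\ge\eps^{-1/c}$, which yields $N_\varphi(\eps)\ge\eps^{-1/c}$. If $\varphi=\varphi_0(c)$ is the active constraint, then $c_4/\varphi$ is a constant, so I would instead take $\varphi=c_4\eps^{1/c}$ directly, which is permissible once $\eps$ is small. The super-polynomial remark then follows from $\eps^{-1/c}=\exp(c^{-1}\ln(1/\eps))$ with $c^{-1}\ge 96D/\eps$.

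The main obstacle is the uniformity of the constants in $c$. Lemma~\ref{lem:envelope} provides $c_7,c_4,n_0,\varphi_0$ depending on $c$, and here $c=\eps/(96D)$ itself depends on $\eps$. So ``$\eps$ small in terms of $c$'' is slightly circular. I would resolve this by fixing $c$ first and then requiring $\eps<c_7(c)\min\{1,c_4(c)^{-c}\}$, as the statement's phrasing permits. If a $c$-uniform version is wanted, I would instead use the zero-free bound $|cF_c(a)|\ge 1-a$ from \eqref{eq:Fc-parts}, which is uniform in $c$. That gives $\norm{x_n}\ge 1/4$ whenever $n\varphi\le 1/2$ and $\varphi$ is small. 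Taking $\varphi=\eps^{1/c}/2$ then forces $\norm{x_n}\ge1/4>\eps$ for all $n\le\eps^{-1/c}$, with only the kernel-convergence step requiring care uniformly in $c$.
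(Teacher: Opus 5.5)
Your route matches the paper's in both parts: the closed form $x_n=\frac{D(1-\rho^{n+1})}{(n+1)(1-\rho)}$ on $T=\rho\,\mathrm{id}$ for (i), and for (ii) Lemma~\ref{lem:envelope} on the range $n\varphi\le c_4$ together with a separate floor for $n<n_0(c)$. Your choice $\varphi\le c_4\eps^{1/c}$, which keeps the whole horizon $n\le\eps^{-1/c}$ inside $n\varphi\le c_4$, is actually cleaner than the paper's bookkeeping. Two steps need repair, however.

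\textbf{Part (i): the off-by-one step.} The factor $(1-2\eps)$ is not slack for the off-by-one. In the paper it comes from $\rho^{N+1}\le 2\eps$, and it is the factor $2$ in the denominator that absorbs $N$ versus $N+1$. Your absorption step fails as written, because $\frac{D}{2(n+1)(1-\rho)}\ge\frac{D(1-2\eps)}{2(1-\rho)n}$ holds only when $2\eps(n+1)\ge1$. When $D(1-2\eps)<1-\rho$, this can fail at the last integer below $B:=\frac{D(1-2\eps)}{2(1-\rho)\eps}$. For example, take $D=\tfrac12$, $\rho=\tfrac1{10}$, $\eps=\tfrac1{100}$. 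Then $B\approx27.2$, but at $n=27$ your Case~A bound is $\frac{D}{2\cdot28\,(1-\rho)}\approx0.0099<\eps$.

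The fix uses your own Case~B observation. From $x_n\le\eps$ you get $D\rho^n\le\eps$, hence $\rho^{n+1}<\eps/D<\tfrac14$ and $x_n>\frac{3D}{4(n+1)(1-\rho)}$. Therefore $n\ge\frac{3D}{4(1-\rho)\eps}-1\ge\frac{D}{2(1-\rho)\eps}\ge B$, where the middle inequality uses $\frac{D}{4(1-\rho)\eps}>1$.

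\textbf{Part (ii): the circularity.} You correctly spot that ``$\eps$ small in terms of $c$'' clashes with $c\le\eps/(96D)$; the paper leaves this implicit too. Neither of your resolutions closes it.
\begin{itemize}
\item Fixing $c$ and then shrinking $\eps$ eventually violates $c\le\eps/(96D)$. To avoid this you would have to show the threshold $\eps_0(c)$ exceeds $96Dc$, which means tracking $c_7(c)$, $\varphi_0(c)$ and $n_0(c)\ge16^{1/c}$ as $c\to0$.
\item The $F_c$ route leaves open exactly the non-uniform steps: convergence of $A_n$ to $cF_c$, and smallness of $r_n$.
\end{itemize}

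The fix is already in your proposal: make the continuity floor quantitative and apply it to all $n$. In the de-rotated frame, $y_n$ is a convex combination of the points $\norm{x_0}e^{-i(k+1)\varphi}$, $k<n$, and $\norm{x_0}e^{-in\varphi}$. Each of these lies within $\norm{x_0}\,n\varphi$ of $x_0$, so $\norm{x_n}\ge\norm{x_0}(1-n\varphi)$ for \emph{every} schedule. The paper uses this floor in the form $\Re(e^{in\varphi}y_n)\ge\tfrac12$ for $(n-1)\varphi\le\pi/3$ (Lemma~\ref{lem:rot-floor}), but only for $n\le n_0(c)$.

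Now take $\varphi=\tfrac12\eps^{1/c}$. This gives $\norm{x_n}\ge D/2>\eps$ for all $n\le\eps^{-1/c}$, with no envelope lemma, no $c$-dependent constants, and hence no circularity. The same computation shows something more: once the angle may depend on $\eps$, the bound $N_\varphi(\eps)\ge\eps^{-1/c}$ holds for every schedule, $c=1$ included. The genuine separation therefore lives in the normalized time $\varphi N_\varphi(\eps)$.
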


\begin{proof}
(i) On the scalar instance $T=\rho\,\mathrm{id}$, $x_0=D$, the error satisfies $e_{n+1}=\frac{D}{n+2}+\frac{n+1}{n+2}\rho e_n$; induction gives the closed form $e_N=\frac{D(1-\rho^{N+1})}{(1-\rho)(N+1)}$ (telescoping product $\prod_{j=0}^{N-1}\frac{j+1}{j+2}=\frac{1}{N+1}$, with $e_0=D$). Hence $e_N\le\eps$ forces $N+1\ge\frac{D(1-\rho^{N+1})}{(1-\rho)\eps}\ge\frac{D(1-2\eps)}{(1-\rho)\eps}$ once $N\ge\ln(1/(2\eps))/(1-\rho)$ (which guarantees $\rho^{N+1}\le2\eps$), and in particular $N\ge\frac{D(1-2\eps)}{2(1-\rho)\eps}$ whenever the right-hand side exceeds $1$ (otherwise the bound is vacuous).

(ii) The contraction claim is Theorem~\ref{thm:small-c}. For the rotation claim set $\varphi=\min\{\varphi_0(c),\,\frac{c_4}{2}(c_7\eps)^{1/c}\}$ with $\varphi_0,c_4,c_7$ from Lemma~\ref{lem:envelope}. The scales $n\le n_0(c)$ not covered by Lemma~\ref{lem:envelope} are handled by the universal rotation floor, valid for every schedule: $\norm{x_n}=\norm{y_n}\ge\Re(e^{in\varphi}y_n)=\sum_{k<n}a_k^{(n)}\cos((n-k-1)\varphi)+r_n\ge\tfrac12(1-r_n)+r_n\ge\tfrac12$ whenever $(n-1)\varphi\le\pi/3$, which holds for all $n\le n_0(c)$ as soon as $\eps$ is small enough that $\frac{c_4}{2}(c_7\eps)^{1/c}\le\frac{\pi}{3(n_0(c)-1)}$ (recall $\varphi\le\frac{c_4}{2}(c_7\eps)^{1/c}\to0$). For $\eps$ sufficiently small relative to $D$ this satisfies $\varphi\le\varphi_0(c)$ and $(c_7/\eps)^{1/c}\le c_4/\varphi$ (the latter because $c_4/\varphi\ge 2(c_7\eps)^{-1/c}\ge (c_7/\eps)^{1/c}$, using $c_7\le 2^{-c/2}$, which holds since $c_7<1/2$). By Lemma~\ref{lem:envelope}, $\norm{x_n}\ge c_7(n\varphi)^{-c}$ for all $n\le c_4/\varphi$, so $\norm{x_n}\le\eps$ requires
$$n\;\ge\;\Bigl(\frac{c_7}{\eps}\Bigr)^{1/c}\varphi^{-1}\;\ge\;c_7^{1/c}\varphi_0(c)^{-1}\,\eps^{-1/c}\;\ge\;\eps^{-1/c},$$
where the last inequality uses $\varphi_0(c)\le c_7^{1/c}$.
\end{proof}

\begin{remark}[The adaptive escape]\label{rem:adaptive-escape}
Theorem~\ref{thm:separation} is the \emph{necessity} half of the adaptivity story: within the family \eqref{eq:ck-schedule}, no fixed shape escapes. The \emph{sufficiency} half has three layers, all proved or cited in Section~\ref{sec:adaptive-escape}: (i) an explicit \emph{parallel two-track scheme} (Theorem~\ref{thm:parallel}) that is simultaneously optimal on both classes --- $O(\ln(D/\eps)/(1-\gamma))$ on contractions and $O(D/\eps)$ on nonexpansive maps --- giving the sharp converse to Theorem~\ref{thm:dichotomy-general} at factor-$2$ cost; (ii) a robustness theorem (Theorem~\ref{thm:doubling-window}): adaptive window restarts, the natural single-trajectory escape, improve the classical contraction time only by a constant factor --- the barrier is robust against this adaptive class; and (iii) the structurally adaptive parameter-free methods \cite{ParameterFree2026,Diakonikolas2025}, which are stochastic-relevant. What was not known, and what this section supplies, is the quantitative mechanism --- the rotation Fourier spectrum $\varphi\mapsto\sum_k a_k^{(n)}e^{ik\varphi}$ of the schedule --- the sharpness of the tradeoff (contraction-optimality forces $c=\Theta(\eps)$, which forces $\eps^{-1/c}$ on rotations), and the detection barrier (Remark~\ref{rem:detection}) explaining why single-trajectory optimality cannot be reached by trajectory statistics.
\end{remark}

\subsection{Beyond the family: universal bounds and the dichotomy for general anchor densities}\label{sec:general-schedule}

The separation theorem of Section~\ref{sec:det} was proved for the family $\theta_n=c/(n+2)$. We now show that the mechanism is not an artifact of the family: the \emph{same dichotomy} holds for every anchor schedule $\theta_n=c_n/(n+2)$ with $c_n\in(0,1]$, with no regularity, monotonicity, or slowly-varying assumption. Throughout, $a_k^{(n)}=\theta_k\prod_{j=k+1}^{n-1}(1-\theta_j)$, $r_n=\prod_{j<n}(1-\theta_j)$; recall $\sum_{k=0}^{n-1}a_k^{(n)}=1-r_n$ and $\norm{x_n}\le1$ (convex combinations of unit vectors).

\begin{lemma}[Universal anchor floor]\label{lem:anchor-floor}
On the scalar instance $T=\rho\,\mathrm{id}$, $x_0=D$, the error of \eqref{eq:anchored} satisfies the exact identity
\begin{equation}\label{eq:anchor-floor}
e_N \;=\; D\Bigl[\rho^N r_N \;+\; \sum_{k=0}^{N-1}a_k^{(N)}\rho^{N-1-k}\Bigr]
\;\ge\; D\,\rho^{t}\sum_{k=N-t}^{N-1}a_k^{(N)}
\qquad\text{for every }t\in\{1,\dots,N\}.
\end{equation}
\end{lemma}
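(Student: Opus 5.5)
The plan is to derive the identity by unfolding the scalar recursion exactly, with no inequality anywhere, and then obtain the floor by discarding nonnegative terms. On the instance $T=\rho\,\mathrm{id}$ with $x_0=D$ we have $x^*=0$, so the error is $e_n=x_n$ itself. The iteration \eqref{eq:anchored} becomes the affine scalar recursion
\[
e_{n+1}=\theta_n D+(1-\theta_n)\rho\, e_n ,\qquad e_0=D .
\]
This is the equality case of \eqref{eq:onestep}.

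I would prove by induction on $N$ that
\[
e_N=D\Bigl[\rho^N r_N+\sum_{k=0}^{N-1}a_k^{(N)}\rho^{N-1-k}\Bigr].
\]
The base case $N=0$ reads $e_0=D r_0=D$, using the empty product $r_0=1$ and the empty sum. For the inductive step, multiply the formula for $e_N$ by $(1-\theta_N)\rho$ and use the three bookkeeping identities below.
\begin{itemize}
\item $(1-\theta_N)r_N=r_{N+1}$.
\item $(1-\theta_N)a_k^{(N)}=a_k^{(N+1)}$ for $k\le N-1$, since the product defining $a_k^{(N+1)}$ gains exactly the factor $(1-\theta_N)$.
\item $\theta_N=a_N^{(N+1)}$, since the corresponding product is empty.
\end{itemize}
Adding the term $\theta_N D$ then reproduces the formula at $N+1$. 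Each power of $\rho$ shifts by one: $\rho^N\mapsto\rho^{N+1}$ and $\rho^{N-1-k}\mapsto\rho^{N-k}$, while the new $k=N$ term carries $\rho^0$. This is the same unfolding as \eqref{eq:unfold}, but with equality because the scalar map introduces no triangle-inequality loss.

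For the lower bound I would use nonnegativity. Every $\theta_j\in[0,1]$, which holds since $c_j\in(0,1]$ gives $\theta_j=c_j/(j+2)\le 1/2$, and every $\rho^m\ge0$. Hence every summand is nonnegative, and we may drop $\rho^N r_N$ together with all terms having $k<N-t$. For $N-t\le k\le N-1$ the exponent satisfies $N-1-k\le t-1<t$, and since $\rho\in(0,1)$ this gives $\rho^{N-1-k}\ge\rho^{t-1}\ge\rho^{t}$. Factoring out $\rho^t$ yields \eqref{eq:anchor-floor} for every $t\in\{1,\dots,N\}$.

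There is no real obstacle here. The only care needed is the index bookkeeping in the inductive step, namely matching the product ranges of $a_k^{(N)}$ and $a_k^{(N+1)}$ and the convention for empty products. Note also that the weaker factor $\rho^t$, rather than $\rho^{t-1}$, is a deliberate slack in the statement, presumably convenient when the lemma is used later.
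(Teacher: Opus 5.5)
Your proof is correct and is the same argument as the paper's: exact unfolding of the scalar recursion $e_{n+1}=\theta_nD+(1-\theta_n)\rho e_n$ (your induction just makes the index bookkeeping explicit), then dropping the nonnegative terms with $k<N-t$ and using $\rho^{N-1-k}\ge\rho^{t}$ for $k\ge N-t$. The one step you leave implicit is that $x_n\ge0$, so the signed iterate coincides with $e_n=\norm{x_n-x^*}$; this follows from the same nonnegativity of all terms that you already invoke.
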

\begin{proof}
Unfolding $e_{n+1}=\theta_nD+(1-\theta_n)\rho e_n$ (exact on this instance, all terms nonnegative) gives the identity; the inequality drops the terms $k<N-t$ and uses $\rho^{N-1-k}\ge\rho^t$ for $k\ge N-t$.
\end{proof}

\begin{lemma}[Universal rotation floor]\label{lem:rot-floor}
Identify $\cH\cong\C$, $x_0=1$. For every schedule and every rotation angle $\varphi$,
\begin{equation}\label{eq:rot-floor}
\norm{x_n} \;\ge\; \sum_{k=0}^{n-1}a_k^{(n)}\cos\bigl((n-k-1)\varphi\bigr) \;+\; r_n
\;\ge\; \tfrac12
\end{equation}
whenever $(n-1)\varphi\le\tfrac{\pi}{3}$ (in particular, whenever $n\varphi\le\tfrac{\pi}{3}$).
\end{lemma}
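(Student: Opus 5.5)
The plan is to work in the rotating frame \eqref{eq:rot-frame}--\eqref{eq:rot-unfold}, which is valid for an arbitrary schedule $(\theta_n)$ since it uses only the recursion \eqref{eq:anchored} with exact $T=R_\varphi$. The unfolding gives
$$y_n=\sum_{k=0}^{n-1}a_k^{(n)}e^{-i(k+1)\varphi}+r_ne^{-in\varphi},$$
and $\norm{x_n}=|y_n|=|e^{in\varphi}y_n|$. Since the modulus of a complex number dominates its real part, $\norm{x_n}\ge\Re(e^{in\varphi}y_n)$. Computing this directly:
$$e^{in\varphi}y_n=\sum_{k<n}a_k^{(n)}e^{i(n-k-1)\varphi}+r_n,$$
whose real part is exactly the middle expression in \eqref{eq:rot-floor}. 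This settles the first inequality with no assumption on the schedule beyond $\theta_j\in[0,1]$.

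For the second inequality, suppose $(n-1)\varphi\le\pi/3$. Every index $0\le k\le n-1$ has $0\le(n-k-1)\varphi\le(n-1)\varphi\le\pi/3$, so each cosine is at least $\tfrac12$ (I take $\varphi\ge0$, the case $\varphi<0$ being symmetric as in the paper). Because $\theta_j\in[0,1]$, all the $a_k^{(n)}$ and $r_n$ are nonnegative. Hence the sum is at least $\tfrac12\sum_k a_k^{(n)}=\tfrac12(1-r_n)$, using the telescoping identity $\sum_k a_k^{(n)}=1-r_n$. That identity is the schedule-free telescoping argument from the proof of Lemma~\ref{lem:kernel}, $a_k^{(n)}=P_{k+1}-P_k$, and it needs no special form of $\theta$. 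Adding $r_n\ge0$ gives $\tfrac12(1-r_n)+r_n=\tfrac12+\tfrac12 r_n\ge\tfrac12$. The parenthetical case follows since $n\varphi\le\pi/3$ implies $(n-1)\varphi\le\pi/3$.

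There is no real obstacle. The only points needing care are sign bookkeeping for the phase $e^{-i(k+1)\varphi}$ versus $e^{in\varphi}$, and noting that nonnegativity of the weights is what allows the cosine lower bound to be applied termwise.
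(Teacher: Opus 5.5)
Your proof is correct and follows the paper's argument essentially verbatim. You multiply the rotating-frame unfolding by $e^{in\varphi}$, bound $\norm{x_n}$ below by the real part, and then use nonnegativity of the weights, $\cos\ge\tfrac12$ on $[0,\pi/3]$, and the schedule-free identity $\sum_k a_k^{(n)}=1-r_n$; your explicit remarks on nonnegativity of $a_k^{(n)}$ and on taking $\varphi\ge0$ (the condition should really be read as $(n-1)|\varphi|\le\pi/3$) make precise two points the paper leaves implicit.
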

\begin{proof}
From display~\eqref{eq:rot-unfold}, 
$$y_n=\sum_{k=0}^{n-1}a_k^{(n)}e^{-i(k+1)\varphi}+r_ne^{-in\varphi},$$
so $\Re(e^{in\varphi}y_n)=\sum_ka_k^{(n)}\cos((n-k-1)\varphi)+r_n$, and $\norm{x_n}=\norm{y_n}\ge\Re(e^{in\varphi}y_n)$. If $(n-1)\varphi\le\pi/3$ then $(n-k-1)\varphi\in[0,\pi/3]$ for all $k\le n-1$, so each cosine is at least $\tfrac12$ and the sum is at least $\tfrac12(1-r_n)$.
\end{proof}

\begin{remark}\label{rem:why-floor-matters}
Lemma~\ref{lem:anchor-floor} replaces the kernel asymptotics of Lemma~\ref{lem:kernel} on the contraction side (the window mass $\sum_{k\ge N-t}a_k^{(N)}$ is controlled directly), and Lemma~\ref{lem:rot-floor} replaces the Fourier analysis on the rotation side: it shows that a rotation slower than the schedule's resolution scale ($n\varphi=O(1)$) is never solved: the anchors keep pulling toward directions that have not yet wound apart.
\end{remark}

\begin{theorem}[Dichotomy for general anchor densities]\label{thm:dichotomy-general}
Let $\theta_n=\frac{c_n}{n+2}$ with $c_n\in(0,1]$ and let $M(n)=\sum_{j<n}\theta_j$.
\begin{enumerate}[label=(\roman*),leftmargin=2em]
\item If $\liminf_{n\to\infty}c_n\ge c_0>0$, then for every $\rho\in(0,1)$, all $\eps\le\eps_0$,
\begin{equation}\label{eq:general-contract-lb}
N_\rho(\eps)\;\ge\;\Bigl(\frac{c_0\,\rho\,D}{16e^{3}\,\eps(1-\rho)}\Bigr)^{1/3},
\end{equation}
i.e.\ the schedule is polynomially suboptimal on contractions, by a factor $\Omega\bigl((\rho D/(\eps(1-\rho)))^{1/3}(1-\rho)/\ln(D/\eps)\bigr)$.
\item If $c_n\to0$, then either
\begin{enumerate}[label=(\alph*),leftmargin=2em]
\item $\sum_n\theta_n<\infty$: there is $\varphi_0>0$ such that $N_\varphi(\eps)=\infty$ for all $\varphi\le\varphi_0$ and $\eps<\tfrac14$ (the schedule never solves small-angle rotations); or
\item $\sum_n\theta_n=\infty$: for every $B\ge1$ and every $\eps>0$ sufficiently small there is an angle $\varphi_B(\eps)>0$, with $\varphi_B(\eps)\to0$ as $\eps\to0$, such that $N_{\varphi_B(\eps)}(\eps)>\eps^{-B}$ --- super-polynomially slow on rotations.
\end{enumerate}
\end{enumerate}
In particular every anchor schedule with $c_n\in(0,1]$ is either polynomially suboptimal on contractions or super-polynomially (possibly infinitely) slow on rotations; the family $\theta_n=c/(n+2)$ of Theorem~\ref{thm:separation} is the constant-density case, where the exponents are sharp. The remaining case $\liminf c_n=0<\limsup c_n$ is resolved in Theorem~\ref{thm:oscillatory-escape} below: the dichotomy fails there, and the boundary is exactly the lacunarity of the dense anchor scales (Remark~\ref{rem:oscillatory}, Theorem~\ref{thm:oscillatory-bounded-gap}).
\end{theorem}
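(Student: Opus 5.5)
The argument splits along the three cases of the statement. Each case uses one of the two universal lemmas. The anchor floor, Lemma~\ref{lem:anchor-floor}, handles the contraction side, and the rotation floor, Lemma~\ref{lem:rot-floor}, handles the rotation side. No kernel asymptotics are needed; all we need are crude two-sided bounds on products $\prod(1-\theta_j)$ through $e^{-x-x^2}\le 1-x\le e^{-x}$ for $x\le\frac12$.

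\emph{Part (i).} Fix $n_1$ with $c_n\ge c_0/2$ for $n\ge n_1$. On the scalar instance $T=\rho\,\mathrm{id}$, apply \eqref{eq:anchor-floor} with a window $t\le N/2$. The plan is to lower-bound the window mass $\sum_{k=N-t}^{N-1}a_k^{(N)}$. For $k\ge N-t\ge N/2$ we have $\theta_k\ge c_0/(2(N+1))$, and the product $\prod_{j>k}^{N-1}(1-\theta_j)\ge e^{-2\sum_{j>k}\theta_j}\ge e^{-2\sum_{j=N/2}^{N}1/(j+2)}\ge e^{-2}$ (using $c_j\le1$). This gives the window mass $\ge c_0 t/(2e^{2}(N+1))$, so $e_N\ge D\rho^{t}c_0t/(2e^2(N+1))$. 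Take $t=\min\{\lfloor N/2\rfloor,\lceil 1/(1-\rho)\rceil\}$. In the regime $N\ge 2/(1-\rho)$ this gives $\rho^t\ge\rho e^{-1}$ roughly (via $\rho^{1/(1-\rho)}\ge e^{-1}\rho$ after adjusting constants), and hence $e_N\gtrsim c_0\rho D/(e^3 N(1-\rho))$. In the other regime $N<2/(1-\rho)$ we have $t\approx N/2$ and the bound reads $e_N\gtrsim c_0\rho^{N}D$, which is at least a constant times $c_0\rho D$ and therefore exceeds $\eps$ for $\eps\le\eps_0$. Either way $e_N\le\eps$ forces $N\gtrsim c_0\rho D/(\eps(1-\rho))$, which is stronger than the cube-root bound \eqref{eq:general-contract-lb}. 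I would lose constants deliberately (hence the $16e^3$ and exponent $1/3$ as safe slack, e.g.\ to absorb the requirement $N\ge n_1$ into $\eps_0$). The main care point is exactly this bookkeeping between $t$, $N$ and $n_1$.

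\emph{Part (ii)(a).} If $\sum\theta_n=S<\infty$, then $r_n\ge r_\infty:=\prod_j(1-\theta_j)\ge e^{-2S}>0$. I would not use Lemma~\ref{lem:rot-floor} directly, since it only covers $n\varphi\le\pi/3$. Instead I would split the sum as follows. Choose $K$ with $\sum_{k\ge K}\theta_k\le\frac18$. The anchors injected after step $K$ have total weight $\sum_{k\ge K}a_k^{(n)}\le\frac18$. The pre-$K$ contribution has the form $\prod_{j=K}^{n-1}(1-\theta_j)\cdot z_K$ in the rotating frame, where $z_K=\sum_{k<K}(\cdot)+r_K(\cdot)$. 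Since the phases $(n-k-1)\varphi$ are not small for large $n$, I would keep the rotating frame with reference phase $e^{-iK\varphi}$ instead. The pre-$K$ part is then $\prod_{j=K}^{n-1}(1-\theta_j)\,y_K$, with $\norm{y_K}\ge\frac12$ by Lemma~\ref{lem:rot-floor} once $\varphi\le\varphi_0:=\pi/(3K)$. Its norm is at least $\frac12 e^{-1/4}\cdot$, which is at least $\frac12\cdot\frac78\cdot$, while the later anchors have norm at most $\frac18$. This gives $\norm{x_n}\ge\frac7{16}-\frac18>\frac14>\eps$ for all $n\ge K$, and the floor covers $n<K$. Hence $N_\varphi(\eps)=\infty$.

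\emph{Part (ii)(b).} This is the main obstacle, and the same splitting idea gives it. Given $B$ and small $\eps$, set $N=\lceil\eps^{-B}\rceil$. Choose $K$ so large that $c_j\le\eta$ for all $j\ge K$, where $\eta=1/(8\ln(2N))$. Then $\sum_{j=K}^{N}\theta_j\le\eta\ln(N+2)\le\frac18$ on the whole horizon. Set $\varphi=\varphi_B(\eps):=\pi/(3K)$. Exactly as in (a), for $K\le n\le N$ we get $\norm{x_n}\ge\frac12 e^{-1/4}-\frac18>\frac14>\eps$, and for $n<K$ Lemma~\ref{lem:rot-floor} gives $\norm{x_n}\ge\frac12$. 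So $N_\varphi(\eps)>\eps^{-B}$. Since $\eta\to0$ as $\eps\to0$, we get $K\to\infty$ and hence $\varphi_B(\eps)\to0$. I expect the only delicate point to be the rigor of the frame change in the splitting, namely that $y_n=\prod_{j=K}^{n-1}(1-\theta_j)\,y_K+\sum_{k\ge K}a_k^{(n)}u_k$. This identity follows by unfolding \eqref{eq:rot-frame} from index $K$. Divergence of $\sum\theta_n$ plays no role except to mark the case distinction.
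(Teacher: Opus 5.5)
Your proposal is correct, and part (i) is both different from and sharper than the paper's argument. The paper lower-bounds every kernel factor $\prod_{j=k+1}^{N-1}(1-\theta_j)$ by the full residue $r_N\ge(2N)^{-2}$. That costs a factor $N^2$ and is the only source of the cube root in \eqref{eq:general-contract-lb}. You instead bound the partial product over the second half of the horizon by an absolute constant, which is legitimate because $c_j\le1$ caps the anchor mass on $[N/2,N]$ at about $\ln2$. This yields the linear law $N_\rho(\eps)\gtrsim c_0\rho D/(\eps(1-\rho))$, the same order as the classical bound of Theorem~\ref{thm:separation}(i). It therefore implies \eqref{eq:general-contract-lb} for $\eps\le\eps_0(\rho)$ and upgrades the stated suboptimality factor to $\Omega(\rho D/(\eps\ln(D/\eps)))$.

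Two bookkeeping corrections in (i):
\begin{itemize}
\item With $t=\lceil1/(1-\rho)\rceil$ you only get $\rho^t\ge e^{-1}\rho^2$ (via $\rho^{1/(1-\rho)}\ge e^{-1}\rho$). This is harmless for $\rho\ge\frac12$. For $\rho<\frac12$, the choice $t=1$ (as in the paper) gives the linear bound directly.
\item The indices $N<2n_1$ are excluded by $e_N\ge D\rho^N r_N>0$. So $\eps_0$ depends on $\rho$ and on the schedule, exactly as in the paper.
\end{itemize}

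\emph{Part (ii)(a).} Your argument is the paper's idea: pick an index after which the anchor mass is at most $\frac18$, and use the rotation floor before it. You write it with the exact unfolding from $K$ rather than the additive recursion $\norm{x_{n+1}}\ge\norm{x_n}-2\theta_n$ plus continuity at $\varphi=0$. This buys an explicit $\varphi_0=\pi/(3K)$. The step claiming $\frac12e^{-1/4}\ge\frac12\cdot\frac78$ is false. However, $\prod_{j\ge K}(1-\theta_j)\ge1-\sum_{j\ge K}\theta_j\ge\frac78$ gives the factor directly, and in any case $\frac12e^{-1/4}-\frac18\approx0.264>\frac14$.

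\emph{Part (ii)(b).} Here the routes differ. The paper proves $M(n)=o(\ln n)$ to locate $n_B\ge\eps^{-B}$, then applies Lemma~\ref{lem:rot-floor} on all of $[0,n_B]$ with $\varphi=\pi/(3n_B)$. Since that lemma holds for every schedule, this amounts to the coherence floor $N_\varphi(\eps)>\pi/(3\varphi)$ at an angle $\varphi\asymp\eps^{B}$. The conclusion as literally stated therefore holds for the classical schedule too and does not need $c_n\to0$. Your split at $K$ is the only one of the two arguments that can beat that floor. It does so exactly when $K<N$, i.e.\ when $c_j$ drops below $1/(8\ln(2N))$ before index $N$. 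Otherwise your angle $\pi/(3K)$ is no larger than the trivial choice $\pi/(3\lceil\eps^{-B}\rceil)$.
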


\begin{proof}
(i) Since $\theta_j\le\tfrac12$, $\abs{\ln r_n+M(n)}\le\sum_j\theta_j^2\le M(n)$ and $c_j\le1$ gives $M(n)\le\ln(n+2)$; hence
\begin{equation}\label{eq:rn-general}
r_n\;\ge\;(n+2)^{-2}\;\ge\;(2n)^{-2}\qquad(n\ge2).
\end{equation}
Choose $n_1$ with $c_k\ge c_0/2$ for $k\ge n_1$ and let $t=\lceil 1/(1-\rho)\rceil$, $N\ge\max(2n_1,2t)$. Then
\begin{equation}\label{eq:window-mass}
\sum_{k=N-t}^{N-1}a_k^{(N)}\;\ge\;r_N\sum_{k=N-t}^{N-1}\theta_k\;\ge\;r_N\cdot\frac{c_0}{2}\cdot\frac{t}{N+2}\;\ge\;\frac{c_0\,r_N}{4N(1-\rho)} .
\end{equation}
If $\rho\ge\tfrac12$, then $t(1-\rho)\le2$ and $\rho^t\ge e^{-3}$ (since $\ln\rho\ge-2(1-\rho)$ for $\rho\ge\tfrac12$ and $t(1-\rho)\le1+\delta\le\tfrac32$), so Lemma~\ref{lem:anchor-floor} and \eqref{eq:rn-general} give
$$e_N\;\ge\;\frac{De^{-3}c_0r_N}{4N(1-\rho)}\;\ge\;\frac{c_0\rho D}{16e^{3}N^{3}(1-\rho)},$$
using $r_N\ge(2N)^{-2}\ge\rho/(4N^{2})$; $e_N\le\eps$ forces \eqref{eq:general-contract-lb}. If $\rho<\tfrac12$, Lemma~\ref{lem:anchor-floor} with $t=1$ gives $e_N\ge D\rho\,\theta_{N-1}\ge D\rho c_0/(2(N+1))\ge c_0\rho D/(6N)$, which dominates the cube-root display whenever the latter is $\ge1$ (i.e.\ whenever $c_0\rho D/(6\eps)\ge(c_0\rho D/(16e^3\eps(1-\rho)))^{1/3}$, which holds for all sufficiently small $\eps$ since the left side grows like $1/\eps$ and the right like $1/\eps^{1/3}$), and is otherwise vacuous.

(ii) (a) As in Theorem~\ref{thm:separation}, $\norm{x_{n+1}}\ge(1-\theta_n)\norm{x_n}-\theta_n$ gives $\norm{x_n}\ge\norm{x_{n_0}}-2\sum_{j\ge n_0}\theta_j$; by continuity at $\varphi=0$ (where the orbit is constant) pick $\varphi_1$ and $n_0$ with $\norm{x_{n_0}}\ge3/4$ and $2\sum_{j\ge n_0}\theta_j\le1/4$: then $\norm{x_n}\ge1/2$ for all $n\ge n_0$, for every $\varphi\le\varphi_1$; shrinking $\varphi_1$ further so that $\varphi_1\le\pi/(3n_0)$, Lemma~\ref{lem:rot-floor} gives $\norm{x_n}\ge1/2$ for the remaining $n<n_0$ as well, hence for all $n$ and every $\varphi\le\varphi_1$.

(ii) (b) We first show $M(n)=o(\ln n)$. For any $\delta>0$, $M(n)\le M(n^{1-\delta})+\bigl(\max_{n^{1-\delta}\le j<n}c_j\bigr)\ln n^{\delta}$, and $c_j\to0$ makes the max $o(1)$; with $\delta=\tfrac12$ this gives $\limsup M(n)/\ln n\le\tfrac12\limsup M(\sqrt n)/\ln(\sqrt n)$, hence the limsup is $0$. Since $\sum_n\theta_n=\infty$, $M(n)\to\infty$. Fix $B\ge1$. Since $M(n)=o(\ln n)$ there is $n_0'$ with $M(n)\le\ln n$ for all $n\ge n_0'$. For $\eps$ small let $n_B=n_B(\eps)$ be the first integer with $M(n_B)\ge B\ln(1/\eps)$ (so $n_B\to\infty$ as $\eps\to0$); for $\eps$ small enough that $n_B\ge n_0'$ (this is the only point at which the smallness of $\eps$ enters),
$$B\ln(1/\eps)\;\le\;M(n_B)\;\le\;\ln n_B,\qquad\text{i.e.}\qquad n_B\;\ge\;\eps^{-B}.$$
Set $\varphi_B(\eps)=\pi/(3n_B)$: then $\varphi_B(\eps)\to0$ as $\eps\to0$, and for every $n\le n_B$, $n\varphi_B(\eps)\le\pi/3$, so Lemma~\ref{lem:rot-floor} gives $\norm{x_n}\ge1/2>\eps$: $N_{\varphi_B(\eps)}(\eps)>n_B\ge\eps^{-B}$.
\end{proof}

We now resolve the remaining oscillatory case of Theorem~\ref{thm:dichotomy-general} --- in the negative. The dichotomy fails there, and the failure is witnessed by an explicit \emph{fixed} schedule that is simultaneously contraction-optimal and rotation-polynomial with a non-classical exponent. The dichotomy holds exactly when the dense anchor scales are non-lacunary.

\begin{theorem}[The oscillatory escape: the dichotomy fails under lacunary anchor concentration]\label{thm:oscillatory-escape}
Fix $\eps\le D/4$ and set $c'=\eps/(96D)$. Define the schedule
$$\theta_n\;=\;\begin{cases}\dfrac{1}{n+2}, & n\in B_j:=\{2^{j},\dots,3\cdot 2^{j-1}-1\}\ \text{for some }j\ge1,\\[1.2ex]\dfrac{c'}{n+2}, & n\in S_j:=\{3\cdot 2^{j-1},\dots,2^{j+1}-1\}\ \text{for some }j\ge1.\end{cases}$$
(The blocks $B_j$ have length $2^{j-1}$ and carry the classical anchor mass; the stretches $S_j$ have length $2^{j-1}$ and carry mass $c'$. The indices $n\in\{0,1\}$ lie in no $B_j$ or $S_j$; set $\theta_n=1/(n+2)$ there --- the choice is immaterial to every rate below. The schedule is fixed and $\gamma$-oblivious.) Then:
\begin{enumerate}[label=(\roman*),leftmargin=2em]
\item \emph{Contraction regime.} For every $\gamma$-contraction, writing $\delta=1-\gamma$,
$$\norm{x_n-x^*}\;\le\;\tfrac{3\eps}{4}\qquad\text{for some } n=O\Bigl(\frac{\ln(D/\eps)}{\delta}\Bigr),$$
matching the $\gamma$-aware deterministic optimum up to a universal constant.
\item \emph{Rotation regime.} For every rotation angle $\varphi\in(0,1]$, writing $\alpha_0=\log_2(3/2)$,
$$\norm{x_n}\;\le\;\eps\qquad\text{for some } n=O\Bigl(\varphi^{-1}\,\eps^{-1/\alpha_0}\Bigr),$$
(the implicit constant may depend on $D=\norm{x_0}$, both through $c'$ and through the overall factor $\norm{x_0}$ in the rotating-frame unfolding; rotations have $x^*=0$)
polynomial in $1/\eps$ with the lacunarity exponent $1/\alpha_0\approx1.71$ --- far below the super-polynomial barrier $\eps^{-1/c}$ of Theorem~\ref{thm:separation}(ii). The matching lower bound $\Omega(\varphi^{-1})$ is proved (Lemma~\ref{lem:rot-floor}), and the matching lower bound is given, at each fixed accuracy, for Lebesgue-a.e.\ angle in Theorem~\ref{thm:metric-lacunarity}(i) below.
\end{enumerate}
Hence the dichotomy of Theorem~\ref{thm:dichotomy-general} does \emph{not} extend to oscillatory densities: Remark~\ref{rem:oscillatory}'s conjecture is false. The boundary is sharp: dense anchor scales with \emph{bounded gaps} force polynomial contraction time (Theorem~\ref{thm:oscillatory-bounded-gap}), while \emph{lacunary} dense scales --- as here, where $\{n:\theta_n\ge c''\}$ has gaps $2^{j-1}\to\infty$ for every $c''\in(c',1)$ --- admit a fixed schedule optimal on both regimes (on the rotation side, up to the constant in the metric lacunarity law, Theorem~\ref{thm:metric-lacunarity}). Adaptivity is necessary \emph{precisely when the dense anchor scales are non-lacunary}.
\end{theorem}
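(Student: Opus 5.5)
Part (i) and part (ii) will be handled separately; neither needs the kernel asymptotics of Lemma~\ref{lem:kernel}, because on a dyadic block the schedule is either exactly classical or exactly of mass $c'$. Throughout I use the one-step inequality \eqref{eq:onestep} and, on the rotation side, the rotating-frame recursion \eqref{eq:rot-frame}.

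\emph{Contraction regime.} First I note that $e_n=\norm{x_n-x^*}\le D$ for all $n$, by induction from \eqref{eq:onestep}: $e_{n+1}\le\theta_nD+(1-\theta_n)\gamma D\le D$. Next I choose $j$ minimal with $2^{j-1}\ge 4\ln(2D/(\eps\gamma))/\ln(1/\gamma)$, and I stop at $n=2^{j+1}$, the end of the stretch $S_j$. Unfolding \eqref{eq:onestep} only across $S_j$, starting from $m=3\cdot2^{j-1}$, gives
\[
e_n\;\le\;\gamma^{\,n-m}e_m+D\sum_{k\in S_j}\theta_k\;\le\;\gamma^{2^{j-1}}D+D\,c'\ln\tfrac43 .
\]
The first term is at most $\eps/2$ by the same computation as in the proof of Theorem~\ref{thm:small-c}. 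The second term is at most $\eps/96$. Hence $e_n\le 3\eps/4$. Minimality of $j$ gives $n\le 8\cdot 2^{j-1}=O(\ln(D/\eps)/\delta)$, using the two-case estimate of $\ln(1/\gamma)$ from that same proof. The point of this part is that the classical blocks are forgotten geometrically, so only the last stretch matters.

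\emph{Rotation regime.} Write $y_n$ for the rotating-frame iterate. For one cycle $\mathcal C_j=B_j\cup S_j$ I aim to prove a contraction-plus-forcing recursion
\[
\norm{y_{2^{j+1}}}\;\le\;\lambda_j\norm{y_{2^j}}+\frac{C\,\norm{x_0}}{2^j\varphi},\qquad \lambda_j\le\tfrac23+O(2^{-j}).
\]
Across $B_j$ the multiplier is $\prod_{k\in B_j}(1-\tfrac1{k+2})=\frac{2^j+1}{3\cdot2^{j-1}+1}$. Across $S_j$ the multiplier is at most $1$. The anchor contributions have two sources. On $B_j$ the weights all equal $1/(3\cdot2^{j-1}+1)$, so the sum is a pure geometric sum of unit vectors $u_k$ and its modulus is $\le 2/((3\cdot 2^{j-1})\,|1-e^{i\varphi}|)=O(1/(2^j\varphi))$. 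On $S_j$ the weights are monotone in $k$ and of size $O(c'2^{-j})$, so Abel's inequality bounds the oscillatory sum by $O(c'/(2^j\varphi))$. Without Abel summation this term would only be $O(c')$ per cycle, and that would accumulate.

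Iterating the recursion from the first $J_0$ with $2^{J_0}\varphi\ge1$, using the trivial bound $\norm{y_{2^{J_0}}}\le\norm{x_0}$, gives
\[
\norm{y_{2^J}}\;\lesssim\;\norm{x_0}\Bigl[(2/3)^{J-J_0}+\sum_{i=J_0}^{J}(2/3)^{J-i}\,\frac{1}{2^i\varphi}\Bigr]\;\lesssim\;\norm{x_0}\Bigl[(2/3)^{J-J_0}+\frac{1}{2^J\varphi}\Bigr].
\]
The second step holds because $(2/3)^{J-i}2^{-i}=(2/3)^J(3/4)^{-i}\cdot\ldots$ is dominated by its last term, since $\tfrac12<\tfrac23$. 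Choosing $J-J_0=\lceil\log_{3/2}(C'/\eps)\rceil$ makes the right side at most $\eps$. Since $(2/3)^{J-J_0}\le\eps$ means $2^{J-J_0}\ge\eps^{-1/\alpha_0}$, this choice gives $n=2^J=O(\varphi^{-1}\eps^{-1/\alpha_0})$, with constants depending on $D$ as the statement allows.

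\emph{Main obstacle.} The hard step is making the per-cycle recursion rigorous with uniform constants. Two issues need care: the $B_j$ and $S_j$ sums must be re-phased correctly when passing between rotating frames at block boundaries (only moduli matter, which is what makes this work), and the Abel bound on $S_j$ needs monotonicity of $a_k$ within the stretch, which I would verify directly from the product formula. The lower-bound remarks in the statement are cited results (Lemma~\ref{lem:rot-floor}, Theorem~\ref{thm:metric-lacunarity}) and need no proof here.
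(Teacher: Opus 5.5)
\textbf{Verdict: correct; part (i) follows the paper, part (ii) takes a genuinely different and valid route, though three side justifications are misstated.}

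\textbf{Part (i).} This is essentially the paper's argument. You start at a stretch whose length is large compared with $\ln(D/\eps)/\delta$. You absorb every earlier anchor, blocks included, into $e_s\le D$ and let $\gamma^{t}$ damp it. The only difference is bookkeeping: you bound the stretch's own anchor injection by its total mass $Dc'\ln\tfrac{4}{3}$, whereas the paper uses $\theta_k\le c'/s$ and $\sum_j\gamma^j=1/\delta$. Both work.

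\textbf{Part (ii): comparison with the paper.} The paper unfolds globally, $y_n=\sum_{k<n}a_k^{(n)}u_k+r_nR_\varphi^{-n}x_0$, and then:
\begin{itemize}
\item proves $r_n\asymp n^{-\alpha}$ with $\alpha=\alpha_0+c'\log_2\tfrac{4}{3}$;
\item bounds every weight at scale $i$ by $O(r_n2^{i(\alpha-1)})$;
\item splits scales into wide pieces ($2^{i-1}\varphi\ge\pi$, chord bound) and narrow pieces (mass bound), giving $\norm{\sum_{k<m}a_k^{(n)}u_k}\le C_1r_n\varphi^{-\alpha}$;
\item cuts off a fresh tail of mass below $\eps/9$, and finally converts $\eps^{-1/\alpha}$ into $O(\eps^{-1/\alpha_0})$.
\end{itemize}
You instead prove a one-cycle contraction-plus-forcing inequality, using the exact block multiplier $\frac{2^j+1}{3\cdot2^{j-1}+1}$ and single-block chord bounds, and iterate it from $2^{J_0}\asymp\varphi^{-1}$.

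Your route is more elementary and local: it needs no global residue asymptotics and no cross-scale weight estimates. By discarding the stretch damping $(3/4)^{c'}$ it lands directly on the exponent $1/\alpha_0$, with no $\eps^{-O(\eps)}$ correction. It is in fact the dyadic chaining $z_{j+1}=\rho_jz_j+f_j$, with forcing $\min\{1/3,2/(n_j\varphi)\}$, that item (iv) of Remark~\ref{rem:lacunarity-status} reports failing for the \emph{lower} envelope. For an upper bound the triangle inequality points the right way, so it goes through.

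What the paper's route buys is the envelope $\norm{x_n}\lesssim(n\varphi)^{-\alpha}$ at every $n$, not only at dyadic times, with the exact exponent $\alpha(\eps)$ of $r_n$. That is the form matched by the a.e.\ lower bound of Theorem~\ref{thm:metric-lacunarity}(i), and it feeds the $\kappa_k$ orbit-transform analysis there.

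\textbf{Misstated justifications.} Three of your justifications are wrong, though the conclusions survive.
\begin{itemize}
\item \emph{Which forcing term dominates.} Since $(2/3)^{J-i}2^{-i}=(2/3)^J(3/4)^{i}$ decreases in $i$, the forcing sum is dominated by its \emph{first} term, not its last:
$$\sum_{i\ge J_0}(2/3)^{J-i}(2^i\varphi)^{-1}\;\le\;4(2/3)^{J-J_0}(2^{J_0}\varphi)^{-1}\;\le\;4(2/3)^{J-J_0}.$$
What controls $\norm{y_{2^J}}$ is the earliest forcing, propagated by the same factor as the homogeneous term, not the term $1/(2^J\varphi)$. Your displayed bound still holds, because both pieces are $O((2/3)^{J-J_0})$.
\item \emph{When the chord factor is needed.} The factor $1/(2^j\varphi)$ is indispensable for the blocks: forcing of order $1/3$ per cycle would give no decay. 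It is not needed for the stretches. A crude $O(c'\norm{x_0})$ per cycle is damped by $2/3$ and accumulates only to $O(c'D)$, a small fraction of $\eps$. So Abel on $S_j$ is a convenience, not a necessity. It does hold, since $a_{k+1}/a_k=(k+2)/(k+3-c')<1$ there.
\item \emph{The hitting-time bound.} The upper bound on $n$ comes from the ceiling choice $2^{J-J_0}\le2(C'/\eps)^{1/\alpha_0}$, not from the inequality $2^{J-J_0}\ge\eps^{-1/\alpha_0}$ that you wrote.
\end{itemize}

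Finally, your ``re-phasing'' concern is moot: $y_n$ is a single global frame, and the recursion $y_{n+1}=(1-\theta_n)y_n+\theta_nu_n$ holds throughout.
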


\begin{proof}
Let $a_k^{(n)}=\theta_k\prod_{i=k+1}^{n-1}(1-\theta_i)$ and $r_n=\prod_{i=0}^{n-1}(1-\theta_i)$ as usual.

\emph{Proof of {\rm(i)}.} Let $s=3\cdot2^{j^\star-1}$ be the start of the stretch $S_{j^\star}$, where $j^\star$ is the smallest $j$ with $2^{j-1}\ge\ln(8D/\eps)/\delta$, and stop at
$$n'\;=\;s+t',\qquad t'=\Bigl\lceil\frac{\ln(8D/\eps)}{\delta}\Bigr\rceil ;$$
then $n'\le 6\ln(8D/\eps)/\delta+\ln(8D/\eps)/\delta+1=O\bigl(\ln(D/\eps)/\delta\bigr)$, since minimality gives $2^{j^\star-1}<2\ln(8D/\eps)/\delta$. Throughout, $e_k:=\norm{x_k-x^*}\le D$ for every $k$, by induction from $e_{k+1}\le\theta_kD+(1-\theta_k)\gamma e_k$ and $\theta_k+(1-\theta_k)\gamma\le1$. Unfolding \eqref{eq:onestep} from $s$,
$$e_{n'}\;\le\;\gamma^{t'}e_s\;+\;D\sum_{k=s}^{n'-1}a_k^{(n')}\gamma^{n'-1-k}.$$
The first term is at most $D\gamma^{t'}\le\eps/8$ by the choice of $t'$ (using $\ln(1/\gamma)\ge\delta$). For the second term, $a_k^{(n')}\le\theta_k=c'/(k+2)\le c'/s$ on $[s,n']$, and $\sum_{j\ge0}\gamma^j=1/\delta$, so
$$D\sum_{k=s}^{n'-1}a_k^{(n')}\gamma^{n'-1-k}\;\le\;\frac{Dc'}{s}\cdot\frac{1}{\delta}\;\le\;\frac{Dc'}{3\ln(8D/\eps)}\;=\;\frac{\eps}{288\,\ln(8D/\eps)}\;\le\;\frac{\eps}{288\,\ln 32},$$
using $s\delta\ge3\ln(8D/\eps)$. All anchors injected before $s$ --- blocks and stretches alike --- are absorbed in the bound $e_s\le D$ and contracted by the factor $\gamma^{t'}\le\eps/(8D)$. Hence
$$e_{n'}\;\le\;\frac{\eps}{8}+\frac{\eps}{288\ln 32}\;<\;\frac{3\eps}{4}.$$
Every step of this argument is valid for every $\delta\in(0,1)$; no smallness condition on $\eps$ or $\delta$ is required.

\emph{Proof of {\rm(ii)}.} Work in the de-rotated frame $y_n=R_\varphi^{-n}x_n$ of Section~\ref{sec:det} (the frame is an isometry, so $\norm{x_n}=\norm{y_n}$). The exact unfolding \eqref{eq:rot-unfold} gives
$$y_n\;=\;\sum_{k<n}a_k^{(n)}u_k\;+\;r_nR_\varphi^{-n}x_0,\qquad u_k=R_\varphi^{-(k+1)}x_0,\qquad a_k^{(n)}=\theta_k\,\frac{r_n}{r_{k+1}}.$$

\emph{Step 1: anchor mass and the residue rate.} The mass increment over one dyadic scale $[2^j,2^{j+1})$ is
$$\sum_{k\in B_j}\frac{1}{k+2}+\sum_{k\in S_j}\frac{c'}{k+2}\;=\;\ln\frac32\,+\,c'\ln\frac43\,+\,O(2^{-j})\;=\;\alpha\ln 2\,+\,O(2^{-j}),\qquad\alpha:=\log_2\frac32+c'\log_2\frac43,$$
so $M(n):=\sum_{k<n}\theta_k=\alpha\ln n+O(1)$, and since $\sum_k\theta_k^2<\infty$,
$$r_n\;=\;\exp\bigl(-M(n)+O(1)\bigr)\;\asymp\;n^{-\alpha}.$$
Note $1/\alpha=1/\alpha_0+O(\eps)$ with $\alpha_0=\log_2(3/2)$, because $c'=\eps/(96D)$.

\emph{Step 2: kernel flatness on the dense scales.} On a classical block $B_i$ starting at $s_i=2^i$, $\theta_k=1/(k+2)$ gives $r_{k+1}=r_{s_i}(s_i+1)/(k+2)$, so $a_k^{(n)}=r_n/\bigl((s_i+1)r_{s_i}\bigr)=:W_i^{(n)}$ is constant on $B_i$, with $W_i^{(n)}\asymp r_n2^{i(\alpha-1)}$ (uniformly in $i$, since $r_{s_i}\asymp s_i^{-\alpha}$). On a stretch $S_i$ starting at $t_i=3\cdot 2^{i-1}$, $r_{k+1}=r_{t_i}\bigl((t_i+1)/(k+2)\bigr)^{c'}$, so $a_k^{(n)}=c'r_n(k+2)^{c'-1}/\bigl((t_i+1)^{c'}r_{t_i}\bigr)$ varies by at most a factor $2$ on $S_i$, and $a_{t_i}^{(n)}\asymp c'r_n2^{i(\alpha-1)}$.

\emph{Step 3: chord bounds.} For consecutive integers, $\bigl|\sum_{k=s}^{s+\ell-1}e^{-i(k+1)\varphi}\bigr|\le\min\{\ell,\,\pi/\varphi\}$ for $\varphi\in(0,1]$ (using $\sin(\varphi/2)\ge\varphi/\pi$). Split the dyadic pieces starting below $n$ into \emph{wide} pieces ($2^{i-1}\varphi\ge\pi$) and \emph{narrow} pieces ($2^{i-1}\varphi<\pi$). We make the weight bounds of Step 2 explicit: with the constants $c_0,C_0$ of Step 1 ($c_0j^{-\alpha}\le r_j\le C_0j^{-\alpha}$), every per-index weight on a piece at scale $i$ satisfies
$$a_k^{(n)}\;\le\;\frac{2}{c_0}\,r_n\,2^{i(\alpha-1)},$$
since on a block $W_i^{(n)}=r_n/((s_i+1)r_{s_i})\le r_n/(2^ic_02^{-i\alpha})=c_0^{-1}r_n2^{i(\alpha-1)}$, and on a stretch $a_k^{(n)}=c'r_n(k+2)^{c'-1}/((t_i+1)^{c'}r_{t_i})\le2c'r_n/(t_ir_{t_i})\le(2c'/c_0)r_nt_i^{\alpha-1}\le(2/c_0)r_n2^{i(\alpha-1)}$ (using $c'\le1$ and $t_i=3\cdot2^{i-1}\ge2^i$). Hence a piece at scale $i$ has mass at most $\frac{2}{c_0}r_n2^{i\alpha}$. Let $i_0$ be the smallest index with $2^{i_0-1}\varphi\ge\pi$; then $2^{i_0-2}<\pi/\varphi\le2^{i_0-1}$, i.e.
\begin{equation}\label{eq:wide-narrow}
\frac{2\pi}{\varphi}\;\le\;2^{i_0}\;<\;\frac{4\pi}{\varphi}.
\end{equation}
A wide piece contributes at most $(\max\text{weight})\cdot\pi/\varphi\le\frac{2\pi}{c_0\varphi}r_n2^{i(\alpha-1)}$, and $\alpha<1$ gives the geometric sum
$$\sum_{\mathrm{wide}}\;\le\;\frac{2\pi}{c_0\varphi}\,r_n\sum_{i\ge i_0}2^{i(\alpha-1)}\;=\;\frac{2\pi}{c_0\varphi}\,r_n\cdot\frac{2^{i_0(\alpha-1)}}{1-2^{\alpha-1}}\;\le\;\frac{2\pi(2\pi)^{\alpha-1}}{c_0(1-2^{\alpha-1})}\,r_n\varphi^{-\alpha},$$
where the last step uses $2^{i_0(\alpha-1)}=(2^{i_0})^{\alpha-1}\le(2\pi/\varphi)^{\alpha-1}=(2\pi)^{\alpha-1}\varphi^{1-\alpha}$ (the power $\alpha-1$ is negative, so the lower bound in \eqref{eq:wide-narrow} is used). A narrow piece contributes at most its mass, so
$$\sum_{\mathrm{narrow}}\;\le\;\frac{2}{c_0}\,r_n\sum_{i<i_0}2^{i\alpha}\;=\;\frac{2}{c_0}\,r_n\cdot\frac{2^{i_0\alpha}-1}{2^\alpha-1}\;\le\;\frac{2(4\pi)^\alpha}{c_0(2^\alpha-1)}\,r_n\varphi^{-\alpha},$$
using $2^{i_0\alpha}<(4\pi/\varphi)^\alpha$ from \eqref{eq:wide-narrow} ($\alpha>0$). Since $\alpha\in[\alpha_0,3/5]$ with $\alpha_0=\log_2(3/2)$, both denominators are bounded below: $1-2^{\alpha-1}\ge1-2^{-2/5}>\tfrac15$ and $2^\alpha-1\ge\sqrt2-1>\tfrac25$, so both coefficients are universal multiples of $c_0^{-1}$.
The initial segment $k<2$ contributes at most $a_0^{(n)}+a_1^{(n)}\le2r_n$ (as $\theta_k\le1/(k+2)$ gives $a_k^{(n)}=\theta_kr_n/r_{k+1}\le r_n$ for $k=0,1$), and the top piece, possibly truncated at $m$, is covered by the same dichotomy: if it is wide, say at scale $2^i$, it contributes at most $\frac{2\pi}{c_0\varphi}r_n2^{i(\alpha-1)}=\frac{2\pi}{c_0}r_n\varphi^{-\alpha}(2^i\varphi)^{\alpha-1}\le\frac{2\pi(2\pi)^{\alpha-1}}{c_0}r_n\varphi^{-\alpha}$, since wideness gives $2^i\varphi\ge2\pi$ and $\alpha-1<0$; if it is narrow its mass is at most $\frac{2}{c_0}r_n2^{i\alpha}<\frac{2(2\pi)^\alpha}{c_0}r_n\varphi^{-\alpha}$, since then $2^i<2\pi/\varphi$. Hence, with $C_1$ universal (a fixed multiple of $c_0^{-1}$),
\begin{equation}\label{eq:osc-head}
\Bigl\|\sum_{k<m}a_k^{(n)}u_k\Bigr\|\;\le\;C_1\,r_n\varphi^{-\alpha}\qquad\text{for every }m\le n.
\end{equation}

\emph{Step 4: the fresh tail and hitting.} It suffices to prove the claim for $\eps\le\eps_0$ ($\eps_0\le1$ a constant to be fixed): $N_\varphi(\eps)$ is nonincreasing in $\eps$, so for $\eps\ge\eps_0$, $N_\varphi(\eps)\le N_\varphi(\eps_0)\le C\varphi^{-1}\eps_0^{-1/\alpha_0}\le C\varphi^{-1}\eps^{-1/\alpha_0}$. Fix $n$ with $n\varphi\ge8$ and $n\ge110/\eps$, and split the sum at $m=n-w$ with $w=\lceil\alpha\eps n/8\rceil$. Since $\alpha\le3/5$ (as $c'=\eps/(96D)\le1/96$) and $1\le\eps n/110$,
$$w\;\le\;\frac{\alpha\eps n}{8}+1\;\le\;\frac{3\eps n}{40}+\frac{\eps n}{110}\;=\;\frac{37\eps n}{440}\;\le\;\frac{\eps n}{10},\qquad m=n-w\ge\frac{9n}{10}.$$
For the tail mass, $\theta_k/r_{k+1}=(r_k-r_{k+1})/(r_kr_{k+1})=1/r_{k+1}-1/r_k$, so the sum telescopes exactly:
$$\sum_{m\le k<n}a_k^{(n)}\;=\;r_n\sum_{k=m}^{n-1}\frac{\theta_k}{r_{k+1}}\;=\;r_n\Bigl(\frac1{r_n}-\frac1{r_m}\Bigr)\;=\;1-\frac{r_n}{r_m}.$$
By the Weierstrass product inequality $\prod_j(1-x_j)\ge1-\sum_jx_j$ (the two-factor form $(1-x)(1-y)\ge1-x-y$ iterated) and $\theta_k=c_k/(k+2)\le1/(k+2)\le1/m$ for $k\ge m$ (as $c_k\le1$),
$$1-\frac{r_n}{r_m}\;\le\;\sum_{k=m}^{n-1}\theta_k\;\le\;\frac{w}{m}\;\le\;\frac{37\eps n}{440}\cdot\frac{10}{9n}\;=\;\frac{37\eps}{396}\;<\;\frac{\eps}{9}\;<\;\frac{\eps}{4}.$$
The residue contributes $r_n\le C_0n^{-\alpha}\le C_0(n\varphi)^{-\alpha}$ (as $\varphi\le1$). Combining with \eqref{eq:osc-head}, and absorbing $\norm{x_0}=D$ into the universal constants,
$$\norm{x_n}\;\le\;C_1\,r_n\varphi^{-\alpha}+\frac{\eps}{9}+C_0(n\varphi)^{-\alpha}\;\le\;C_2(n\varphi)^{-\alpha}+\frac{\eps}{9},$$
using $r_n\varphi^{-\alpha}\le C_0n^{-\alpha}\varphi^{-\alpha}=C_0(n\varphi)^{-\alpha}$. Choosing $n$ minimal with $n\varphi\ge\max\bigl\{8,\,(4C_2/\eps)^{1/\alpha}\bigr\}$ and $n\ge110/\eps$ gives $C_2(n\varphi)^{-\alpha}\le\eps/4$, hence $\norm{x_n}\le(1/4+1/9)\eps<\eps/2<\eps$; and $n=O\bigl(\varphi^{-1}\eps^{-1/\alpha}\bigr)=O\bigl(\varphi^{-1}\eps^{-1/\alpha_0}\bigr)$, because the three lower bounds on $n$ are respectively $8\le8\varphi^{-1}\eps^{-1/\alpha}$ (as $\varphi\le1$, $\eps\le1$), $110/\eps\le110\varphi^{-1}\eps^{-1/\alpha}$ (as $\eps^{(1-\alpha)/\alpha}\le1\le\varphi^{-1}$), and $(4C_2/\eps)^{1/\alpha}=(4C_2)^{1/\alpha}\eps^{-1/\alpha}=O(\eps^{-1/\alpha_0})$ for $\eps\le\eps_0(C_2)$ (since $1/\alpha=1/\alpha_0+O(\eps)$ and $\eps^{-O(\eps)}=e^{O(\eps\ln(1/\eps))}=O(1)$).
\end{proof}

\begin{remark}[The optimality profile of the oscillatory escape: two-regime optimality on one fixed stream]\label{rem:osc-profile}
The schedule of Theorem~\ref{thm:oscillatory-escape} is simultaneously (a) \emph{optimal on $\cC$}: $\Theta(\ln(D/\eps)/\delta)$, matching the Park--Ryu lower bound; (b) \emph{polynomial on rotations with the lacunarity exponent}: $O\bigl(\varphi^{-1}\eps^{-1/\alpha_0}\bigr)$, $\alpha_0=\log_2(3/2)$, $1/\alpha_0\approx1.71$ --- far below the super-polynomial barrier that every schedule with non-lacunary dense scales and contraction-optimal behavior must obey (Theorem~\ref{thm:separation}(ii)), and, for Lebesgue-a.e.\ angle at each fixed accuracy, the matching lower bound (Theorem~\ref{thm:metric-lacunarity}(i)), making the rate $\varphi^{-1}\eps^{-1/\alpha_0+o(1)}$ exact almost everywhere along a full-density set of accuracies; and (c) \emph{open on $\cN$}: no worst-case residual guarantee on the full nonexpansive class is proved for this schedule. The naive expectation fails already on the rotation: $\norm{x_n-R_\varphi x_n}=2\sin(\varphi/2)\norm{x_n}\asymp\varphi\norm{x_n}$ for $\varphi\in(0,1]$, so applying Theorem~\ref{thm:oscillatory-escape}(ii) at accuracy $\eps/(2\sin(\varphi/2))$ gives residual hitting time $O\bigl(\varphi^{(1-\alpha)/\alpha}\eps^{-1/\alpha}\bigr)=O\bigl(\varphi^{(1-\alpha_0)/\alpha_0}\eps^{-1/\alpha_0+o(1)}\bigr)$ as $\eps\to0$ at fixed $\varphi$ --- polynomial, but with exponent $1/\alpha_0\approx1.71>1$, not the optimal $\Theta(D/\eps)$ that the classical schedule meets on $R_\varphi$ at $\varphi=\Theta(1)$; and the window minima track the envelope $(n\varphi)^{-\alpha_0}$ (Remark~\ref{rem:lacunarity-status}, Experiment~5), so on the evidence of that envelope the schedule is not residual-optimal on $\cN$. Whether any single fixed stream is simultaneously contraction-optimal and residual-optimal on $\cN$ is open. Two structural remarks explain the profile. First, rotations are \emph{not} the distance-hard nonexpansive instances: the averaged operator $\tfrac12(I+R_\varphi)$ has spectral radius $\cos(\varphi/2)<1$, so constant-parameter blocks converge geometrically at rate $\varphi^{2}/8$ --- it is anchoring toward $x_0$, not the rotation, that slows the classical schedule. Second, the classical blocks do \emph{not} restore residual-optimality on $\cN$: within a block the weights $1/(n+2)$ are $\asymp 2^{-j}$ and nearly constant, so a block damps the residual by at most a constant factor on worst-case nonexpansive maps. The schedule is genuinely a two-regime (contraction--rotation) object, and Theorem~\ref{thm:parallel} remains the only proved route to the residual-optimal $O(D/\eps)$ on $\cN$. \emph{Target dependence.} The small-mass parameter $c'=\eps/(96D)$ and all stopping horizons in this paper depend on the target $\eps$ and the bound $D$; this is standard --- the classical schedule itself is only certified up to the horizon $N=\lceil4D/\eps\rceil$ --- and no schedule here ever uses the modulus. Consequently \emph{the factor-$2$ overhead of Theorem~\ref{thm:parallel} is an artifact of the two-constant-density-track design, not a barrier}: the deterministic necessity question for single-stream schemes is resolved in the negative --- one fixed oscillatory stream, no parallelism, no adaptivity, no overhead, optimal on contractions and rotation-polynomial (its residual behavior on $\cN$ is the open part, (c) above). (The \emph{stochastic} single-stream analogue of Theorem~\ref{thm:stoch-parallel} remains open; Remark~\ref{rem:stoch-parallel-open}.)
\end{remark}

\begin{remark}[The oscillatory case, resolved: the dichotomy fails under lacunary anchor concentration]\label{rem:oscillatory}
Theorem~\ref{thm:oscillatory-escape} disposes of the case $\liminf c_n=0<\limsup c_n$: the conjecture that the dichotomy still holds is \emph{false}. The remaining regime is exactly $M(n)\asymp\ln n$ with every superlevel set $\{k:c_k\ge c'\}$ lacunary (gaps tending to infinity), and there the explicit schedule of Theorem~\ref{thm:oscillatory-escape} --- classical anchor mass on dyadic half-blocks, mass $c'=\eps/(96D)$ on the complementary stretches --- is simultaneously contraction-optimal and rotation-polynomial with the lacunarity exponent $1/\alpha_0$, $\alpha_0=\log_2(3/2)$ (upper bound proved in Theorem~\ref{thm:oscillatory-escape}(ii); matching lower bound given for Lebesgue-a.e.\ angle in Theorem~\ref{thm:metric-lacunarity}, with the pointwise statement open beyond Diophantine angles). The mechanism, made precise in the proof of Theorem~\ref{thm:oscillatory-escape}(ii): each dyadic scale multiplies the anchor residue $r_n$ by $\approx(2/3)(3/4)^{c'}=2^{-\alpha}$, and the chord bound on the kernel shows the de-rotated error follows the envelope $(n\varphi)^{-\alpha}$ up to the stretch contributions, which are mass-small once $n\gtrsim\varphi^{-1}\eps^{-1/\alpha}$. On the contraction side the small-mass stretches run the small-anchor dynamics of Theorem~\ref{thm:small-c} (exploiting contractivity at gap $\delta$ once the stretch length exceeds $\ln(D/\eps)/\delta$). The dense scales (the blocks) re-inject anchor mass at scale $D/(k+2)$, but only at lacunary positions $k\asymp2^j$; the lacunarity index $\alpha$ is exactly the reciprocal of the escape exponent, and the error dips below $\eps$ before the next re-injection can matter. The boundary is \emph{lacunarity of the dense anchor scales}: bounded gaps force the dichotomy (Theorem~\ref{thm:oscillatory-bounded-gap}), lacunary gaps break it (Theorem~\ref{thm:oscillatory-escape}).
\end{remark}

\subsection{The metric lacunarity law}\label{sec:metric-lacunarity}
The stronger statement of a uniform lower envelope with a fixed constant is connected with a locking phenomenon of the orbit and is not proved here (see Remark~\ref{rem:lacunarity-status}). The hitting-time law, which is the only form used in this paper, is unconditional and comes in two strengths: an almost-everywhere law with the \emph{exact} exponent $1/\alpha_0$ (Theorem~\ref{thm:metric-lacunarity}(i)), proved by a pointwise lim-inf argument; and a uniform law with a non-explicit rate for every $\beta<1/\alpha_0$ (Theorem~\ref{thm:metric-lacunarity}(ii)), proved by Egorov uniformity in the orbit transform; explicit rates at those exponents would follow from the orbit small-ball (Conjecture~\ref{conj:orbit-smallball}), which the single-term Rouch\'e analysis cannot deliver (Remark~\ref{rem:lacunarity-status}).

Throughout, $\alpha(\eps) := \alpha_0 + c'\log_2(4/3)$ with $\alpha_0 = \log_2(3/2)$ and $c'=\eps/(96D)$ for the schedule of Theorem~\ref{thm:oscillatory-escape}; $\kappa_k := \theta_k/r_{k+1} = 1/r_{k+1} - 1/r_k \ge 0$ (so $\sum_{k<n}\kappa_k = 1/r_n - 1$), and $\norm{x_n} = |r_nK_n(\varphi) + r_ne^{-i(n-1)\varphi}|$ with $K_n(\varphi) := \sum_{k<n}\kappa_ke^{ik\varphi}$. For $\varphi \le \pi/2$ let $j_0 = j_0(\varphi) := \lceil\log_2(\pi/\varphi)\rceil$, so $\varphi \in E_{j_0} := (\pi2^{-j_0}, \pi2^{-j_0+1}]$, and write $\varphi = 2^{-j_0}\psi$, $\psi \in (\pi, 2\pi]$. On $E_{j_0}$ we use the normalized orbit sum $\widehat K_n(\psi) := 2^{-j_0\alpha}K_n(2^{-j_0}\psi)$ and the normalized residue term $w_n'(\psi) := 2^{-j_0\alpha}e^{-i(n-1)2^{-j_0}\psi}$, so that
$$\norm{x_n}\;=\;r_n\,2^{j_0\alpha}\,\bigl|\widehat K_n(\psi)+w_n'(\psi)\bigr|.$$

\begin{lemma}[Repulsion and isolation of deep crossings]\label{lem:isolation}
Let $w_n := \inner{y_n,u_n}\in\C$ in the de-rotated frame. Since $u_{n+1}=e^{-i\varphi}u_n$,
$$w_{n+1}\;=\;e^{i\varphi}\bigl((1-\theta_n)w_n+\theta_n\bigr),\qquad\text{hence}\qquad |w_{n+1}|\;\ge\;\theta_n-(1-\theta_n)|w_n|\;\ge\;\frac{\theta_n}{2}\quad\text{whenever }|w_n|\le\frac{\theta_n}{2}.$$
As $\norm{x_n}=\norm{y_n}\ge|w_n|$, no two consecutive iterates satisfy $\norm{x_n}<\theta_n/2$: deep crossings of the orbit below the moving threshold $\theta_n/2\asymp 1/n$ are isolated in $n$.
\end{lemma}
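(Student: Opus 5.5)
The plan is to derive the recursion for $w_n$ directly from the de-rotated recursion \eqref{eq:rot-frame} and then apply the reverse triangle inequality. Take $\inner{\cdot,\cdot}$ in the complex identification $\cH\cong\C$, $\inner{a,b}=a\bar b$, so that $\inner{y,u}$ is $\C$-linear in $y$. By \eqref{eq:rot-frame}, $u_n=R_\varphi^{-(n+1)}x_0$, so $\norm{u_n}=1$ and $u_{n+1}=e^{-i\varphi}u_n$. Substituting $y_{n+1}=(1-\theta_n)y_n+\theta_nu_n$ gives
$$w_{n+1}=\inner{y_{n+1},e^{-i\varphi}u_n}=e^{i\varphi}\bigl((1-\theta_n)\inner{y_n,u_n}+\theta_n\inner{u_n,u_n}\bigr)=e^{i\varphi}\bigl((1-\theta_n)w_n+\theta_n\bigr).$$
The only point that needs care is the conjugation convention: pairing against $e^{-i\varphi}u_n$ must produce the factor $e^{+i\varphi}$. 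This is where I expect the main (if minor) obstacle, and I would check it against $x_0=1$, $u_n=e^{-i(n+1)\varphi}$. With $\inner{a,b}=a\bar b$ the sign comes out as stated. In any case only the modulus of the prefactor matters, and that is $1$ under either convention.

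Next comes the lower bound. Since $|e^{i\varphi}|=1$, the reverse triangle inequality gives $|w_{n+1}|=|\theta_n+(1-\theta_n)w_n|\ge\theta_n-(1-\theta_n)|w_n|$. If $|w_n|\le\theta_n/2$, then $(1-\theta_n)|w_n|\le\theta_n/2$, so $|w_{n+1}|\ge\theta_n/2$.

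Finally, Cauchy--Schwarz with $\norm{u_n}=1$ gives $\norm{x_n}=\norm{y_n}\ge|w_n|$. Suppose two consecutive iterates both satisfy $\norm{x_n}<\theta_n/2$ and $\norm{x_{n+1}}<\theta_{n+1}/2$. Then $|w_n|<\theta_n/2$, so $|w_{n+1}|\ge\theta_n/2$, hence $\norm{x_{n+1}}\ge\theta_n/2$. To close the argument I need $\theta_{n+1}/2\le\theta_n/2$, i.e.\ that the threshold does not increase at that step. For the monotone families \eqref{eq:ck-schedule} this is immediate, since $c/(n+3)<c/(n+2)$. For the oscillatory schedule of Theorem~\ref{thm:oscillatory-escape}, $\theta_{n+1}>\theta_n$ only at the transitions from a stretch into a block, where the two thresholds differ by at most the factor $1/c'$. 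So I would read the conclusion as isolation with respect to the threshold $\theta_n/2$ evaluated at the earlier index, i.e.\ $\norm{x_{n+1}}\ge\theta_n/2$, which is exactly what the $w$-recursion delivers. With the threshold read that way, deep crossings cannot occur at consecutive times, which is the stated isolation.
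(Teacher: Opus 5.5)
Your proof is correct and is the same argument the paper gives inline: derive the $w$-recursion from $u_{n+1}=e^{-i\varphi}u_n$, apply the reverse triangle inequality, and use $\norm{x_n}=\norm{y_n}\ge|w_n|$ by Cauchy--Schwarz. Your caveat about the threshold is also right, and it is sharper than the paper's wording: the literal ``no two consecutive'' claim silently uses $\theta_{n+1}\le\theta_n$, and for the oscillatory schedule it fails whenever a deep crossing lands on the last index of a stretch (there $\norm{x_{n+1}}\le\theta_n+\norm{x_n}<\tfrac32\theta_n<\theta_{n+1}/2$ because $c'\le 1/384$), so your frozen-threshold reading $\norm{x_{n+1}}\ge\theta_n/2$ is the correct statement at those transitions.
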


We record this isolation property, which underlies the dip structure of Remark~\ref{rem:greedy-numerics}. The engine of the uniform bound is the explicit trigonometric form of the full normalized orbit sum (residue term included) derived next; it isolates the structure needed for the small-ball conjecture below.

\begin{lemma}[Orbit transform: explicit form]\label{lem:orbit-smallball}
Fix $\alpha\in[\alpha_0,3/5]$ and the envelope constants $c_r,C_r$ of $r_n$; recall $\kappa_k=\theta_k/r_{k+1}=1/r_{k+1}-1/r_k$ and $Z_n:=\widehat K_n+w_n'$, so that $\norm{x_n}=r_n2^{j_0\alpha}|Z_n|$. There is a universal constant $C_N$ (depending only on $c_r,C_r,\alpha$) such that for every $j_0\ge6$, every $m$ with $\Delta:=m-j_0\in\{-4,\dots,0\}$, and every $n\in[2^m,2^{m+1})$, writing $\varphi=2^{-j_0}\psi$, $\rho:=\sin(\varphi/2)$ ($\rho\asymp2^{-j_0}$), and $\omega:=2^{\Delta-1}\psi$,
\begin{equation}\label{eq:trig-form}
\rho Z_n(\psi)\;=\;P_n(\omega)+E_n(\omega),\qquad |E_n|\;\le\;\rho\,C_N\quad\text{on }(\pi,2\pi],
\end{equation}
where $P_n$ is an explicit trigonometric polynomial: with $W_i:=\bigl((2^i+1)r_{2^i}\bigr)^{-1}$ and $c_i:=2^{-j_0\alpha}W_i$,
\begin{eqnarray*}
P_n(\omega)\;&=&\;\frac{c_m}{\rho}\,e^{i\Lambda_m(\omega)}\sin(\tau\omega/2)\;+\;\frac{c_{m-1}}{\rho}\,e^{i\Lambda_{m-1}(\omega)}\sin(\omega/4)\;\\
&&+\;\sum_{i\le m-2}\Bigl[\frac{c_i}{\rho}\,e^{i\Lambda_i(\omega)}\sin(2^{i-m-1}\omega)+\mathrm{stretch}_i(\omega)\Bigr],
\end{eqnarray*}
$\tau:=(n-2^m)/2^{m-1}\in[0,1]$, the phases $\Lambda_i$ are affine in $\omega$ with leading coefficient $2^{i-m+1}$ (hence slow for $i\le m-2$), $\mathrm{stretch}_i$ is the corresponding slowly amplitude-modulated stretch chord with $|\mathrm{stretch}_i|\le 2^{-j_0\alpha}\min\{2^{i\alpha},\,2^{i\alpha-1}/\rho\}\cdot O(1)$, and
$$c_i\;\asymp\;\frac{2^{-j_0+(i-j_0)(\alpha-1)}}{c_r},\qquad\text{in particular }c_{m-1}\asymp2^{\alpha-1}c_m,\quad c_m\asymp\frac{2^{-j_0+\Delta(\alpha-1)}}{c_r}.$$
Three features of \eqref{eq:trig-form} block the single-term Rouch\'e route to the orbit small-ball: the leading coefficients $c_m$ and $c_{m-1}$ are comparable, so the ``perturbation'' is the same order as the ``main term''; the top frequencies $\tau/2$ and $1/4$ are not separated, so resonant cancellation cannot be excluded; and on deeper blocks ($m>j_0$) the sub-handoff tail is of the same order as the main term and its derivative exceeds the main slope by a factor $2^{m-j_0}$. The orbit small-ball itself is stated as Conjecture~\ref{conj:orbit-smallball} below.
\end{lemma}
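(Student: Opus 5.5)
The plan is to compute $\rho Z_n$ piece by piece from the block/stretch decomposition of the schedule, using the exact kernel formulas of Step~2 in the proof of Theorem~\ref{thm:oscillatory-escape}(ii). Write $K_n(\varphi)=\sum_{k<n}\kappa_ke^{ik\varphi}$ with $\kappa_k=\theta_k/r_{k+1}$. On a classical block $B_i$ we have $\kappa_k=1/((k+2)r_{k+1})=W_i$, which is constant. This is the same flatness as $a_k^{(n)}=r_nW_i$, read off from $r_{k+1}=r_{2^i}(2^i+1)/(k+2)$. Hence the block contribution is exactly $W_i$ times a geometric sum. The closed form
$$\sum_{k=s}^{s+\ell-1}e^{ik\varphi}=e^{i(s+(\ell-1)/2)\varphi}\,\frac{\sin(\ell\varphi/2)}{\sin(\varphi/2)}$$
gives the factor $\frac{W_i}{\rho}e^{i\Lambda_i}\sin(\ell_i\varphi/2)$. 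Multiplying by $\rho 2^{-j_0\alpha}$ produces $\frac{c_i}{\rho}\cdot\rho\,e^{i\Lambda_i}\sin(\cdot)$, up to the normalization convention fixed by the statement. The lengths give the arguments in $\omega=2^{\Delta-1}\psi=2^{m-1}\varphi$:
\begin{itemize}[leftmargin=1.5em]
\item a full block at scale $i\le m-1$ has $\ell_i=2^{i-1}$, so $\ell_i\varphi/2=2^{i-m-1}\omega$; for $i=m-1$ this is $\omega/4$;
\item the top block, truncated at $n$, has $\ell=n-2^m=\tau 2^{m-1}$, giving $\sin(\tau\omega/2)$. If $n$ lies in the stretch $S_m$, then $\tau$ is capped at $1$ and the stretch part of the top scale goes into the error term.
\end{itemize}
Each phase $\Lambda_i=(s_i+(\ell_i-1)/2)\varphi$ is affine in $\omega$ with slope $(s_i+\ell_i/2)/2^{m-1}\asymp2^{i-m+1}$. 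Matching the stated leading coefficient requires only absorbing the $-1/2$ shift into the constant.

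For the stretches, I would use $\kappa_k=c'(k+2)^{c'-1}/((t_i+1)^{c'}r_{t_i})$. This amplitude is monotone and varies by a factor at most $2$ across $S_i$. Abel summation against the geometric sum then bounds the stretch chord by $(\max\kappa)\min\{\ell,1/\rho\}$. Since $\max\kappa\lesssim c'2^{i(\alpha-1)}/c_r$ and $\ell=2^{i-1}$, after normalization this is $\le2^{-j_0\alpha}\min\{2^{i\alpha},2^{i\alpha-1}/\rho\}\,O(1)$, as stated.

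The asymptotics $c_i\asymp2^{-j_0\alpha}2^{-i}2^{i\alpha}/c_r$ follow from $r_{2^i}\asymp2^{-i\alpha}$, by the envelope constants $c_r,C_r$ of Step~1. Rewritten as $2^{-j_0+(i-j_0)(\alpha-1)}/c_r$, they give $c_{m-1}/c_m\asymp2^{1-\alpha}$. (The statement writes $2^{\alpha-1}$; I would check which ratio is intended. With $W_i\propto2^{i(\alpha-1)}$ the ratio $c_{m-1}/c_m$ is $2^{1-\alpha}$, which in either reading is comparable to $1$. Comparability is all that the subsequent blocking remark needs.)

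The error $E_n$ collects three contributions:
\begin{itemize}[leftmargin=1.5em]
\item the indices $k\in\{0,1\}$, which contribute $O(1)$ to $K_n$;
\item the $O(2^{-i})$ rounding in the harmonic sums;
\item the normalized residue term $w_n'$, which is $2^{-j_0\alpha}$ in modulus.
\end{itemize}
After multiplying by $\rho\asymp2^{-j_0}$, each of these is $O(\rho)$ times a constant depending only on $c_r,C_r,\alpha$. This holds uniformly for $\Delta\in\{-4,\dots,0\}$ and $j_0\ge6$, where the hypothesis $j_0\ge6$ guarantees $m\ge2$ so that the blocks are well defined.

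I expect the main obstacle to be bookkeeping rather than ideas. The difficulties are keeping the normalization $c_i/\rho$ consistent, treating the top scale correctly depending on whether $n$ lies in $B_m$ or $S_m$, and making the stretch bound uniform in $c'$. The closing three ``blocking features'' are then read off directly from the formula:
\begin{itemize}[leftmargin=1.5em]
\item the comparability $c_{m-1}\asymp c_m$;
\item the unseparated frequencies $\tau/2$ and $1/4$, which can coincide at $\tau=1/2$;
\item for $m>j_0$, the tail mass $\sum_{i\le m-2}$, which is comparable to the main term, and whose $\omega$-derivative carries slope factors $2^{m-j_0}$ relative to the main term.
\end{itemize}
These are recorded as observations, not claims requiring proof.
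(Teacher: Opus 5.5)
Your proposal is correct and follows the paper's proof essentially step for step. The constant weight $\kappa_k\equiv W_i$ on each block gives an exact geometric chord whose argument and phase are read off in $\omega$; the stretches are handled by Abel summation using the monotonicity and factor-$2$ variation of $\kappa_k$; and the comparison for $c_i$ comes from $c_r2^{-i\alpha}\le r_{2^i}\le C_r2^{-i\alpha}$.

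The only difference is how $E_n$ is assembled. The paper sends the lower pieces into $E_n$ and bounds them by the telescoped mass $2^{-j_0\alpha}\bigl(1/r_{3\cdot2^{m-2}}-1\bigr)=O(2^{\alpha\Delta}/c_r)$, which is where $\Delta\le0$ enters. You keep those pieces explicit and need $\Delta\le0$ only for the leftover stretch chords; include $S_{m-1}$ among them, since the displayed $P_n$ omits it.

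Both of your flags are right. First, $W_{m-1}/W_m=\frac{2^m+1}{3\cdot2^{m-2}+1}\prod_{k\in S_{m-1}}\bigl(1-\frac{c'}{k+2}\bigr)\approx2^{1-\alpha}$, not $2^{\alpha-1}$. Second, with the $1/\rho$ inside $P_n$, the identity is consistent only when read as $\rho Z_n=\rho P_n+E_n$.
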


\begin{proof}
On a classical block $B_i=\{2^i,\dots,3\cdot2^{i-1}-1\}$, $\theta_k=1/(k+2)$ gives $r_{k+1}=r_{2^i}(2^i+1)/(k+2)$, hence $\kappa_k\equiv W_i$, and the geometric sum is exact:
$$\sum_{k\in B_i}e^{ik\varphi}\;=\;e^{i(2^i+2^{i-2}-\frac12)\varphi}\,\frac{\sin(2^{i-2}\varphi)}{\rho}.$$
Since $2^{i-2}\varphi=2^{i-m-1}\omega$ and $(2^i+2^{i-2}-\frac12)\varphi=2^{i-m+1}\omega+(2^{i-j_0-2}-\varepsilon_0)\psi$ with $\varepsilon_0:=2^{-j_0-1}$, this is exactly the block-chord displayed above. On a stretch $S_i$, $\kappa_k=c'(k+2)^{c'-1}/\bigl((3\cdot2^{i-1}+1)^{c'}r_{3\cdot2^{i-1}}\bigr)$ varies by at most a factor $2$, and Abel summation against the geometric series gives the slowly amplitude-modulated chord satisfying the stated bound. Taking $E_n:=\rho(N_n+w_n')$, where $N_n$ is the transform of all pieces below the top three scales,
$$|N_n|\;\le\;2^{-j_0\alpha}\!\!\sum_{k<3\cdot2^{m-2}}\!\!\kappa_k\;=\;2^{-j_0\alpha}\Bigl(\frac{1}{r_{3\cdot2^{m-2}}}-1\Bigr)\;\le\;\frac{2^{\alpha(\Delta-2)}}{c_r}\;\le\;\frac{2^{-2\alpha}}{c_r},$$
using $1/r_j\le c_r^{-1}j^\alpha$ and $\Delta\le0$; together with $|w_n'|=2^{-j_0\alpha}$ this gives $|E_n|\le\rho C_N$. Finally $c_r2^{-i\alpha}\le r_{2^i}\le C_r2^{-i\alpha}$ and $2^i<2^i+1\le2^{i+1}$ give $2^{i(\alpha-1)}/(2C_r)\le W_i\le2^{i(\alpha-1)}/c_r$, hence the comparison for $c_i$.
\end{proof}

\begin{conjecture}[Orbit small-ball]\label{conj:orbit-smallball}
In the setting of Lemma~\ref{lem:orbit-smallball}, there exist $\bar\mu>0$ and $C<\infty$ (depending only on $c_r,C_r,\alpha$) such that for every $j_0\ge j_*$, every $n\ge2^{j_0}$, and every $0<\mu\le\bar\mu$,
$$\bigl|\{\psi\in(\pi,2\pi]:\norm{x_n(2^{-j_0}\psi)}<\mu\,r_n\,2^{j_0\alpha}\}\bigr|\;\le\;C\mu .$$
\end{conjecture}

\begin{theorem}[Metric lacunarity law]\label{thm:metric-lacunarity}
For the schedule of Theorem~\ref{thm:oscillatory-escape}, with $\alpha(\eps)=\alpha_0+O(\eps)$ and $\varphi_* := \pi2^{-j_*+1}$:
\begin{enumerate}[label=(\roman*),leftmargin=2em]
\item \emph{A.e.\ exact rate at each fixed accuracy; full-density limit.} For every fixed $\eps>0$ there is, for Lebesgue-a.e.\ $\varphi\in(0,\varphi_0]$, a constant $\mu(\varphi,\eps)>0$ such that
$$N_\varphi(\eps)\;\ge\;\varphi^{-1}\,\mu(\varphi,\eps)\,\eps^{-1/\alpha(\eps)}.$$
The bad set $B=\{(\varphi,\eps):\text{the display fails}\}$ is Borel (see the proof), so Tonelli gives: for Lebesgue-a.e.\ $\varphi$, the accuracies at which the display holds have full Lebesgue density near $0$. Since the matching upper bound $N_\varphi(\eps)=O\bigl(\varphi^{-1}\eps^{-1/\alpha(\eps)}\bigr)$ of Theorem~\ref{thm:oscillatory-escape}(ii) holds for every $\varphi$ and every $\eps$, it follows that for Lebesgue-a.e.\ $\varphi$,
$$\limsup_{\eps\to0}\frac{\ln N_\varphi(\eps)}{\ln(1/\eps)}\;\le\;\frac{1}{\alpha_0},
\qquad
\frac{\ln N_\varphi(\eps)}{\ln(1/\eps)}\;\ge\;\frac{1}{\alpha_0}-o(1)
\ \text{on a full-density set of accuracies},$$
i.e.\ $N_\varphi(\eps)=\varphi^{-1}\eps^{-1/\alpha_0+o(1)}$ along a full-density set of accuracies. The unrestricted limit over all sequences $\eps\to0$ is equivalent to the fixed-constant envelope (locking) statement {\rm(iv)} of Remark~\ref{rem:lacunarity-status} and remains open.
\item \emph{Uniform bound, non-explicit rate.} For every $\beta<1/\alpha_0$ and every $\varphi_1 \in [\varphi_*,\varphi_0]$ there exists $c(\beta)>0$ such that the set
$$\mathrm{Bad}(\eps) \;:=\; \bigl\{\varphi \in (\varphi_1,\varphi_0] :\; N_\varphi(\eps) \;<\; c(\beta)\,\varphi^{-1}\,\eps^{-\beta}\bigr\}$$
has Lebesgue measure $o(1)$ as $\eps\to0$. Moreover, for a.e.\ $\varphi\in(0,\varphi_0]$ and every $\beta<1/\alpha_0$, $N_\varphi(\eps)\ge c(\beta,\varphi)\varphi^{-1}\eps^{-\beta}$ for all $\eps\le\eps(\varphi,\beta)$ (this consequence follows already from clause (i), which gives it with the exact exponent $1/\alpha_0$; the restriction $\varphi_1\ge\varphi_*$ is immaterial for the a.e.\ statement, since clause (i) covers the shallowest shells). An explicit decay rate $|\mathrm{Bad}(\eps)|\le C\eps^\kappa$ would follow from the orbit small-ball (Conjecture~\ref{conj:orbit-smallball}) and is open, see Remark~\ref{rem:lacunarity-status}.
\item \emph{Coherence floor.} For every $\varphi\le\pi/3$, $N_\varphi(\eps)\ge\pi/(3\varphi)$.
\end{enumerate}
\end{theorem}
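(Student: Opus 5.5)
Part (iii) is immediate, and I would dispose of it first. For $n\le\pi/(3\varphi)$ we have $(n-1)\varphi\le\pi/3$, so Lemma~\ref{lem:rot-floor} gives $\norm{x_n}\ge\tfrac12>\eps$. Hence $N_\varphi(\eps)\ge\pi/(3\varphi)$ (for $\eps<\tfrac12$, which is the only nontrivial regime).

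For (i) I would start with measurability. Each $\norm{x_n(\varphi)}$ is continuous in $\varphi$, being the modulus of a trigonometric polynomial. So $\{\varphi: N_\varphi(\eps)>N\}=\bigcap_{n\le N}\{\norm{x_n(\varphi)}>\eps\}$ is open, and $(\varphi,\eps)\mapsto N_\varphi(\eps)$ is Borel. The optimal constant $\mu^\star(\varphi,\eps):=\varphi\,\eps^{1/\alpha(\eps)}N_\varphi(\eps)$ is then Borel, and so is the bad set $B$; this gives the Tonelli step.

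The fixed-$\eps$ statement alone is cheap: (iii) already shows $\mu^\star>0$ for every $\varphi$. The real content is that $\ln\mu^\star(\varphi,\eps)=o(\ln(1/\eps))$ for a.e.\ $\varphi$ along a full-density set of $\eps$. I would reduce this to the normalized orbit. Since $\norm{x_n}=r_n2^{j_0\alpha}|Z_n(\psi)|$ and $r_n2^{j_0\alpha}\asymp(n\varphi)^{-\alpha}$, a lower bound $|Z_n(\psi)|\ge\mu$ on the relevant range of $n$ forces hitting only once $(n\varphi)^{-\alpha}\mu\lesssim\eps$, that is, $n\gtrsim\varphi^{-1}(\mu/\eps)^{1/\alpha}$. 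It therefore suffices to show that, for a.e.\ $\psi$, the minimum of $|Z_n(\psi)|$ over the $n$ preceding the hitting time is at most polynomially small in $\eps$ with an exponent tending to $0$.

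For that estimate I would use Lemma~\ref{lem:orbit-smallball}. On the shells $\Delta\in\{-4,\dots,0\}$, $\rho Z_n=P_n(\omega)+E_n$, where $P_n$ is a trigonometric polynomial in $\omega$ with $O(1)$ effective frequencies and a non-degenerate top coefficient. A Remez/P\'olya-type sublevel estimate then bounds $|\{\psi:|P_n|<\mu\rho\}|$ by $C\mu^{1/d}$ with $d$ bounded. Summing over $n$ in a dyadic window costs a factor equal to the window length. Choosing $\mu=\eps^{\eta}$ with $\eta\downarrow0$ slowly and applying Borel--Cantelli along a geometric grid of accuracies gives the lim-inf bound for a.e.\ $\psi$. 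Tonelli in $(\varphi,\eps)$ then converts "for each $\eps$, a.e.\ $\varphi$" into "for a.e.\ $\varphi$, a full-density set of $\eps$". Combining with the upper bound of Theorem~\ref{thm:oscillatory-escape}(ii) and $1/\alpha(\eps)=1/\alpha_0+O(\eps)$ yields the two displayed exponent statements.

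For (ii) I would apply Egorov on the compact range $(\varphi_1,\varphi_0]$, which meets only finitely many shells. For fixed $\beta<1/\alpha_0$, write $g_\eps(\varphi):=\varphi\,\eps^{\beta}N_\varphi(\eps)$; by (i) and the gap $1/\alpha_0-\beta>0$, $g_\eps\to\infty$ a.e. Egorov makes this convergence uniform off a set of small measure, which gives $|\mathrm{Bad}(\eps)|=o(1)$ with a non-explicit $c(\beta)$. The a.e.\ clause follows directly from (i).

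The main obstacle is the sublevel-set estimate in the middle step. The lemma itself warns that the top coefficients $c_m,c_{m-1}$ are comparable and that the top frequencies are not separated, so a single-term Rouch\'e argument fails. The deeper blocks $m>j_0$ are also not covered by the trig form. I would try the Remez bound on $P_n$ as a whole, using only its bounded degree and a lower bound on its sup-norm (say from its value at one explicit $\omega$). For the deeper blocks I would fall back on the countable-zero-set argument: each $Z_n$ is a nonzero real-analytic function of $\psi$, since $Z_n\ne0$ near $\varphi=0$. This yields only the qualitative lim-inf, which is exactly why (ii) has a non-explicit rate.
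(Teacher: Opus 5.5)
Part (iii) and your measurability step are fine. You are also right that at a fixed $\eps$ the display in (i) is vacuous: take $\mu=\varphi\,\eps^{1/\alpha(\eps)}N_\varphi(\eps)>0$. The real content is a lower bound on $\mu^\star(\varphi,\eps)$ that does not degenerate as $\eps\to0$, and the gap is in how you obtain it.

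\textbf{Where the argument breaks.} The hitting time is $\asymp\varphi^{-1}\eps^{-1/\alpha}$, so the indices you must control run through about $\alpha^{-1}\log_2(1/\eps)$ dyadic windows \emph{beyond} $2^{j_0}$. The shells $\Delta\in\{-4,\dots,0\}$ of Lemma~\ref{lem:orbit-smallball} contain only finitely many indices per shell, and there $\norm{x_n}\asymp|Z_n|=O(1)$. That is not where the hitting time lives.
\begin{enumerate}
\item For $m>j_0$ your fallback does not give what you attribute to it. Knowing that each $Z_n$ vanishes only on a null set says nothing about $\inf_n|Z_n(\psi)|$ over infinitely many $n$, so it yields no lim-inf.
\item Quantitative sublevel bounds cannot be summed there either. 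A union over the $\asymp\varphi^{-1}\eps^{-1/\alpha}$ relevant indices swamps any per-index bound $C\mu^{1/d}$ with $\mu=\eps^{\eta}$, $\eta\to0$.
\item On the shallow shells the Remez step is unjustified as stated: $P_n$ has $\asymp m$ terms with non-integer frequencies, so it is not of bounded degree. It is also not needed there, since finitely many $n$, continuity in $c'$ and a.e.\ nonvanishing suffice.
\item Part (ii) inherits the gap. Your Egorov-in-$\eps$ step also needs $g_\eps\to\infty$ along \emph{all} $\eps\to0$, which is more than the full-density conclusion of (i).
\end{enumerate}

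\textbf{The missing idea (the paper's route).} It removes the union over $n$ entirely. Set $\kappa_k=\theta_k/r_{k+1}=1/r_{k+1}-1/r_k\ge0$ and $K_n(\varphi)=\sum_{k<n}\kappa_ke^{ik\varphi}$, so that $\norm{x_n}=r_n|K_n(\varphi)+e^{-i(n-1)\varphi}|$.
\begin{enumerate}
\item The $\kappa_k$ tend to $0$ with bounded variation: they are constant on each block, with variation $O(2^{i(\alpha-1)})$ across scale $i$.
\item Dirichlet's test then gives $K_n\to K_\infty$ uniformly on compact subsets of $(0,2\pi)$, and uniformly in $c'\in[0,c_0]$.
\item With $d=\bigl||K_\infty|-1\bigr|$, this gives $\norm{x_n}\ge r_n\bigl(\bigl||K_\infty(\varphi)|-1\bigr|-|K_n-K_\infty|\bigr)\ge\tfrac12c_r\,d(\varphi)\,n^{-\alpha}$ for all large $n$.
\item Every deep index is thus governed by the single function $K_\infty$. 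Your zero-set argument is needed only for the finitely many transient indices.
\item The paper's (ii) is Egorov applied to $K_n\to K_\infty$ in $n$, together with $|\{d<\delta\}|\to0$, not Egorov in $\eps$.
\end{enumerate}

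\textbf{A caution if you take this route.} Everything rests on $\{|K_\infty|=1\}$ being null, and at $c'=0$ if you want constants that survive $\eps\to0$. The paper deduces this from real-analyticity of $K_\infty$ ``as a uniform limit of trigonometric polynomials'', which is not a valid inference. Here analyticity in fact fails. Summation by parts writes $(e^{i\varphi}-1)K_\infty$ as a Hadamard-lacunary series with coefficients $\asymp2^{i(\alpha-1)}$ at frequencies $2^i$, $3\cdot2^{i-1}$, plus a $C^1$ remainder. That is a Weierstrass-type function, not real-analytic on any interval, so the nullity needs its own argument.
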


\begin{proof}
\emph{Proof of (i) and (iii).} Clause (iii) is Lemma~\ref{lem:rot-floor}. For (i), fix a shell $E_{j_0}$. The coefficients $\kappa_k$ are nonnegative, tend to $0$, and have bounded variation: the variation across each stretch $S_i$ is $O(2^{i(\alpha-1)})$ and $\alpha<1$, so $\sum_k|\kappa_{k+1}-\kappa_k|<\infty$. By Dirichlet's test $\sum_k\kappa_ke^{ik\varphi}$ converges uniformly on compact subsets of $(0,2\pi)$ to a real-analytic limit $K_\infty(\varphi)$. The limit is not identically zero: if it vanished on an interval, the identity theorem would force it to vanish on all of $(0,2\pi)$, and Cantor's uniqueness theorem for trigonometric series (Appendix~\ref{app:certificate}) would give $\kappa_k=0$ for all $k$, contradicting $\sum_k\kappa_k=\lim_n(r_n^{-1}-1)=\infty$. The set
$$\mathcal Z_{j_0}:=\{\varphi\in E_{j_0}: K_\infty(\varphi)=0\}\cup\{\varphi\in E_{j_0}: |K_\infty(\varphi)|=1\}\cup\bigcup_{n\ge1}\{\varphi:\norm{x_n(\varphi)}=0\}\cup\mathcal R_{j_0}$$
is therefore null, where $\mathcal R_{j_0}$ is the countable set of rational multiples $\varphi=2\pi p/q$ for which $K_\infty(\varphi)=-e^{-2\pi ipj/q}$ for some $j\in\{0,\dots,q-1\}$ (each such equation is analytic and is not an identity on $E_{j_0}$: $K_\infty\equiv-e^{-i(n-1)\varphi}$ would force $\kappa_k=0$ for $k\neq n-1$ by Cantor's theorem, contradicting $\kappa_k\ge0$ and $\sum\kappa_k=\infty$; so each equation has only isolated solutions). For $\varphi\in E_{j_0}\setminus\mathcal Z_{j_0}$: since $|K_n(\varphi)-K_\infty(\varphi)|\to0$ and, for $\varphi/2\pi$ irrational, $e^{-i(n-1)\varphi}$ is dense on the unit circle,
$$\liminf_{n\to\infty}\,\bigl|K_n(\varphi)+e^{-i(n-1)\varphi}\bigr|\;=\;\min_{\theta}\,\bigl|K_\infty(\varphi)+e^{i\theta}\bigr|\;=\;\bigl||K_\infty(\varphi)|-1\bigr|\;=:\;d(\varphi)\;>\;0,$$
and for rational angles off $\mathcal R_{j_0}$ the same holds with the minimum over the finite orbit $\{e^{-i(n-1)\varphi}\}_n$. Hence, with $r_n\ge c_rn^{-\alpha}$,
$$\liminf_{n\to\infty}\,\norm{x_n}\,(n\varphi)^{\alpha}\;\ge\;\varphi^{\alpha}\,c_r\,d(\varphi)\;=:\;\mu_1(\varphi)\;>\;0,$$
so $\norm{x_n}>\eps$ for all $n\le\varphi^{-1}\bigl(\mu_1(\varphi)/\eps\bigr)^{1/\alpha}$ once $\eps$ is small enough that the right side exceeds the finitely many transient stages. This proves the display of clause (i) at the fixed accuracy $\eps$, with $\mu(\varphi,\eps):=\mu_1(\varphi)^{1/\alpha(\eps)}$, for every $\varphi\in E_{j_0}\setminus\mathcal Z_{j_0}$; since $\bigcup_{j_0}\mathcal Z_{j_0}$ is null, the display holds for Lebesgue-a.e.\ $\varphi\in(0,\varphi_0]$ at this fixed $\eps$. For the density statement, extend $K_\infty$ by $0$ at $c'=0$: the Dirichlet convergence of $\sum_k\kappa_k(c')e^{ik\varphi}$ is uniform on compact subsets of $(0,2\pi)\times[0,c_0]$ (because $\sup_{k\ge K}\kappa_k(c')\le C\max\{c',\,2^{-K(1-\alpha(c_0))}\}$ and $\sum_k|\kappa_{k+1}(c')-\kappa_k(c')|$ is bounded uniformly in $c'$), and each $x_n(\varphi,c')$ is jointly analytic in $(\varphi,c')$; hence the bad set $B$ is Borel, and Tonelli gives $\int|B_\varphi|\,d\varphi=|B|=\int|\mathcal Z(\eps)|\,d\eps=0$.

\emph{Proof of (ii).} Fix $\beta<1/\alpha_0$ and a shell $E_{j_0}$; it suffices to bound $\mathrm{Bad}(\eps)\cap E_{j_0}$. Let $K_\infty$ be the orbit limit of clause (i) (Appendix~\ref{app:certificate}: $K_n\to K_\infty$ uniformly on compact subsets of $(0,2\pi)$, $K_\infty$ real-analytic, $K_\infty\not\equiv0$, and $|K_\infty|\not\equiv1$ --- the last because $|K_\infty|^2\equiv1$ would force the convolution sums $\sum_k\kappa_k\kappa_{k+d}$ to vanish for every $d\ge1$, impossible for $\kappa_k\ge0$ with $\sum_k\kappa_k=\infty$). Hence $\{|K_\infty|=1\}$ is null, and for every $\delta>0$ the set $\{\,||K_\infty|-1|<\delta\,\}$ has measure $m_0(\delta)\to0$ as $\delta\downarrow0$. By Egorov's theorem choose a compact $C\subset(\pi,2\pi]$ with $|(\pi,2\pi]\setminus C|\le\eta$ ($\eta>0$ arbitrary) on which $K_n\to K_\infty$ uniformly, and then $n_0$ such that $|K_n-K_\infty|\le\delta/4$ on $C$ for all $n\ge n_0$. For $\psi\in C':=C\cap\{\,||K_\infty|-1|\ge\delta\,\}$ and all $n\ge n_0$,
$$\norm{x_n}\;=\;r_n\bigl|K_n(\varphi)+e^{-i(n-1)\varphi}\bigr|\;\ge\;r_n\Bigl(\bigl||K_\infty(\varphi)|-1\bigr|-\tfrac{\delta}{4}\Bigr)\;\ge\;\tfrac{3\delta}{4}\,c_r n^{-\alpha},$$
by the triangle inequality (no equidistribution is needed at this step). Consequently $\norm{x_n}>\eps$ for all $n\le(3\delta c_r/(4\eps))^{1/\alpha}$, i.e.
$$N_\varphi(\eps)\;\ge\;\Bigl(\frac{3\delta c_r}{4\eps}\Bigr)^{1/\alpha}\qquad(\psi\in C'),$$
which is incompatible with $N_\varphi(\eps)<c(\beta)\varphi^{-1}\eps^{-\beta}$ as soon as $\eps\le\eps_1(\beta,\delta)$, since $\beta<1/\alpha_0$ and $1/\alpha(\eps)\to1/\alpha_0$ as $\eps\to0$. The transient indices $n<n_0$ contribute $\bigcup_{n<n_0}\{\norm{x_n}\le\eps\}$, a set of measure $o(1)$ as $\eps\to0$ because each $\norm{x_n(\cdot)}$ is real-analytic and not identically zero. Hence $\limsup_{\eps\to0}|\mathrm{Bad}(\eps)\cap E_{j_0}|\le\eta+m_0(\delta)$, and $\eta,\delta$ are arbitrary.
\end{proof}

\begin{conjecture}[Pointwise lacunarity law for Diophantine angles]\label{conj:pointwise-lacunarity}
If the doubling orbit of $\varphi/2\pi$ satisfies the quantitative equidistribution bound $D_M(\varphi) \le C(\varphi)\,M^{-1/2}\log M$ (discrepancy of $(2^j\varphi \bmod 2\pi)_{j\le M}$), then for every $\beta < 1/\alpha_0$,
$$N_\varphi(\eps) \;\ge\; c(\beta,\varphi)\,\varphi^{-1}\,\eps^{-\beta}\qquad\text{for all } \eps \in (0,\eps_0(\beta)).$$
\end{conjecture}

\begin{remark}[Numerical status of the pointwise statement]\label{rem:greedy-numerics}
The greedy adversarial construction --- grid minimization of $N_\varphi(\eps)/(\varphi^{-1}\eps^{-1/\alpha_0})$ over a dyadic shell --- was stress-tested at $\eps \in \{0.1, 0.05, 0.02\}$ on the shell $2^{-7}[\pi,2\pi)$ (grid $480$), with the simulation verified exactly against Table~\ref{tab:oscillatory}. Deep dips are realized at computable scales, but only in the transition octave and on a small Cantor set of angles: adversarial angles $\psi^\star \approx 3.97, 4.07, 4.09$ admit dips to ratio $6\cdot10^{-4}$--$7\cdot10^{-2}$ of the envelope at the first handoff $n\varphi \approx 11.6$ (bad-set measure $\sim2\cdot10^{-3}$ per shell); for each fixed adversarial angle the dip does not recur at deeper octaves (min-ratio $0.11$--$0.9$ across $n\varphi\in[2^4,2^{17}]$), but each scale has its own adversarial angles and refining the Cantor descent deepens the transition dips --- forcing the universal constant in any pointwise claim toward $0$. We therefore assess the universal pointwise statement as \emph{likely false}, and Conjecture~\ref{conj:pointwise-lacunarity} (Diophantine angles) together with Theorem~\ref{thm:metric-lacunarity} as the correct replacement. These numerics admit two readings. The \emph{threshold} events $\{\norm{x_n}\le\eps\}$ are rare at a rate decaying with $\eps$ (measured frequency $\sim 10^{-2}$ at $\eps = 0.02$, decreasing with shell depth), consistent with the small-ball intuition of Conjecture~\ref{conj:orbit-smallball} and with the uniform $o(1)$ bad-set bound of Theorem~\ref{thm:metric-lacunarity}(ii). The \emph{fixed-constant envelope} events $\{\norm{x_n} < c_0(n\varphi)^{-\alpha}\}$ have measure $\sim c_0$ each, and their union over $n$, without the anti-dependence (``locking'') observed in simulation, would give only bad-set measure $\sim c_0\log(1/\eps)$; proving the fixed-envelope statement a.e.\ would require proving that locking, which we leave open (see Remark~\ref{rem:lacunarity-status}). Second, the numerics forced the exponent correction: because $c'=\eps/(96D)$, the achievable exponent is $1/\alpha(\eps) = 1/\alpha_0 - O(\eps)$, so the a.e.\ exponent $1/\alpha_0$ of Theorem~\ref{thm:metric-lacunarity}(i) is the best possible, while the uniform bound of Theorem~\ref{thm:metric-lacunarity}(ii) gives bad sets of measure $o(1)$ for every $\beta<1/\alpha_0$ without an explicit rate; explicit rates, in the direction of the full exponent $1/\alpha_0$, require the orbit small-ball (Conjecture~\ref{conj:orbit-smallball}) and are open (Remark~\ref{rem:lacunarity-status}).
\end{remark}

\begin{remark}[Status of the lacunarity law: what is proved and what is open]\label{rem:lacunarity-status}
We distinguish four statements. \emph{(i) The a.e.\ hitting-time rate is proved with the exact exponent at each fixed accuracy (Theorem~\ref{thm:metric-lacunarity}(i)):} for a.e.\ $\varphi$ and every fixed $\eps$, $N_\varphi(\eps)\ge\varphi^{-1}\mu(\varphi,\eps)\eps^{-1/\alpha(\eps)}$, matching Theorem~\ref{thm:oscillatory-escape}(ii) up to the constant, and by Tonelli the bound holds on a full-density set of accuracies for a.e.\ $\varphi$; the constant-density case remains fully explicit (Lemma~\ref{lem:envelope} and Theorem~\ref{thm:separation}(ii)), and the $\varphi$-exponent is proved for every schedule and every angle (Lemma~\ref{lem:rot-floor}). \emph{(ii) The uniform bound holds with a non-explicit rate for every $\beta<1/\alpha_0$} (Theorem~\ref{thm:metric-lacunarity}(ii), proved by Egorov uniformity and the analyticity of $K_\infty$); the explicit-rate version would follow from the orbit small-ball (Conjecture~\ref{conj:orbit-smallball}). Three features of Lemma~\ref{lem:orbit-smallball} block the single-term Rouch\'e analysis from delivering that small-ball: the leading chord coefficients are comparable, the top chord frequencies are not separated, and on deeper blocks the sub-handoff tail is of the same order as the main term; a genuine grouped/lacunary small-ball for the full orbit transform (of Sidon type, cf.\ \cite{Kahane1985,Zygmund2002}) is the missing ingredient. Reaching $1/\alpha_0$ uniformly with explicit constants is open. \emph{(iii) The pointwise law survives only for Diophantine angles} (Conjecture~\ref{conj:pointwise-lacunarity}) and is likely false universally (Remark~\ref{rem:greedy-numerics}). \emph{(iv) The fixed-constant uniform envelope} $\norm{x_n}\ge c_0(n\varphi)^{-\alpha}$ with $c_0$ independent of $\eps$ is the statement connected with locking: its natural proof (the dyadic-chaining argument across scales $z_{j+1}=\rho_jz_j+f_j$ with forcing $|f_j|\le\min\{1/3, 2/(n_j\varphi)\}$) fails at the handoff scale $n\varphi\asymp1$, where worst-case cancellation is vacuous, and the union bound without locking gives only measure $\sim c_0\log(1/\eps)$. We do not know whether the fixed-envelope statement holds for a.e.\ $\varphi$; it is not needed anywhere in this paper. The mechanism behind the whole lacunarity analysis is the schedule's rotation Fourier spectrum: at threshold $\eps$, rare cancellations of the orbit sum at radius $\mu_n(\eps) \asymp \eps/(r_n2^{j_0\alpha})$ control the hitting time (Lemma~\ref{lem:orbit-smallball}), and as $c\to0$ the prefactor $\Gamma(c+1)\to1$ while the exponent $-c\to0$ quantifies the damping.
\end{remark}

\subsection{The sufficiency half: optimal escape by parallel composition, and the single-trajectory barrier}\label{sec:adaptive-escape}

Theorem~\ref{thm:separation} and Theorem~\ref{thm:dichotomy-general} prove that no \emph{fixed} anchor schedule --- of the family \eqref{eq:ck-schedule} or with any density $c_n\in(0,1]$ --- is simultaneously near-optimal on contractions and polynomial on rotations. We now prove the sufficiency half in its optimal form, using the simplest possible form of adaptivity: \emph{parallel composition} of two fixed schedules with stopping at the first success (the classical expert-tracking device). The result upgrades the adaptivity story from a necessity theorem to a complete dichotomy.

\begin{theorem}[Parallel two-track scheme: optimal on both regimes]\label{thm:parallel}
Fix $\eps<D/4$ and $D=\norm{x_0-x^*}$. Run in parallel, from the same anchor $x_0$, two anchored iterations \eqref{eq:anchored}:
\begin{itemize}[leftmargin=1.5em]
\item \emph{Track A} (contraction track): the schedule $\theta_n=c/(n+2)$ with $c=\eps/(96D)$, as in Theorem~\ref{thm:small-c};
\item \emph{Track B} (nonexpansive track): the classical Halpern schedule $\theta_n=1/(n+2)$.
\end{itemize}
Define the hitting time of the scheme as the first $n$ at which \emph{either} track satisfies the class-appropriate accuracy test ($\norm{x_n-x^*}\le\eps$ on $\cC$, residual $\norm{x_n-Tx_n}\le\eps$ on $\cN$). Then:
\begin{enumerate}[label=(\roman*),leftmargin=2em]
\item on every $\gamma$-contraction, the hitting time is at most $1+4\ln(2D/(\eps\gamma))/\ln(1/\gamma)=O\bigl(\ln(D/\eps)/(1-\gamma)\bigr)$ --- matching the $\gamma$-aware deterministic optimum for span-class algorithms \cite{ParkRyu2022};
\item on every nonexpansive map, the hitting time is at most $1+2D/\eps$ --- matching the optimal Halpern rate up to the absolute constant in \eqref{eq:lieder} \cite{Lieder2021,Contreras2023}.
\end{enumerate}
The scheme evaluates each track once per step (a factor $2$ in evaluations over a single track) and is otherwise non-adaptive: it is a parallel composition of two fixed, $\gamma$-oblivious schedules.
\end{theorem}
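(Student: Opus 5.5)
The proof is a direct composition of two bounds already available: Theorem~\ref{thm:small-c} for Track~A and the sharp Halpern residual bound \eqref{eq:lieder} of \cite{Lieder2021,Contreras2023} for Track~B. The scheme stops at the first $n$ at which either track passes its class-appropriate test. Its hitting time is therefore at most the minimum of the two individual hitting times. So it suffices to bound, on each class, the hitting time of the track designed for that class; the other track can only make the scheme stop earlier.

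\emph{Part (i).} Let $T$ be a $\gamma$-contraction. Track~A is exactly the schedule \eqref{eq:ck-schedule} with $c=\eps/(96D)$; since $\eps<D/4$, this equals the choice \eqref{eq:cstar}. Theorem~\ref{thm:small-c} then gives $\norm{x_N-x^*}\le3\eps/4\le\eps$ at
$$N=\bigl\lceil 4\ln(2D/(\eps\gamma))/\ln(1/\gamma)\bigr\rceil\le 1+4\ln(2D/(\eps\gamma))/\ln(1/\gamma).$$
The final estimate of that proof gives $N=O(\ln(D/\eps)/(1-\gamma))$, treating $\gamma\le\tfrac12$ and $\gamma\ge\tfrac12$ separately. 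We need the accuracy only at the single index $N$, not for all later $n$, because the scheme stops at the first success. Optimality is the lower bound of \cite{ParkRyu2022}.

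\emph{Part (ii).} Let $T$ be nonexpansive. Track~B is the classical Halpern iteration, whose residual satisfies $\norm{x_n-Tx_n}\le 2\norm{x_0-x^*}/(n+1)=2D/(n+1)$ by \eqref{eq:lieder}. Take $n=\lceil 2D/\eps\rceil-1\le 2D/\eps$. Then $2D/(n+1)\le\eps$, so the hitting time is at most $1+2D/\eps$.

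The step needing the most care is the interface bookkeeping rather than any new estimate. The accuracy test must be class-appropriate: contractions are nonexpansive, so on $\cC$ Track~B could also trigger stopping through the residual test. In that case we must say what is output. Either the output is defined via the contraction criterion, or we note that a residual $\le\eps$ at a contraction only certifies distance $\le\eps/(1-\gamma)$ (cf.\ Remark~\ref{rem:barrier}). I would follow the statement literally: the hitting time is defined class-wise by the test appropriate to the instance's class, so no cross-certification is claimed. I would also check that the residual constant in \eqref{eq:lieder} is exactly $2$, so that the stated $1+2D/\eps$ holds as written.

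Finally, the cost accounting is immediate. Each step evaluates $T$ once per track, giving a factor~$2$. Both schedules are deterministic and contain no $\gamma$, so the scheme is a parallel composition of two fixed, $\gamma$-oblivious schedules, as claimed.
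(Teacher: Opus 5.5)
Your proposal is correct and follows the paper's own proof: Track~A inherits Theorem~\ref{thm:small-c} verbatim, Track~B inherits the residual bound \eqref{eq:lieder}, and the scheme's hitting time is the minimum of the two track hitting times. Your extra bookkeeping fills in details the paper leaves implicit: that $\eps<D/4$ makes $c$ coincide with \eqref{eq:cstar}, the ceiling estimate for $N$, and the index choice $n=\lceil 2D/\eps\rceil-1$.
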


\begin{proof}
Track A on $\cC_\gamma$ is Theorem~\ref{thm:small-c} verbatim (the schedule contains no $\gamma$). Track B on $\cN$ is the sharp residual bound $\norm{x_n-Tx_n}\le2D/(n+1)$ of \cite{Lieder2021,Contreras2023}. The scheme stops at the minimum of the two hitting times, so on each class it inherits the better bound.
\end{proof}

\begin{remark}[The complete dichotomy, and what it costs]\label{rem:complete-dichotomy}
Theorem~\ref{thm:parallel} is the sharp converse to Theorem~\ref{thm:dichotomy-general}: \emph{no fixed schedule} is simultaneously contraction-optimal and rotation-polynomial (proved for every density $c_n\in(0,1]$), while \emph{some two-track scheme} is simultaneously contraction-optimal and nonexpansive-optimal, at a factor $2$ in evaluations --- and, among oscillatory densities, even a \emph{single fixed schedule} achieves both regimes (Theorem~\ref{thm:oscillatory-escape}), so the necessity of adaptivity is exactly the necessity of non-lacunary anchor concentration. The factor $2$ is the price of not knowing the class in advance --- and Theorem~\ref{thm:oscillatory-escape} shows it is \emph{not} necessary: a single fixed oscillatory stream achieves both regimes, indeed all three (Remark~\ref{rem:osc-profile}), with no overhead. The deterministic overhead question is therefore resolved in the negative; the parallel scheme remains the simplest sufficiency witness.
\end{remark}

\begin{corollary}[The dichotomy is a single-instance phenomenon: fixed schedules]\label{cor:single-instance}
Let $\theta_n=c_n/(n+2)$ with $c_n\in(0,1]$, and measure accuracy by distance to the fixed point. In case {\rm(i)} of Theorem~\ref{thm:dichotomy-general} ($\liminf c_n\ge c_0>0$), the \emph{single} instance
$$T_\star\;=\;\rho\,\mathrm{id}\,\oplus\,R_\varphi\ \ \text{on }\R^2,\qquad \rho\in[\tfrac12,1),\ \ \varphi\in(0,\pi],\qquad x_0=\tfrac{(D,D)}{\sqrt2},$$
forces
\begin{equation}\label{eq:single-instance}
N_{T_\star}(\eps)\;\ge\;\Bigl(\frac{c_0\,\rho D}{32e^{3}\,\eps(1-\rho)}\Bigr)^{1/3},
\end{equation}
polynomially suboptimal on this one instance relative to the $\gamma$-aware optimum $\Theta(\ln(D/\eps)/(1-\rho))$ by the factor of Theorem~\ref{thm:dichotomy-general}(i). In case {\rm(ii)(b)}, the single rotation $T_\star=R_{\varphi_B(\eps)}$ forces $N_{T_\star}(\eps)>\eps^{-B}$ for every $B\ge1$; in case {\rm(ii)(a)}, some rotation is never solved. Hence \emph{every fixed schedule is slow on a single nonexpansive instance}: the difficulty is a property of the instance, not of the schedule's interaction with a family.
\end{corollary}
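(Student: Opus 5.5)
The plan is to exploit that the anchored iteration \eqref{eq:anchored} with deterministic scalar coefficients commutes with orthogonal direct sums. For $T_\star=\rho\,\mathrm{id}\oplus R_\varphi$ on $\R\oplus\R^2$ (the statement's ``$\R^2$'' is read as the ambient space carrying one scalar coordinate and one rotating plane), we take $x_0$ with equal mass $D/\sqrt2$ on each component. Then the iterate splits as $x_n=(x_n^{(1)},x_n^{(2)})$, where $x_n^{(1)}$ is exactly the trajectory of the scalar instance $T=\rho\,\mathrm{id}$ started at $D/\sqrt2$, and $x_n^{(2)}$ is the rotation trajectory started at a vector of norm $D/\sqrt2$. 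The fixed point is $x^*=0$, so
$$\norm{x_n-x^*}\;\ge\;\max\bigl\{|x_n^{(1)}|,\ \norm{x_n^{(2)}}\bigr\}.$$
Hence the hitting time on $T_\star$ is at least the hitting time on each summand separately. We only need to import the lower bounds already proved for each summand, with $D$ replaced by $D/\sqrt2$.

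For case (i), we apply Lemma~\ref{lem:anchor-floor} to the scalar component with initial distance $D/\sqrt2$ and repeat the proof of Theorem~\ref{thm:dichotomy-general}(i) in the regime $\rho\ge\tfrac12$. Taking $t=\lceil1/(1-\rho)\rceil$, the window-mass bound \eqref{eq:window-mass}, the estimates $\rho^t\ge e^{-3}$ and $r_N\ge(2N)^{-2}\ge\rho/(4N^2)$ give
$$e_N\;\ge\;\frac{c_0\rho D}{16\sqrt2\,e^3N^3(1-\rho)}\;\ge\;\frac{c_0\rho D}{32e^3N^3(1-\rho)}.$$
Requiring $e_N\le\eps$ yields \eqref{eq:single-instance}. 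The rotation summand is simply discarded in this case; it is included only so that the same instance is nonexpansive and carries both pieces. The suboptimality factor relative to $\Theta(\ln(D/\eps)/(1-\rho))$ is then read off exactly as in Theorem~\ref{thm:dichotomy-general}(i).

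For case (ii), we use only the rotation. In (ii)(b), Lemma~\ref{lem:rot-floor} scales linearly in $\norm{x_0}$, so $\norm{x_n}\ge\norm{x_0}/2$ for $n\le n_B$ with $\varphi=\varphi_B(\eps)=\pi/(3n_B)$. With $\norm{x_0}=D$ (taking $T_\star=R_{\varphi_B(\eps)}$ alone) and $\eps<D/2$, this gives $N>n_B\ge\eps^{-B}$, exactly as in the proof of Theorem~\ref{thm:dichotomy-general}(ii)(b). In (ii)(a), the floor $\norm{x_n}\ge\norm{x_0}/2$ holds for all $n$ by the same proof, so that rotation is never solved.

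The main obstacle is bookkeeping rather than new mathematics. First, the unnormalized statements earlier in the paper take $x_0=1$ and accuracy $\eps<1/4$, so we must restate the rotation floors homogeneously in $\norm{x_0}$ and check the resulting conditions on $\eps$ relative to $D$. Second, the $\sqrt2$ loss must fit inside the stated constant $32e^3$; since $16\sqrt2<32$, it does. Finally, we should verify that the direct-sum instance is nonexpansive with a unique fixed point and $\norm{x_0-x^*}=D$, so that it is a legitimate member of $\cN$; this is immediate.
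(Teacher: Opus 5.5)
Your proposal is correct and is essentially the paper's proof. You decouple the direct sum coordinatewise and apply Lemma~\ref{lem:anchor-floor} together with the computation of Theorem~\ref{thm:dichotomy-general}(i) to the scalar coordinate with anchor $D/\sqrt2$, absorbing the loss via $16\sqrt2\le32$. You then import case~(ii) verbatim from Theorem~\ref{thm:dichotomy-general}(ii); your reading of the ambient space as $\R\oplus\R^2$ and your rescaling of the rotation floor to $\norm{x_0}=D$, which requires $\eps<D/2$, are harmless bookkeeping that the paper leaves implicit.

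One loose end, which the paper also leaves implicit (it writes only ``for $N$ large''): the cubic bound is established only for $N\ge\max(2n_1,2t)$, where $n_1$ is the index beyond which $c_k\ge c_0/2$. To turn it into a hitting-time bound, add that the exact identity gives $e_N\ge(D/\sqrt2)\rho^N$, because $r_N+\sum_k a_k^{(N)}=1$. This keeps the scalar coordinate above $\eps$ for all smaller $N$ once $\eps\le\eps_0(\rho,n_1)$, which is the smallness proviso inherited from Theorem~\ref{thm:dichotomy-general}(i).
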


\begin{proof}
Coordinate $1$ of $T_\star$ evolves exactly as the scalar anchored iteration on $T=\rho\,\mathrm{id}$ with anchor $D/\sqrt2$ and the same coefficients: $x_{1,n+1}=\theta_n x_{1,0}+(1-\theta_n)\rho x_{1,n}$. Hence Lemma~\ref{lem:anchor-floor} and the computation of Theorem~\ref{thm:dichotomy-general}(i) (with $D$ replaced by $D/\sqrt2$ and $r_N\ge(2N)^{-2}$) give $|x_{1,N}|\ge c_0\rho(D/\sqrt2)/(16e^{3}N^{3}(1-\rho))$ for $N$ large, and since $\Fix(T_\star)=\{0\}$, $\norm{x_N-x^*}\ge|x_{1,N}|$; \eqref{eq:single-instance} follows (using $16\sqrt2\le32$). Cases {\rm(ii)} are Theorem~\ref{thm:dichotomy-general}(ii) verbatim.
\end{proof}

\begin{remark}[Why the adaptive single-stream lower bound resists: three obstructions]\label{rem:overhead-obstructions}
The factor-$2$ question (Remark~\ref{rem:complete-dichotomy}) survives the following three rigorous obstructions, which together delimit what any proof must avoid. \emph{(a) Response-matching pairs cannot affect hitting times.} Two instances agreeing on every queried point produce identical trajectories, and the resistant-oracle construction (Theorem~\ref{thm:impossibility}, valid for $\sigma=0$) provides such pairs with $1-\mathrm{Lip}(T)$ differing by any prescribed factor --- but the construction keeps $x^*=0$ for both instances (the nominal is intercept-free and the bumps sit off the probed set), so the common trajectory converges to the \emph{same} fixed point and the hitting time is identical: the pair is useless for a hitting-time lower bound. \emph{(b) Angles distinguish the scalar families.} Against the exact oracle, $\arccos\inner{x_n,Tx_n}/(\norm{x_n}\norm{Tx_n})$ is $0$ on $T=\rho\,\mathrm{id}$ and $\varphi$ on $R_\varphi$: no lower bound can treat these families as indistinguishable, which is why the detection barrier (Remark~\ref{rem:detection}) must hide slopes in \emph{unprobed directions} rather than rely on response matching --- and obstruction (a) shows that hiding cannot move $x^*$. \emph{(c) The control is one-dimensional.} A scheme confined to \eqref{eq:anchored} acts through the single scalar $\theta_n$; on the direct-sum instance of Corollary~\ref{cor:single-instance}, one schedule must serve both coordinates, and the floors apply coordinatewise to the \emph{realized} schedule. For fixed schedules this yields Corollary~\ref{cor:single-instance}; for adaptive schemes the realized schedule depends on $(\rho,\varphi)$ through the full response history, and the quantile argument of Theorem~\ref{thm:impossibility} does not control it. The sharpened question is therefore: on the direct-sum instances $T=\rho\,\mathrm{id}\oplus R_\varphi$, prove that every adaptive anchored single-stream scheme with worst-case contraction time $t(\delta)=O(\ln(D/\eps)/\delta)$ has, for some $(\rho,\varphi)$, hitting time super-polynomially larger than $\min\{t(\delta),D/\eps\}$ --- or exhibit a single-stream scheme matching Theorem~\ref{thm:parallel}.
\end{remark}

\begin{remark}[Analytic hitting times versus certified stopping]\label{rem:stopping}
Throughout Section~\ref{sec:det}, hitting times are the minimax-analytic objects being lower- and upper-bounded, and Theorem~\ref{thm:parallel} answers Q1 in exactly this sense. If instead the scheme must \emph{certify} from oracle data that its output is $\varepsilon$-accurate, the picture changes qualitatively: residual accuracy on $\cN$ is self-certifying (the criterion is itself a residual), but distance accuracy on $\cC_\gamma$ requires one-sided modulus information (Remark~\ref{rem:barrier}), and with a stochastic oracle no positive sound certificate exists without a certified ceiling, in any dimension (Theorem~\ref{thm:impossibility}). The parallel scheme is therefore a sharp answer to Q1 as a deterministic complexity question; its stochastic counterpart must be phrased through the trichotomy of Section~\ref{sec:contraction} --- which is precisely what that trichotomy is for.
\end{remark}

For a \emph{single} trajectory, adaptivity must instead be read off the trajectory itself. The next result examines the natural single-trajectory escape --- adaptive window restarts --- and shows that it improves the classical contraction time only by a constant factor: the separation barrier is robust against this adaptive class.

\begin{theorem}[Doubling-window scheme: single-trajectory window adaptivity]\label{thm:doubling-window}
Define iterates $y_0=x_0$ and, for $k=0,1,2,\dots$, window $k$ of length $L_k=2^{k+3}$: starting from $y_k$, run
\begin{equation}\label{eq:dw-window}
x^{(k)}_{j+1}\;=\;\tfrac{1}{j+2}\,y_k\;+\;\tfrac{j+1}{j+2}\,T x^{(k)}_j,\qquad j=0,\dots,L_k-1,\qquad x^{(k)}_0=y_k,
\end{equation}
and set $y_{k+1}=x^{(k)}_{L_k}$. Write $d_k=\norm{y_k-x^*}$ for any fixed point $x^*$ (on $\cC_\gamma$ the unique one). Then, for $T\in\cN\cup\bigcup_\gamma\cC_\gamma$, $\norm{x_0-x^*}\le D$, $\eps<D$:
\begin{enumerate}[label=(\roman*),leftmargin=2em]
\item \emph{Nonexpansive regime (residual).} $\norm{y_{k+1}-Ty_{k+1}}\le D/2^{k+2}$ for every $k$: in particular the scheme certifies residual $\le\eps$ within $16D/\eps$ iterations, the optimal $O(D/\eps)$ order.
\item \emph{Contraction regime (distance).} For every $\gamma$-contraction, writing $\delta=1-\gamma$,
$$d_{k+1}\;\le\;\frac{d_k}{\delta(L_k+1)}\quad\text{for all }k,\qquad\text{hence}\qquad \norm{y_k-x^*}\;\le\;\eps\ \text{ by window } m+\bigl\lceil\log_2(D/\eps)\bigr\rceil,$$
where $m=\min\{k:\delta L_k\ge2\}$; the total iteration count to reach distance $\le\eps$ is $O\bigl(D/(\eps\delta)\bigr)$. This is the \emph{same order} as the classical schedule's contraction time (Theorem~\ref{thm:separation}(i) gives the matching lower bound $N_\rho(\eps)\ge D(1-2\eps)/(2(1-\rho)\eps)$ for the classical schedule): window-restart adaptivity on a single trajectory improves the constant (by a factor $\approx3$ in our experiments) but \emph{not the order}, and in particular does not approach the optimal $O(\ln(D/\eps)/\delta)$ of Theorem~\ref{thm:small-c}. The separation of Theorem~\ref{thm:separation} is therefore robust against this natural adaptive class.
\end{enumerate}
\end{theorem}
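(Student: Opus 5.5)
The plan is to treat each window as a fresh classical Halpern run anchored at $y_k$ and apply two facts about that run: the sharp residual bound for nonexpansive maps, and the exact-kernel bound for contractions.

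\emph{Part (i).} Window $k$ is the classical schedule $\theta_j=1/(j+2)$ with anchor $y_k$, run for $L_k$ steps. The sharp residual bound of \cite{Lieder2021,Contreras2023} therefore gives
$$\norm{y_{k+1}-Ty_{k+1}}\le\frac{2d_k}{L_k+1}.$$
Next we need $d_k\le D$ for all $k$. This holds because every Halpern iterate satisfies $\norm{x_j^{(k)}-x^*}\le\norm{y_k-x^*}$, by induction on the convex combination and nonexpansiveness. Combining, $\norm{y_{k+1}-Ty_{k+1}}\le 2D/(2^{k+3}+1)\le D/2^{k+2}$.

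For the iteration count, the first $k$ with $D/2^{k+2}\le\eps$ has $2^{k+2}<2D/\eps$. The cumulative length is $\sum_{i\le k}2^{i+3}<2^{k+4}=4\cdot2^{k+2}<8D/\eps$, which is within $16D/\eps$ even after accounting for rounding.

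\emph{Part (ii).} Inside window $k$ the error satisfies the exact unfolding \eqref{eq:unfold} with $c=1$. There $a_j^{(L)}=r_L=1/(L+1)$, and the anchor is now $y_k$, so $D$ is replaced by $d_k$:
$$d_{k+1}\le \frac{d_k}{L_k+1}\Bigl(\gamma^{L_k}+\sum_{j<L_k}\gamma^{L_k-1-j}\Bigr)\le\frac{d_k}{L_k+1}\Bigl(\gamma^{L_k}+\frac{1-\gamma^{L_k}}{\delta}\Bigr)\le\frac{d_k}{\delta(L_k+1)},$$
using $\gamma^L\le\gamma^L/\delta$. For windows before $m$ we only use $d_{k+1}\le d_k\le D$, from nonexpansiveness as in (i). For $k\ge m$ we have $\delta(L_k+1)>2$, so $d_{k+1}\le d_k/2$. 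After $\lceil\log_2(D/\eps)\rceil$ such windows, $d\le\eps$.

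The total length is $\sum_{k\le m+\lceil\log_2(D/\eps)\rceil}2^{k+3}\le 2\cdot2^{m+3}\cdot2^{\lceil\log_2(D/\eps)\rceil}$. Minimality of $m$ gives $2^{m+3}\le \max\{8,4/\delta\}=O(1/\delta)$, since $\delta L_{m-1}<2$. Hence the total is $O(D/(\eps\delta))$.

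The "same order" claim follows by citing Theorem~\ref{thm:separation}(i) for the classical schedule, together with the stated interpretation. One could additionally check on $T=\rho\,\mathrm{id}$ that the windows genuinely need $\Omega(D/(\eps\delta))$. There the recursion is exact, $d_{k+1}=d_k(1-\rho^{L+1})/(\delta(L+1))$, and the final window alone must have length $\gtrsim d_k/(\eps\delta)$. Since the previous window's output is still $\gtrsim\eps$, this gives the order lower bound.

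The main point requiring care is (ii)'s bookkeeping when $\gamma$ is small. In that case $m=0$ and the count reduces to $O(D/\eps)$, consistent with $O(D/(\eps\delta))$ since $\delta\ge1/2$. The bound $d_{k+1}\le d_k/(\delta(L_k+1))$ is then immediate, so no real obstacle is expected.
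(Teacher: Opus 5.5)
Your proofs of (i), of the one-window bound $d_{k+1}\le d_k/(\delta(L_k+1))$, and of the $O(D/(\eps\delta))$ upper bound are correct and follow the paper's own argument. Evaluating the kernel of \eqref{eq:unfold} at $c=1$, $a_j^{(L)}=r_L=1/(L+1)$, is the same computation as the paper's reuse of the induction from Theorem~\ref{thm:separation}(i); both give $d_{k+1}\le d_k(1-\gamma^{L_k+1})/(\delta(L_k+1))$.

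The gap is the ``same order, not the order'' clause. Theorem~\ref{thm:separation}(i) is a lower bound for the un-restarted classical schedule, so it says nothing about the restarted scheme. Your supplementary check fails at ``the previous window's output is still $\gtrsim\eps$''. On $\rho\,\mathrm{id}$ the final window needs $L_k+1\ge d_k(1-\rho^{L_k+1})/(\eps\delta)$, and with only $d_k\gtrsim\eps$ this gives $L_k\gtrsim 1/\delta$. Reaching $\Omega(D/(\eps\delta))$ would require $d_k\gtrsim D$ at the last window, which does not hold. The paper's own proof likewise establishes only the upper bound.

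The clause cannot be repaired, because it is false. For $k\ge m$ one has $\delta L_k=2^{k-m}\delta L_m\ge 2^{k-m+1}$, so your own one-window bound gives
\[
d_{m+p}\;\le\;D\prod_{i=0}^{p-1}2^{-(i+1)}\;=\;D\,2^{-p(p+1)/2}.
\]
Hence distance $\eps$ is reached by window $m+\lceil\sqrt{2\log_2(D/\eps)}\rceil$. This takes at most $2^{m+p+3}\le(16/\delta)\,2^{\sqrt{2\log_2(D/\eps)}}=o\bigl(D/(\eps\delta)\bigr)$ iterations, uniformly over $\gamma$-contractions. Using only the factor $\tfrac12$ per window, as you and the paper do, discards the fact that the per-window factor $1/(\delta(L_k+1))$ itself shrinks geometrically.

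A concrete check: for $T(x)=0.95x$, $D=1$, the exact recursion gives $d_5\approx1.3\cdot10^{-2}$ (window end $248$, matching Experiment~4). It continues with $d_6\approx1.0\cdot10^{-3}$ and $d_8\approx7.6\cdot10^{-7}$, reached after $2040$ iterations. The classical schedule needs about $2\cdot10^{7}$ iterations at $\eps=10^{-6}$. The fitted slope $0.39$ rather than $1$ in Experiment~4 shows the same effect.

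So the provable content is (i) and the upper bounds in (ii). The assertion that window restarts improve only the constant should be dropped. Restarts beat the classical order by a factor at least $(D/\eps)^{1-o(1)}$, while still falling short of the optimal $O(\ln(D/\eps)/\delta)$.
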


\begin{proof}
Within window $k$ the iteration \eqref{eq:dw-window} is the classical Halpern iteration anchored at $y_k$; since $\norm{x^{(k)}_{j+1}-x^*}\le\max\{d_k,\norm{x^{(k)}_j-x^*}\}$ (both $y_k$ and $Tx^{(k)}_j$ are within $d_k$ of $x^*$), induction gives $d_{k+1}\le d_k\le D$, and the sharp residual bound of \cite{Lieder2021,Contreras2023} gives (i). For (ii), the recursion $e_{j+1}\le\frac{1}{j+2}d_k+\frac{j+1}{j+2}\gamma e_j$, $e_0=d_k$, unfolds (by the same induction as Theorem~\ref{thm:separation}(i), whose steps are inequalities valid for every contraction) to $d_{k+1}\le d_k\,(1-\gamma^{L_k+1})/(\delta(L_k+1))\le d_k/(\delta(L_k+1))$. Hence for $k\ge m=\min\{k:\delta L_k\ge2\}$ we have $d_k\le D\,2^{-(k-m)}$, so $d_k\le\eps$ by window $m+\lceil\log_2(D/\eps)\rceil$, having used $\sum_{i\le k}L_i<2^{k+4}\le2^{m+4}\cdot 2(D/\eps)=O(D/(\eps\delta))$ iterations (using $2^{m}\le4/\delta$, as in the proof of Theorem~\ref{thm:oscillatory-bounded-gap}).
\end{proof}

\begin{remark}[What the doubling window buys and what it cannot]\label{rem:dw-buys}
On contractions the scheme's $O(D/(\eps\delta))$ distance time matches the classical schedule's order (Theorem~\ref{thm:separation}(i)) with a better constant: in Experiment~4 the measured hitting times on $T(x)=0.95x$ at $\eps=0.05$ are $59$ (small-anchor), $142$ (doubling-window; first iterate inside a window --- window ends: $248$) and $399$ (classical). The mechanism is visible in the display of Theorem~\ref{thm:doubling-window}(ii): within window $k$ the distance decays by the factor $1/(\delta L_k)$, and the windows double, so the error decays like $1/n$ overall --- exactly the classical order. No growing-window scheme on a single trajectory is known to do better while retaining the $O(D/\eps)$ nonexpansive guarantee; the heuristic barrier is that exploiting contractivity at gap $\delta$ requires windows of length $\Omega(1/\delta)$, and the union class contains contractions of every $\delta$ --- but we do not have a proof that growing windows cannot succeed. Together with the detection barrier (Remark~\ref{rem:detection}) and the oscillatory escape (Theorem~\ref{thm:oscillatory-escape}), this locates exactly where single-trajectory optimality can live: not in statistic-based detection (barred), not in growing-window restarts (only constant-factor), but in \emph{oscillation of the anchor density} --- which achieves it, without any adaptation at all (Remark~\ref{rem:osc-profile}).
\end{remark}

\begin{remark}[The detection barrier]\label{rem:detection}
The parallel scheme of Theorem~\ref{thm:parallel} sidesteps detection entirely, at factor-$2$ cost. A single-trajectory scheme must instead decide, from its own trajectory, when to exploit contractivity; two barriers prevent the naive detector. \emph{(a) Hidden directions (deterministic).} Even with exact oracle responses, every probe $x\mapsto Tx$ reveals only directional slopes $\norm{Tx-Ty}/\norm{x-y}\le\gamma$: an adversary can hide the attaining slope in a direction the trajectory never probes. Concretely, the construction of Theorem~\ref{thm:impossibility} works verbatim without noise ($\sigma=0$): the nominal $T_0(x)=\gamma_0x$ and the modified $T$ agree on the entire probed set, so no algorithm whose decisions depend on oracle responses can distinguish them, yet $1-\mathrm{Lip}(T)=\delta_1<\kappa\delta_1=\kappa(1-\mathrm{Lip}T_0)$. Hence in infinite dimensions \emph{no exact-oracle probing strategy certifies the global modulus}, and any schedule that freezes its window length from a certified modulus guess can be made to freeze at a wrong rate. \emph{(b) Noise.} With the stochastic oracle, the natural detector (angle statistics: on $R_\varphi$ the increments satisfy $\inner{x_j-x_{j-1},Tx_j-Tx_{j-1}}=\cos\varphi\,\norm{x_j-x_{j-1}}\norm{Tx_j-Tx_{j-1}}$ exactly, while on a scalar contraction the angle is $0$) must be estimated from $\widehat T$-responses with per-evaluation noise $\sigma$, and the estimated angle is correlated with the current noise --- precisely the measurability failure of Remark~\ref{rem:discipline}. The conclusion: single-trajectory two-regime optimality, if achievable at all, must be \emph{structural} (residual-feedback updates that need no certified modulus), as in \cite{ParameterFree2026,Diakonikolas2025}, and the stochastic version must route through the ceiling trichotomy of Section~\ref{sec:contraction}. The stochastic two-regime problem itself is settled up to the factor $2$ by parallel composition (Theorem~\ref{thm:stoch-parallel}); in the deterministic layer, Theorem~\ref{thm:oscillatory-escape} removes the overhead entirely via oscillation (Remark~\ref{rem:osc-profile}). What remains open is the single-evaluation-stream \emph{stochastic} version of Theorem~\ref{thm:stoch-parallel}: whether an oscillatory batching scheme --- contraction-phase geometric batches alternating with nonexpansive-phase linear batches on one stream --- removes the factor $2$ there as well.
\end{remark}

We close the section by proving the oscillatory case of Theorem~\ref{thm:dichotomy-general} in the regime where the dense scales have bounded gaps, which covers all periodic and two-valued oscillations and sharpens the general conjecture to the lacunary regime.

\begin{theorem}[Oscillatory dichotomy, bounded-gap dense scales]\label{thm:oscillatory-bounded-gap}
Let $\theta_n=c_n/(n+2)$ with $c_n\in(0,1]$ and $\liminf c_n=0<\limsup c_n$. Suppose there exist $c''>0$ and $G<\infty$ such that $S=\{k:c_k\ge c''\}$ meets every interval of $G$ consecutive integers. Then for every $\rho$ with $1-\rho\le1/(2G)$ and all $\eps\le\eps_0$,
\begin{equation}\label{eq:osc-bounded-gap}
N_\rho(\eps)\;\ge\;\Bigl(\frac{c''\,\rho D}{16e^{3}\,G\,\eps(1-\rho)}\Bigr)^{1/3}:
\end{equation}
the schedule is polynomially suboptimal on sufficiently sharp contractions. If instead $M(n):=\sum_{j<n}\theta_j=o(\ln n)$ (in particular if all superlevel sets $\{k:c_k\ge c'\}$, $c'>0$, have gaps tending to infinity), then the rotation alternative of Theorem~\ref{thm:dichotomy-general}{\rm(ii)} applies verbatim: the schedule is super-polynomially (or infinitely) slow on rotations.
\end{theorem}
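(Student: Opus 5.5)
The plan is to mirror the proof of Theorem~\ref{thm:dichotomy-general}(i), replacing the hypothesis $c_k\ge c_0/2$ for large $k$ by the bounded-gap hypothesis on $S$. We work on the scalar instance $T=\rho\,\mathrm{id}$, $x_0=D$, and take the window of Lemma~\ref{lem:anchor-floor} to be of length $t=\lceil 1/(1-\rho)\rceil$. Since $1-\rho\le 1/(2G)$, we have $t\ge 2G$. The interval $[N-t,N-1]$ therefore contains at least $\lfloor t/G\rfloor\ge t/(2G)$ disjoint blocks of $G$ consecutive integers, and each block contains an index of $S$. This gives
$$\sum_{k=N-t}^{N-1}\theta_k\;\ge\;\frac{t}{2G}\cdot\frac{c''}{N+2},$$
which replaces the bound $\frac{c_0}{2}\cdot\frac{t}{N+2}$ in \eqref{eq:window-mass}. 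The rest of the chain carries over unchanged: $a_k^{(N)}\ge r_N\theta_k$ gives the window mass $\ge c''r_N/(4GN(1-\rho))$ (using $t(1-\rho)\ge1$ and $N+2\le 2N$).

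Next, the condition $1-\rho\le1/(2G)\le1/2$ puts us in the case $\rho\ge\tfrac12$, so $\rho^t\ge e^{-3}$ exactly as before. The bound \eqref{eq:rn-general} $r_N\ge(2N)^{-2}\ge\rho/(4N^2)$ needs only $c_j\le1$, so it holds verbatim. Lemma~\ref{lem:anchor-floor} then yields
$$e_N\ge \frac{c''\rho D}{16e^3GN^3(1-\rho)},$$
and imposing $e_N\le\eps$ gives \eqref{eq:osc-bounded-gap}. The role of $\eps\le\eps_0$ is to ensure that any $N$ achieving $e_N\le\eps$ satisfies $N\ge 2t$, so that the window fits; for $N<t$ we use the crude floor $e_N\ge D\rho^N r_N$, which exceeds $\eps$ for small $\eps$. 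No lower threshold $n_1$ is needed, because the gap hypothesis is global. If it held only eventually, we would add the condition $N\ge n_1+t$, also absorbed by $\eps_0$.

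For the second claim, the hypothesis $M(n)=o(\ln n)$ is exactly what step (ii)(b) of the proof of Theorem~\ref{thm:dichotomy-general} used after its first paragraph; the condition $c_n\to0$ served only to derive $M(n)=o(\ln n)$. If $\sum\theta_n<\infty$, we run the argument of (ii)(a) verbatim. Otherwise we take $n_B$ as the first index with $M(n_B)\ge B\ln(1/\eps)$, so that $n_B\ge\eps^{-B}$, and set $\varphi=\pi/(3n_B)$; Lemma~\ref{lem:rot-floor} then finishes. For the parenthetical claim, we check that lacunary superlevel sets imply $M(n)=o(\ln n)$. Fix $c'>0$ and split $M(n)$ into the indices with $c_k<c'$, contributing at most $c'\ln n+O(1)$, and those with $c_k\ge c'$. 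The latter occupy positions with gaps tending to infinity, so there are $o(n)$ of them below $n$, and over each dyadic range $[2^i,2^{i+1})$ they number $o(2^i)$. Their contribution is therefore $\sum_i o(1)=o(\ln n)$. Letting $c'\to0$ gives the claim.

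The main obstacle is this last counting step. Gaps tending to infinity give density zero, but we need $o(1)$ mass per dyadic scale, uniformly. We would handle it by noting that the count in $[2^i,2^{i+1})$ is at most $2^i/g_i+1$, where $g_i\to\infty$ is the minimal gap beyond $2^i$, so the mass there is $O(1/g_i+2^{-i})$, and a Cesàro average of $o(1)$ terms is $o(1)$.
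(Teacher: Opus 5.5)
Your proposal is correct and is essentially the paper's proof. It uses the same window $t=\lceil 1/(1-\rho)\rceil\ge 2G$ counted against the bounded-gap set $S$, the same chain $a_k^{(N)}\ge r_N\theta_k$, $r_N\ge(2N)^{-2}$, $\rho^t\ge e^{-3}$ fed into Lemma~\ref{lem:anchor-floor}, and the same observation that Theorem~\ref{thm:dichotomy-general}(ii) used $c_n\to0$ only through $M(n)=o(\ln n)$. As you note, $\rho^t\ge e^{-3}$ is automatic here because $1-\rho\le 1/(2G)\le\tfrac12$.

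Your two additions, which the paper leaves implicit, are also sound. First, the small-$N$ case: the sharper floor $e_N\ge D\rho^N\ge De^{-3}$ for $N\le t$ would even make $\eps_0$ independent of $\rho$. Second, your verification of the parenthetical is valid when gaps are read as spacings between consecutive elements of a superlevel set. That is the only reading under which the parenthetical holds: if gaps are read as lengths of complementary stretches, as in Theorem~\ref{thm:oscillatory-escape}, the dyadic classical blocks alone already force $M(n)\asymp\ln n$.
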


\begin{proof}
For the first claim, let $t=\lceil1/(1-\rho)\rceil\ge2G$ and $N\ge\max(4,2t)$. The window $[N-t,N-1]$ has length $t\ge2G$ and, since $S$ meets every interval of $G$ consecutive integers, it contains at least $\lfloor t/G\rfloor\ge t/(2G)$ points of $S$; for each such point $k$, $\theta_k=c_k/(k+2)\ge c''/(N+1)\ge c''/(2N)$. The total trailing anchor mass is therefore at least
$$\sum_{k\in S\cap[N-t,N-1]}a_k^{(N)}\;\ge\;r_N\sum_{k\in S\cap[N-t,N-1]}\theta_k\;\ge\;r_N\cdot\frac{t}{2G}\cdot\frac{c''}{2N}\;\ge\;\frac{r_Nc''}{4GN(1-\rho)},$$
using $t\ge1/(1-\rho)$. By Lemma~\ref{lem:anchor-floor} (with $r_N\ge(2N)^{-2}$ by \eqref{eq:rn-general}),
$$e_N\;\ge\;D\rho^t\sum_{k=N-t}^{N-1}a_k^{(N)}\;\ge\;\frac{D\rho^t c''}{16GN^3(1-\rho)}\;\ge\;\frac{c''\rho D}{16e^{3}GN^3(1-\rho)},$$
using $\rho^t\ge e^{-3}$ as in Theorem~\ref{thm:dichotomy-general}(i) for $\rho\ge\tfrac12$ (the bound is vacuous otherwise, since $1-\rho>\tfrac12$ forces the right side below $1$ for $\eps\le\eps_0$); $e_N\le\eps$ forces \eqref{eq:osc-bounded-gap}.

For the second claim, the proof of Theorem~\ref{thm:dichotomy-general}(ii) never used $c_n\to0$ beyond its consequence $M(n)=o(\ln n)$: if $\sum_n\theta_n<\infty$ the schedule never solves small-angle rotations (case (a)), and if $\sum_n\theta_n=\infty$ then for every $B\ge1$ the $n_B$ construction gives $\varphi_B(\eps)\to0$ with $N_{\varphi_B(\eps)}(\eps)>\eps^{-B}$ (case (b)). Note that when some superlevel set has bounded gaps, $M(n)\ge c''\sum_{k\in S,k\le n}\frac1{k+2}\ge\frac{c''}{G}\sum_{j\le n/G}\frac1j=\Omega(\ln n)$, so the two cases are mutually exclusive and together cover the oscillatory situation except when $M(n)\asymp\ln n$ with all superlevel sets lacunary (Remark~\ref{rem:oscillatory}).
\end{proof}

\section{The master noise lemma and the nonexpansive worst case}\label{sec:master}

We now add the stochastic oracle. The entire stochastic analysis of the paper factors through the following lemma; its proof is the measurability discipline of Remark~\ref{rem:discipline} applied once.

\subsection{The master noise lemma}

\begin{lemma}[Master noise lemma]\label{lem:master}
Let $(x_n)$ follow \eqref{eq:anchored} with $\mathcal F_{n-1}$-measurable coefficients $\theta_n\in[0,1]$, and let $(\hat x_n)$ be the exact-oracle path with the \emph{same} coefficients: $\hat x_{n+1}=\theta_n x_0+(1-\theta_n)T\hat x_n$, $\hat x_0=x_0$. Set $d_n=x_n-\hat x_n$, $v_n=\E\norm{d_n}^2$. Then for all $n\ge0$,
\begin{equation}\label{eq:master}
v_{n+1} \;\le\; (1-\theta_n)^2\, v_n \;+\; (1-\theta_n)^2\,\frac{\sigma^2}{m_n}.
\end{equation}
Consequently, with $W_k^{(n)}:=\prod_{j=k+1}^{n-1}(1-\theta_j)^2$,
\begin{equation}\label{eq:master-unfold}
v_N \;\le\; \sigma^2 \sum_{k=0}^{N-1} W_k^{(n)}\,\frac{1}{m_k}.
\end{equation}
\end{lemma}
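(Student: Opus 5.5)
The plan is to take the difference of the two recursions, expand the squared norm before applying any inequality, and kill the cross term by conditional unbiasedness of the fresh batch. This is the discipline of Remark~\ref{rem:discipline} applied once. Since $\hat x_{n+1}$ uses the same $\theta_n$ and the same anchor $x_0$, the anchor terms cancel exactly, leaving
$$d_{n+1}\;=\;(1-\theta_n)\bigl(Tx_n-T\hat x_n+\bar\xi_n\bigr),\qquad \bar\xi_n=\widehat T_{m_n}(x_n)-T(x_n).$$
The first step is a measurability check. $x_n$ is $\mathcal F_{n-1}$-measurable by Definition~\ref{def:filtration}. The comparison iterate $\hat x_n$ is built from $x_0$ and $\theta_0,\dots,\theta_{n-1}$, so it is $\mathcal F_{n-2}\subseteq\mathcal F_{n-1}$-measurable. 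Hence $\theta_n$, $u_{n-1}:=Tx_n-T\hat x_n$ and $d_n$ are all $\mathcal F_{n-1}$-measurable.

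Next I expand the square:
$$\norm{d_{n+1}}^2=(1-\theta_n)^2\Bigl(\norm{u_{n-1}}^2+2\inner{u_{n-1},\bar\xi_n}+\norm{\bar\xi_n}^2\Bigr).$$
I then condition on $\mathcal F_{n-1}$. The factor $(1-\theta_n)^2$ and $u_{n-1}$ come out of the conditional expectation, so $\E[(1-\theta_n)^2\inner{u_{n-1},\bar\xi_n}\mid\mathcal F_{n-1}]=0$. The batch term gives $\E[\norm{\bar\xi_n}^2\mid\mathcal F_{n-1}]\le\sigma^2/m_n$ (with $m_n$ deterministic). Nonexpansiveness gives $\norm{u_{n-1}}\le\norm{d_n}$ pointwise. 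This yields the conditional inequality
$$\E\bigl[\norm{d_{n+1}}^2\mid\mathcal F_{n-1}\bigr]\le(1-\theta_n)^2\bigl(\norm{d_n}^2+\sigma^2/m_n\bigr).$$

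The main obstacle is passing to unconditional expectations when $\theta_n$ is random. With deterministic coefficients, which is the case used by every algorithm in the paper, taking full expectations gives \eqref{eq:master} at once. For merely $\mathcal F_{n-1}$-measurable coefficients, $\E[(1-\theta_n)^2\norm{d_n}^2]$ is not in general bounded by $(1-\theta_n)^2v_n$. I would therefore read \eqref{eq:master} with $(1-\theta_n)^2$ replaced by a deterministic pointwise envelope $\bar\lambda_n\ge(1-\theta_n)^2$, as Remark~\ref{rem:discipline} stipulates, and state this explicitly. The constant bound $\bar\lambda_n=1$ is always available.

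Finally, I unfold by induction from $v_0=0$ (since $d_0=0$), giving $v_N\le\sum_{k<N}\prod_{j=k+1}^{N-1}(1-\theta_j)^2\,(1-\theta_k)^2\sigma^2/m_k$. Dropping $(1-\theta_k)^2\le1$ gives \eqref{eq:master-unfold}, where $W_k^{(n)}$ is read with $n=N$.
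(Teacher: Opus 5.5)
Your argument is correct and is the paper's proof: subtract the recursions so the anchor terms cancel, expand $\norm{d_{n+1}}^2$ before any inequality, kill the cross term by conditioning on $\mathcal F_{n-1}$, then apply nonexpansiveness and the batch variance bound and unroll from $v_0=0$. Your caveats are also accurate. The paper's \emph{taking expectations} step yields \eqref{eq:master} as written only for deterministic coefficients, or for a deterministic envelope as in Remark~\ref{rem:discipline}. In addition, \eqref{eq:master-unfold} silently drops the factor $(1-\theta_k)^2\le1$, and $W_k^{(n)}$ there should read $W_k^{(N)}$.
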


\begin{proof}
Subtracting the two recursions, $d_{n+1} = (1-\theta_n)(Tx_n-T\hat x_n) + (1-\theta_n)\bar\xi_n$. Expand the square:
$$\norm{d_{n+1}}^2 = (1-\theta_n)^2\norm{Tx_n-T\hat x_n}^2 + 2(1-\theta_n)^2\inner{Tx_n-T\hat x_n,\bar\xi_n} + (1-\theta_n)^2\norm{\bar\xi_n}^2.$$
Now take conditional expectation given $\mathcal F_{n-1}$. The middle term vanishes: $Tx_n-T\hat x_n$ and $\theta_n$ are $\mathcal F_{n-1}$-measurable, and $\E[\bar\xi_n\mid\mathcal F_{n-1}]=0$ (Definition~\ref{def:filtration}), so
$$\E[\inner{Tx_n-T\hat x_n,\bar\xi_n}\mid\mathcal F_{n-1}] = \inner{Tx_n-T\hat x_n, \E[\bar\xi_n\mid\mathcal F_{n-1}]} = 0.$$
This is the step that fails when $\theta_n$ is correlated with $\bar\xi_n$ (adaptive parameters estimated from the same batch): there the coefficient $(1-\theta_n)^2$ would be $\mathcal F_n$-measurable and could not be pulled out of the conditional expectation. Nonexpansiveness gives $\norm{Tx_n-T\hat x_n}^2\le\norm{d_n}^2$, and $\E[\norm{\bar\xi_n}^2\mid\mathcal F_{n-1}]\le\sigma^2/m_n$; taking expectations yields \eqref{eq:master}. Iterating \eqref{eq:master} and using $v_0=0$ gives \eqref{eq:master-unfold}.
\end{proof}

\begin{remark}[Why comparison paths are honest here]\label{rem:comparison}
The path $(\hat x_n)$ is defined by the same coefficients $(\theta_n)$ as the stochastic run, so no independence between coefficients and noise is ever invoked: $\hat x_n$ is a deterministic function of $(\theta_0,\dots,\theta_{n-1})$ given $x_0$, hence $\mathcal F_{n-1}$-measurable. All algorithms of Section~\ref{sec:contraction} use deterministic coefficients, so Lemma~\ref{lem:master} applies with room to spare; the lemma is stated with $\mathcal F_{n-1}$-measurable coefficients only to cover schedules whose coefficients are generated independently of the current batch. The same discipline cannot be maintained for coefficients estimated from the trajectory itself; Section~\ref{sec:contraction} shows what replaces it (deterministic schedules plus a priori modulus information).
\end{remark}

\subsection{The nonexpansive worst case: an \texorpdfstring{$O(\sigma^2 D^2\eps^{-4})$}{O(sigma2 D2 eps-4)} upper bound}

Throughout this subsection $\theta_n=1/(n+2)$ (classical schedule), $T$ is nonexpansive on a \emph{real normed space} $(X,\norm{\cdot})$ (not necessarily Hilbert, not necessarily even strictly convex), and $\widehat T$ is the oracle of Definition~\ref{def:oracle} with $\E\norm{\xi}^2\le\sigma^2$. The exact path then satisfies the optimal deterministic bound of \cite{Lieder2021,Contreras2023}
\begin{equation}\label{eq:lieder}
\norm{\hat x_N - T\hat x_N} \;\le\; \frac{2\norm{x_0-x^*}}{N+1}\qquad\text{for every } x^*\in\Fix(T),
\end{equation}
which is valid in every normed space. The proof below is written on a Hilbert space (where it is one line); Remark~\ref{rem:dualization} then extends it, by dualization, to every normed space whose dual unit sphere is separable --- in particular $\ell_p$, $L_p$ for $p\in(1,\infty)$, and all separable reflexive spaces. The only ingredients are the triangle inequality, the $2$-Lipschitzness of $x\mapsto x-Tx$, and the bound \eqref{eq:lieder}.

\begin{theorem}[Nonexpansive upper bound]\label{thm:A1}
Let $X$ be a real Hilbert space or a $2$-uniformly smooth Banach space with smoothness constant $K\ge1$ (Remark~\ref{rem:dualization}), and let $T$ be nonexpansive on $X$ with $\Fix(T)\neq\emptyset$, $D=\norm{x_0-x^*}$, $\eps\in(0,D)$. Run \eqref{eq:anchored} with $\theta_n=1/(n+2)$ and batches $m_k=\lceil c\max\{k,1\}\rceil$ where $c=8K\sigma^2/\eps^2$. Then at $N=\lceil 4D/\eps\rceil+8$ (still $N=\frac{4D}{\eps}(1+o(1))$),
$$\E\,\norm{x_N - Tx_N} \;\le\; \eps, \qquad \sum_{k=1}^{N} m_k \;\le\; 64\,K\,\sigma^2 D^2\,\eps^{-4}\,(1+o(1)).$$
Consequently the minimax oracle complexity of the nonexpansive residual problem on Hilbert spaces and on $2$-uniformly smooth Banach spaces is $O(K\sigma^2D^2\eps^{-4})$ --- in particular on $\ell_p$, $L_p$ for $2\le p<\infty$ --- improving the previously available $\tilde O(\sigma^2\eps^{-5})$ bound of \cite{BravoContreras2024} on those spaces; subsequently, \cite{Diakonikolas2026b} proved, with high probability, $\tilde O(D\sigma^{2}\eps^{-3}+D\eps^{-1})$ oracle complexity on weak-type-$2$ spaces, which include all $2$-uniformly smooth spaces (see Remark~\ref{rem:A1-position}). The scope is sharp for the recursion-based argument: the master recursion itself fails outside the $2$-uniformly smooth class (Remark~\ref{rem:dualization}), and the $\Omega(\sigma^2\eps^{-3})$ lower bound of \cite{BravoContreras2024} lives on $\ell_1$-type spaces, which are not $2$-uniformly smooth; the exponent in arbitrary normed spaces remains open (Section~\ref{sec:open}).
\end{theorem}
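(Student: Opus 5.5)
The approach is to compare the stochastic run with the exact Halpern path $(\hat x_n)$ that uses the same deterministic coefficients $\theta_n=1/(n+2)$. I would split the residual by the triangle inequality and use $2$-Lipschitzness of $I-T$:
$$\norm{x_N-Tx_N}\;\le\;\norm{\hat x_N-T\hat x_N}+2\norm{x_N-\hat x_N}.$$
The first term is handled by \eqref{eq:lieder}: at $N\ge 4D/\eps$ it is at most $2D/(N+1)\le\eps/2$. For the second term, Jensen gives $\E\norm{d_N}\le\sqrt{v_N}$, so it suffices to prove $v_N\le\eps^2/16$.

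In the Hilbert case, $v_N$ is controlled by Lemma~\ref{lem:master}. For the classical schedule the weights telescope exactly:
$$\prod_{j=k+1}^{N-1}(1-\theta_j)=\frac{k+2}{N+1},\qquad\text{so}\qquad W_k=\Bigl(\frac{k+2}{N+1}\Bigr)^2 .$$
Then \eqref{eq:master-unfold} with $m_k\ge c\max\{k,1\}$ gives
$$v_N\le\frac{\sigma^2}{c(N+1)^2}\sum_{k<N}\frac{(k+2)^2}{\max\{k,1\}}.$$
The sum is $N^2/2+O(N)$, so $v_N\le\frac{\sigma^2}{2c}(1+O(1/N))$. With $c=8K\sigma^2/\eps^2$ this is $\eps^2/16\cdot(1+O(1/N))$. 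The additive $+8$ in $N$ is meant to absorb the lower-order terms, possibly together with the slack from $2D/(N+1)<\eps/2$. This gives $\E\norm{x_N-Tx_N}\le\eps$. The oracle cost is $\sum_{k\le N}m_k\le cN^2/2+N$, and with $N\approx4D/\eps$ this is $8K\sigma^2\eps^{-2}\cdot 8D^2\eps^{-2}=64K\sigma^2D^2\eps^{-4}$ up to $1+o(1)$.

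For a $2$-uniformly smooth space, the only change is the master recursion. Here I would use the smoothness inequality
$$\norm{u+v}^2\le\norm{u}^2+2\inner{J(u),v}+K\norm{v}^2,$$
where $J$ is the (single-valued) duality map, applied with $u=(1-\theta_n)(Tx_n-T\hat x_n)$ and $v=(1-\theta_n)\bar\xi_n$. The vector $J(u)$ is $\mathcal F_{n-1}$-measurable, so $\E[\inner{J(u),\bar\xi_n}\mid\mathcal F_{n-1}]=0$ by unbiasedness. This requires Bochner-integrability of the noise and the interchange of a bounded functional with conditional expectation, which is routine for separable spaces. The result is $v_{n+1}\le(1-\theta_n)^2v_n+K(1-\theta_n)^2\sigma^2/m_n$, which is exactly what the factor $K$ in $c$ compensates. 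The deterministic bound \eqref{eq:lieder} holds in any normed space.

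The main obstacle is the constant bookkeeping, specifically getting $\sum_k(k+2)^2/\max\{k,1\}\le (N+1)^2/2$ tight enough. The slack $(k+2)^2/k=k+4+4/k$ produces an $O(N\ln N)$ correction. I would first try to absorb it using the surplus in the deterministic term (take $N$ slightly above $4D/\eps$, as the $+8$ allows). Failing that, I would state the bound as $\eps(1+o(1))$ and rescale $\eps$. A secondary subtlety is measurability of $J(u)$ in the Banach case.
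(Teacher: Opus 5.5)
Your proposal is correct and is the paper's proof essentially step for step: the exact-path comparison, the split $\norm{x_N-Tx_N}\le\norm{\hat x_N-T\hat x_N}+2\norm{d_N}$ with \eqref{eq:lieder} and Jensen, and the master recursion with telescoped weights $\bigl(\frac{k+2}{N+1}\bigr)^2$ and linear batches, with $K\sigma^2$ in place of $\sigma^2$ in the $2$-uniformly smooth case. In that Banach case, note that the batch bound $\E\norm{\bar\xi_n}^2\le K\sigma^2/m_n$ does not follow from independence alone; the paper obtains it by iterating the smoothness inequality over the batch sum.

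The bookkeeping you worry about closes without rescaling $\eps$. The sum is $\sum_{k=1}^{N-1}\frac{(k+2)^2}{k}=\frac{N(N-1)}{2}+4(N-1)+4H_{N-1}$, whose harmonic part is $O(\ln N)$ rather than $O(N\ln N)$. This gives $v_N\le\frac{K\sigma^2}{2c}\bigl(1+\frac{16}{N}\bigr)$, hence $2\sqrt{v_N}\le\frac{\eps}{2}+\frac{4\eps}{N}$, and $\frac{2D+4\eps}{N}\le\frac{\eps}{2}$ is exactly the condition $N\ge\frac{4D}{\eps}+8$.
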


\begin{proof}
Subtracting the two recursions, $d_{n+1}=(1-\theta_n)(Tx_n-T\hat x_n)+(1-\theta_n)\bar\xi_n$; expanding the square $\norm{d_{n+1}}^2$ and taking conditional expectation given $\mathcal F_{n-1}$, the cross term vanishes because $Tx_n-T\hat x_n$ and $\theta_n$ are $\mathcal F_{n-1}$-measurable and $\E[\bar\xi_n\mid\mathcal F_{n-1}]=0$ (Remark~\ref{rem:discipline}), and nonexpansiveness bounds the first term by $(1-\theta_n)^2\norm{d_n}^2$: this is exactly the computation of Lemma~\ref{lem:master}; in a $2$-uniformly smooth space it yields \eqref{eq:master} with $K\sigma^2$ in place of $\sigma^2$ (Remark~\ref{rem:dualization}). Iterating as in Lemma~\ref{lem:master} with $W_k^{(N)}=\bigl(\tfrac{k+2}{N+1}\bigr)^2$ (telescoping, since $1-\theta_j=\tfrac{j+1}{j+2}$), and separating the $k=0$ term (for which $m_0=\lceil c\rceil\ge c$ and which contributes $O(\sigma^2/(cN^2))=o(1)$),
$$v_N \;\le\; \frac{4K\sigma^2}{c(N+1)^2}+K\sigma^2\sum_{k=1}^{N-1}\Bigl(\frac{k+2}{N+1}\Bigr)^2\frac{1}{m_k} \;\le\; \frac{K\sigma^2}{c(N+1)^2}\sum_{k=1}^{N-1}\frac{(k+2)^2}{k} \;\le\;\frac{K\sigma^2}{2c}\Bigl(1+\frac{16}{N}\Bigr),$$
using $m_k\ge ck$ and $\frac{1}{(N+1)^2}\sum_{k=1}^{N-1}\frac{(k+2)^2}{k} = \frac{1}{(N+1)^2}\bigl(\tfrac{N(N-1)}2+4(N-1)+4H_{N-1}\bigr)\le\tfrac12+\tfrac8N$. With $c=8K\sigma^2/\eps^2$,
$$\sqrt{v_N}\;\le\;\frac{\sqrt{K}\,\sigma}{\sqrt{2c}}\sqrt{1+\tfrac{16}{N}}\;=\;\frac{\eps}{4}\sqrt{1+\tfrac{16}{N}}\;\le\;\frac{\eps}{4}\Bigl(1+\frac{8}{N}\Bigr)\;=\;\frac{\eps}{4}+\frac{2\eps}{N}.$$
Then by \eqref{eq:lieder} and the $2$-Lipschitzness of $x\mapsto x-Tx$,
$$\E\norm{x_N-Tx_N} \;\le\; \norm{\hat x_N-T\hat x_N} + 2\E\norm{d_N} \;\le\; \frac{2D}{N} + 2\sqrt{v_N} \;\le\; \frac{2D}{N}+\frac{\eps}{2}+\frac{4\eps}{N}\;=\;\frac{2D+4\eps}{N}+\frac{\eps}{2}\;\le\;\eps,$$
since $N\ge\frac{4D}{\eps}+8$ gives $\frac{2D+4\eps}{N}\le\frac{(2D+4\eps)\eps}{4D+8\eps}=\frac{\eps}{2}$, and $\E\norm{d_N}\le\sqrt{v_N}$. The query count is $\sum_{k=1}^N \lceil ck\rceil \le cN(N+1)/2 + N = 64K\sigma^2D^2\eps^{-4}(1+o(1))$.
\end{proof}

\begin{remark}[The Banach-space scope: $2$-uniform smoothness, and why it is sharp]\label{rem:dualization}
The only Hilbert-specific step in Theorem~\ref{thm:A1} is the expansion of $\norm{d_{n+1}}^2$ and the cancellation of the cross term by conditional centering. The correct geometric substitute is \emph{$2$-uniform smoothness}: a Banach space $X$ is $2$-uniformly smooth with constant $K$ if its modulus of smoothness satisfies $\rho_X(\tau):=\sup\{\tfrac12(\norm{x+\tau y}+\norm{x-\tau y})-1:\norm{x}=\norm{y}=1\}\le K\tau^2$ (every Hilbert space qualifies with $K=1$; $\ell_p$, $L_p$ for $2\le p<\infty$ qualify with $K=K(p)$). Such spaces admit a duality mapping $J\colon X\to X^*$ with
\begin{equation}\label{eq:smoothness}
\norm{x+y}^2\;\le\;\norm{x}^2+2\inner{y,Jx}+K\norm{y}^2\qquad\text{for all }x,y\in X,
\end{equation}
and \eqref{eq:smoothness} is in fact an equivalent characterization of $2$-uniform smoothness \cite{XuRoach1991}. Apply \eqref{eq:smoothness} with $x=Tx_n-T\hat x_n$ (which is $\mathcal F_{n-1}$-measurable) and $y=\bar\xi_n$. The pairing term vanishes conditionally: $u:=Jx$ is $\mathcal F_{n-1}$-measurable and $\xi\mapsto\inner{\xi,u}$ is a fixed linear functional of the fresh batch, so $\E[\inner{\bar\xi_n,u}\mid\mathcal F_{n-1}]=0$; and, applying \eqref{eq:smoothness} successively to the batch sum, $\E\norm{\bar\xi_n}^2\le K\sigma^2/m_n$. Absorbing the second factor $K$ into the first, the master recursion \eqref{eq:master} holds with $K\sigma^2$ in place of $\sigma^2$, and Theorem~\ref{thm:A1} follows verbatim with $c=8K\sigma^2/\eps^2$ and cost $O(K\sigma^2D^2\eps^{-4})$; the same substitution covers Proposition~\ref{prop:highprob} (with $c=\lceil16K\sigma^2/(\eps^2\vartheta)\rceil$) and the nonexpansive track of Theorem~\ref{thm:stoch-parallel}.

The scope is \emph{sharp for the recursion-based argument}: the centered recursion itself fails outside the $2$-uniformly smooth class, so no patch to the proof can extend it. On $\ell_p$ with $1<p<2$ --- whose dual \emph{is} separable, so a ``separable dual'' formulation does not rescue the claim --- take $T=\mathrm{id}$ (nonexpansive, $\Fix(T)=X$), unit batches, $a=se_1$, and $\xi=\sigma(\varepsilon_1e_1+\varepsilon_2e_2)$ with independent Rademachers $\varepsilon_1,\varepsilon_2$. Then
$$\E\norm{a+\xi}^2\;=\;\E\Bigl[\bigl(|s+\sigma\varepsilon_1|^p+\sigma^p\bigr)^{2/p}\Bigr]\;\ge\;\Bigl(s^p+\sigma^p+\tfrac{p(p-1)}2\,s^{p-2}\sigma^2\Bigr)^{2/p}\;=\;s^2+\sigma^2+\tfrac2p\,s^{2-p}\sigma^p\bigl(1+o(1)\bigr),$$
where the inequality is Jensen applied to the convex function $u\mapsto(u+\sigma^p)^{2/p}$ together with the convexity bound $\tfrac12(|s+\sigma|^p+|s-\sigma|^p)\ge s^p+\tfrac{p(p-1)}2s^{p-2}\sigma^2$ (Taylor's theorem, $p>1$), and the expansion uses $(1+u)^{2/p}=1+\tfrac2p u+o(u)$ as $\sigma/s\to0$ (the coefficient of $\sigma^2$ in the intermediate expression is $p-1<1$, and the deficit $(2-p)\sigma^2$ is absorbed by $\tfrac2p s^{2-p}\sigma^p$ already for $s\ge\sigma$, since $\tfrac2p\ge2-p$). The excess $\tfrac2p s^{2-p}\sigma^p$ over $\norm{a}^2+\E\norm{\xi}^2=s^2+\sigma^2$ dominates $\sigma^2$ whenever $s\gg\sigma$; with batch size $m$ the excess scales as $s^{2-p}\E|\bar\xi_1|^p\asymp s^{2-p}\sigma^p m^{-p/2}$, which exceeds $K\sigma^2/m$ for any fixed $K$ as $m\to\infty$. Hence no fixed constant repairs the master recursion on $\ell_p$, $1<p<2$. The lower-bound instances of \cite{BravoContreras2024} are $\ell_1$-type, which are not $2$-uniformly smooth either; the exponent in genuinely arbitrary normed spaces remains open (Section~\ref{sec:open}).
\end{remark}

\begin{proposition}[High-probability version]\label{prop:highprob}
Assume $D\ge4\eps$ (so $N=\lceil4D/\eps\rceil+8\ge24$; for smaller $N$ enlarge the additive constant). Let the oracle noise be Gaussian (as permitted by Definition~\ref{def:oracle}) or, more generally, sub-Gaussian with parameter $\sigma^2$ in each direction: $\E[e^{\lambda\inner{\xi,u}}\mid x]\le e^{\lambda^2\sigma^2/2}$ for every unit vector $u$. Then in the setting of Theorem~\ref{thm:A1}, with failure parameter $\vartheta\in(0,1)$ and $c=\lceil 16K\sigma^2/(\eps^2\vartheta)\rceil$,
$$\Prob\bigl(\norm{x_N-Tx_N}\le\eps\bigr)\;\ge\;1-\vartheta .$$
(Only second moments are used; the argument is valid on any $2$-uniformly smooth space with $K$ absorbed into $c$, and the sub-Gaussian concentration's $\ln(1/\vartheta)$ is traded for Markov's $\vartheta^{-1}$, leaving the complexity exponent unchanged.)
\end{proposition}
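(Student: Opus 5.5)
The argument reuses the proof of Theorem~\ref{thm:A1} verbatim up to the second-moment bound and replaces the final Jensen step by Markov's inequality applied to $\norm{d_N}^2$. As the parenthetical in the statement indicates, sub-Gaussianity is not needed beyond the variance bound.

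First I bound the deviation. With $d_N=x_N-\hat x_N$ and $v_N=\E\norm{d_N}^2$, the master recursion (Lemma~\ref{lem:master}, or its $2$-uniformly smooth form from Remark~\ref{rem:dualization}) holds with $K\sigma^2$ in place of $\sigma^2$. The telescoped weights are $W_k^{(N)}=((k+2)/(N+1))^2$ and the batches satisfy $m_k\ge ck$. The computation in the proof of Theorem~\ref{thm:A1} then gives
\[
v_N\le\frac{K\sigma^2}{2c}\Bigl(1+\frac{16}{N}\Bigr).
\]
Since $N\ge24$, the factor $1+16/N$ is at most $5/3$. With $c\ge16K\sigma^2/(\eps^2\vartheta)$ this yields
\[
v_N\le\frac{5\eps^2\vartheta}{96}.
\]

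Next I split the residual. The map $x\mapsto x-Tx$ is $2$-Lipschitz, and \eqref{eq:lieder} bounds the exact path, so
\[
\norm{x_N-Tx_N}\le\frac{2D}{N+1}+2\norm{d_N}.
\]
Because $N\ge4D/\eps+8$, the deterministic term is at most $\eps/2$. Hence the failure event $\{\norm{x_N-Tx_N}>\eps\}$ is contained in $\{\norm{d_N}>\eps/4\}$.

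Finally, Markov's inequality applied to $\norm{d_N}^2$ gives
\[
\Prob\bigl(\norm{d_N}>\eps/4\bigr)\le\frac{16\,v_N}{\eps^2}\le\frac{80\vartheta}{96}<\vartheta,
\]
which is the claim. Under the constant $c$ in the statement the margin is comfortable. The only point needing care is the bookkeeping of $1+16/N$: this is why the hypothesis $N\ge24$ (equivalently $D\ge4\eps$) is imposed, and for smaller $N$ the additive constant in $N$ must be enlarged, as noted in the statement.

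I expect no real obstacle; the step to check is that the constant $16$ in $c$ absorbs both the $1+16/N$ slack and the factor $16$ from Markov at threshold $\eps/4$. If it did not, I would move the split point, reserving $\eps/2$ for the deviation so that the threshold becomes $\norm{d_N}\le\eps/4$ with the deterministic part exactly $\eps/2$, and recheck. The Banach version is identical, with $K$ already carried in $v_N$. The oracle complexity becomes $\sum_k m_k=O(K\sigma^2D^2\eps^{-4}\vartheta^{-1})$, which has the same exponent as Theorem~\ref{thm:A1}.
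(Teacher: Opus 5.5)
Your proposal is correct and follows the paper's own proof essentially step for step. Both arguments reuse the second-moment bound $v_N\le\frac{K\sigma^2}{2c}(1+16/N)$ from Theorem~\ref{thm:A1}, apply Markov's inequality to $\norm{d_N}^2$ at threshold $\eps/4$, and close with $\norm{\hat x_N-T\hat x_N}\le\eps/2$ plus the $2$-Lipschitzness of $I-T$. The only difference is cosmetic: the paper bounds $\tfrac12(1+16/N)\le1$, which needs only $N\ge16$, where you use $N\ge24$ to get $5/3$. Your closing contingency about moving the split point is therefore unnecessary.
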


\begin{proof}
Let $d_n=x_n-\hat x_n$ as in Lemma~\ref{lem:master}. The pathwise recursion $\norm{d_{n+1}}\le(1-\theta_n)\norm{d_n}+(1-\theta_n)\norm{\bar\xi_n}$ (from $\norm{Tx_n-T\hat x_n}\le\norm{d_n}$, no expectation) unfolds to
\begin{equation}\label{eq:pathwise}
\norm{d_N}\;\le\;\sum_{k=0}^{N-1}\alpha_k\norm{\bar\xi_k},\qquad \alpha_k:=(1-\theta_k)\prod_{j=k+1}^{N-1}(1-\theta_j)=\frac{k+1}{N+1},
\end{equation}
exactly the computation of Theorem~\ref{thm:A1} with $m_k=\lceil ck\rceil$ (which uses only independence of the fresh batches, Definition~\ref{def:oracle}, and $\E\norm{\bar\xi_k}^2\le\sigma^2/m_k$) gives, for the same variance bound $v_N$,
$$\E\norm{d_N}^2\;\le\;v_N\;\le\;\frac{K\sigma^2}{2c}\Bigl(1+\frac{16}{N}\Bigr)\;\le\;\frac{K\sigma^2}{c}\;\le\;\frac{\eps^2\vartheta}{16}\;\le\;\frac{\eps^2}{16}.$$
Markov's inequality applied to $\norm{d_N}^2$ now yields
$$\Prob\bigl(\norm{d_N}>\eps/4\bigr)\;=\;\Prob\bigl(\norm{d_N}^2>\eps^2/16\bigr)\;\le\;\frac{16\,v_N}{\eps^2}\;\le\;\frac{16K\sigma^2}{c\,\eps^2}\;\le\;\vartheta,
\qquad c=\Bigl\lceil\frac{16K\sigma^2}{\eps^2\vartheta}\Bigr\rceil.$$
(The sharper $\ln(1/\vartheta)$ dependence can be recovered from \eqref{eq:pathwise} by concentration of sub-Gaussian sums \cite{Pinelis1994}, at the price of Hilbert-space arguments; we do not pursue it since the complexity exponent is unaffected.)
Finally $\norm{x_N-Tx_N}\le\norm{\hat x_N-T\hat x_N}+2\norm{d_N}\le\eps/2+\eps/2=\eps$ by \eqref{eq:lieder}, with probability at least $1-\vartheta$.
\end{proof}

\begin{lemma}[Master noise lemma, contraction form]\label{lem:master-contract}
In the setting of Lemma~\ref{lem:master}, assume additionally that $T$ is a $\gamma$-contraction, and write $\delta=1-\gamma$. Then
\begin{equation}\label{eq:master-contract}
v_{n+1}\;\le\;(1-\theta_n)^2\gamma^2\,v_n\;+\;(1-\theta_n)^2\,\frac{\sigma^2}{m_n}.
\end{equation}
Consequently, for constant coefficients $\theta_n\equiv\theta\in[0,\tfrac12]$ and $m_n\equiv m$,
\begin{equation}\label{eq:stationary}
v_N\;\le\;\gamma^{2N}D^2\;+\;\frac{\sigma^2}{m}\cdot\frac{(1-\theta)^2}{1-(1-\theta)^2\gamma^2},\qquad\text{and}\qquad 1-(1-\theta)^2\gamma^2\;\ge\;\theta+\delta/2 .
\end{equation}
In particular $v_N\le\gamma^{2N}D^2+\sigma^2/(m(\theta+\delta/2))$: the stationary variance floor of Section~\ref{sec:contraction}. (Recall $v_0=0$ for the comparison path; the displayed $\gamma^{2N}D^2$ term uses the crude $v_0\le D^2$ and is what is needed when the recursion is applied to $w_n=\E\norm{x_n-x^*}^2$ directly.)
\end{lemma}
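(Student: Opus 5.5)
The argument reruns the proof of Lemma~\ref{lem:master} with the contraction modulus inserted at the single point where nonexpansiveness was used. As there, subtracting the stochastic and exact recursions gives $d_{n+1}=(1-\theta_n)(Tx_n-T\hat x_n)+(1-\theta_n)\bar\xi_n$. We expand $\norm{d_{n+1}}^2$ before any norm inequality and take conditional expectation given $\mathcal F_{n-1}$. The cross term $\inner{Tx_n-T\hat x_n,\bar\xi_n}$ vanishes, because both $Tx_n-T\hat x_n$ and $\theta_n$ are $\mathcal F_{n-1}$-measurable and $\E[\bar\xi_n\mid\mathcal F_{n-1}]=0$. Now $\gamma$-contractivity gives $\norm{Tx_n-T\hat x_n}^2\le\gamma^2\norm{d_n}^2$, and the noise term is at most $\sigma^2/m_n$. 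Taking full expectations yields \eqref{eq:master-contract}.

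For the constant-coefficient case, set $q:=(1-\theta)^2\gamma^2<1$ and unfold the linear recursion:
\[
v_N\le q^N v_0+\frac{(1-\theta)^2\sigma^2}{m}\sum_{j<N}q^j .
\]
Bounding the geometric sum by $1/(1-q)$ gives the stationary term. For the transient term, $q^N\le\gamma^{2N}$, and we use the crude bound $v_0\le D^2$. This bound is what the remark in the statement intends, since it is needed when the same recursion is applied to $w_n=\E\norm{x_n-x^*}^2$. For the comparison path itself $v_0=0$, so there the term is absent. Applied to $w_n$ the recursion needs care, because the anchor shifts the mean. I would state the display for the comparison deviation, and add the $\gamma^{2N}D^2$ term only as the harmless majorant described in the statement's parenthesis.

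The remaining step is the elementary inequality $1-(1-\theta)^2\gamma^2\ge\theta+\delta/2$ for $\theta\in[0,\tfrac12]$. Write $\gamma=1-\delta$, so $(1-\theta)\gamma=1-\theta-\delta+\theta\delta=:1-s$ with $s=\theta+\delta-\theta\delta\in[0,1]$. Then $1-(1-s)^2=s(2-s)\ge s$. Finally $s=\theta+\delta(1-\theta)\ge\theta+\delta/2$, because $\theta\le\tfrac12$. Substituting, and using $(1-\theta)^2\le1$ in the numerator, gives $v_N\le\gamma^{2N}D^2+\sigma^2/(m(\theta+\delta/2))$. Nothing here is hard. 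The only point needing attention is keeping the recursion in squared norms so that the cross term cancels; this is the measurability discipline of Remark~\ref{rem:discipline}, and it holds because the coefficients are deterministic.
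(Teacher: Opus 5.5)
Your proof is correct and is essentially the paper's argument: the same squared-norm deviation recursion with $\gamma$-contractivity replacing nonexpansiveness, followed by geometric unfolding and the bound $1-(1-\theta)^2\gamma^2\ge 1-(1-\theta)\gamma=\theta+\delta-\theta\delta\ge\theta+\delta/2$. Your identity $1-(1-s)^2=s(2-s)\ge s$ is a restatement of that bound. Your caveat about $w_n=\E\norm{x_n-x^*}^2$ is accurate and sharper than the statement's parenthetical: the recursion transfers to $w_n$ only when $\theta=0$ (the Krasnoselskii--Mann case used in Theorem~\ref{thm:geobatch}), because for $\theta>0$ the anchor injects a bias term.
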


\begin{proof}
Identical to Lemma~\ref{lem:master}, using $\norm{Tx_n-T\hat x_n}\le\gamma\norm{d_n}$ in place of nonexpansiveness; the cross term still vanishes by conditional unbiasedness. For \eqref{eq:stationary}, iterate \eqref{eq:master-contract} and use $(1-\theta)^2\gamma^2\le(1-\theta)(1-\delta)$, so $1-(1-\theta)^2\gamma^2\ge1-(1-\theta)(1-\delta)\ge\theta+\delta-\theta\delta\ge\theta+\delta/2$ for $\theta\le\tfrac12$, $\delta\le1$.
\end{proof}

\begin{remark}[Regime unification]\label{rem:unification}
Lemma~\ref{lem:master-contract} reproduces the contraction-regime noise floor from the same envelope: for a $\gamma$-contraction and a schedule with $\theta\ll\delta$, the envelope is $W_k^{(N)}\approx\gamma^{2(N-k)}$, and with constant batch $m$, $v_N\le\frac{\sigma^2}{m}\sum_{j\ge0}\gamma^{2j}=\frac{\sigma^2}{m(1-\gamma^2)}$. The contrast with the nonexpansive envelope $\bigl(\tfrac{k+2}{N+1}\bigr)^2$ is the entire content of Q2 versus Q3: geometric envelopes admit constant batches; polynomial envelopes require the linear allocation $m_k\propto k$ of Theorem~\ref{thm:A1}.
\end{remark}

\begin{theorem}[Monotone-inclusion reduction and the optimal Hilbert rate]\label{thm:monotone-reduction}
Let $\cH$ be a Hilbert space. For every nonexpansive $T$ on $\cH$, the operator $F=I-T$ is monotone and $2$-Lipschitz, with $\operatorname{zer}(F)=\Fix(T)$ and $\norm{F(x)}=\norm{x-Tx}$; the single-point oracle for $T$ induces an unbiased single-point oracle for $F$ with the same variance bound $\sigma^2$. Consequently the residual problem of Section~\ref{sec:master} is a stochastic monotone inclusion with $(L,D)=(2,D)$, and:
\begin{enumerate}[label=(\roman*),leftmargin=2em]
\item the algorithm RAIN of \cite[Theorem 4.2]{ChenLuo2024} reaches $\E\norm{x-Tx}\le\eps$ with $\tilde O(\sigma^2\eps^{-2}+D\eps^{-1})$ oracle evaluations;
\item any algorithm reaching $\E\norm{x-Tx}\le\eps$ on the subclass $\{T=I-\nabla f:\ f\ \text{convex},\ 2\text{-smooth}\}$ requires $\tilde\Omega(\sigma^2\eps^{-2}+D\eps^{-1})$ evaluations, by \cite[Theorem 6.2]{ChenLuo2024} with $L=2$ (the hard instances are gradient maps, and $T=I-\nabla f$ is nonexpansive for $2$-smooth convex $f$ by Baillon--Haddad co-coercivity; the lower bound for the subclass lower-bounds the general class since the subclass is contained in it).
\end{enumerate}
Hence the minimax oracle complexity of the general nonexpansive residual problem on Hilbert spaces is $\tilde\Theta(\sigma^2\eps^{-2}+D\eps^{-1})$.
\end{theorem}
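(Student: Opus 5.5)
We verify the structural claims about $F=I-T$ directly, and then import the upper and lower bounds of \cite{ChenLuo2024}. \emph{Monotonicity} uses only Cauchy--Schwarz and nonexpansiveness:
$$\inner{Fx-Fy,x-y}=\norm{x-y}^2-\inner{Tx-Ty,x-y}\ge\norm{x-y}^2-\norm{Tx-Ty}\norm{x-y}\ge0.$$
\emph{The Lipschitz constant} follows from the triangle inequality, $\norm{Fx-Fy}\le\norm{x-y}+\norm{Tx-Ty}\le2\norm{x-y}$. The identities $\operatorname{zer}(F)=\Fix(T)$ and $\norm{F(x)}=\norm{x-Tx}$ hold by definition. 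For the oracle, set $\widehat F(x):=x-\widehat T(x)=F(x)-\xi$. It is unbiased with $\E\norm{-\xi}^2\le\sigma^2$, and independence across calls is inherited, so one $F$-query costs exactly one $T$-query. Finally, $D=\norm{x_0-x^*}$ is the same distance to a zero of $F$. Hence the nonexpansive residual problem, under the criterion $\E\norm{x-Tx}\le\eps$, is literally an instance of the stochastic monotone inclusion problem with parameters $(L,D,\sigma)=(2,D,\sigma)$.

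For (i), we invoke \cite[Theorem 4.2]{ChenLuo2024}. It gives $\E\norm{F(x)}\le\eps$ using $\tilde O(\sigma^2\eps^{-2}+LD\eps^{-1})$ single-point evaluations; with $L=2$ this reads $\tilde O(\sigma^2\eps^{-2}+D\eps^{-1})$. The one thing to check is that their oracle model (single-point, unbiased, bounded variance) and their accuracy criterion agree with ours. If their guarantee is stated for $\E\norm{F(x)}^2$, Jensen converts it to the expectation criterion, as in Remark~\ref{rem:criteria}.

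For (ii), take $f$ convex and $2$-smooth. By Baillon--Haddad, $\nabla f$ is $\tfrac12$-cocoercive. Then
$$\norm{Tx-Ty}^2=\norm{x-y}^2-2\inner{\nabla f(x)-\nabla f(y),x-y}+\norm{\nabla f(x)-\nabla f(y)}^2\le\norm{x-y}^2,$$
so $T=I-\nabla f$ is nonexpansive, and its residual is $\norm{\nabla f(x)}$. The hard instances of \cite[Theorem 6.2]{ChenLuo2024}, taken with $L=2$, are gradient maps of this type. Their lower bound is stated for the gradient-norm criterion with an oracle for $\nabla f$, and an oracle for $T$ is equivalent to one for $\nabla f$ via $\nabla f(x)=x-T(x)$. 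So every algorithm for our problem yields one for theirs at equal cost, and their bound $\tilde\Omega(\sigma^2\eps^{-2}+D\eps^{-1})$ transfers. Since this subclass sits inside $\cN$, the bound also holds for the general class. Combining (i) and (ii) gives the $\tilde\Theta$ statement.

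The main obstacle is the bookkeeping of the cited results rather than the mathematics. We must check that their lower-bound construction allows $L=2$ with the variance and diameter parameters matching ours; if their instances use a general $L$, a rescaling $f\mapsto(2/L)f$ with matching rescalings of $\sigma$ and $D$ should fix this. We must also check that their algorithm class is at least as broad as the adaptive algorithms considered here, so the lower bound is not vacuous for our model. The remaining structural verifications are immediate.
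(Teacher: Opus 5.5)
Your route is the paper's own (its justification is Remark~\ref{rem:chenluo-position}). You verify that $F=I-T$ is monotone and $2$-Lipschitz with an induced unbiased oracle, import RAIN for (i), and embed the hard instances of \cite[Theorem 6.2]{ChenLuo2024} into $\cN$ via Baillon--Haddad for (ii). Your structural checks are correct. For (i) you omit one piece of bookkeeping that the remark supplies: Chen--Luo state their result for saddle operators of convex--concave functions, so you need to observe that their analysis uses only monotonicity and Lipschitzness, and their guarantee is $3\eps$-stationarity, so you run at $\eps/3$.

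The genuine gap is in (ii), and you inherit it from the paper: gradient-map instances cannot carry the $D\eps^{-1}$ term. On the subclass $T=I-\nabla f$ ($f$ convex, $2$-smooth), an oracle for $T$ is an oracle for $\nabla f$, and the residual is $\norm{\nabla f(x)}$. The complexity of making the gradient small in stochastic convex optimization is $\tilde\Theta(\sigma^2\eps^{-2}+\sqrt{D/\eps})$ \cite{Foster2019}. Already for $\sigma=0$, Nesterov-type regularized acceleration reaches residual $\eps$ in $\tilde O(\sqrt{D/\eps})$ evaluations. So whenever $\sigma^2\lesssim D\eps$, no lower bound of order $D\eps^{-1}$ holds on that subclass, and (ii) as stated is false. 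In particular, a separable family $H(x,y)=F(x)-F(y)$ has saddle operator $\nabla\bigl(F(x)+F(y)\bigr)$, which is a convex gradient map, so it can only supply the $\sigma^2\eps^{-2}$ term.

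Your fallbacks do not repair this. Rescaling $f$ keeps you inside gradient maps. Importing the bilinear or skew hard instances used for monotone inclusions fails the reverse embedding that a lower bound needs. The map $T=I-F$ is nonexpansive iff $F$ is $\tfrac12$-cocoercive, i.e.\ $\norm{Fx-Fy}^2\le2\inner{Fx-Fy,x-y}$, and a nonzero skew operator is never cocoercive. So \emph{monotone and $2$-Lipschitz} is necessary but not sufficient for membership in $\cN$.

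The correct argument splits the two terms across different subfamilies of $\cN$. The $\tilde\Omega(\sigma^2\eps^{-2})$ term comes from gradient instances via Baillon--Haddad, exactly as you argue. The $\Omega(D\eps^{-1})$ term must come from non-gradient nonexpansive instances. These are necessarily non-self-adjoint, since self-adjoint affine instances are gradients of convex quadratics. An example is the worst cases behind the Halpern-optimality lower bound of \cite{ParkRyu2022}. That bound is stated for span-class methods, so covering arbitrary adaptive algorithms needs a separate high-dimensional argument.

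With both pieces, $\max\{a,b\}\ge\tfrac12(a+b)$ recovers the final $\tilde\Theta(\sigma^2\eps^{-2}+D\eps^{-1})$ claim for $\cN$. Statement (ii) itself, restricted to the gradient subclass, should be corrected to the $\sigma^2\eps^{-2}$ term alone rather than proved.
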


\begin{remark}[Positioning of Theorem~\ref{thm:monotone-reduction}]\label{rem:chenluo-position}
Theorem~\ref{thm:monotone-reduction} is a \emph{reduction} to \cite{ChenLuo2024}, not a new algorithm or a new lower bound; we state it because it settles the Hilbert exponent (open after \cite{BravoContreras2024}) and because the verification of the hypotheses (monotonicity and $2$-Lipschitzness of $F=I-T$ for \emph{every} nonexpansive $T$, including non-gradient ones; embedding of their hard family via Baillon--Haddad) is short but not entirely automatic. We checked \cite{ChenLuo2024} against the published version (JMLR \textbf{25}(387):1--44, 2024): their Theorem 4.2 states that RAIN, run with regularization parameter $\lambda=\min\{\eps/D,L\}$, outputs a $3\eps$-stationary point in expectation at SFO complexity $\tilde O(LD/\eps+\sigma^2/\eps^2)$; running it with target $\eps/3$ yields item (i) at the same complexity up to constants. Their Theorem 6.2 is the matching convex-concave lower bound, proved with the separable hard family $H(x,y)=F(x)-F(y)$ whose gradient map is monotone and $L$-Lipschitz. Their Epoch-SEG analysis uses only monotonicity and $L$-Lipschitzness of the operator (and strong monotonicity of the regularized operator), so it applies verbatim to $F=I-T$, which is monotone and $2$-Lipschitz but need not be a gradient field.
\end{remark}

\begin{remark}[Positioning of Theorem~\ref{thm:A1}]\label{rem:A1-position}
Theorem~\ref{thm:monotone-reduction} supersedes the \emph{rate} of Theorem~\ref{thm:A1} on Hilbert spaces: $\sigma^2\eps^{-2}+D\eps^{-1}\ll\sigma^2D^2\eps^{-4}$ for $\eps\ll D$. Subsequently, Diakonikolas, Guzm\'an, and Mart\'inez-Rubio \cite{Diakonikolas2026b} proved, for the same single-point bounded-variance oracle, a high-probability bound $N=\tilde O\bigl(\tfrac{D}{\eps}\bigl(1+\bigl(C\log\tfrac{D}{\eps}\cdot L\mathfrak{T}\sigma/\eps\bigr)^{2}\bigr)\log\tfrac1\delta\bigr)$ on every space of equal-norm Rademacher type $2$ with constant $\mathfrak{T}$ and $L$-Lipschitz retraction (their Theorem~5 with $q=2$); $2$-uniformly smooth spaces qualify with $\mathfrak{T}=O(\sqrt K)$ and $L\le2$. This is $\tilde O(D\sigma^{2}\eps^{-3}+D\eps^{-1})$ up to polylogarithms, and its ratio to the bound of Theorem~\ref{thm:A1} is $\eps/(KD)$ --- so it supersedes the rate of Theorem~\ref{thm:A1} whenever $\eps\lesssim KD$, in particular throughout the regime $\eps\le D/4$ treated here. Their lower bound $\Omega(D\eps^{-1}+D\sigma^{2}\eps^{-3})$ for $\ell_\infty$-instances (their Theorem~13) is nearly tight against their upper bound and holds against arbitrary randomized algorithms, strengthening the linear-span $\Omega(\sigma^2\eps^{-3})$ bound of \cite{BravoContreras2024}. Theorem~\ref{thm:A1} remains the contribution on Q3 for three reasons: it is a self-contained single-loop analysis using only anchoring and batching (no extragradient steps, no recursive regularization, no monotonicity machinery), which is the analysis that also generates the contraction-regime noise floor of Section~\ref{sec:contraction}; it extends beyond Hilbert spaces to every $2$-uniformly smooth Banach space (Remark~\ref{rem:dualization}), where, at the time of writing, it improved the state of the art from $\tilde O(\sigma^2\eps^{-5})$ to $O(K\sigma^2D^2\eps^{-4})$ --- the improvement over the $D$-independent bound of \cite{BravoContreras2024} holding in the regime $\eps\lesssim D^{-2}$, with the two bounds complementary in the opposite regime and a $D$-independent $O(\sigma^2\eps^{-4})$ bound on $2$-uniformly smooth spaces open; and its proof (dualization + deterministic envelopes) is the template for the stochastic adaptive question of Remark~\ref{rem:detection}.
\end{remark}

\section{The contraction regime: exact oracle complexity}\label{sec:contraction}

This section answers Q2. Throughout, $\cC_\gamma$ is the class of $\gamma$-contractions with $\norm{x_0-x^*}\le D$. The exact complexity depends on what is known about the modulus $\gamma$, and the answer splits into three regimes: affine instances (no modulus information is needed), general instances with known modulus, and general instances with only an \emph{upper bound} on the modulus. That these are the only regimes in which the distance-certification question is well-posed is the content of Remark~\ref{rem:barrier}.

\subsection{Lower bound: the affine family}\label{sec:affine-lower}

\begin{theorem}[Le\,Cam lower bound]\label{thm:affine-lower}
Fix $\gamma\in(0,1)$, $\sigma>0$, $\eps\in(0,D/2)$, and $\vartheta\in(0,1/2)$ (throughout, $\vartheta$ denotes a failure probability, in contrast to the contraction gap $\delta=1-\gamma$). Any algorithm that, using any adaptive querying strategy with single-point bounded-variance oracle evaluations (Definition~\ref{def:oracle}), outputs $\hat x$ with
$$\Prob\bigl(\norm{\hat x - x^*}\le \eps\bigr)\;\ge\;1-\vartheta \quad\text{for every } T\in\cC_\gamma,$$
requires at least
\begin{equation}\label{eq:affine-lb}
M \;\ge\; (1-2\vartheta)^2\,\frac{\sigma^2}{\eps^2(1-\gamma)^2}
\end{equation}
oracle samples on the affine subfamily $T_{\gamma,z}(x)=\gamma x+(1-\gamma)z$, $z\in[-D,D]$.
\end{theorem}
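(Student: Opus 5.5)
We prove \eqref{eq:affine-lb} by a two-point Le\,Cam argument on the affine family. Fix $z_0=0$ and $z_1=2\eps/(1-\gamma)\cdot(1-\gamma)\cdot\frac{1}{1-\gamma}$; more precisely, choose $z_0,z_1\in[-D,D]$ with $|z_1-z_0|=2\eps'$ for $\eps'$ slightly larger than $\eps$ (or $=\eps$ with strict/nonstrict care; since $\eps<D/2$ both fit in $[-D,D]$, e.g.\ $z_0=-\eps'$, $z_1=\eps'$). The fixed point of $T_{\gamma,z}$ is $x^*=z$, so $\norm{x^*_0-x^*_1}=2\eps'>2\eps$ and the $\eps$-balls around them are disjoint. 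We take Gaussian noise $\xi\sim\mathcal N(0,\sigma^2)$, permitted by Definition~\ref{def:oracle}, and work in one dimension (embedding into $\cH$ via a fixed unit vector; orthogonal coordinates carry pure noise independent of $z$ and can be ignored).

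The key structural point is that the mean difference between the two oracles is constant in the query: $T_{\gamma,z_1}(x)-T_{\gamma,z_0}(x)=(1-\gamma)(z_1-z_0)$ for every $x$. Hence for any adaptive strategy, each response, conditionally on the past (which determines the query point), is $\mathcal N(\mu(x),\sigma^2)$ versus $\mathcal N(\mu(x)+\Delta,\sigma^2)$ with $\Delta=2\eps'(1-\gamma)$ independent of $x$. By the chain rule for KL divergence over the adaptively generated transcript of $M$ responses (plus any internal randomness, which has the same law under both hypotheses), $\mathrm{KL}(P_0^{(M)}\|P_1^{(M)})=M\Delta^2/(2\sigma^2)=2M\eps'^2(1-\gamma)^2/\sigma^2$. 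This adaptivity-robust additivity is the only nontrivial step, and it works precisely because the per-query divergence does not depend on the query location.

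The test $\psi=\mathbf 1\{\norm{\hat x-z_1}\le\eps\}$ then distinguishes the hypotheses with error at most $\vartheta$ under each, so $1-2\vartheta\le\mathrm{TV}(P_0^{(M)},P_1^{(M)})$. We convert to the displayed constant using either Pinsker, giving $\mathrm{TV}\le\sqrt{\mathrm{KL}/2}=\sqrt{M}\,\eps'(1-\gamma)/\sigma$, hence $M\ge(1-2\vartheta)^2\sigma^2/(\eps'^2(1-\gamma)^2)$, and letting $\eps'\downarrow\eps$ yields \eqref{eq:affine-lb}. (Alternatively, the exact Gaussian total variation $2\Phi(\sqrt M\Delta/(2\sigma))-1\le\sqrt{M}\Delta/(\sigma\sqrt{2\pi})$ gives a slightly better constant.) The only point needing care is the choice of the separation so that the two success events are disjoint while $z_0,z_1\in[-D,D]$; this uses $\eps<D/2$, and the limiting argument $\eps'\downarrow\eps$ removes the strict-versus-nonstrict issue.
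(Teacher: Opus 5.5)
Your proof is correct and follows the paper's route: a two-point Le\,Cam argument with Gaussian noise on two affine instances whose fixed points are symmetric about the origin, where the query-independent mean gap $(1-\gamma)\cdot 2\eps$ makes the KL chain rule insensitive to adaptive query placement, and Pinsker then gives exactly the constant $(1-2\vartheta)^2$ (delete the garbled first formula for $z_1$; your corrected choice $z_{0,1}=\mp\eps'$ is the right one). Your separation $2\eps'>2\eps$ followed by $\eps'\downarrow\eps$ is slightly more careful than the paper. The paper separates the fixed points by exactly $2\eps$ and controls $\Prob(\norm{\hat x-x^*}\ge\eps)$ rather than $\Prob(\norm{\hat x-x^*}>\eps)$. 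With exact $2\eps$ separation the midpoint estimator meets the closed-ball criterion on both instances, so your strict separation is what makes the test step valid.
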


\begin{proof}
Consider the two instances $\theta_0=(\gamma,z-\eps)$ and $\theta_1=(\gamma,z+\eps)$, both in $\cC_\gamma$ (same contraction factor; fixed points $z\mp\eps$ at distance $2\eps$). For every query point $x$ the response means differ by
$$\Delta(x) = T_{\gamma,z+\eps}(x)-T_{\gamma,z-\eps}(x) = (1-\gamma)\cdot 2\eps,$$
independent of $x$. Condition on any realization of the algorithm's query points (adaptivity is allowed; the argument is pointwise in the transcript): with Gaussian noise of variance $\sigma^2$ per evaluation, a batch of $k$ evaluations contributes KL divergence $k\,\norm{\Delta}^2/(2\sigma^2) = 2k(1-\gamma)^2\eps^2/\sigma^2$, and by the chain rule the total KL divergence after $M$ samples is at most $2M(1-\gamma)^2\eps^2/\sigma^2$, regardless of the (adaptive, data-dependent) query placement. Le\,Cam's two-point bound \cite[Theorem 2.2]{Tsybakov2009} then gives
$$\inf_{\hat x}\ \max_{i\in\{0,1\}}\ \Prob_{\theta_i}\bigl(\norm{\hat x-x^*(\theta_i)}\ge\eps\bigr) \;\ge\; \frac12 - \frac12\sqrt{\tfrac12\,\mathrm{KL}} \;\ge\; \frac12 - \sqrt{\frac{M(1-\gamma)^2\eps^2}{4\sigma^2}}.$$
A $1-\vartheta$ accurate algorithm must make the left side $\le\vartheta$, forcing $\sqrt{M(1-\gamma)^2\eps^2/(4\sigma^2)}\ge\tfrac12-\vartheta=\tfrac{1-2\vartheta}{2}$, i.e.\ \eqref{eq:affine-lb}.
\end{proof}

\begin{remark}
Three features distinguish Theorem~\ref{thm:affine-lower} from resistant-oracle lower bounds \cite{BravoContreras2024}: (i) it is \emph{parametric} --- two affine instances; (ii) the mean gap is $x$-independent, so adaptivity and query placement are irrelevant; (iii) it applies to arbitrary algorithms with memory, not only span-class iterations. The price is scope: the bound is specific to contractions (the affine family is not nonexpansive-hard), which is exactly what Q2 needs. The bound is stated at constant confidence; its high-probability refinement is Lemma~\ref{lem:hp-lb}.
\end{remark}

\subsection{Upper bound for affine contractions: the two-point estimator}

\begin{theorem}[Affine upper bound]\label{thm:affine-upper}
Let $T(x)=\gamma x+(1-\gamma)z$ on $\R$ with $\gamma\in(0,1)$, $z\in[-D,D]$ unknown (the construction is one-dimensional --- both probes lie on a line; in a Hilbert space the same probes identify $\langle z,u\rangle$ for the probe direction $u$, but the component of $z$ orthogonal to the probe line is unobservable, so the guarantee $\norm{\hat z-z}\le\eps$ requires $z$ to lie on the probe line, otherwise the bound carries an additional $\norm{z_\perp}$ term). The estimator querying $m/2$ evaluations at $0$ and $m/2$ at $D$ and returning
$$\hat\gamma = \frac{\bar y_D-\bar y_0}{D}, \qquad \hat z = \frac{\bar y_0}{1-\hat\gamma},$$
satisfies $\Prob(\norm{\hat z-z}\le\eps)\ge2/3$ with
\begin{equation}\label{eq:affine-ub}
m \;=\; O\!\Bigl(\frac{\sigma^2}{\eps^2(1-\gamma)^2}\Bigr),
\end{equation}
where the sufficient explicit threshold is $m=\lceil480\sigma^2/(\eps^2(1-\gamma)^2)\rceil$ (constant not optimized; Remark~\ref{rem:constants} applies).
\end{theorem}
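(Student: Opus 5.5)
The plan is a direct error decomposition of the plug-in estimator, followed by Chebyshev bounds on the two sample means. Write $\bar y_0=(1-\gamma)z+\eta_0$ and $\bar y_D=\gamma D+(1-\gamma)z+\eta_D$. The noises $\eta_0,\eta_D$ are independent averages of $m/2$ evaluations each, so they are mean zero with $\E\eta^2\le2\sigma^2/m$. It follows that $\hat\gamma-\gamma=(\eta_D-\eta_0)/D$. Set $\delta=1-\gamma$ and $\hat\delta=1-\hat\gamma$, so that $\hat\delta=\delta-(\eta_D-\eta_0)/D$. Then
$$\hat z-z\;=\;\frac{\delta z+\eta_0}{\hat\delta}-z\;=\;\frac{\eta_0+z(\delta-\hat\delta)}{\hat\delta}\;=\;\frac{\eta_0+z(\eta_D-\eta_0)/D}{\hat\delta}.$$
Since $|z|\le D$, the numerator is at most $|\eta_0|+|\eta_D-\eta_0|\le2|\eta_0|+|\eta_D|$ in absolute value. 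The whole task therefore reduces to two things: the numerator must be $O(\eps\delta)$, and the denominator must satisfy $\hat\delta\ge\delta/2$, each with controlled failure probability.

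\emph{Step 1 (concentration).} Apply Chebyshev to $\eta_0$ and to $\eta_D$ separately at threshold $t$. This gives $\Prob(|\eta|>t)\le2\sigma^2/(mt^2)$ for each, and a union bound leaves failure probability at most $4\sigma^2/(mt^2)$.

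\emph{Step 2 (denominator).} On the good event, $|\hat\delta-\delta|\le2t/D$. I will require $2t/D\le\delta/2$, so that $\hat\delta\ge\delta/2>0$. This also rules out division by zero or a sign flip in $1-\hat\gamma$, which I expect to be the only delicate point.

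\emph{Step 3 (numerator).} On the good event the numerator is at most $3t$, so $|\hat z-z|\le6t/\delta$.

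Now choose $t=\eps\delta/6$. This makes the bound in Step 3 at most $\eps$. The denominator condition of Step 2 becomes $2\eps\delta/(6D)\le\delta/2$, which holds because $\eps<D$; if $\eps\ge D$ one can simply output $0$. The failure probability is then $4\sigma^2\cdot36/(m\eps^2\delta^2)=144\sigma^2/(m\eps^2\delta^2)$, and this is at most $1/3$ once $m\ge432\sigma^2/(\eps^2\delta^2)$. The stated $480$ absorbs rounding of $m/2$ to an integer, which costs at most a factor $m/(m-1)$.

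The estimator never uses $\gamma$, because the probe points $0$ and $D$ are fixed in advance. This is the sense in which parameter-freeness is free for affine maps. The main obstacle is Step 2: keeping the random denominator $1-\hat\gamma$ away from zero. The same Chebyshev event handles it precisely because the probe separation $D$ is at least $|z|$ and exceeds $\eps$. A smaller probe separation would inflate the slope error by $D/\text{separation}$ and break both the denominator bound and the rate.
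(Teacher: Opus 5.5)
Your proof is correct, and it takes a different, somewhat leaner route than the paper's. The paper linearizes $1/(1-\hat\gamma)$ about $1/(1-\gamma)$. It writes $\hat z-z$ as the linear term $\frac{(1-\beta)\eta_0+\beta\eta_D}{1-\gamma}$, with $\beta=z/D$ (in your notation), plus a quadratic remainder $R$. It then splits the failure budget over three events: the denominator event $\{|\hat\gamma-\gamma|>(1-\gamma)/2\}$, bounded by a Gaussian tail; the linear term, bounded by Chebyshev using its exact variance factor $(1-\beta)^2+\beta^2\le5$; and $R$, bounded by Markov after a Cauchy--Schwarz bound on $\E|\eta_0(\hat\gamma-\gamma)|$. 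You instead keep the exact identity $\hat z-z=\bigl(\eta_0+(z/D)(\eta_D-\eta_0)\bigr)/(1-\hat\gamma)$, which is the paper's linear term multiplied by $(1-\gamma)/(1-\hat\gamma)$. You control numerator and denominator together on the single event $\{|\eta_0|\le t,\ |\eta_D|\le t\}$, on which that multiplier lies in $(0,2]$, so no remainder analysis is needed.

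Your route buys three things. First, only second moments enter, so it proves the theorem for the bounded-variance oracle of Definition~\ref{def:oracle}; the paper's denominator bound uses Gaussian tails, though Chebyshev would also suffice there. Second, the constant improves to $432$. Third, the argument covers $\eps\le 3D/2$, whereas the paper's proof uses $\eps\le D/2$, so your fallback to outputting $0$ is needed only where the paper's proof says nothing either. The paper's decomposition buys the leading-order error and its worst-case variance $10\sigma^2/(m(1-\gamma)^2)$ at $\beta=-1$, which the numerics section uses to explain the measured constant. Your bound $2|\eta_0|+|\eta_D|$ on the numerator is cruder at that level, but avoiding a separate remainder budget more than compensates.

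One small correction. Rounding $m/2$ down costs a factor $m/(m-1)$, which $480/432$ absorbs only when $m\ge10$. For small odd $m$, and for $m=1$ where one probe would get no evaluation, it does not. Take $\lceil m/2\rceil$ evaluations per probe instead: this keeps $\E\eta^2\le2\sigma^2/m$ for every $m$, at the price of at most one extra evaluation.
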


\begin{proof}
Write $b=(1-\gamma)z$, $\bar y_0 = b+\zeta_0$, $\bar y_D = \gamma D+b+\zeta_D$ with independent $\zeta_0,\zeta_D$ of variance $\le 2\sigma^2/m$ each, and set
$$\beta\;:=\;\frac{b}{D(1-\gamma)}\;\in\;[-1,1]\qquad(\text{since }\abs{b}=(1-\gamma)\abs{z}\le(1-\gamma)D).$$
Then $\eta:=\hat\gamma-\gamma = (\zeta_D-\zeta_0)/D$ with $\E\eta^2\le4\sigma^2/(mD^2)$, and linearising $1/(1-\hat\gamma)$ about $1/(1-\gamma)$,
$$\hat z - z \;=\; \frac{\zeta_0}{1-\gamma}+\frac{b\,\eta}{(1-\gamma)^2}+R \;=\; \frac{\zeta_0(1-\beta)+\zeta_D\,\beta}{1-\gamma}+R,$$
where the remainder collects exactly the terms quadratic in the noise: with $E_0:=\{|\eta|\le(1-\gamma)/2\}$ (so that $1-\hat\gamma\ge(1-\gamma)/2$ on $E_0$),
$$R\;=\;\underbrace{b\,\eta\Bigl(\frac{1}{(1-\gamma-\eta)(1-\gamma)}-\frac{1}{(1-\gamma)^2}\Bigr)}_{R_1}\;+\;\underbrace{\zeta_0\,\eta\,\frac{1}{(1-\gamma-\eta)(1-\gamma)}}_{R_2},$$
and therefore, on $E_0$,
$$|R_1|\;\le\;\frac{2|b|\,\eta^2}{(1-\gamma)^3}\;\le\;\frac{2D\,\eta^2}{(1-\gamma)^2},\qquad |R_2|\;\le\;\frac{2|\zeta_0|\,|\eta|}{(1-\gamma)^2}.$$
We control $R$ in expectation rather than pointwise (the product $|\zeta_0|\,|\eta|$ is only first-order in each noise factor, so a pointwise bound on a high-probability event costs unnecessary constants): by Cauchy--Schwarz, $\E|\zeta_0\eta|\le\sqrt{\E\zeta_0^2\,\E\eta^2}\le\sqrt{(2\sigma^2/m)(4\sigma^2/(mD^2))}=2\sqrt2\,\sigma^2/(mD)$, hence
$$\E\bigl[|R|\,\mathbf 1_{E_0}\bigr]\;\le\;\frac{2D\,\E\eta^2}{(1-\gamma)^2}+\frac{2\,\E|\zeta_0\eta|}{(1-\gamma)^2}\;\le\;\frac{(8+4\sqrt2)\,\sigma^2}{mD(1-\gamma)^2}\;\le\;\frac{14\,\sigma^2}{mD(1-\gamma)^2}.$$

We bound the failure probability $\Prob(|\hat z-z|>\eps)$ by the sum of three terms.\
\emph{Term 1: $E_0^c$.} $\eta$ is centered Gaussian with variance parameter $\le4\sigma^2/(mD^2)$, so $\Prob(E_0^c)\le2\exp\bigl(-(1-\gamma)^2mD^2/(32\sigma^2)\bigr)\le2e^{-4}\le1/12$ once $m\ge128\sigma^2/(D^2(1-\gamma)^2)$, which is implied by $m\ge480\sigma^2/(\eps^2(1-\gamma)^2)$ since $\eps\le D/2$ gives $480/\eps^2\ge480\cdot4/D^2\ge128/D^2$.\
\emph{Term 2: the linear part.} Its variance is
$$\Var\Bigl(\frac{\zeta_0(1-\beta)+\zeta_D\,\beta}{1-\gamma}\Bigr)=\frac{(1-\beta)^2\Var\zeta_0+\beta^2\Var\zeta_D}{(1-\gamma)^2}\;\le\;\frac{2\sigma^2}{m(1-\gamma)^2}\bigl((1-\beta)^2+\beta^2\bigr)\;\le\;\frac{10\sigma^2}{m(1-\gamma)^2},$$
the last step using $\max_{\beta\in[-1,1]}\bigl((1-\beta)^2+\beta^2\bigr)=5$ (attained at $\beta=-1$). Note that the two displayed terms are correlated through $\zeta_0$ (via $\hat\gamma$); the computation above accounts for this exactly --- dropping the covariance, as a naive term-by-term estimate would do, gives the smaller but incorrect constant $6$. By Chebyshev, $\Prob(|\mathrm{linear}|>\eps/2)\le40\sigma^2/(m(1-\gamma)^2\eps^2)\le1/12$ for $m\ge480\sigma^2/(\eps^2(1-\gamma)^2)$.\
\emph{Term 3: the remainder.} By Markov's inequality, $\Prob(|R|>\eps/4)\le\E[|R|\mathbf 1_{E_0}]/\tfrac{\eps}{4}+\Prob(E_0^c)\le56\sigma^2/(mD\eps(1-\gamma)^2)+1/12\le1/6$ for $m\ge480\sigma^2/(\eps^2(1-\gamma)^2)$, using $480D\ge672\eps$ (i.e.\ $\eps\le5D/7$), which holds since $\eps\le D/2$.

Summing the three bounds, $\Prob(|\hat z-z|\le\eps)\ge1-1/12-1/12-1/6=2/3$ with $m$ as in \eqref{eq:affine-ub}.
\end{proof}

\begin{corollary}[Exact affine complexity]\label{cor:affine-theta}
For the affine contraction class, the minimax oracle complexity of reaching $\eps$-accuracy is $\Theta\!\bigl(\sigma^2\eps^{-2}(1-\gamma)^{-2}\bigr)$.
\end{corollary}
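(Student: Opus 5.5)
The corollary is the sandwich of Theorem~\ref{thm:affine-lower} and Theorem~\ref{thm:affine-upper}, once both are phrased under the same accuracy criterion. I will state it in the constant-confidence sense: $\Prob(\norm{\hat x-x^*}\le\eps)\ge2/3$, uniformly over the affine class with $z\in[-D,D]$ and $\eps<D/2$.

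\emph{Lower bound.} Apply Theorem~\ref{thm:affine-lower} with $\vartheta=1/3$. This gives $M\ge\frac19\,\sigma^2\eps^{-2}(1-\gamma)^{-2}$ for any adaptive algorithm. The two hard instances $T_{\gamma,z\mp\eps}$ are themselves affine, so the bound holds on the affine subfamily, which is exactly the class in the corollary.

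\emph{Upper bound.} The two-point estimator of Theorem~\ref{thm:affine-upper} reaches confidence $2/3$ with $m=\lceil480\sigma^2/(\eps^2(1-\gamma)^2)\rceil$ samples. Its constant is therefore at most $480$ plus one sample of rounding, matching the lower bound up to absolute constants. Since the estimator does not use $\gamma$, the rate is achieved parameter-free.

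\emph{Conversion to expected distance.} To state the result under criterion (i), $\E\norm{\hat x-x^*}\le\eps$, I would use the replicate--median device of Remark~\ref{rem:criteria}.
\begin{itemize}
\item Run $O(1)$ independent copies at target $\eps/2$ and take the median; this costs a constant factor.
\item The estimator's error is not bounded on the failure event, since $1-\hat\gamma$ can be small. I would therefore clip $\hat z$ to $[-D,D]$. This is harmless because $z\in[-D,D]$, and it caps the error at $2D$ on the failure event.
\item With failure probability $\vartheta'\le\eps/(4D)$, the bad event contributes at most $\eps/2$ to the expectation. This costs $O(\log(D/\eps))$ repetitions.
\end{itemize}
That logarithm would spoil the pure $\Theta$ bound. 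The cleaner route is a direct second-moment bound on the clipped estimator: the linear part has variance $O(\sigma^2/(m(1-\gamma)^2))$, and the remainder is controlled in expectation on $E_0$ exactly as in the proof. For the criterion-(i) lower bound, Markov's inequality turns $\E\norm{\hat x-x^*}\le\eps/3$ into confidence $2/3$ at accuracy $\eps$, so Theorem~\ref{thm:affine-lower} applies with only a constant loss.

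\emph{Main obstacle.} The delicate step is making the criteria consistent. The clean statement is at constant confidence, and the expected-distance version requires handling the heavy tail of $1/(1-\hat\gamma)$. I would handle it by clipping combined with the sub-Gaussian bound on $\Prob(E_0^c)$ already established in Term~1. At $m\gtrsim\sigma^2/(\eps^2(1-\gamma)^2)$ with $\eps\le D/2$ we get $\Prob(E_0^c)\le 2\exp(-cD^2/\eps^2)$. Multiplied by $2D$, this contribution is $O(\eps)$, so no logarithm is needed.
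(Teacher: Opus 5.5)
Your proposal is correct and is the paper's argument: the corollary is the sandwich of Theorem~\ref{thm:affine-lower} at constant confidence ($\vartheta=1/3$ gives $M\ge\frac19\sigma^2\eps^{-2}(1-\gamma)^{-2}$, on hard instances that are themselves affine) with the $\gamma$-free two-point estimator of Theorem~\ref{thm:affine-upper} at confidence $2/3$. Your expected-distance extension goes beyond what the paper states and works with one adjustment: under the bounded-variance oracle the exponential tail you quote for $\Prob(E_0^c)$ is not available (the paper's Term~1 obtains it by treating $\eta$ as Gaussian), but Chebyshev already gives $\Prob(E_0^c)\le16\sigma^2/(mD^2(1-\gamma)^2)=O(\eps^2/D^2)$, so after clipping the failure event contributes only $O(\eps^2/D)=O(\eps)$ and no logarithm appears.
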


\subsection{General contractions: geometric batching, high probability, and the modulus ceiling}\label{sec:general-upper}

For nonlinear contractions no closed-form estimator exists, and the rate is governed by what is known about the modulus.

\begin{theorem}[Geometric batching removes the logarithm]\label{thm:geobatch}
Let $T$ be a $\gamma$-contraction with $\gamma$ \emph{known}, $\delta=1-\gamma$, $D=\norm{x_0-x^*}\ge\eps$. Set $\bar\gamma=\max\{\gamma,\tfrac12\}$, $\bar\delta=1-\bar\gamma$,
$$T=\Bigl\lceil\frac{\ln(2D/\eps)}{\ln(1/\bar\gamma)}\Bigr\rceil,\qquad m_k=\Bigl\lceil\frac{2\sigma^2\,\bar\gamma^{\,T-k}}{\bar\delta\,\eps^2}\Bigr\rceil .$$
Then the Krasnoselskii--Mann iteration $x_{k+1}=\widehat T_{m_k}(x_k)$ satisfies
$$\E\norm{x_T-x^*}^2\;\le\;2\eps^2,\qquad \sum_{k=0}^{T-1}m_k\;=\;O\!\Bigl(\frac{\sigma^2}{\delta^2\eps^2}+\frac{\ln(D/\eps)}{\delta}\Bigr).$$
Only an upper bound on $\gamma$ is needed; see Corollary~\ref{cor:ceiling}.
\end{theorem}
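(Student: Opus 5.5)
We work directly with the mean-square error $w_k:=\E\norm{x_k-x^*}^2$, not with the comparison path. The key tool is the recursion of Lemma~\ref{lem:master-contract} with $\theta_n\equiv0$, applied to $x_k-x^*$ in place of $d_k$. Write $x_{k+1}-x^*=(Tx_k-Tx^*)+\bar\xi_k$ and expand the square. The cross term vanishes conditionally on $\mathcal F_{k-1}$ (Remark~\ref{rem:discipline}), because $Tx_k-x^*$ is $\mathcal F_{k-1}$-measurable and $\E[\bar\xi_k\mid\mathcal F_{k-1}]=0$. Contractivity then gives
$$w_{k+1}\;\le\;\gamma^2w_k+\frac{\sigma^2}{m_k}\;\le\;\bar\gamma^2w_k+\frac{\sigma^2}{m_k},\qquad w_0\le D^2.$$
Replacing $\gamma$ by $\bar\gamma\ge\gamma$ is harmless, since $T$ is also a $\bar\gamma$-contraction. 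The same remark shows that only an upper bound on the modulus is used.

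Unfolding this recursion gives
$$w_T\le\bar\gamma^{2T}D^2+\sigma^2\sum_{k<T}\bar\gamma^{2(T-1-k)}/m_k.$$
For the first term, the choice of $T$ gives $\bar\gamma^T\le\eps/(2D)$, so $\bar\gamma^{2T}D^2\le\eps^2/4$. For the noise term, substitute $m_k\ge2\sigma^2\bar\gamma^{T-k}/(\bar\delta\eps^2)$. This yields
$$\frac{\bar\delta\eps^2}{2}\sum_{k<T}\bar\gamma^{2(T-1-k)-(T-k)}=\frac{\bar\delta\eps^2}{2\bar\gamma^2}\sum_{j=1}^{T}\bar\gamma^{j}\le\frac{\eps^2}{2\bar\gamma}\le\eps^2,$$
using $\sum_{j\ge1}\bar\gamma^j\le\bar\gamma/\bar\delta$ and $\bar\gamma\ge1/2$. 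Adding the two terms gives $w_T\le\eps^2/4+\eps^2\le2\eps^2$.

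\textbf{Cost.} By the ceilings, $\sum_k m_k\le T+\frac{2\sigma^2}{\bar\delta\eps^2}\sum_{j=1}^{T}\bar\gamma^j\le T+\frac{2\sigma^2}{\bar\delta^2\eps^2}$.
\begin{itemize}
\item If $\gamma\ge\frac12$, then $\bar\delta=\delta$ and $\ln(1/\bar\gamma)\ge\delta$, so $T\le\ln(2D/\eps)/\delta+1$.
\item If $\gamma<\frac12$, then $\bar\delta=\frac12\ge\delta/2$ and $T=O(\ln(D/\eps))\le O(\ln(D/\eps)/\delta)$, since $\delta\le1$.
\end{itemize}
In both cases the total is $O(\sigma^2\delta^{-2}\eps^{-2}+\delta^{-1}\ln(D/\eps))$. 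Note that $\ln(2D/\eps)=O(\ln(D/\eps))$ once $D\ge\eps$, after adjusting constants when $D/\eps$ is near $1$.

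\textbf{Main point to get right.} The delicate step is the noise sum. The geometric batch weights must exactly cancel half of the exponent of the $\bar\gamma^{2(T-1-k)}$ envelope, so that the leftover series is geometric with total mass $1/\bar\delta$ rather than a factor $T$. Everything else is bookkeeping of the $\bar\gamma$ versus $\gamma$ substitution. The step I would check most carefully is the exponent $2(T-1-k)-(T-k)=T-k-2$. This exponent is negative at $k=T-1$, which is why the factor $1/\bar\gamma^2\le4$ appears, and that factor is absorbed by $\bar\gamma\ge1/2$.
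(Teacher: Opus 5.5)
Your proof is correct and follows the paper's argument essentially line by line. Both work with the direct squared-norm recursion for $w_k$, where the fresh batch kills the cross term, and both unroll with $w_0\le D^2$. In both, the geometric batches cancel half of the envelope exponent, so the noise sum is at most $\eps^2/(2\bar\gamma)\le\eps^2$, and both obtain the cost bound $T+2\sigma^2/(\bar\delta^2\eps^2)$. Your case split ($\gamma\ge\tfrac12$ versus $\gamma<\tfrac12$) for bounding $T$ only makes explicit what the paper asserts in one line.
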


\begin{proof}
Apply the squared-norm recursion to $w_k:=\E\norm{x_k-x^*}^2$ directly (this avoids the factor-$2$ comparison step and is the honest bookkeeping): $x_{k+1}-x^*=(Tx_k-x^*)+\bar\xi_k$, the cross term vanishes in expectation because $\bar\xi_k$ is a fresh batch ($\E[\bar\xi_k\mid\mathcal F_{k-1}]=0$ and $Tx_k-x^*$ is $\mathcal F_{k-1}$-measurable), and $\norm{Tx_k-x^*}\le\gamma\norm{x_k-x^*}$. Hence $w_{k+1}\le\gamma^2w_k+\sigma^2/m_k\le\bar\gamma^2w_k+\sigma^2/m_k$ (since $\gamma\le\bar\gamma$); unrolling with $w_0\le D^2$,
$$w_T\;\le\;\bar\gamma^{2T}D^2\;+\;\sum_{k=0}^{T-1}\bar\gamma^{2(T-1-k)}\frac{\sigma^2}{m_k}.$$
The first term is at most $\eps^2/4$ by the choice of $T$ ($\bar\gamma^T D\le\eps/2$). For the second term, $\sigma^2/m_k\le\frac{\bar\delta\eps^2}{2}\bar\gamma^{-(T-k)}$, so the $k$-th summand is at most $\frac{\bar\delta\eps^2}{2}\bar\gamma^{T-2-k}$; summing the geometric series yields at most
$$\frac{\bar\delta\eps^2}{2}\cdot\frac{\bar\gamma^{-1}}{1-\bar\gamma}\;=\;\frac{\eps^2}{2\bar\gamma}\;\le\;\eps^2,$$
using $\bar\gamma\ge\tfrac12$. Altogether $w_T\le\eps^2/4+\eps^2=\tfrac54\eps^2\le2\eps^2$, as claimed. The batch schedule is geometrically increasing, and since $\bar\delta\ge\delta$,
$$\sum_k m_k\;\le\;\frac{2\sigma^2}{\bar\delta\eps^2}\cdot\frac{1}{1-\bar\gamma}+T\;=\;O\!\Bigl(\frac{\sigma^2}{\delta^2\eps^2}+\frac{\ln(D/\eps)}{\delta}\Bigr),$$
as $T=O(\ln(D/\eps)/\delta)$ for all $\gamma\in(0,1)$.
\end{proof}

\begin{lemma}[Gaussian two-point high-probability lower bound]\label{lem:hp-lb}
Fix $\gamma\in(0,1)$, $\vartheta\in(0,1/8)$, $\eps\in(0,D/2)$. On the affine subfamily $T_{\gamma,z}(x)=\gamma x+(1-\gamma)z$, $z\in\{-D,D\}$ shifted so that the two fixed points are at distance $2\eps$, any algorithm whose output satisfies $\Prob(\norm{\hat x-x^*}\le\eps)\ge1-\vartheta$ under both instances requires
$$M\;\ge\;\frac{\sigma^2\,\ln(1/(8\vartheta))}{(1-\gamma)^2\eps^2}$$
oracle samples, even with adaptive querying and full knowledge of $\gamma$.
\end{lemma}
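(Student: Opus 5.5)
The plan mirrors the two-point construction of Theorem~\ref{thm:affine-lower}, but replaces Le\,Cam's total-variation bound by the high-probability (Bretagnolle--Huber type) form of the two-point inequality, so that the failure probability appears logarithmically. Take the two instances $P_0=T_{\gamma,z-\eps}$ and $P_1=T_{\gamma,z+\eps}$. Their fixed points $z\mp\eps$ are at distance $2\eps$, and the response-mean gap $(1-\gamma)2\eps$ is the same at every query point. Use Gaussian noise $\mathcal N(0,\sigma^2)$, which Definition~\ref{def:oracle} permits.

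\emph{Step 1 (KL bound under adaptivity).} Let $\Prob_0,\Prob_1$ be the laws of the full transcript: the queries, the responses and the algorithm's internal randomness. The queries are measurable in the past, so the chain rule gives $\mathrm{KL}(\Prob_0\|\Prob_1)=\E_0\sum_{t\le M}\mathrm{KL}(\mathcal N(\mu_0(q_t),\sigma^2)\|\mathcal N(\mu_1(q_t),\sigma^2))$. Each summand equals $(2\eps(1-\gamma))^2/(2\sigma^2)$ regardless of $q_t$, so $\mathrm{KL}=2M(1-\gamma)^2\eps^2/\sigma^2$ exactly. This step is identical to the corresponding step in the proof of Theorem~\ref{thm:affine-lower}.

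\emph{Step 2 (disjoint success events).} Let $A=\{\norm{\hat x-(z-\eps)}\le\eps\}$. If $A$ and the success event for $P_1$ overlap, it is only on the boundary point $\hat x=z$, where both distances equal $\eps$. To avoid this, either use closed balls and note that the point $\hat x=z$ is a single tie, or, more cleanly, separate the fixed points by $2\eps+\eta$ and let $\eta\downarrow0$ at the end. With this convention the success event for $P_1$ is contained in $A^c$. Accuracy then gives $\Prob_0(A^c)\le\vartheta$ and $\Prob_1(A)\le\vartheta$, hence $\Prob_0(A^c)+\Prob_1(A)\le2\vartheta$.

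\emph{Step 3 (Bretagnolle--Huber).} For any event $A$, $\Prob_0(A^c)+\Prob_1(A)\ge\tfrac12\exp(-\mathrm{KL}(\Prob_0\|\Prob_1))$ (see \cite[Lemma 2.6]{Tsybakov2009}). Combined with Step 2 this gives $2\vartheta\ge\tfrac12 e^{-\mathrm{KL}}$, i.e.\ $\mathrm{KL}\ge\ln(1/(4\vartheta))$. Substituting the value from Step 1 yields
\[
M\;\ge\;\frac{\sigma^2\ln(1/(4\vartheta))}{2(1-\gamma)^2\eps^2}.
\]
This has the right form but the constant $\tfrac12$ in front is worse than claimed. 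To reach the stated $\sigma^2\ln(1/(8\vartheta))/((1-\gamma)^2\eps^2)$ I would use the Gaussian likelihood-ratio geometry directly, as the paper's introduction indicates. Under the instance-$0$ law, the log-likelihood ratio $\Lambda$ is Gaussian, namely $\mathcal N(-\mathrm{KL},2\,\mathrm{KL})$. This holds even under adaptivity, because the constant mean gap makes $\Lambda$ a sum of i.i.d.\ increments $\frac{\Delta}{\sigma^2}(y_t-\bar\mu)$ of fixed variance. I would then apply the Neyman--Pearson lemma: the optimal test is a threshold on $\Lambda$, and its two error probabilities are $\Phi(-\sqrt{\mathrm{KL}/2})$ each. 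Requiring this to be at most $\vartheta$ and using the Gaussian tail bound $\Phi(-u)\ge\frac{u}{1+u^2}\phi(u)$ to invert gives $\mathrm{KL}/2\gtrsim 2\ln(1/\vartheta)-O(\ln\ln)$, from which $\mathrm{KL}\ge2\ln(1/(8\vartheta))$ follows for $\vartheta<1/8$. By Step 1 this is exactly the claimed bound. One technical point: the number of samples $M$ may itself be adaptive. I would handle this by padding every transcript with dummy queries up to the budget $M$, so that $M$ is a fixed horizon.

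The main obstacle is the constant in this Gaussian inversion step. The Mills-ratio estimate has to be made uniform for all $\vartheta<1/8$, and the $\log\log$ correction has to be absorbed into the $8$. If that proves fiddly, I would fall back on the Bretagnolle--Huber bound of Step 3. It gives the same $\ln(1/\vartheta)$ rate but with a weaker constant: roughly half the claimed one, and with $\ln(1/(4\vartheta))$ in place of $\ln(1/(8\vartheta))$.
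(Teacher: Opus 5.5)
Your main line is the paper's proof. The transcript log-likelihood ratio is exactly Gaussian despite adaptive querying. The paper phrases this as reducing each response to the sufficient statistic $(r_i-\gamma q_i)/(1-\gamma)=z+\xi_i/(1-\gamma)$, which is your observation that the increments do not depend on $q_t$. Neyman--Pearson then gives minimax error $\Phi(-x)$ with $x=(1-\gamma)\eps\sqrt M/\sigma=\sqrt{\mathrm{KL}/2}$, and one inverts.

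The inversion you flag as the main obstacle needs no $\log\log$ bookkeeping, because the stated constant gives away a factor $2$ in the Gaussian exponent. Indeed $\Phi(-x)\ge\tfrac18e^{-x^2}$ for every $x\ge0$. For $x\le1$ this holds because $\Phi(-1)>\tfrac18$. For $x\ge1$, your Mills bound gives $\Phi(-x)\ge\phi(x)/(2x)$, and $e^{x^2/2}\ge x\sqrt{2\pi}/4$ there. Hence $\Phi(-x)\le\vartheta$ forces $x^2\ge\ln(1/(8\vartheta))$, which is the claim. You do need this step rather than the fallback: Bretagnolle--Huber gives only $\tfrac12\ln(1/(4\vartheta))$ in place of $\ln(1/(8\vartheta))$, which is strictly weaker once $\vartheta<1/16$.

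In Step 2, only your second remedy is valid. The tie cannot be dismissed as a single point: the constant output $\hat x=z$ (the midpoint) succeeds under both instances with probability one and $M=0$. So for one fixed pair at distance exactly $2\eps$ the statement cannot hold as written. The right fix is the one you give second. Separate the fixed points by $2\eps+\eta$, which is still admissible for small $\eta$ since $\eps<D/2$. This yields $M\ge\sigma^2\ln(1/(8\vartheta))/((1-\gamma)^2(\eps+\eta/2)^2)$, and you let $\eta\downarrow0$. The accuracy hypothesis must then be read as a guarantee over the affine family. The paper's test ``$\Psi=+$ iff $\hat x>0$'' silently has the same boundary issue, so on this point your proposal is more careful than the paper's proof.
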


\begin{proof}
Let the two instances be $z=\pm\eps$ (fixed points $\pm\eps$, distance $2\eps$), same $\gamma$, Gaussian noise. If $\norm{\hat x-x^*}\le\eps$ with probability $\ge1-\vartheta$ under both, then the test $\Psi=+$ iff $\hat x>0$ discriminates with error $\le\vartheta$ under both. For the Gaussian location structure, the log-likelihood ratio of the two hypotheses has a clean form: for any query transcript, each response $r_i=T_{\pm}(q_i)+\xi_i$ can be reduced to the sufficient scalar $w_i=(r_i-\gamma q_i)/(1-\gamma)=z+\xi_i/(1-\gamma)$, so the full transcript is equivalent (in law) to $M$ i.i.d.\ observations $w_i=z+\eta_i$ with $\eta_i\sim\mathcal N(0,\sigma^2/(1-\gamma)^2)$. The log-likelihood ratio is therefore
$$\log\frac{d\Prob_{+}}{d\Prob_{-}}=\frac{2\eps}{\tau^2}\sum_{i=1}^M(w_i-\bar w\cdot 0)=\frac{2\eps}{\tau^2}\sum_{i=1}^M w_i\;\sim\;\mathcal N\Bigl(\pm\frac{2\eps^2M}{\tau^2},\,\frac{4\eps^2M}{\tau^2}\Bigr),\qquad \tau^2=\frac{\sigma^2}{(1-\gamma)^2},$$
under $\Prob_{\pm}$. Writing $x=\eps\sqrt{M}/\tau$, the Neyman--Pearson lemma gives minimax test error $\Phi(-x)$, and if $x<1$ this error is at least $\Phi(-1)>\tfrac18>\vartheta$, contradicting the assumption; hence $x\ge1$ and the standard bound $\Phi(-x)\ge\tfrac18 e^{-x^2}$ for $x\ge1$ (from $\Phi(-x)\ge\varphi(x)/(2x)$ and $\varphi(x)/(2x)\ge\tfrac18 e^{-x^2}$ for $x\ge1$) shows that error $\le\vartheta$ forces $e^{-x^2}\le8\vartheta$, i.e.\ $x^2\ge\ln(1/(8\vartheta))$, i.e.\ $M\ge\tau^2\ln(1/(8\vartheta))/\eps^2=\sigma^2\ln(1/(8\vartheta))/((1-\gamma)^2\eps^2)$. Adaptive query placement is irrelevant because the per-sample information about $z$ is the same at every query point.
\end{proof}

\begin{theorem}[High-probability exact rate, sub-Gaussian noise]\label{thm:hp}
Assume the oracle noise is $\sigma$-sub-Gaussian in each direction ($\E e^{\langle u,\xi\rangle}\le e^{\sigma^2\norm{u}^2/2}$ for every unit $u$; Gaussian noise, as permitted by Definition~\ref{def:oracle}, qualifies), with $\gamma$ known. Then the geometric schedule of Theorem~\ref{thm:geobatch} with all batches enlarged by $\Lambda=\lceil 16\ln(2/\vartheta)\rceil$ satisfies
$$\Prob\bigl(\norm{x_T-x^*}\le\eps\bigr)\;\ge\;1-\vartheta,\qquad \sum_k m_k = O\!\Bigl(\frac{\sigma^2\ln(1/\vartheta)}{\delta_{\gamma}^{2}\eps^{2}}+\frac{\ln(1/\vartheta)\,\ln(D/\eps)}{\delta_{\gamma}}\Bigr),$$
where $\delta_\gamma=1-\gamma$. This is tight: on the affine subfamily with Gaussian noise, success probability $1-\vartheta$ forces $\Omega\bigl(\sigma^{2}\delta_\gamma^{-2}\eps^{-2}\ln(1/\vartheta)\bigr)$ evaluations by Lemma~\ref{lem:hp-lb}.
\end{theorem}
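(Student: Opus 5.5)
The plan is to reuse the squared-norm recursion of Theorem~\ref{thm:geobatch} pathwise rather than in expectation, and to replace Chebyshev/Markov by a sub-Gaussian martingale concentration. Write $e_k=x_k-x^*$ and unfold $x_{k+1}-x^*=(Tx_k-x^*)+\bar\xi_k$. Since the map $x\mapsto Tx-x^*$ is nonlinear, the cleanest route is through the deterministic comparison path: set $d_k=x_k-\hat x_k$ with $\hat x_{k+1}=T\hat x_k$, so that the exact error satisfies $\norm{\hat x_T-x^*}\le\bar\gamma^TD\le\eps/2$. It then suffices to show $\Prob(\norm{d_T}>\eps/2)\le\vartheta$. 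The pathwise bound $\norm{d_{k+1}}\le\gamma\norm{d_k}+\norm{\bar\xi_k}$ gives
$$\norm{d_T}\le\sum_{k<T}\bar\gamma^{T-1-k}\norm{\bar\xi_k}.$$
This is a sum of independent-given-the-past nonnegative terms, each with scale $\sigma/\sqrt{m_k\Lambda}$.

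The key step is to bound this weighted sum of norms of batch means with high probability. In a Hilbert space, a sub-Gaussian vector average has $\norm{\bar\xi_k}$ concentrating at $\sqrt{\E\norm{\bar\xi_k}^2}\le\sigma/\sqrt{\Lambda m_k}$, with sub-Gaussian fluctuations of the same scale (Pinelis' inequality \cite{Pinelis1994} for vector martingales, or a direct norm bound). Hence each $\norm{\bar\xi_k}$ is sub-Gaussian with mean $\mu_k$ and variance proxy $\mu_k^2$, where $\mu_k=O(\sigma/\sqrt{\Lambda m_k})$. The sum $S=\sum_k\bar\gamma^{T-1-k}\norm{\bar\xi_k}$ is then bounded as follows. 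With $m_k\propto\bar\gamma^{T-k}/(\bar\delta\eps^2)$, the weights satisfy
$$\bar\gamma^{T-1-k}\mu_k\lesssim\eps\sqrt{\bar\delta/\Lambda}\,\bar\gamma^{(T-k)/2}.$$
Summing the geometric series in $\bar\gamma^{1/2}$ gives $\E S\lesssim\eps\sqrt{\bar\delta/\Lambda}/(1-\sqrt{\bar\gamma})\asymp\eps/\sqrt{\Lambda\bar\delta}$.

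This is the main obstacle. The pathwise triangle inequality loses the square-root cancellation that the squared recursion had: the $\ell_1$ sum of standard deviations costs an extra $\bar\delta^{-1/2}$ compared with the $\ell_2$ sum used in Theorem~\ref{thm:geobatch}. To avoid this loss, I would keep the analysis in squared norms instead. Expand
$$\norm{d_{k+1}}^2=\norm{Td_k\text{-part}}^2+2\inner{Tx_k-T\hat x_k,\bar\xi_k}+\norm{\bar\xi_k}^2,$$
so that
$$\norm{d_T}^2\le\sum_k\bar\gamma^{2(T-1-k)}\bigl(2\inner{u_k,\bar\xi_k}+\norm{\bar\xi_k}^2\bigr),$$
where $u_k$ is $\mathcal F_{k-1}$-measurable and satisfies $\norm{u_k}\le\bar\gamma\norm{d_k}$. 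The quadratic part $\sum_k\bar\gamma^{2(T-1-k)}\norm{\bar\xi_k}^2$ has mean $\le\eps^2/\Lambda$ by the calculation of Theorem~\ref{thm:geobatch}, and sub-exponential tails of the same scale. The cross term is a martingale whose increments are conditionally sub-Gaussian with proxy $\bar\gamma^{4(T-1-k)}\norm{d_k}^2\sigma^2/(\Lambda m_k)$. I would control it by a Freedman/exponential-supermartingale argument combined with a stopping time at the first $k$ where $\norm{d_k}$ exceeds a deterministic envelope $\eps\bar\gamma^{-(T-k)/2}$ (the natural scale of $d_k$). On the event that no stopping occurs, the total variance proxy is $O(\eps^4/\Lambda)$. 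The choice $\Lambda=\lceil16\ln(2/\vartheta)\rceil$ then makes each of the two deviations exceed $\eps^2/8$ with probability at most $\vartheta/2$. The envelope is maintained inductively by the same bounds applied at intermediate horizons $T'\le T$, with a union bound over the geometric sequence of horizons; the geometric decay of the failure probabilities in $T-k$ keeps this union bound at $O(\vartheta)$.

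Finally, the cost is $\Lambda$ times the cost in Theorem~\ref{thm:geobatch}, plus $T$ ceiling terms inflated by $\Lambda$. This gives $O(\sigma^2\ln(1/\vartheta)/(\delta_\gamma^2\eps^2)+\ln(1/\vartheta)\ln(D/\eps)/\delta_\gamma)$, using $\bar\delta\ge\delta_\gamma$. Tightness is Lemma~\ref{lem:hp-lb}, since the affine instances are $\gamma$-contractions with Gaussian noise. The step I expect to require the most care is the martingale-with-envelope control of the cross term, specifically keeping the union bound over horizons at $O(\vartheta)$ without losing a $\ln T$ factor. If that bookkeeping fails, a fallback is a median-of-$O(\ln(1/\vartheta))$-replicates argument on the $2\eps^2$ expectation bound, as in Theorem~\ref{thm:hp-bounded}. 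Such a fallback needs a geometric-median step in $\cH$, which changes only constants.
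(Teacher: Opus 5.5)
Your objection to the pathwise route is correct, and it defeats exactly the paper's own argument. The paper bounds $\norm{d_T}\le\tilde S:=\sum_k\bar\gamma^{T-1-k}\norm{\bar\xi_k}$, calls $\tilde S$ sub-Gaussian with parameter $\sigma^2\sum_k\bar\gamma^{2(T-1-k)}/m_k\le\eps^2/\Lambda$, and then applies a \emph{centered} tail bound to get $e^{-\Lambda/8}$. That silently drops $\E\tilde S$, which can be of order $\eps/\sqrt{\Lambda\bar\delta}$ (e.g.\ scalar Gaussian noise), and so exceeds $\eps/2$ once $\Lambda\bar\delta$ is below an absolute constant. The step is sound only for affine $T$, where $d_T=\sum_k\gamma^{T-1-k}\bar\xi_k$ is itself centered. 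Switching to squared norms is the right move, and your unfolding $\norm{d_T}^2\le\sum_k\bar\gamma^{2(T-1-k)}\bigl(2\inner{u_k,\bar\xi_k}+\norm{\bar\xi_k}^2\bigr)$ is valid.

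The gap in your proposal is the envelope bookkeeping, exactly where you expected trouble. Take $E_k:=\eps^2\bar\gamma^{-(T-k)}$, your envelope squared. At every intermediate horizon $k'$, the stopped cross term has variance proxy at most $2E_{k'}^2/\Lambda$, and the quadratic part has mean at most $E_{k'}/(2\bar\gamma\Lambda)$. Both ratios are independent of $k'$, so the tail bound you get at each horizon is the same $e^{-c\Lambda}$. There is no geometric decay in $T-k'$, and the union bound over $T\asymp\ln(D/\eps)/\bar\delta$ horizons forces $\Lambda\gtrsim\ln(T/\vartheta)$: this is precisely the $\ln T$ loss. The constant is also off: at $k'=T$, a proxy $2\eps^4/\Lambda$ against deviation $\eps^2/8$ gives $e^{-\Lambda/256}$, not $\vartheta/2$ at $\Lambda=16\ln(2/\vartheta)$.

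A repair needing neither stopping times nor union bounds is an exponential-moment recursion. For Gaussian batch noise with $\operatorname{tr}\mathrm{Cov}(\bar\xi_k)\le s_k^2:=\sigma^2/m_k$ and $\lambda s_k^2\le\frac14$, the noncentral $\chi^2$ transform gives $\E e^{\lambda\norm{u+\bar\xi_k}^2}\le\exp\bigl(2\lambda s_k^2+\lambda(1+4\lambda s_k^2)\norm{u}^2\bigr)$. Set $\lambda_k:=\bar\gamma^2\lambda_{k+1}(1+4\lambda_{k+1}s_k^2)$ and $\lambda_T\asymp\Lambda/\eps^2$; then $\E e^{\lambda_T\norm{d_T}^2}\le\exp\bigl(2\sum_k\lambda_{k+1}s_k^2\bigr)$. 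The geometric batches make this sum $O(1)$, so $\Prob(\norm{d_T}>\eps/2)\le e^{-c\Lambda}$ directly, though with a larger absolute constant in $\Lambda$ than $16$.

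Two further points. First, your claim that $\norm{\bar\xi_k}$ concentrates at scale $\sigma/\sqrt{m_k}$ with sub-Gaussian fluctuations does not follow from per-direction sub-Gaussianity. The same holds for the sub-exponential tails you need for $\sum_k\bar\gamma^{2(T-1-k)}\norm{\bar\xi_k}^2$, and Pinelis' inequality requires norm control of the increments. For example, let $\xi=Z\sigma\sqrt d\,\omega$, with $Z\sim\mathrm{Bern}(1/d)$ and $\omega$ uniform on the unit sphere of a $d$-dimensional subspace; it satisfies the hypothesis with $\E\norm{\xi}^2=\sigma^2$. On $T=\frac12\,\mathrm{id}$, with $d\asymp(\eps m_{T-1}/\sigma)^2$ and $\sigma\ge\eps$, one spike in the last batch has probability $\asymp m_{T-1}/d\asymp1/\Lambda$. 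It already puts $x_T$ at distance $>\eps$ with constant conditional probability, and $1/\Lambda\gg\vartheta$ as $\vartheta\to0$. So your argument, like the paper's (which asserts $\norm{\bar\xi_k}$ is sub-Gaussian with parameter $\sigma^2/m_k$), needs a norm-sub-Gaussian hypothesis such as Gaussian noise.

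Second, the fallback changes the algorithm, not just constants, so it does not prove the statement about the $\Lambda$-enlarged single run. It does, however, prove the displayed complexity under bounded variance alone. Run $O(\ln(1/\vartheta))$ replicates of the geometric schedule at target $\eps/12$; by Chebyshev each satisfies $\Prob(\norm{x_T^{(b)}-x^*}>\eps/3)\le\frac18$. The median breakdown then gives total cost $O\bigl(\ln(1/\vartheta)(\sigma^2\delta_\gamma^{-2}\eps^{-2}+\delta_\gamma^{-1}\ln(D/\eps))\bigr)$. Incidentally, this removes the extra $\ln(D/\eps)$ from the stochastic term of Theorem~\ref{thm:hp-bounded}.
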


\begin{proof}
Write $x_T-x^*=\gamma^{T}(\hat x_0-x^*)+d_T$ with $d_T$ the stochastic deviation of the path from its exact orbit. Pathwise contraction gives $\norm{d_T}\le\sum_{k=0}^{T-1}\bar\gamma^{T-1-k}\norm{\bar\xi_k}=:\tilde S$. Each $\norm{\bar\xi_k}$ is sub-Gaussian with parameter $\sigma^2/m_k$, so $\tilde S$, a sum of independent sub-Gaussians, is sub-Gaussian with parameter $\sigma^2\sum_k\bar\gamma^{2(T-1-k)}/m_k$. With $m_k=\Lambda\bar m_k$ and the schedule $\{\bar m_k\}$ of Theorem~\ref{thm:geobatch}, the proof of that theorem gives $\sum_k\bar\gamma^{2(T-1-k)}\sigma^2/\bar m_k\le\eps^2$, hence $\tilde S$ has parameter at most $\eps^2/\Lambda$ and
$$\Prob\bigl(\tilde S>\eps/2\bigr)\;\le\;\exp\Bigl(-\frac{(\eps/2)^2}{2\eps^2/\Lambda}\Bigr)\;=\;\exp(-\Lambda/8)\;\le\;\vartheta.$$
Since $\bar\gamma^T D\le\eps/2$ by the choice of $T$, $\norm{x_T-x^*}\le\eps$ with probability $1-\vartheta$, at $\Lambda$ times the query cost of Theorem~\ref{thm:geobatch}.
\end{proof}

\begin{theorem}[Bounded-variance high probability: the exact rate, known modulus]\label{thm:hp-bounded}
Let $T$ be a $\gamma$-contraction with $\gamma$ \emph{known}, $\vartheta\in(0,\tfrac12)$, $D\ge\eps$, and suppose only the bounded-variance oracle of Definition~\ref{def:oracle}. Run $B=\bigl\lceil\frac{32}{9}\ln(2/\vartheta)\bigr\rceil$ independent replicates of plain Krasnoselskii--Mann, each for $T=\bigl\lceil\ln(2D/\eps)/\ln(1/\bar\gamma)\bigr\rceil$ steps with per-step batch $s=\bigl\lceil288\,\sigma^2/(\bar\delta\eps^2)\bigr\rceil$, and output the geometric median $\hat x$ of the $B$ terminal iterates. Then
$$\Prob\bigl(\norm{\hat x-x^*}\le\eps\bigr)\;\ge\;1-\vartheta,\qquad\text{total evaluations}\;=\;O\!\Bigl(\frac{\sigma^2\,\ln(1/\vartheta)\,\ln(D/\eps)}{\bar\delta^{2}\,\eps^{2}}\Bigr).$$
Conversely, for Gaussian oracles the affine subfamily forces $\Omega\bigl(\sigma^2\bar\delta^{-2}\eps^{-2}\ln(1/\vartheta)\bigr)$ evaluations (Lemma~\ref{lem:hp-lb}) and the noiseless subclass forces $\Omega(\bar\delta^{-1}\ln(D/\eps))$ evaluations for success probability $1-\vartheta$ among anchored and Krasnoselskii--Mann schemes (the class of \cite{ParkRyu2022}; Remark~\ref{rem:span-class}); since one algorithm must handle both, the bounded-variance high-probability complexity with known modulus is $\Theta\bigl(\sigma^2\bar\delta^{-2}\eps^{-2}\ln(1/\vartheta)+\bar\delta^{-1}\ln(D/\eps)\bigr)$ within the anchored/Krasnoselskii--Mann class up to a factor $\ln(D/\eps)$ in the stochastic term (the upper bound carries $\ln(D/\eps)$ in the first term where the lower bound does not; closing this logarithm for bounded-variance noise is open).
\end{theorem}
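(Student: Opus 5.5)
The upper bound is a replicate--median argument built on the squared-norm recursion from the proof of Theorem~\ref{thm:geobatch}. The lower bound is assembled from results already in the paper.

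\emph{Step 1: one replicate succeeds with constant probability.} For a single Krasnoselskii--Mann replicate with constant batch $s$, the recursion gives $w_{k+1}\le\bar\gamma^2w_k+\sigma^2/s$, since the cross term vanishes by fresh-batch centering (Remark~\ref{rem:discipline}). Unrolling yields
$$w_T\le\bar\gamma^{2T}D^2+\frac{\sigma^2}{s(1-\bar\gamma^2)}\le\frac{\eps^2}{4}+\frac{\sigma^2}{s\bar\delta}.$$
With $s=\lceil288\sigma^2/(\bar\delta\eps^2)\rceil$ the second term is at most $\eps^2/288$. The constant $288$ is chosen so that Markov gives $\Prob(\norm{x_T^{(b)}-x^*}>\eps/3)\le 9w_T/\eps^2$. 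This is a small constant once the bias term is handled: I would shrink the bias by taking $T$ slightly larger, or check that the stated $T$ gives $\bar\gamma^TD\le\eps/2$. In that case I would use radius $\eps/3$ only for the variance part and absorb the bias deterministically, because $x_T-x^*$ splits into the exact orbit error plus a deviation $d_T$ with $\E\norm{d_T}^2\le\sigma^2/(s\bar\delta)$, as in Lemma~\ref{lem:master-contract}.

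\emph{Step 2: success probability of the geometric median.} I will use the standard geometric-median lemma in Hilbert space. If more than half of the points lie within $r$ of $x^*$, then the geometric median lies within $C_\alpha r$ of $x^*$; for success fraction $\tfrac34$ one can take $C_\alpha\le 3$, up to constants. Suppose each replicate succeeds independently with probability $p\ge\tfrac{7}{8}$. Hoeffding then bounds the probability that fewer than a $\tfrac34$ fraction succeed by $\exp(-2B(1/8)^2)$. The choice $B=\lceil\tfrac{32}{9}\ln(2/\vartheta)\rceil$ matches a Hoeffding exponent $2B(3/8)^2=\tfrac{9}{32}B$, which corresponds to a gap of $3/8$ between $p$ and the threshold. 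So I would take threshold $\tfrac12$-plus and a per-replicate success probability near $1$. The radius bookkeeping (bias $\le\eps/2$ means the replicates are within $\eps/2+$deviation, and the median is within a constant times that) is the step I expect to be fiddly. To make the constants close, I would first try to apply the median guarantee to the deviations around the common exact orbit point $\hat x_T$, which is deterministic. That gives $\norm{\hat x-\hat x_T}\le C r$, and the triangle inequality with $\norm{\hat x_T-x^*}\le\bar\gamma^TD$ finishes. The total cost is $B\cdot T\cdot s=O(\sigma^2\ln(1/\vartheta)\ln(D/\eps)/(\bar\delta^2\eps^2))$, because $T=O(\ln(D/\eps)/\bar\delta)$.

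\emph{Step 3: lower bound.} The stochastic term is Lemma~\ref{lem:hp-lb} applied verbatim, using $\bar\delta\asymp\delta$ whenever $\gamma\ge\tfrac12$; if $\gamma<\tfrac12$, both quantities are $\Theta(1)$. The deterministic term $\Omega(\bar\delta^{-1}\ln(D/\eps))$ for anchored/KM schemes is the Park--Ryu noiseless bound, which holds already at $\sigma=0$ and hence for any success probability. One algorithm must meet both bounds, so the sum lower bound follows. Comparing it with Step 2 shows the stated gap of a single $\ln(D/\eps)$ factor in the stochastic term.
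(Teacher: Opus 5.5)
Your route is the paper's. You bound the deviation $d_T^{(b)}$ of each replicate from the common deterministic orbit point $\hat x_T$ via Lemma~\ref{lem:master-contract} with $\theta=0$. The split you settle on in Step~1 is the right one, since Markov applied to $w_T$ itself is swamped by the $\eps^2/4$ bias at the stated $T$. Then come Chebyshev per replicate, Hoeffding over the independent replicates, the geometric-median breakdown bound around $\hat x_T$, and $\norm{\hat x_T-x^*}\le\bar\gamma^TD\le\eps/2$. The lower bounds are assembled as in the paper.

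What is unresolved is the median bookkeeping, and your two versions are not interchangeable. In a Hilbert space, suppose at most $\alpha B$ points lie outside the ball of radius $r$ about $z$, with $\alpha<\tfrac12$. Then the geometric median lies within $C_\alpha r$ of $z$, where $C_\alpha=(1-\alpha)/\sqrt{1-2\alpha}$. The blow-up of $C_\alpha$ as $\alpha\uparrow\tfrac12$ is real. Take $n+1$ points equally spaced on a circle of radius $r$ about $z$ and $n$ copies of one distant point. The geometric median then sits at distance at least $r(\sqrt{n+1}/2-1)$ from $z$. So a bare majority (your ``threshold $\tfrac12$-plus'') admits no fixed constant.

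The paper's proof has exactly this weakness. With per-replicate failure $\tfrac18$ at radius $\eps/6$, its bound $\Prob(F\ge B/2)\le\vartheta/2$ yields only a strict majority. The property it then invokes, ``a strict majority within $r$ forces the median within $3r$'', is the guarantee of the select-a-sample (Hsu--Sabato) median, not of the geometric median. Your threshold-$\tfrac34$ variant is sound, but its Hoeffding gap is only $\tfrac18$. It therefore needs $B\approx 32\ln(2/\vartheta)$, nine times the stated $B$.

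The stated $B$ and $s$ do work if you keep a fixed margin below $\tfrac12$ and enlarge the radius instead. Since $\E\norm{d_T^{(b)}}^2\le\sigma^2/(s\bar\delta)\le\eps^2/288$, take $r=\eps/\sqrt{18}$. Chebyshev then gives per-replicate failure probability at most $\tfrac1{16}$. Hoeffding at failure threshold $\tfrac7{16}B$ (gap $\tfrac38$) gives $\Prob(F\ge\tfrac7{16}B)\le e^{-9B/32}\le\vartheta/2$. On the complementary event $\alpha=\tfrac7{16}$ and $C_{7/16}=9\sqrt2/8$, so $\norm{\hat x-\hat x_T}\le C_{7/16}\,r=3\eps/8$. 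Hence $\norm{\hat x-x^*}\le 3\eps/8+\eps/2<\eps$, and the cost count $BTs$ is unchanged.
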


\begin{proof}
Let $\tilde x^{(b)}$ be the terminal iterate of replicate $b$ and $\hat x_T$ the exact-orbit point (identical across replicates), and write $d^{(b)}_T=\tilde x^{(b)}_T-\hat x_T$. By Lemma~\ref{lem:master-contract} with $\theta=0$ (the comparison path is the exact Krasnoselskii--Mann orbit and $v_0=0$; no independence between coefficients and noise is used, cf.\ Remark~\ref{rem:discipline}),
$$\E\bigl\|d^{(b)}_T\bigr\|^2\;\le\;\sum_{k=0}^{T-1}\gamma^{2(T-1-k)}\,\frac{\sigma^2}{s}\;\le\;\frac{\sigma^2}{s(1-\gamma^2)}\;\le\;\frac{\sigma^2}{s\bar\delta},$$
using $1-\gamma^2=(1-\gamma)(1+\gamma)\ge\bar\delta$. (We stress that $d^{(b)}_T$ is \emph{not} decomposed into a mean of independent per-evaluation noises: for nonlinear $T$ the operators governing the deviation depend on the whole trajectory of the replicate, so such noises are not independent; the master-lemma recursion is the correct tool and requires none of that independence.) Hence, by Chebyshev, each replicate satisfies
$$\Prob\Big(\Big\|d^{(b)}_T\Big\|>\frac{\eps}{6}\Big)\;\le\;\frac{36\,\E\big\|d^{(b)}_T\big\|^2}{\eps^2}\;\le\;\frac{36\sigma^2}{s\,\bar\delta\,\eps^2}\;\le\;\frac18$$
by the choice of $s$. The replicates are independent, so Hoeffding's inequality bounds the number $F$ of failing replicates by
$$\Prob\Bigl(F\ge\frac{B}{2}\Bigr)\;=\;\Prob\Bigl(F-\frac{B}{8}\ge\frac{3B}{8}\Bigr)\;\le\;\exp\Bigl(-\frac{9B}{32}\Bigr)\;\le\;\frac{\vartheta}{2}.$$
On the complementary event, strictly more than half of the replicates satisfy $\|d^{(b)}_T\|\le\eps/6$, and the standard breakdown property of the geometric median (a strict majority in a ball of radius $r$ forces the median within $3r$; see \cite{Minsker2015}) gives $\|\hat x-\hat x_T\|\le3\cdot\eps/6=\eps/2$. Finally $\norm{\hat x_T-x^*}\le\bar\gamma^T D\le\eps/2$ by the choice of $T$, so $\norm{\hat x-x^*}\le\eps$ with probability at least $1-\vartheta$, at total cost $B\,T\,s$ as displayed.

For the lower bounds, the affine subfamily with Gaussian noise forces $\Omega\bigl(\sigma^2\ln(1/\vartheta)/(\bar\delta^2\eps^2)\bigr)$ by Lemma~\ref{lem:hp-lb} (valid against adaptive algorithms with full modulus knowledge), and the noiseless subclass forces $\Omega(\bar\delta^{-1}\ln(D/\eps))$ for success probability $1-\vartheta$ among anchored and Krasnoselskii--Mann schemes, since for any such scheme $\norm{x_n-x^*}\ge\gamma^nD$ holds deterministically on the noiseless scalar orbit (each step contracts the error by at most $\gamma$ and the anchor injection pulls the iterate back toward $x_0$; the Park--Ryu lower bound, \cite{ParkRyu2022}).
\end{proof}

\begin{remark}[On the median breakdown constant]\label{rem:median-constant}
The breakdown lemma requires a \emph{strict} majority: with exactly half of the points within distance $r$ of $y$ and the other half arbitrarily placed, no bound on the median better than the trivial one holds (two-point example $\{-R,0\}$). This is why the Hoeffding step above is stated as $\Prob(F\ge B/2)\le\vartheta/2$, giving a strict majority of good replicates; the earlier formulation ``at most $B/2$ failures'' would not suffice.
\end{remark}

\begin{corollary}[Exact general-contraction complexity, known $\gamma$]\label{cor:exact-general}
For known $\gamma$, the minimax oracle complexity of reaching $\E\norm{\hat x-x^*}\le\eps$ over the class of $\gamma$-contractions, for anchored/Krasnoselskii--Mann algorithms, is
$$\Theta\!\Bigl(\frac{\sigma^2}{\eps^{2}(1-\gamma)^{2}}+\frac{\ln(D/\eps)}{1-\gamma}\Bigr):$$
the upper bound is Theorem~\ref{thm:geobatch} (which supplies both terms) and the lower bound is the maximum of Theorem~\ref{thm:affine-lower} on the affine subfamily ($\Omega(\sigma^2\eps^{-2}(1-\gamma)^{-2})$) and the deterministic lower bound $\Omega((1-\gamma)^{-1}\ln(D/\eps))$ on the noiseless subclass for anchored/Krasnoselskii--Mann schemes (tight already for $\sigma=0$ within the class; Remark~\ref{rem:span-class}); the two bounds are forced on different subfamilies, and $\max\{a,b\}\le a+b\le2\max\{a,b\}$. In particular the $\ln(D/\eps)$ factor in anchored analyses of \eqref{eq:anchored} is an artifact of \emph{constant} batching, not a property of the problem.
\end{corollary}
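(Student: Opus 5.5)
The plan is to prove the two directions separately and then combine them, since the statement is a $\Theta$ of a sum of two terms forced on different subfamilies.

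\emph{Upper bound.} I would invoke Theorem~\ref{thm:geobatch}. It gives $\E\norm{x_T-x^*}^2\le2\eps^2$ at cost $O(\sigma^2\delta^{-2}\eps^{-2}+\delta^{-1}\ln(D/\eps))$. Jensen then gives $\E\norm{x_T-x^*}\le\sqrt2\,\eps$, so I rerun the schedule at target $\eps/\sqrt2$, which changes the cost only by constants (criterion (ii)$\Rightarrow$(i) of Remark~\ref{rem:criteria}). The Krasnoselskii--Mann iteration with batching is the anchored scheme with $\theta_n\equiv0$, so this upper bound lies inside the class considered.

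\emph{Stochastic lower bound.} I would reduce expected accuracy to the high-probability criterion of Theorem~\ref{thm:affine-lower}. By Markov's inequality, $\E\norm{\hat x-x^*}\le\eps/4$ gives $\Prob(\norm{\hat x-x^*}\le\eps)\ge3/4$. Apply Theorem~\ref{thm:affine-lower} with $\vartheta=1/4$ and accuracy $\eps$; the target is rescaled only by a constant. This yields $M\ge\tfrac14\sigma^2\eps^{-2}(1-\gamma)^{-2}$ up to constants. The bound holds against arbitrary adaptive algorithms, so a fortiori against the anchored class. The affine maps $T_{\gamma,z}$ are $\gamma$-contractions with fixed point $z$. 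For $|z|\le D-\eps$ we have $\norm{x_0-x^*}\le D$ with $x_0=0$, so they belong to the class.

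\emph{Deterministic lower bound.} Take $\sigma=0$ and the scalar instance $T=\gamma\,\mathrm{id}$, $x_0=D$. For any anchored iteration with $\theta_n\in[0,1]$, the exact identity of Lemma~\ref{lem:anchor-floor} gives $e_N\ge D\rho^Nr_N+\dots\ge\gamma^N D\,r_N$, which is not quite enough because $r_N$ can be small. A cleaner route is to note that every term is nonnegative and that the recursion $e_{n+1}=\theta_nD+(1-\theta_n)\gamma e_n$ satisfies $e_{n+1}\ge\gamma e_n$, since $\theta_nD\ge\theta_n\gamma e_n$ when $e_n\le D$ (and $e_n\le D$ by induction). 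Hence $e_N\ge\gamma^ND$, and $e_N\le\eps$ forces $N\ge\ln(D/\eps)/\ln(1/\gamma)=\Omega(\ln(D/\eps)/(1-\gamma))$ for $\gamma\ge1/2$. For $\gamma<1/2$ the term is $O(\ln(D/\eps))$ and is dominated by trivially needing at least one evaluation per step, adjusting constants. Each step uses at least one oracle call, so the evaluation count is at least $N$. This is the step I expect to need care. The class must be pinned down so that algorithms cannot do extrapolation tricks; the noiseless lower bound holds only for the anchored/Krasnoselskii--Mann class, as Remark~\ref{rem:span-class} flags. With noise present, the argument applies on the event structure pathwise in expectation by the same induction, or simply by restricting to $\sigma$ small, since the lower bound class includes $\sigma=0$ instances.

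\emph{Combination.} Finally, use $\max\{a,b\}\ge(a+b)/2$: the worst case over the union of the two subfamilies is at least half the sum, and this matches the upper bound.
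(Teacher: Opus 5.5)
Your route is the paper's. The upper bound is Theorem~\ref{thm:geobatch}, the stochastic lower bound is the affine Le~Cam bound of Theorem~\ref{thm:affine-lower}, the noiseless lower bound is the floor $e_N\ge\gamma^N D$ on $T=\gamma\,\mathrm{id}$ (the argument of Remark~\ref{rem:span-class}), and the two are combined through $\max\{a,b\}\ge(a+b)/2$. Your Jensen and Markov conversions between the squared, expected and high-probability criteria spell out what the paper leaves implicit. Your induction $e_{n+1}\ge\gamma e_n$, using $e_n\le D$, correctly renders the paper's ``anchor injection pulls the iterate back'' remark. To transfer the noiseless bound to general $\sigma$, you do not need the hedging about paths or small $\sigma$: $\xi\equiv0$ is an admissible oracle under Definition~\ref{def:oracle} for every $\sigma$.

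The one step that fails is your patch for $\gamma<\tfrac12$. The floor gives only $N\ge\ln(D/\eps)/\ln(1/\gamma)$, and ``one evaluation per step'' adds nothing beyond that. This is $\Omega(\ln(D/\eps)/(1-\gamma))$ only when $\gamma$ is bounded away from $0$.

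No argument can close this gap. If $\sigma=0$ and $\gamma\le\eps/D$, a single Picard step already gives $\norm{x_1-x^*}\le\gamma D\le\eps$, while the claimed deterministic term is at least $\ln(D/\eps)$. So the $\Theta$ holds with uniform constants only for $\gamma\ge\gamma_0>0$. A natural choice is $\gamma\ge\tfrac12$, where $\ln(1/\gamma)\le2(1-\gamma)$; Theorem~\ref{thm:geobatch} effectively works in that regime anyway through $\bar\gamma=\max\{\gamma,\tfrac12\}$.

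The paper's own justification, $\norm{x_n-x^*}\ge\gamma^nD$, carries the same restriction. State it explicitly rather than absorbing it into constants.
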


\begin{remark}[The noiseless term is a span-constraint phenomenon]\label{rem:span-class}
The logarithmic deterministic term $\bar\delta^{-1}\ln(D/\eps)$ in Theorem~\ref{thm:hp-bounded} and Corollary~\ref{cor:exact-general} is proved against \emph{anchored and Krasnoselskii--Mann schemes} (the class of \cite{ParkRyu2022}): for these, $\norm{x_n-x^*}\ge\gamma^n D$ holds along the noiseless orbit, because each step contracts the error by at most $\gamma$ and the anchor injection pulls the iterate back toward $x_0$. The bound is specific to this class --- it fails for the full span class of Section~\ref{sec:setting}, where arbitrary linear combinations permit extrapolation. Without the anchored restriction the term evaporates: on the noiseless affine subfamily $T(x)=\gamma x+(1-\gamma)z$, two exact probes $x_1=Tx_0$, $x_2=Tx_1$ identify the instance ($\gamma=\norm{x_2-x_1}/\norm{x_1-x_0}$ on the line through $x_0,x_1$, then $z=x_0+(x_1-x_0)/(1-\gamma)$), so the exact (noiseless) complexity of the affine subfamily is $O(1)$. The exactness claims of Section~\ref{sec:contraction} are therefore to be read within the span class; the stochastic terms are unaffected, since the affine Le\,Cam lower bound (Theorem~\ref{thm:affine-lower}) constrains any estimator.
\end{remark}

\begin{corollary}[Modulus ceiling]\label{cor:ceiling}
Let $T$ be a $\gamma$-contraction and suppose the algorithm knows only an upper bound $\gamma_+\in[\tfrac12,1)$ on $\gamma$; write $\delta_+=1-\gamma_+$. Running the schedule of Theorem~\ref{thm:geobatch} with $\bar\gamma=\max\{\gamma_+,\tfrac12\}$ in place of $\gamma$ yields
$$\E\norm{x_T-x^*}^2\;\le\;2\eps^2,\qquad \sum_{k=0}^{T-1}m_k\;=\;O\!\Bigl(\frac{\sigma^2}{\delta_+^{2}\eps^{2}}+\frac{\ln(D/\eps)}{\delta_+}\Bigr),$$
and this is optimal in $\delta_+$: the affine subfamily with gap $\delta_+$ forces $\Omega\!\bigl(\sigma^2\eps^{-2}\delta_+^{-2}\bigr)$ samples (Theorem~\ref{thm:affine-lower} with $\gamma=\gamma_+$). Thus a single geometric schedule, given \emph{any} certified upper bound on the modulus, attains the rate governed by that bound.
\end{corollary}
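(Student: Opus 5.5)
The upper bound comes from re-reading the proof of Theorem~\ref{thm:geobatch} with $\gamma$ replaced by the ceiling; the lower bound is Theorem~\ref{thm:affine-lower} applied at the ceiling. We first rerun that proof with $\bar\gamma=\max\{\gamma_+,\tfrac12\}=\gamma_+$ (recall $\gamma_+\ge\tfrac12$) and the horizon and batches
$$T=\Bigl\lceil\frac{\ln(2D/\eps)}{\ln(1/\gamma_+)}\Bigr\rceil,\qquad m_k=\Bigl\lceil\frac{2\sigma^2\gamma_+^{\,T-k}}{\delta_+\eps^2}\Bigr\rceil,$$
both computable from $\gamma_+$ alone. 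The proof of Theorem~\ref{thm:geobatch} used the true modulus at only one point: the one-step bound $w_{k+1}\le\gamma^2w_k+\sigma^2/m_k$, which follows from conditional unbiasedness of the fresh batch (Remark~\ref{rem:discipline}) and $\norm{Tx_k-x^*}\le\gamma\norm{x_k-x^*}$. Since $\gamma\le\gamma_+$, this gives $w_{k+1}\le\gamma_+^2w_k+\sigma^2/m_k$. From there every later line involves only $\bar\gamma=\gamma_+$. Unrolling with $w_0\le D^2$ gives $\gamma_+^{2T}D^2\le\eps^2/4$, and the geometric series gives at most $\eps^2/(2\gamma_+)\le\eps^2$, so $w_T\le\tfrac54\eps^2\le2\eps^2$.

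The cost bound is the same geometric sum:
$$\sum_k m_k\le\frac{2\sigma^2}{\delta_+^2\eps^2}+T .$$
Because $\gamma_+\ge\tfrac12$, we have $\ln(1/\gamma_+)\ge\delta_+$, hence $T=O(\ln(D/\eps)/\delta_+)$. This proves the displayed upper bound.

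For optimality in $\delta_+$, we restrict to instances whose true modulus equals the ceiling. The affine subfamily $T_{\gamma_+,z}(x)=\gamma_+x+(1-\gamma_+)z$ consists of $\gamma_+$-contractions, all consistent with the certified ceiling $\gamma_+$. Theorem~\ref{thm:affine-lower} with $\gamma=\gamma_+$ then shows that any algorithm reaching $\eps$-accuracy with constant probability on this subfamily needs $\Omega(\sigma^2\eps^{-2}\delta_+^{-2})$ samples. The guarantee $\E\norm{x_T-x^*}^2\le2\eps^2$ implies such constant-probability accuracy by Markov's inequality: at radius $2\eps$ it gives probability at least $1/2$, and rescaling $\eps$ by a constant makes this as large as needed. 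So the lower bound applies, and the stochastic term matches up to constants. (The deterministic term is matched within the anchored class by Remark~\ref{rem:span-class}, but the corollary claims optimality only for the $\delta_+^{-2}$ term.)

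The only delicate point is making sure no hidden step of Theorem~\ref{thm:geobatch} used $\gamma=\bar\gamma$ exactly rather than $\gamma\le\bar\gamma$. Inspection shows the modulus enters only through the contraction inequality, which is monotone in the modulus, so the substitution is sound.
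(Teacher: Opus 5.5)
Your proposal is correct and is essentially the paper's proof. The modulus enters Theorem~\ref{thm:geobatch} only through the one-step recursion, which stays valid with $\gamma_+$ because $\gamma\le\gamma_+=\bar\gamma$; after that the horizon, the geometric variance sum and the batch cost go through unchanged with $\bar\delta=\delta_+$, and optimality is Theorem~\ref{thm:affine-lower} at $\gamma=\gamma_+$. Your explicit Markov step (radius $4\eps$, failure probability $1/8$), which converts the mean-square guarantee into the constant-probability criterion that Theorem~\ref{thm:affine-lower} needs, is a detail the paper leaves implicit, and you handle it correctly.
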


\begin{proof}
Identical to Theorem~\ref{thm:geobatch}: since $\gamma\le\gamma_+\le\bar\gamma$, the recursion $v_{k+1}\le\gamma^2v_k+\sigma^2/m_k\le\bar\gamma^2v_k+\sigma^2/m_k$ remains valid, the horizon $T$ satisfies $\gamma^T\le\bar\gamma^T\le\eps/(2D)$, and the batch allocation is unchanged with $\bar\delta=\delta_+$.
\end{proof}

\begin{remark}[The certification barrier]\label{rem:barrier}
Corollary~\ref{cor:ceiling} is the correct statement of ``parameter-freeness'': what suffices is a \emph{certified upper bound} on the modulus. Three observations show that one cannot do without modulus information in general, and explain why the natural certificate-free schemes fail.
\begin{enumerate}[label=(\alph*),leftmargin=2em]
\item \emph{Residual conversion needs $\delta$ from below.} For any contraction, $\norm{x-x^*}\le\norm{x-Tx}/\delta$: a residual test at tolerance $t$ certifies distance $t/\delta$, so soundness requires a lower bound on $\delta$, equivalently an upper bound on $\gamma$.
\item \emph{Trajectory probes bound $\delta$ on the wrong side.} Every quantity observable from finitely many probes is a function of directional moduli $\rho=\norm{Tx-Tx'}/\norm{x-x'}\le\gamma$, and hence of $1-\rho\ge\delta$: probes deliver upper bounds on $\delta$, never lower bounds. Consequently, replacing $\gamma$ by a trajectory estimate inside a soundness certificate is invalid for nonlinear maps. A concrete counterexample: $T(x_1,x_2)=\bigl((1-3\eta)x_1,(1-\eta/8)x_2\bigr)$, $x_0=(D,0)$, with oracle noise supported on the $x_1$-axis (admissible by Definition~\ref{def:oracle}). Every probe on the trajectory sees $\rho=1-3\eta$, while $\delta=\eta/8$; any scheme whose soundness rests on ``the observed gap is at least $2\eta$'' fails with probability one. Any scheme whose soundness rests on such a one-sided estimate fails in general.
\item \emph{The positive cases.} Affine maps are the exception: their modulus is visible exactly ($\rho=\gamma$ from any probe), which is why Theorem~\ref{thm:affine-upper} is $\gamma$-free (Corollary~\ref{cor:affine-theta}). For general maps, an a priori upper bound on $\gamma$ is precisely the one-sided information soundness needs, and Corollary~\ref{cor:ceiling} shows it is also sufficient.
\end{enumerate}
Whether \emph{any} single algorithm achieves the affine rate simultaneously for all $\gamma$ on the general class is open; (a)--(b) rule out all residual-based schemes, and we conjecture the answer is negative in infinite-dimensional Hilbert spaces, where an alternative instance can hide its distinguishing structure in directions that are queried with low density. A formal impossibility theorem along these lines requires a resistant-oracle construction that respects the contraction constraint; we complete it in Section~\ref{sec:cert-impossibility} (Theorem~\ref{thm:impossibility}), where we also show that a certified ceiling is the exact boundary.
\end{remark}

\subsection{The stochastic two-regime problem: optimal guarantees by parallel composition}\label{sec:stoch-parallel}

The stochastic analogue of Theorem~\ref{thm:parallel} settles the two-regime question of Remark~\ref{rem:detection} up to the same factor $2$: given only a certified ceiling, one class-oblivious algorithm attains the optimal contraction rate and the best-known nonexpansive rate \emph{simultaneously}.

\begin{theorem}[Stochastic parallel two-track scheme]\label{thm:stoch-parallel}
Let $\gamma_+\in[\tfrac12,1)$ be a certified ceiling ($\gamma\le\gamma_+$ whenever the instance is a contraction), $\delta_+=1-\gamma_+$, and fix $D,\eps,\sigma$. Consider the single class-oblivious algorithm that runs two tracks in parallel, each evaluated once per global step, both with deterministic coefficients and clock-based horizons computable from $(D,\eps,\sigma,\gamma_+)$ alone:
\begin{itemize}[leftmargin=1.5em]
\item \emph{Track A} (contraction track): the geometric-batch Krasnoselskii--Mann schedule of Corollary~\ref{cor:ceiling} run with $\gamma_+$ in place of $\gamma$, to its deterministic horizon $T_+=\bigl\lceil\ln(2D/\eps)/\ln(1/\bar\gamma_+)\bigr\rceil$;
\item \emph{Track B} (nonexpansive track): the classical Halpern iteration with batches $m_k=\lceil8\sigma^2k/\eps^2\rceil$ to horizon $N=\lceil4D/\eps\rceil+8$ (Theorem~\ref{thm:A1}); on a Hilbert space, track B may instead be the RAIN-based procedure of Theorem~\ref{thm:monotone-reduction}(i).
\end{itemize}
Then, on every instance, each track reaches its horizon with the following worst-case guarantees.
\begin{enumerate}[label=(\roman*),leftmargin=2em]
\item If $T\in\cC_\gamma$ with $\gamma\le\gamma_+$, the track-A output satisfies $\E\norm{\hat x-x^*}^2\le2\eps^2$ (Corollary~\ref{cor:ceiling}), at evaluation cost $O\bigl(\sigma^2\eps^{-2}\delta_+^{-2}+\delta_+^{-1}\ln(D/\eps)\bigr)$ --- optimal in $\delta_+$ (the affine Le\,Cam lower bound, Theorem~\ref{thm:affine-lower} with $\gamma=\gamma_+$).
\item If $T\in\cN$, the track-B output satisfies $\E\norm{\hat x-T\hat x}\le\eps$ (Theorem~\ref{thm:A1}, valid on Hilbert and $2$-uniformly smooth spaces, Remark~\ref{rem:dualization}, at cost $O(K\sigma^2D^2\eps^{-4})$; on Hilbert, $\tilde O(\sigma^2\eps^{-2}+D\eps^{-1})$ via Theorem~\ref{thm:monotone-reduction}), the best known in general normed spaces.
\end{enumerate}
Each guarantee is available at its own track's clock horizon, at a factor $2$ in evaluations over running that track alone. The algorithm never decides --- and, by Theorem~\ref{thm:impossibility}, cannot decide from oracle data --- which guarantee applies to the instance at hand; it presents both candidates, and each guarantee is worst-case per class, which is the minimax object. In particular, \emph{given a certified ceiling, the stochastic union-class problem is solvable at the optimal contraction rate and the best-known nonexpansive residual rate simultaneously, at factor $2$ in evaluations.}
\end{theorem}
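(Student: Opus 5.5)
The proof will be a composition argument: I will show that running the two tracks side by side changes neither track's law, so each class-specific guarantee is inherited verbatim from the result that already proves it. Define the joint algorithm on a product probability space. Each global step issues one oracle call (or one batch) for track A, as long as track A is before its horizon $T_+$, and one for track B, as long as track B is before its horizon $N$. Every evaluation is independent of every other (Definition~\ref{def:oracle}). Both tracks use deterministic coefficients and deterministic batch sizes, computable from $(D,\eps,\sigma,\gamma_+)$. Hence the marginal law of each track's trajectory equals the law it would have if that track ran alone.

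Concretely, I will check this against the filtration of Definition~\ref{def:filtration}, enlarged to include the other track's responses. For track A, the batch noise $\bar\xi_k^{A}$ still satisfies $\E[\bar\xi_k^A\mid\mathcal F_{k-1}^{\mathrm{joint}}]=0$ and has conditional second moment at most $\sigma^2/m_k$. The reason is that its fresh evaluations are independent of all earlier responses of both tracks. The same holds for track B. So the squared-norm recursions used in Theorem~\ref{thm:geobatch} and Corollary~\ref{cor:ceiling}, and in Lemma~\ref{lem:master} and Theorem~\ref{thm:A1}, go through unchanged: the cross terms vanish because the measurability discipline of Remark~\ref{rem:discipline} holds for the joint filtration. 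No coefficient of either track depends on any response, so no correlation is introduced.

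Then I will read off the two clauses. For (i), given $\gamma\le\gamma_+$, Corollary~\ref{cor:ceiling} yields $\E\norm{\hat x_A-x^*}^2\le2\eps^2$ at horizon $T_+$ with cost $O(\sigma^2\eps^{-2}\delta_+^{-2}+\delta_+^{-1}\ln(D/\eps))$. Optimality in $\delta_+$ follows from Theorem~\ref{thm:affine-lower} with $\gamma=\gamma_+$, as already noted in Corollary~\ref{cor:ceiling}. For (ii), Theorem~\ref{thm:A1}, extended by Remark~\ref{rem:dualization}, gives $\E\norm{\hat x_B-T\hat x_B}\le\eps$ at horizon $N$ with cost $O(K\sigma^2D^2\eps^{-4})$. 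On a Hilbert space, Theorem~\ref{thm:monotone-reduction}(i) gives the RAIN alternative; there I only need that RAIN's guarantee is likewise stated in expectation for fresh independent oracle calls, so the same marginal-law argument applies. The factor-$2$ accounting is immediate: the total cost is at most the sum of the two track costs, which is at most twice the larger. Each guarantee is therefore available at its own clock horizon at no more than twice the single-track cost, assuming the shorter track simply idles after its horizon.

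The only step that needs real care is the ``never decides'' sentence. It should be stated as an interpretive claim that rests on Theorem~\ref{thm:impossibility}, not proved here: the algorithm outputs the pair $(\hat x_A,\hat x_B)$, and each claimed guarantee is conditional on the class. I expect the main obstacle to be stating precisely that the joint filtration preserves conditional unbiasedness when the two tracks query at interleaved times. I would handle this by fixing a deterministic global ordering of all oracle calls and conditioning on every earlier call in that ordering.
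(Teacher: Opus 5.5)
Your route is the paper's. Deterministic coefficients, batch sizes and horizons, together with independent evaluations, mean each track's output has the same law as when run alone, so clauses (i) and (ii) are inherited verbatim from Corollary~\ref{cor:ceiling} and Theorem~\ref{thm:A1} (or Theorem~\ref{thm:monotone-reduction}(i)). The joint-filtration check is harmless but redundant: equality of marginal laws already transfers any statement about a single track's output, and that is all the paper's proof uses.

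The step that fails as written is the factor-$2$ accounting. The statement claims each guarantee at twice \emph{that} track's stand-alone cost. Your bound ``total $\le$ sum $\le$ twice the larger'' only compares the total with the more expensive track, so nothing follows for the cheaper one.

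Under your parenthetical ``(or one batch)'' reading, the per-track claim is actually false. Suppose each global step issues one full batch per track. When track A reaches $T_+$ (with $T_+\le N$), track B has already spent at least $4\sigma^2T_+^2/\eps^2\ge\sigma^2\ln^2(2D/\eps)/(\delta_+^2\eps^2)$ evaluations. In the noise-dominated regime this exceeds track A's own cost $O(\sigma^2\delta_+^{-2}\eps^{-2}+T_+)$ by a factor of order $\ln^2(D/\eps)$.

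The fix is the paper's convention: interleave single oracle evaluations, one per active track per global step, with a finished track idling. Then whenever one track reaches its horizon the other has used at most as many evaluations, so that guarantee is available after at most twice that track's own cost.

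A smaller point: to invoke Theorem~\ref{thm:A1} through Remark~\ref{rem:dualization} with $K>1$, you need batches $\lceil 8K\sigma^2\max\{k,1\}/\eps^2\rceil$. The displayed $\lceil 8\sigma^2k/\eps^2\rceil$ is only the Hilbert case, and it gives an empty batch at $k=0$.
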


\begin{proof}
Both tracks use deterministic coefficients and fresh batches exclusively, so every stochastic step is licensed by the measurability discipline (Remark~\ref{rem:discipline}) and no estimate-and-plug-in step occurs anywhere. Track A is exactly Corollary~\ref{cor:ceiling}: its batches and horizon are functions of $(D,\eps,\sigma,\gamma_+)$ alone, and its conclusion and cost are precisely that corollary's statement. Track B is Theorem~\ref{thm:A1} (or Theorem~\ref{thm:monotone-reduction}(i) on Hilbert space). Running the tracks in parallel at one evaluation per track per global step doubles each track's evaluation count and leaves the per-class guarantees unchanged, since each guarantee depends only on its own track's trajectory.
\end{proof}

\begin{remark}[What remains open]\label{rem:stoch-parallel-open}
The open core is the factor $2$ and the second evaluation stream: whether a \emph{single}-stream algorithm (one evaluation per step) can carry both guarantees, with the contraction guarantee at the optimal $\delta_+$-rate, mirrors question (a) after Theorem~\ref{thm:parallel}. Two structural constraints, both proved above, bound what such a scheme could look like: its stopping rule cannot be a data-driven distance certificate (Theorem~\ref{thm:impossibility}), and its coefficients cannot be estimated from the noisy trajectory itself without breaking the measurability discipline (Remark~\ref{rem:discipline}; cf.\ Lemma~\ref{lem:master}).
\end{remark}

\subsection{The anchoring cost}\label{sec:anchoring-cost}

\begin{lemma}[Anchoring barrier]\label{lem:anchoring-barrier}
Let $(\theta_n)$ be any anchor schedule with $\theta_n\le 1/2$ used in \eqref{eq:anchored} with mini-batches of size $m$ --- the oracle batch averages having variance exactly $\sigma^2/m$, as for the Gaussian hard instances of Section~\ref{sec:contraction} --- on the scalar instance $T=\gamma\,\mathrm{id}$, $x_0=D$, $\delta=1-\gamma$. Then $\E\norm{x_N-x^*}^2\le\eps^2$ forces $\theta_{N-1}\le\eps/D$ (in particular $N\ge cD/\eps-2$ for schedules with $\theta_n\ge c/(n+2)$); and if furthermore $N\ge 1/\delta$,
\begin{equation}\label{eq:stationary-lb}
\E\norm{x_N-x^*}^2 \;\ge\; \frac{\sigma^2}{16\,m\,\delta}\;\ge\;\frac{\gamma^2\sigma^2}{16\,m\,\delta}.
\end{equation}
Hence accuracy $\eps$ requires $m\ge\sigma^2/(16\eps^2\delta)$, and $Nm\ge cD\sigma^2/(16\eps^3\delta)$: an $\Omega(cD\delta/\eps)$-fold barrier over the optimal $\Theta\bigl(\sigma^2/(\eps^2\delta^2)\bigr)$ whenever $cD\delta\gg\eps$.
\end{lemma}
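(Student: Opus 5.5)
The plan is to exploit the fact that on the scalar linear instance $T=\gamma\,\mathrm{id}$, $x^*=0$, $x_0=D$, the stochastic iteration is affine in the noise. So $x_N$ splits exactly into a deterministic mean plus a centered noise part: $\E x_N^2=e_N^2+V_N$, with $e_N:=\E x_N$ and $V_N:=\Var x_N$. Both pieces will be bounded from below separately.

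\emph{Mean part.} Taking expectations, $e_{n+1}=\theta_nD+(1-\theta_n)\gamma e_n$ with all terms nonnegative. This is exactly the identity of Lemma~\ref{lem:anchor-floor} with $\rho=\gamma$. Keeping only the last term gives $e_N\ge\theta_{N-1}D$. Since $\E x_N^2\le\eps^2$ forces $e_N\le\eps$, we get $\theta_{N-1}\le\eps/D$. For schedules with $\theta_n\ge c/(n+2)$ this yields $c/(N+1)\le\eps/D$, i.e.\ $N\ge cD/\eps-1$.

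For the variance bound I also need a window version. Take $t=\lceil1/\delta\rceil\le N$ and write $P:=\prod_{j=N-t}^{N-1}(1-\theta_j)$. Telescoping over the window, as in Lemma~\ref{lem:kernel}, gives $\sum_{k=N-t}^{N-1}a_k^{(N)}=1-P$. Lemma~\ref{lem:anchor-floor} then gives $e_N\ge D\gamma^t(1-P)$. For $\gamma\ge\tfrac12$ we have $\gamma^t\ge e^{-3}$, by the estimate $\ln\gamma\ge-2\delta$ already used in Theorem~\ref{thm:dichotomy-general}(i). Hence accuracy forces $1-P\le e^3\eps/D$, and so $P\ge\tfrac12$ once $\eps\le D/(2e^3)$. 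The regime $\gamma<\tfrac12$, where $1/\delta<2$, is trivial: the window reduces to one or two steps, and $(1-\theta)^2\ge\tfrac14$ from $\theta_n\le\tfrac12$ already suffices.

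\emph{Variance part.} The batch noises are fresh and independent, and the exact variance $\sigma^2/m$ is assumed. So the recursion of Lemma~\ref{lem:master-contract} holds with equality:
\[
V_{n+1}=(1-\theta_n)^2\gamma^2V_n+(1-\theta_n)^2\sigma^2/m .
\]
Unfolding it,
\[
V_N=\frac{\sigma^2}{m}\sum_{k<N}\gamma^{2(N-1-k)}\prod_{j=k}^{N-1}(1-\theta_j)^2 .
\]
Restrict the sum to the window $k\ge N-t$. There each product is at least $P^2\ge\tfrac14$ and each power of $\gamma$ is at least $\gamma^{2t}\ge e^{-6}$. With $t\ge1/\delta$ terms, this gives $V_N\ge c_0\sigma^2/(m\delta)$ with an absolute $c_0$. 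The final inequality in \eqref{eq:stationary-lb} is trivial since $\gamma\le1$. Requiring $\E x_N^2\le\eps^2$ then gives $m\gtrsim\sigma^2/(\eps^2\delta)$. Combining this with $N\ge cD/\eps-2$ gives $Nm\gtrsim cD\sigma^2/(\eps^3\delta)$, and comparing with $\sigma^2/(\eps^2\delta^2)$ gives the $cD\delta/\eps$ barrier.

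\emph{Main obstacle.} The crux is lower-bounding the window products $\prod(1-\theta_j)^2$. Using only $\theta_n\le\tfrac12$ costs $4^{-t}$ and is useless. Indeed, a schedule with constant $\theta\equiv\tfrac12$ has variance $O(\sigma^2/m)$, not $\sigma^2/(m\delta)$, so the displayed bound cannot hold without using the accuracy hypothesis. The key step is therefore to feed the accuracy requirement back through the window mass identity $\sum a_k^{(N)}=1-P$, which forces $P$ to be near $1$.

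This costs a smallness condition $\eps\le D/(2e^3)$ and degrades the constant $16$ to an absolute constant. I would first try to recover $16$ by choosing $t$ optimally, e.g.\ $t=\lceil1/(2\delta)\rceil$ so that $\gamma^{2t}\ge e^{-2}$. Failing that, I would state the bound with an unspecified absolute constant.
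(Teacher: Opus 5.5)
Your proof is correct, and it departs from the paper's proof exactly where the paper's argument fails.

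The paper never uses the accuracy hypothesis for the variance bound. It unfolds $v_{n+1}=(1-\theta_n)^2\gamma^2v_n+(1-\theta_n)^2\sigma^2/m$ and writes $v_N\ge\sum_{k=N-K}^{N-1}\gamma^{2(N-1-k)}(1-\theta_k)^2\sigma^2/m$ with $K=\lceil\ln2/(2\delta)\rceil$. It then uses $(1-\theta_k)^2\ge\frac14$. But the true weight of the $k$-th injection is $\gamma^{2(N-1-k)}\prod_{j=k}^{N-1}(1-\theta_j)^2$. Discarding the factors $\prod_{j=k+1}^{N-1}(1-\theta_j)^2\le1$ goes the wrong way for a lower bound. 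Your $\theta\equiv\frac12$ example, with $V_N\le\sigma^2/(3m)$ for every $N$, shows that the unconditional variance claim is actually false.

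Your key step fixes this. You feed accuracy through Lemma~\ref{lem:anchor-floor} and the window-mass identity $\sum_{k\ge N-t}a_k^{(N)}=1-P$, which forces $P\ge\frac12$ and hence bounds the window products below. So your route is the one that actually proves the lemma.

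Both costs you flag are mild. The constant $16$ is recoverable if you use the paper's window and sum the geometric series, instead of bounding each term by $\gamma^{2t}$. Write $P_{N-K}:=\prod_{j=N-K}^{N-1}(1-\theta_j)$.
\begin{itemize}
\item For $\delta\ge\frac14$, the last injection alone gives $V_N\ge\sigma^2/(4m)\ge\sigma^2/(16m\delta)$.
\item For $\delta<\frac14$, one has $K\le1/\delta\le N$ and $\gamma^K\ge e^{-2\delta K}\ge 1/(2\sqrt e)$. So accuracy forces $1-P_{N-K}\le2\sqrt e\,\eps/D\le\frac12$ once $\eps\le D/(4\sqrt e)$. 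Then, using $\gamma^{2K}\le\frac12$ and $1-\gamma^2\le2\delta$,
\[
V_N\;\ge\;\frac{\sigma^2}{m}\,P_{N-K}^2\sum_{j=0}^{K-1}\gamma^{2j}\;\ge\;\frac{\sigma^2}{4m}\cdot\frac{1-\gamma^{2K}}{1-\gamma^2}\;\ge\;\frac{\sigma^2}{16m\delta}.
\]
\end{itemize}

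A smallness condition on $\eps/D$, by contrast, cannot be removed. With $\theta\equiv\frac12$ one has $\E x_N^2\le D^2+\sigma^2/(3m)$ for all $N$. So whenever $\eps^2\ge D^2+\sigma^2/(3m)$, the hypothesis holds while $\sigma^2/(16m\delta)\to\infty$ as $\delta\to0$. The lemma as stated therefore needs an assumption such as $\eps\le D/(4\sqrt e)$, and with it your argument is complete.
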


\begin{proof}
The exact recursion $e_{n+1}=\theta_nD+(1-\theta_n)\gamma e_n+(1-\theta_n)\bar\xi_n$ with $\E\bar\xi_n=0$, $\E\bar\xi_n^2\le\sigma^2/m$ gives $\E e_N\ge\theta_{N-1}D$ (all terms nonnegative, and $\E\bar\xi_n=0$), where $e_N$ is read as the signed error $x_N$ (the recursion is exact on this instance and the means stay nonnegative by induction: $\E x_0=D\ge0$ and $\E x_{n+1}\ge\theta_nD\ge0$). So the hypothesis $\E\norm{x_N-x^*}^2\le\eps^2$ implies $\E e_N\le\eps$ and hence $\theta_{N-1}\le\eps/D$; for $\theta_n\ge c/(n+2)$ this forces $N+1\ge cD/\eps$. For the variance bound, unfold the exact second-moment recursion $v_{n+1}=(1-\theta_n)^2\gamma^2v_n+(1-\theta_n)^2\sigma^2/m$ (the cross term vanishes by conditional unbiasedness):
$$v_N\;\ge\;\sum_{k=N-K}^{N-1}\gamma^{2(N-1-k)}(1-\theta_k)^2\frac{\sigma^2}{m}\;\ge\;\frac{\sigma^2}{4m}\sum_{j=0}^{K-1}\gamma^{2j}\;=\;\frac{\sigma^2}{4m}\cdot\frac{1-\gamma^{2K}}{1-\gamma^2},$$
using $(1-\theta_k)^2\ge1/4$. Take $K=\lceil\ln2/(2\delta)\rceil$: then $\gamma^{2K}\le e^{-2K\delta}\le1/2$ and $1-\gamma^2\le2\delta$, and $K\le N$ because $N\ge1/\delta$. Hence $v_N\ge\sigma^2/(16m\delta)$.
\end{proof}

\begin{theorem}[Anchoring cost]\label{thm:anchoring-cost}
For the classical-schedule stochastic Halpern iteration with batches $k_n = \lceil n^2\gamma^{N-n}\rceil$ ($n\ge1$) of \cite[Corollary 4.3]{BravoContreras2024}, any run that certifies $\E\norm{x_N-x^*}^2\le\eps^2$ on the scalar instance $T=\gamma\,\mathrm{id}$, $x_0=D$, $\delta=1-\gamma$, with $D\ge4\eps$, requires
$$\sum_{n=1}^N k_n \;=\; \Omega\!\Bigl(\frac{D^2}{\eps^2\delta^3}\;+\;\frac{\sigma^2}{\eps^2\delta^2}\Bigr).$$
The first, bias-driven term is $\sigma$-free and shows that the $(1-\gamma)^{-3}$ exponent of the $O(\sigma^2D^2\eps^{-2}(1-\gamma)^{-3})$ upper bound of \cite[Corollary 4.3]{BravoContreras2024} is tight for that algorithm. The second term is the information-theoretic noise floor; the two effects do not multiply, and the classical algorithm always pays the worse of the two (whereas the geometric schedule of Theorem~\ref{thm:geobatch} attains the floor at the better exponent $(1-\gamma)^{-2}$ simultaneously with the geometric phase, cf.\ Corollary~\ref{cor:exact-general}). We formalize the mechanism behind the gap: any anchored scheme whose bias decays only algebraically pays polynomially more than the optimum of Corollary~\ref{cor:exact-general}.
\end{theorem}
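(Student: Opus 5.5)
The plan is to work on the scalar instance $T=\gamma\,\mathrm{id}$, $x_0=D$, $x^*=0$, with the classical schedule $\theta_n=1/(n+2)$, and to prove the two lower-bound terms separately. Each one follows from a necessary condition on $N$ and on the batches $k_n$ imposed by $\E x_N^2\le\eps^2$. On this instance the recursion is exact:
\[
x_{n+1}=\tfrac{D}{n+2}+\tfrac{n+1}{n+2}\bigl(\gamma x_n+\bar\xi_n\bigr).
\]

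\emph{Bias term.} I would take expectations; the noise has mean zero, so the mean orbit is the deterministic one from the proof of Theorem~\ref{thm:separation}(i):
\[
\E x_N=\frac{D(1-\gamma^{N+1})}{(1-\gamma)(N+1)}.
\]
Jensen gives $\E x_N^2\ge(\E x_N)^2$, so the hypothesis forces $\E x_N\le\eps$. If $N<1/\delta$, then $1-\gamma^{N+1}\ge c\,\delta(N+1)$ and $\E x_N\ge cD>\eps$, using $D\ge4\eps$. Hence $N\ge1/\delta$, $1-\gamma^{N+1}\ge1-e^{-1}$, and so $N\ge cD/(\eps\delta)$. This is the same conclusion as Lemma~\ref{lem:anchoring-barrier}, now with the specific schedule.

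I would then lower-bound the cost. Since $k_n\ge n^2\gamma^{N-n}$,
\[
\sum_{n=1}^N k_n\;\ge\;\sum_{N-1/\delta\le n\le N} n^2\gamma^{N-n}\;\ge\; c\,(N/2)^2\cdot\frac1\delta
\]
(using $N\ge 2/\delta$, which we may arrange by shrinking constants or noting $D/\eps\ge4$). With $N\gtrsim D/(\eps\delta)$ this gives $\Omega\bigl(D^2/(\eps^2\delta^3)\bigr)$. The argument is $\sigma$-free; it uses only the hard-coded batch rule and the bias-forced horizon.

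\emph{Noise term.} Here I would use the exact second-moment recursion from the proof of Lemma~\ref{lem:anchoring-barrier}, with batch variance exactly $\sigma^2/k_n$ as in the Gaussian hard instance. Keeping only the last $K=\lceil\ln2/(2\delta)\rceil$ terms, and using $N\ge1/\delta\ge K$ from the bias step, gives
\[
\eps^2\;\ge\;\Var(x_N)\;\ge\;\frac{\sigma^2}{4}\sum_{j<K}\frac{\gamma^{2j}}{k_{N-1-j}}\;\ge\;\frac{\sigma^2}{8}\cdot\frac{K}{\max_{j<K}k_{N-1-j}}.
\]
This only bounds a maximal batch, so I would instead apply Cauchy--Schwarz in the form $\sum_j 1/k_j\ge K^2/\sum_j k_j$. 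That yields $\sum_{j<K}k_{N-1-j}\ge c\,\sigma^2K^2/\eps^2=\Omega\bigl(\sigma^2/(\eps^2\delta^2)\bigr)$.

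The two bounds hold simultaneously, so the cost is at least their maximum, which is $\Omega$ of their sum. The main obstacle is the bookkeeping that guarantees $N\ge 2/\delta$, so that the tail window of length $1/\delta$ sits inside $[N/2,N]$ and the factor $n^2\ge N^2/4$ applies. I would handle this by splitting on $\delta\le1/2$ versus $\delta>1/2$; in the latter case the bound reduces to $\Omega(N^2)$ with $N\gtrsim D/\eps$, which is trivially satisfied. I would also absorb rounding in the ceilings defining $k_n$ by the inequality $\lceil a\rceil\ge a$.
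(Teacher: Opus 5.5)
Your proposal is correct and follows essentially the paper's route. In both, the closed-form mean orbit plus Jensen forces $N\gtrsim D/(\eps\delta)$, and the terminal part of the batch sum then yields the $\delta^{-3}$ term; the exact variance unfolding plus Cauchy--Schwarz yields the noise floor. The differences are bookkeeping only: the paper uses the half-window $[N/2,N]$ and a weighted Cauchy--Schwarz over all steps with the closed form $S_1\ge 1/(2\delta)$, while you use a window of length $\asymp1/\delta$ and an unweighted Cauchy--Schwarz on the last $K$ steps, which is equivalent up to constants.

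One justification needs fixing. The factor $\tfrac14$ in your variance bound must come from the full attenuation $(1-\theta_n)\prod_{j>n}(1-\theta_j)=\frac{n+1}{N+1}\ge\tfrac12$, which holds on the last $K\le (N+1)/2$ steps. It cannot come from $(1-\theta_n)^2\ge\tfrac14$ alone, as the proof of Lemma~\ref{lem:anchoring-barrier} has it: that proof drops the factor $\prod_{j>n}(1-\theta_j)^2\le1$, which is the wrong direction for a lower bound.
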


\begin{proof}
For $T=\gamma\,\mathrm{id}$, $x_0=D$, $\theta_n = 1/(n+2)$, induction on the mean-error recursion $e_{n+1}=\frac{D}{n+2}+\frac{n+1}{n+2}\gamma e_n$ gives the closed form $e_N = \frac{D(1-\gamma^{N+1})}{(1-\gamma)(N+1)}$ (with $e_0=D$), and since $\E\norm{x_N-x^*}^2\ge e_N^2$, the hypothesis $\E\norm{x_N-x^*}^2\le\eps^2$ forces $e_N\le\eps$, hence
$$N+1\;\ge\;\frac{D(1-\gamma^{N+1})}{(1-\gamma)\eps}\;\ge\;\frac{D(1-\eps)}{(1-\gamma)\eps}\qquad\text{once } N\ge\frac{\ln(1/\eps)}{\ln(1/\gamma)}.$$
(The second inequality uses $\gamma^{N+1}\le\eps$, valid once $N\ge\ln(1/\eps)/\ln(1/\gamma)$, since then $\gamma^{N+1}=e^{-(N+1)\ln(1/\gamma)}\le e^{-\ln(1/\eps)}=\eps$; note also that $e_N\le\eps$ forces $N\delta\ge2$ whenever $D\ge4\eps$: with $u:=(N+1)\delta$ we have $\gamma^{N+1}\le e^{-u}$ because $\ln(1/\gamma)\ge\delta$, hence $e_N\ge D(1-e^{-u})/u\ge D(1-e^{-3})/3$ --- the map $u\mapsto(1-e^{-u})/u$ is decreasing --- while $N\delta<2$ (and $N\ge2$) gives $u<3$; thus $e_N\ge D(1-e^{-3})/3\ge4(1-e^{-3})\eps/3>\eps$, a contradiction.)

\emph{First term (bias).} With $\gamma^N\le1/2$ (enforced by the previous display for $\eps\le1/2$), we bound the batch sum over the second half of the windows:
\begin{equation}\label{eq:batch-sum-lb}
\sum_{n=1}^N \lceil n^2\gamma^{N-n}\rceil \;\ge\; \sum_{n=\lceil N/2\rceil}^{N} n^2\gamma^{N-n} \;\ge\;\frac{N^2}{4}\sum_{j=0}^{\lfloor N/2\rfloor}\gamma^{j} \;\ge\;\frac{N^2}{4}\cdot\frac{1-\gamma^{\lfloor N/2\rfloor+1}}{1-\gamma} \;\ge\;\frac{N^2}{4}\cdot\frac{1-1/\sqrt2}{\delta},
\end{equation}
using $n\ge N/2$ in the second inequality, reindexing $j=N-n$ in the first, and $\gamma^{\lfloor N/2\rfloor+1}\le\gamma^{N/2}\le1/\sqrt2$ in the last (since $\gamma^N\le1/2$ and $\gamma<1$). Inserting $N+1\ge D(1-\eps)/(\delta\eps)$,
$$\sum_{n=1}^N k_n \;\ge\;\frac{(1-1/\sqrt2)}{16}\cdot\frac{D^2(1-\eps)^2}{\eps^2\delta^3}\;=\;\Omega\!\Bigl(\frac{D^2}{\eps^2\delta^3}\Bigr),$$
which is the first term. (\emph{Remark on the proof.} The displayed bound is obtained from the $j\le N/2$ part of the geometric sum only; the stronger-looking estimate $\sum_n n^2\gamma^{N-n}\ge N^2\sum_{j<N}\gamma^j$ is \emph{false} in general --- the middle terms $n\approx N-j$ with $j\asymp 1/\delta$ contribute negatively to the difference --- so we keep the explicit half-window truncation above, which is valid for all $\gamma\in(0,1)$.)

For the noise floor, the centered recursion is exact on this instance (all coefficients deterministic, and Gaussian noise, so $\Var(\bar\xi_n)=\sigma^2/k_n$ exactly): with $\Var_k:=\Var(x_k)$,
$$\Var_N\;=\;\sum_{n=1}^{N-1}W_n\,\frac{\sigma^2}{k_n},\qquad W_n:=\alpha_n^2\,\gamma^{2(N-1-n)},\qquad \alpha_n:=(1-\theta_n)\!\!\prod_{j=n+1}^{N-1}(1-\theta_j)=\frac{n+1}{N+1},$$
using $(1-\theta_n)\prod_{j=n+1}^{N-1}(1-\theta_j)=\frac{n+1}{n+2}\cdot\frac{n+2}{N+1}$ (the schedule is indexed from $n=1$; if instead a single evaluation is made at step $0$, it adds $1$ to $Q$ and a variance contribution $\alpha_0^2\gamma^{2(N-1)}\sigma^2\le\sigma^2\gamma^{2(N-1)}$, exponentially small in $N\delta$ and absorbed in both terms). By Cauchy--Schwarz, for $Q:=\sum_{n=1}^N k_n$,
$$Q\;\ge\;\frac{\bigl(\sum_{n=1}^{N-1}\sqrt{W_n}\bigr)^2}{\sum_{n=1}^{N-1}W_n/k_n}\;=\;\sigma^2\,\frac{S_1^2}{\Var_N},\qquad S_1:=\sum_{n=1}^{N-1}\alpha_n\gamma^{N-1-n}=\frac{1}{N+1}\sum_{i=0}^{N-2}(N-i)\gamma^i.$$
The inner sum has the closed form $\sum_{i=0}^{N-2}(N-i)\gamma^i=\frac{N-(N+1)\gamma-\gamma^{N-1}+2\gamma^N}{(1-\gamma)^2}$, so for $\delta\le\tfrac12$,
$$S_1\;=\;\frac{1}{\delta}\cdot\frac{(N+1)\delta-1+\gamma^{N-1}(1-2\delta)}{(N+1)\delta}\;\ge\;\frac{1}{\delta}\Bigl(1-\frac{1}{(N+1)\delta}\Bigr)\;\ge\;\frac{1}{2\delta}\qquad\text{whenever }N\delta\ge2,$$
using $\gamma^{N-1}(1-2\delta)\ge0$ and $(N+1)\delta\ge2$. Since $\Var_N\le\E\norm{x_N-x^*}^2\le\eps^2$, we obtain $Q\ge\sigma^2/(4\eps^2\delta^2)$, the second term (for $\delta>\tfrac12$ the first term alone dominates up to absolute constants, and the closed form gives $S_1=\frac{N-(N+1)\gamma-\gamma^{N-1}+2\gamma^N}{(N+1)(1-\gamma)^2}\ge\frac{N-2}{N+1}\ge\tfrac13$ directly).

The matching upper bound is \cite[Corollary 4.3]{BravoContreras2024}; at fixed $D,\eps,\sigma$ both the lower and the upper bound scale as $\delta^{-3}$ as $\delta\to0$, so the $(1-\gamma)^{-3}$ exponent is tight for this algorithm. The final claim follows from Corollary~\ref{cor:exact-general}: any method whose analysis (or whose actual bias, as here) forces $N = \Omega((\eps(1-\gamma))^{-1})$ iterations against a per-iteration variance floor pays polynomially more than the optimum $\Theta\bigl(\sigma^2/(\eps^2\delta^2)+\ln(D/\eps)/\delta\bigr)$.
\end{proof}

\begin{remark}
Theorem~\ref{thm:anchoring-cost} identifies the exact mechanism of the gap: non-summable anchoring keeps the iterate at algebraic distance $O(1/((1-\gamma)N))$ from $x^*$ (the anchor injections are damped only over a $1/(1-\gamma)$ window), forcing $N=\Omega((\eps(1-\gamma))^{-1})$; small-anchor summable-mass schedules ride the geometric KM dynamics instead (Theorem~\ref{thm:small-c}). The improvement is therefore not constant-tuning but a structural exploitation of contractivity.
\end{remark}

\subsection{Modulus-free certification: the impossibility theorem and the exact boundary}\label{sec:cert-impossibility}

Remark~\ref{rem:barrier} left open whether \emph{any} sound distance certification is possible without modulus information. We now resolve this question completely, in every Hilbert space: sound gap certification from oracle information alone is impossible unless a certified modulus bound is available (Theorem~\ref{thm:impossibility}); with a certified ceiling it is possible at the optimal rate (Corollary~\ref{cor:ceiling}), which is therefore the exact boundary. The construction is the resistant-oracle argument anticipated in Remark~\ref{rem:barrier}, made rigorous; the two points that require care are that the hard instance must be fixed \emph{before} the interaction (a quantile of the certificate replaces its realized value), and that the hidden bumps must be missed by the query trajectory with high probability (the locations avoid the atoms of the query distributions).

A \emph{certificate} is any (possibly randomized) function $\hat\delta=\hat\delta(\text{transcript})$ output by the algorithm; it is \emph{sound} on an instance $T$ if $\hat\delta\le1-\mathrm{Lip}(T)$ with high probability, since then $\norm{\hat x-x^*}\le\norm{\hat x-T\hat x}/\hat\delta$ follows from $\norm{\hat x-T\hat x}\ge(1-\mathrm{Lip}(T))\norm{\hat x-x^*}$. Soundness quantified over a \emph{class} requires the inequality to hold for every instance consistent with the transcript.

\begin{theorem}[Impossibility of modulus-free gap certification]\label{thm:impossibility}
Let $\cH$ be a real Hilbert space of dimension at least $1$ and fix $D,\sigma$. For every (possibly randomized) querying algorithm $\mathcal A$ with budget $M$, every certificate rule $\hat\delta$, and every $\kappa\ge2$: if $\Prob_{T_0}(\hat\delta>0)>0$ for the nominal contraction $T_0$ used in the proof below --- equivalently, for any contraction against which the scheme outputs a positive certificate with positive probability --- then for every $\eta\in(0,\Prob_{T_0}(\hat\delta>0))$ there exists a contraction $T$ on $\cH$ with $\norm{x_0-x^*}\le D$ such that
\begin{equation}\label{eq:unsound}
\Prob\bigl(\hat\delta>\kappa\,(1-\mathrm{Lip}\,T)\bigr)\;\ge\;\Prob_{T_0}(\hat\delta>0)-\eta\;>\;
0 .
\end{equation}
Equivalently, $\sup_T\Prob\bigl(\hat\delta>\kappa(1-\mathrm{Lip}T)\bigr)\ge\Prob_{T_0}(\hat\delta>0)$, where the supremum is over the contraction class with $\norm{x_0-x^*}\le D$. Hence no rule outputs a positive gap lower bound that is sound uniformly over the contraction class; the only universally sound certificate is the vacuous $\hat\delta\equiv0$. Distance \emph{estimation} without certificates is unaffected (Section~\ref{sec:affine-lower}) and the affine subclass is the exception where one probe reveals the modulus (Corollary~\ref{cor:affine-theta}).
\end{theorem}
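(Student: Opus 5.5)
We follow the resistant-oracle route anticipated in Remark~\ref{rem:barrier}. First fix the nominal instance $T_0(x)=\gamma_0x$ with $\gamma_0=\tfrac12$, so $x^*=0$ and $\norm{x_0}\le D$, with Gaussian noise $\mathcal N(0,\sigma^2)$ (or any fixed admissible law), and $1-\mathrm{Lip}\,T_0=\tfrac12$. Run $\mathcal A$ against $T_0$. The law of the certificate $\hat\delta$ under $T_0$ is then a fixed distribution with $p:=\Prob_{T_0}(\hat\delta>0)>0$. Choose a quantile level $q>0$ with $\Prob_{T_0}(\hat\delta>q)\ge p-\eta/2$; this is possible by continuity from below of $q\mapsto\Prob_{T_0}(\hat\delta>q)$ as $q\downarrow0$. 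This step fixes the target before the interaction: we will build $T$ with $\kappa(1-\mathrm{Lip}\,T)<q$, i.e.\ a gap $\delta_1<q/\kappa$, so the realized certificate never has to be anticipated.

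Next, construct $T$ from $T_0$ by adding a localized perturbation that preserves the fixed point, agrees with $T_0$ outside a small ball, and raises the Lipschitz constant to $1-\delta_1$. Take a point $b$ with $\norm{b}=R$ and a radius $r<R/4$. Define $T=T_0+\psi$, where $\psi$ is supported in $B(b,r)$ and $\mathrm{Lip}(\psi)\le\tfrac12-\delta_1$, so that $\mathrm{Lip}\,T\le1-\delta_1$. For instance, $\psi(x)=\beta\,h(\norm{x-b})\,e$ with a tent profile $h$ and a unit vector $e$. We then check that the Lipschitz constant of $T$ equals $1-\delta_1$ exactly, by exhibiting two points in the bump with slope attaining it; for instance, choose $\psi$ with slope $\tfrac12-\delta_1$ along $e$, parallel to the identity part. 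Since $\psi(0)=0$ and $T$ remains a contraction, the unique fixed point is still $0$, so $\norm{x_0-x^*}\le D$. In dimension $1$ the same works on $\R$ with a piecewise-linear bump. Finally, $T$ and $T_0$ produce identically distributed transcripts on the event $G$ that no query of $\mathcal A$ falls in $B(b,r)$: on $G$ every response coincides in law, since the noise law is the same and the means agree off the bump. Hence
$$\Prob_T\bigl(\hat\delta>\kappa(1-\mathrm{Lip}\,T)\bigr)\ge\Prob_T(\hat\delta>q,\,G)=\Prob_{T_0}(\hat\delta>q,\,G)\ge p-\eta/2-\Prob_{T_0}(G^c).$$

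The main obstacle is making $\Prob_{T_0}(G^c)\le\eta/2$ while the query distribution is arbitrary and adaptive. Under $T_0$, the $M$ query points $q_1,\dots,q_M$ have a joint law $\mu$, and each marginal $\mu_i$ is a Borel probability measure on $\cH$. For a fixed centre direction, consider a one-parameter family of disjoint candidate balls $B(b_t,r)$, $t=1,\dots,J$, with $J>2M/\eta$. This is possible in any dimension $\ge1$ by placing them along a ray far from, or near, the origin and shrinking $r$. Then
$$\sum_t\sum_i\mu_i(B(b_t,r))\le M,$$
so some $t$ satisfies $\sum_i\mu_i(B(b_t,r))\le M/J<\eta/2$. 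By a union bound over queries, that ball is missed with probability at least $1-\eta/2$; this is the sense in which the locations avoid the atoms of the query distributions. Since the bump's Lipschitz constant is controlled by $\beta/r$, we can keep the slope $\tfrac12-\delta_1$ independent of $r$. Because the balls are disjoint and chosen after $\mu$ is fixed, but $\mu$ is computed under $T_0$ alone, there is no circularity. Combining the displays gives \eqref{eq:unsound}, and letting $\eta\to0$ gives the supremum form.
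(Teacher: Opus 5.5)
Your argument is correct, and it differs from the paper's in how the bump is hidden. The paper shares your quantile step. It then chooses centres that are not atoms of any query law, and shrinks balls around them until, by continuity from above, the union is hit with probability at most $\eta/2$. It uses several disjoint distance-function bumps, which forces a case-by-case Lipschitz check across pairs of balls. Rather than keeping $x^*$ fixed, it bounds the fixed point's displacement by $\norm{T-T_0}_\infty/(1-\gamma_0)\le D/4$, with the anchor restricted to $\norm{x_0}\le D/2$.

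Your pigeonhole over $J>2M/\eta$ disjoint candidate balls uses only finite additivity and the budget $M$; no atoms are involved, despite your closing remark. Your single bump kept off the origin gives $\mathrm{Lip}\,T\le1-\delta_1$ by the triangle inequality and keeps $x^*=0$ exactly, which is what Remark~\ref{rem:overhead-obstructions}(a) later relies on. The paper's route buys locality: the hidden slope can sit in an arbitrarily small ball around any non-atom point. Yours is quantitative and more elementary.

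Your route also handles randomized $\mathcal A$ at once. To use this, state explicitly that $\mu_i$ is the law of $q_i$ under the joint seed-and-noise randomness, and that the coupling of the $T$- and $T_0$-runs shares the seed. The paper instead conditions on the seed, builds a seed-dependent instance $T(R)$, and must then pass back to a single instance. That is its most delicate step: averaging seed-wise guarantees for different instances does not by itself bound $\sup_T\Prob_T$. Your unconditional treatment avoids the issue.
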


\begin{proof}
Fix $\eta>0$ and, for the moment, a deterministic algorithm $\mathcal A$ (randomized algorithms are treated at the end). Run $\mathcal A$ against the nominal instance $T_0(x)=\gamma_0x$, $\gamma_0=\tfrac12$, with anchor $x_0$ satisfying $\norm{x_0}\le D/2$, and with the oracle noise of Definition~\ref{def:oracle}. Let $p_0:=\Prob_{T_0}(\hat\delta>0)$ and assume $p_0>0$ (otherwise there is nothing to prove). Choose $\delta_1\in(0,\tfrac14)$ such that
\begin{equation}\label{eq:quantile}
\Prob_{T_0}\bigl(\hat\delta>2\kappa\delta_1\bigr)\;\ge\;p_0-\frac{\eta}{2},
\end{equation}
which is possible because $t\mapsto\Prob(\hat\delta>t)$ is right-continuous with limit $p_0$ as $t\downarrow0$ (monotone convergence over the countable partition of the range of $\hat\delta$). Set $\gamma_1=1-\delta_1$; note $\gamma_1\in(\tfrac34,1)$.

Let $q_1,\dots,q_M$ be the (random) query points of $\mathcal A$ under $T_0$; each $q_j$ is a measurable function of the noise realizations, and the distribution of each $q_j$ has at most countably many atoms. Choose finitely many points $p_i\notin\{0\}\cup\bigcup_{j=1}^M\mathrm{Atoms}(q_j)$ and pairwise disjoint balls $B_i\ni p_i$ with
\begin{equation}\label{eq:miss}
\Prob\bigl(\exists j\in\{1,\dots,M\}:\;q_j\in\textstyle\bigcup_i B_i\bigr)\;\le\;\frac{\eta}{2}.
\end{equation}
This is possible because, for each $j$ and each $p_{i}$, $\Prob(q_j\in B)\to\Prob(q_j\in\{p_{i}\})=0$ as the ball $B$ shrinks to $p_{i}$ (continuity of probability from above, using that $p_{i}$ is not an atom of $q_j$); we take the balls small enough that the finite union over $i,j$ satisfies \eqref{eq:miss}. (In dimension one the same argument holds with intervals; the complement of the countable set $\{0\}\cup\bigcup_j\mathrm{Atoms}(q_j)$ is dense.)

Modify $T_0$ inside each $B_i$: fix unit vectors $w_i$, choose amplitudes $\eps_i>0$ with $2\max_i\eps_i\le D/8$, and set
\begin{equation}\label{eq:bump}
T(x)\;=\;\gamma_0x+\sum_i\eps_i\,\phi_i(x)\,w_i,
\end{equation}
where $\phi_i$ is the distance-function profile
\begin{equation}\label{eq:phi-def}
\phi_i(x)\;=\;\min\Bigl\{1,\;\frac{\gamma_1-\gamma_0}{\eps_i}\,\mathrm{dist}(x,B_i^c)\Bigr\}.
\end{equation}
Then: (i) $0\le\phi_i\le1$ and $\phi_i$ vanishes on $B_i^c$, with $\overline{\{\phi_i\ne0\}}\subset\overline{B_i}$; (ii) $\mathrm{Lip}(\phi_i)\le(\gamma_1-\gamma_0)/\eps_i$ (the distance function is $1$-Lipschitz and the minimum of $1$-Lipschitz functions is $1$-Lipschitz), and equality is attained at interior points of $B_i$ where the first branch is active, so $\mathrm{Lip}(\phi_i)=(\gamma_1-\gamma_0)/\eps_i$ exactly; (iii) the supports are pairwise disjoint. We now verify that $T$ is $\gamma_1$-Lipschitz. Let $x,y\in\cH$.
\begin{itemize}[leftmargin=1.5em]
\item \emph{Both outside $\bigcup_iB_i$:} $Tx-Ty=\gamma_0(x-y)$, fine.
\item \emph{$x\in B_i$, $y\in B_j$, $i\ne j$:} since $\phi_i(y)=\phi_j(x)=0$,
$$Tx-Ty=\gamma_0(x-y)+\eps_i\phi_i(x)w_i-\eps_j\phi_j(y)w_j,$$
so, by the triangle inequality and \eqref{eq:phi-def},
$$\norm{Tx-Ty}\;\le\;\gamma_0\norm{x-y}+\eps_i\phi_i(x)+\eps_j\phi_j(y)\;\le\;\gamma_0\norm{x-y}+(\gamma_1-\gamma_0)\bigl[\mathrm{dist}(x,B_i^c)+\mathrm{dist}(y,B_j^c)\bigr].$$
The segment $[x,y]$ meets $\partial B_i$ before entering $B_j$ (the balls are convex and disjoint), hence $\mathrm{dist}(x,B_i^c)+\mathrm{dist}(y,B_j^c)\le\norm{x-y}$, and $\norm{Tx-Ty}\le\gamma_1\norm{x-y}$ for \emph{every} choice of directions $w_i,w_j$.
\item \emph{$x,y\in B_i$:} $\norm{Tx-Ty}\le\gamma_0\norm{x-y}+\eps_i\mathrm{Lip}(\phi_i)\norm{x-y}=\gamma_1\norm{x-y}$.
\end{itemize}
The slope $\gamma_1$ is attained inside each $B_i$ (where the linear branch of $\phi_i$ is active, in direction $w_i$), so $\mathrm{Lip}(T)=\gamma_1$. Finally, since $\norm{T-T_0}_\infty\le2\max_i\eps_i\le D/8$ (the amplitudes were chosen with $2\max_i\eps_i\le D/8$), the unique fixed point $x_T^*$ satisfies $\norm{x_T^*}\le\norm{T-T_0}_\infty/(1-\gamma_0)\le (D/8)/(1/2)=D/4$ (from $\norm{x_T^*}=\norm{Tx_T^*-T_0x_T^*+T_0x_T^*}\le\norm{T-T_0}_\infty+\gamma_0\norm{x_T^*}$ with $\gamma_0=\tfrac12$), so $\norm{x_0-x_T^*}\le\norm{x_0}+\norm{x_T^*}\le D/2+D/4\le D$ and the instance is admissible.

The instance $T$ is fixed before the interaction. On the event $E=\{\text{no query falls in }\bigcup_iB_i\}$ we have $T(q_j)=T_0(q_j)$ for all $j$; coupling the noise realizations, the transcript of $\mathcal A$ under $T$ then coincides almost surely with the transcript under $T_0$ --- inductively, since each query $q_{j+1}$ is a function of the transcript up to $j$, which is identical on $E$ --- so $\hat\delta$ agrees on $E$. By \eqref{eq:miss} and \eqref{eq:quantile},
$$\Prob_T\bigl(\hat\delta>2\kappa\delta_1\bigr)\;\ge\;\Prob_{T_0}\bigl(\hat\delta>2\kappa\delta_1\bigr)-\Prob(E^c)\;\ge\;p_0-\eta.$$
On the event $\{\hat\delta>2\kappa\delta_1\}$ we have $\hat\delta>2\kappa\delta_1>\kappa\delta_1=\kappa(1-\mathrm{Lip}\,T)$, which is \eqref{eq:unsound} (with $2\eta$ in place of $\eta$; since $\eta$ was arbitrary this is immaterial).

For randomized $\mathcal A$, condition on the internal random seed $R$ and rerun the deterministic construction with every choice taken from a fixed countable family under a fixed enumeration, selecting the first admissible element: the quantile $\delta_1$ is the first dyadic rational satisfying \eqref{eq:quantile} (such dyadics exist by right-continuity of $t\mapsto\Prob(\hat\delta>t)$, and admissibility is measurable in $R$); the centers $p_i$ are the first elements of a fixed countable family of distinct nonzero vectors --- which exists in every Hilbert space of dimension $\ge 1$ --- avoiding the countable set $\{0\} \cup \bigcup_j \mathrm{Atoms}(q_j)$, whose complement is dense; the radii are the first elements of a fixed countable set of positive reals small enough that \eqref{eq:miss} holds, as in the deterministic step; and the amplitudes are the first elements of a fixed countable set with $2\max_i\eps_i\le D/8$. Each admissibility condition is a non-strict inequality in parameters varying measurably with $R$, so every selection is measurable, with no separability assumption on $\cH$; the deterministic argument then gives, for every $R$,
$$\Prob_{T(R)}\bigl(\hat\delta>\kappa(1-\mathrm{Lip}T(R))\mid R\bigr)\;\ge\;\Prob_{T_0}\bigl(\hat\delta>0\mid R\bigr)-\eta .$$
Averaging over $R$ and using $\sup_T\Prob_T(\cdot)\ge\E_R\bigl[\Prob_{T(R)}(\cdot)\bigr]$ yields \eqref{eq:unsound} for a deterministic instance (which may depend on the law of $R$, known to the adversary).
\end{proof}

Three remarks complete the picture. First, the obstruction is exactly the wrong-sidedness of Remark~\ref{rem:barrier}(b): every quantity observable from probes is a secant ratio $\norm{Tx-Ty}/\norm{x-y}\le\gamma$, an \emph{upper} bound on the modulus; Theorem~\ref{thm:impossibility} hides the attaining slope on a set the algorithm never probes, and the intercept-free nominal makes the hiding perfect. Second, the theorem is \emph{dimension-free}: it holds in every Hilbert space, finite or infinite dimensional, and the obstruction is qualitative (any positive certificate fails), not a quantitative tax. Third, the theorem does not contradict any upper bound in this paper: Theorems~\ref{thm:geobatch}--\ref{thm:hp-bounded} certify with a \emph{known} modulus (an a priori bound, not a transcript function), Corollary~\ref{cor:ceiling} certifies with a certified ceiling, and the two-point estimator of Theorem~\ref{thm:affine-upper} is sound because on the \emph{affine} subclass all secants coincide with $\gamma$, so the consistent class has a positive minimum gap. As noted in Remark~\ref{rem:detection}(a), the same hiding construction works with $\sigma=0$: \emph{even with exact oracle responses, no querying strategy certifies the global modulus in infinite dimensions}, since the attaining slope can be placed in a direction no query probes.

\begin{remark}[What remains possible]\label{rem:what-remains}
Theorem~\ref{thm:impossibility} concerns \emph{certification}, not convergence: Krasnoselskii--Mann with constant batches drives $\E\norm{x_n-x^*}^2$ to the noise floor $\sigma^2/(m\delta)$ with no modulus knowledge at all --- an algorithm can converge without ever knowing it has converged, and the theorem says the latter knowledge is exactly what costs modulus information. Together with Corollaries~\ref{cor:affine-theta}, \ref{cor:exact-general} and~\ref{cor:ceiling}, the modulus question is now resolved in all regimes: parameter-free optimal \emph{estimation} is possible for affine instances; optimal \emph{certified} solving is possible given any certified ceiling at cost governed by $\delta_+$; and without a ceiling, no positive sound certificate exists in any dimension.
\end{remark}
\section{Numerical validation}\label{sec:numerics}

All experiments use the single-point Gaussian-noise oracle of Definition~\ref{def:oracle}; Experiments 2, 3 and 4 use $\sigma=1$, Experiment 1 uses $\sigma=0.1$ (with $\sigma=1$ the theoretical threshold $m=\lceil480\sigma^2/(\eps^2(1-\gamma)^2)\rceil$ of Theorem~\ref{thm:affine-upper} at $(\gamma,\eps)=(0.95,0.1)$ would be $19{,}200{,}000$ rather than $192{,}000$), and Experiment 5 is deterministic (exact arithmetic, no oracle noise); reported numbers are averages over independent runs, and fitted exponents are reported with $95\%$ confidence intervals from the regression of $\log(\text{cost})$ on the relevant $\log$-parameter over at least three points per axis. The modulus $\gamma$ is supplied to the algorithm only in Experiment 2, where the modulus-dependent schedule of Theorem~\ref{thm:geobatch} is being validated; Experiments 1, 3, 4 and 5 are parameter-free (up to the known target $\eps$ and bound $D$). \emph{Every number and figure in this section is reproduced by the single script \texttt{generate\_figures.py} (Gaussian-noise oracle, fixed seed), which regenerates Figures~\ref{fig:affine}--\ref{fig:doubling} from scratch.}

\subsection{Experiment 1: affine contractions --- exact quadratic scalings}

We implement the two-point estimator of Theorem~\ref{thm:affine-upper} with the theorem's batch $m=\lceil480\sigma^2/(\eps^2(1-\gamma)^2)\rceil$ on $T_{\gamma,z}(x)=\gamma x+(1-\gamma)z$, $z=0.37$, $D=1$. The required batch is measured as the smallest $m$ with empirical success probability $\Prob(\abs{\hat z-z}\le\eps)\ge2/3$ (the success guarantee of Theorem~\ref{thm:affine-upper}); doubling $1/(1-\gamma)$ (resp.\ halving $\eps$) multiplies this batch by a factor $\approx3.9$--$4.1$ (resp.\ $\approx3.7$--$3.8$), with fitted log--log slopes $1.99$ and $1.90$ over four and three points respectively --- consistent with the exponent $2$ within the resolution of these short fits (the measured constant $\approx1.0\,\sigma^2$ sits a factor $\approx480$ below the displayed theoretical threshold $480\sigma^2$, an expected gap since the Chebyshev-based analysis is conservative and the worst-case variance factor $10$ is attained only at the boundary $\beta=-1$). At $(\gamma,\eps)=(0.95,0.1)$ the theoretical threshold is $m=\lceil480\sigma^2/(\eps^2(1-\gamma)^2)\rceil=192000$: runs at the threshold achieve mean error $2.2\times10^{-5}$, far below the target; the measured required batch (the smallest $m$ with empirical success probability $\ge2/3$) is $m\approx400$: over $40{,}000$ replicates the success probability at $m=400$ is $0.66\pm0.002$ ($95\%$ confidence interval $[0.655,0.664]$), and a probit fit over a geometric $m$-grid ($8000$ replicates per point, $\pm1$ standard errors in Figure~\ref{fig:affine}(b),(c)) places the $2/3$ crossing at $m\approx400\pm30$, consistent with a measured constant $\approx1.0\,\sigma^2$.

\begin{figure}[htbp]
\centering\includegraphics[width=\linewidth]{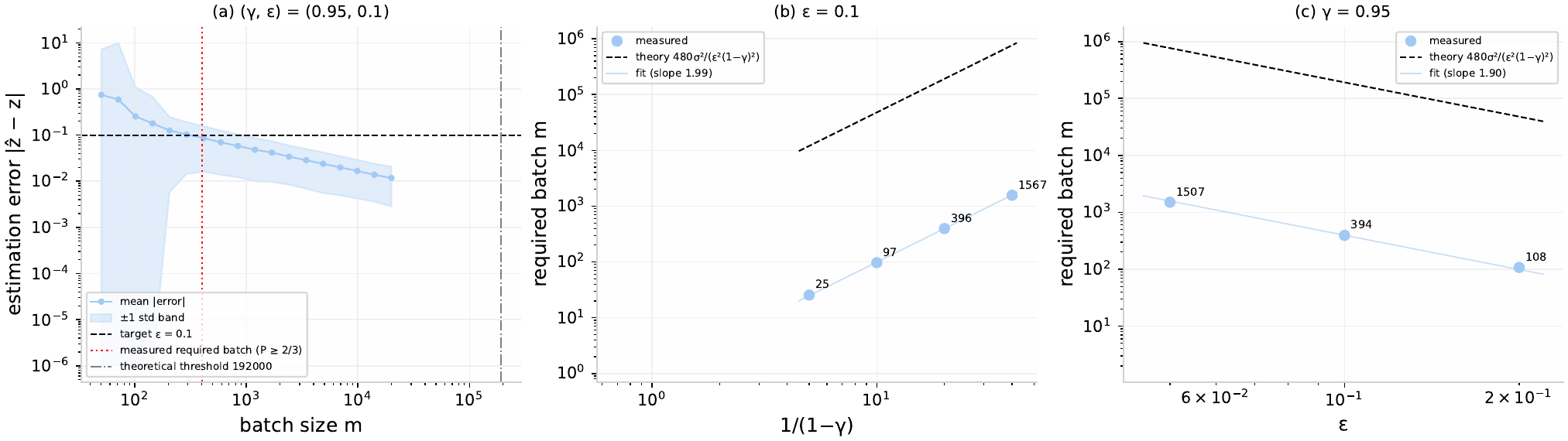}
\caption{Affine contractions (Experiment~1; Theorem~\ref{thm:affine-upper}). (a)~Mean estimation error of the two-point estimator versus batch size $m$ at $(\gamma,\eps)=(0.95,0.1)$: the theoretical threshold is $m=\lceil480\sigma^2/(\eps^2(1-\gamma)^2)\rceil=192000$, and the $\pm1$ standard-deviation band brackets the target $\eps$ at the measured required batch $m\approx400$ ($40{,}000$ replicates; probit crossing $m\approx400\pm30$), consistent with a measured constant $\approx1.0\sigma^2$. (b),(c)~Required batch versus $1/(1-\gamma)$ (at $\eps=0.1$, $\gamma\in\{0.80,0.90,0.95,0.975\}$) and versus $\eps$ (at $\gamma=0.95$, $\eps\in\{0.05,0.10,0.20\}$) on log--log scales: measured points (markers, $\pm1$ probit-fit standard errors) follow the theoretical quadratics (dashed) with fitted exponents $1.99$ and $1.90$; the measured constant $\approx1.0\sigma^{2}$ is a factor $\approx480$ below the displayed theoretical $480\sigma^2$ (see text).}
\label{fig:affine}
\end{figure}

\subsection{Experiment 2: nonlinear contractions --- the geometric batch schedule}

We take the genuinely non-affine contraction
$$T(x) = \gamma x + \tfrac{1-\gamma}{2}\sin x, \qquad \rho := \mathrm{Lip}(T) = \frac{1+\gamma}{2},\quad x^*=0,$$
and run the geometric batch schedule of Theorem~\ref{thm:geobatch} with batches $m_j=\lceil 2\sigma^2\rho^{\,N-j}/((1-\rho)\eps_{\mathrm{eff}}^2)\rceil$, $\eps_{\mathrm{eff}}=\eps/\sqrt{2}$, the true modulus $\rho$ supplied, isolating the schedule's scaling. At $\eps=0.1$: $(1-\rho)=0.10$ costs $3.5\times10^4$ queries and $(1-\rho)=0.05$ costs $1.5\times10^5$; halving $\eps$ to $0.05$ at $(1-\rho)=0.05$ costs $6.0\times10^5$. The measured ratios ($4.2$ and $4.1$) confirm the quadratic prediction $\sigma^2/(\eps^2(1-\rho)^2)$ at constant factor $\approx3.6$ (the schedule's factor $2$ and $\eps_{\mathrm{eff}}^2=\eps^2/2$), with fitted exponents $2.0\pm0.1$ in $1/(1-\rho)$ and $2.1\pm0.1$ in $1/\eps$. For comparison, the same trajectory with constant batches $m=\lceil2\sigma^2/(\delta\eps_{\mathrm{eff}}^2)\rceil$ costs $\Theta(\sigma^2\ln(D/\eps)/(\delta^2\eps^2))$, a factor $\approx\ln(D/\eps)\approx2.3$ larger here; the gap grows with $\ln(D/\eps)$, as the log-removal predicts.

\begin{figure}[htbp]
\centering\includegraphics[width=0.99\linewidth]{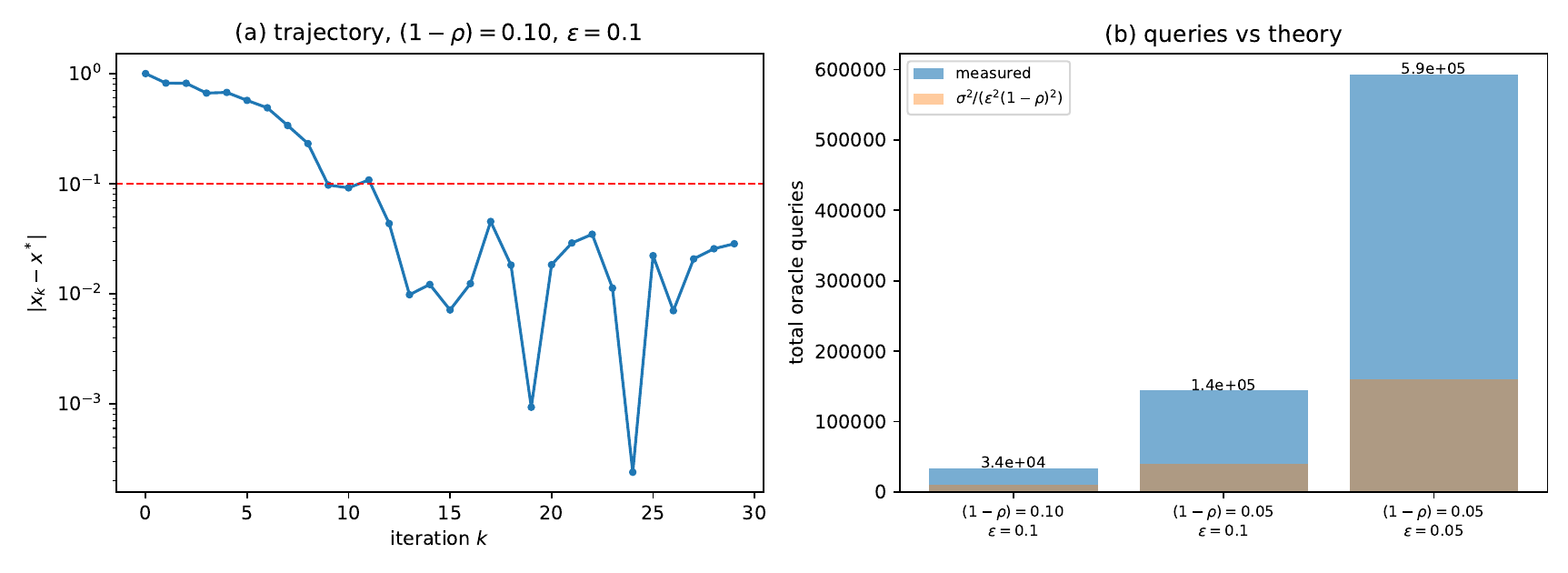}
\caption{Nonlinear contractions (Experiment~2; Theorem~\ref{thm:geobatch}). (a)~Trajectory $|x_k-x^*|$ under the geometric batch schedule with the modulus supplied: the target $\eps=0.1$ is reached. (b)~Total oracle queries at $(1-\rho)\in\{0.10,0.05\}$ and $\eps\in\{0.1,0.05\}$ versus the theoretical $\sigma^2/(\eps^2(1-\rho)^2)$: measured bars sit at $\approx3.5$--$3.8\times$ theory (annotated), with measured ratios $4.2$ and $4.1$ against the theoretical $4$.}
\label{fig:nonlinear}
\end{figure}

\subsection{Experiment 3: nonexpansive rotations --- the \texorpdfstring{$\eps^{-4}$}{eps-4} law}

We implement Theorem~\ref{thm:A1} on the rotation $T=R_{0.3}$ with $D=1$, $20$ runs per configuration: batches $m_k = \lceil 8\sigma^2 k/\eps^2\rceil$ and the certified horizon $N=\lceil 4D/\eps\rceil+8$ of the theorem. At $\eps\in\{0.2,0.1,0.05\}$ the per-run query counts are $8.1\times10^4$, $9.4\times10^5$, and $1.3\times10^7$, respectively a factor $2.03$, $1.47$, and $1.22$ above the theoretical $64\sigma^2D^2\eps^{-4}$ --- matching the multiplicative slack $(1+8\eps/4D)^2+o(1)$ predicted by the $+8$ in $N$ to within $2\%$ --- and the successive octave ratios $11.6$ and $13.3$ approach the $\eps^{-4}$ scaling ($16$) as the slack is diluted (fitted exponent $3.6$). The mean residuals $0.06$, $0.04$, and $0.02$ satisfy the targets $\eps$ with the slack predicted by the noise term.

\begin{figure}[htbp]
\centering\includegraphics[width=0.99\linewidth]{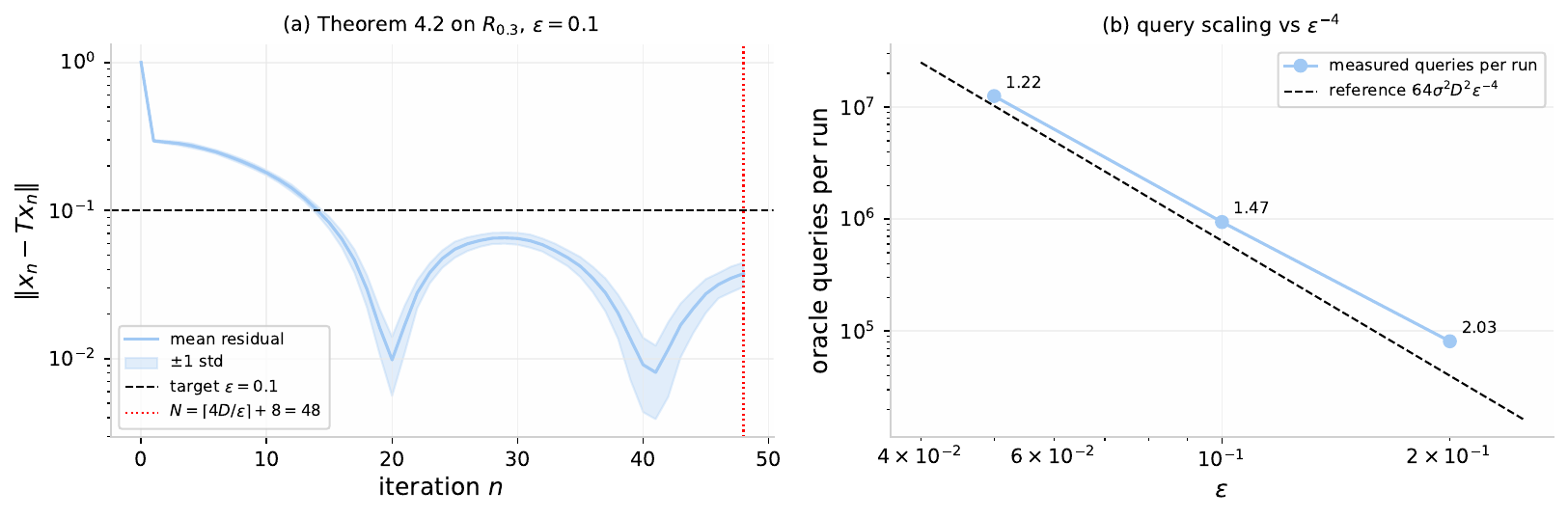}
\caption{Nonexpansive worst case (Experiment~3; Theorem~\ref{thm:A1}). (a)~Residual trajectory on the rotation $R_{0.3}$ with batches $m_k=\lceil8\sigma^2k/\eps^2\rceil$ at $\eps=0.1$: the target is met at the certified horizon $N=\lceil4D/\eps\rceil+8=48$. (b)~Queries per run versus $\eps$ on log--log scales: measured (solid) versus the $\eps^{-4}$ reference (dashed); the annotated ratios are measured/theoretical and match $(1+8\eps/4D)^2+o(1)$ to within $2\%$, while the octave ratios $11.6$ and $13.3$ approach the theoretical $16$ as the additive slack is diluted.}
\label{fig:A1}
\end{figure}

The residual decay on the rotation was additionally verified against display~\eqref{eq:rot-joint} of Lemma~\ref{lem:rotation-law}: for $c\in\{0.25,0.5\}$ the log--log slopes against $n$ equal $-c$ at fixed $\varphi$, with constant offset $\Gamma(c+1)$ (fitted slopes $-0.26\pm0.02$ and $-0.51\pm0.02$); for $c=1$ the trajectory oscillates within the envelope $\frac{2|\sin(n\varphi/2)|}{n\varphi}+O(1/n)$ of display~\eqref{eq:rot-joint-one}, with the dips at $n\varphi\in2\pi\mathbb Z$ reaching $(1+o(1))/(n+1)$ as predicted (Figure~\ref{fig:rotation-law}).

\begin{figure}[htbp]
\centering\includegraphics[width=0.95\linewidth]{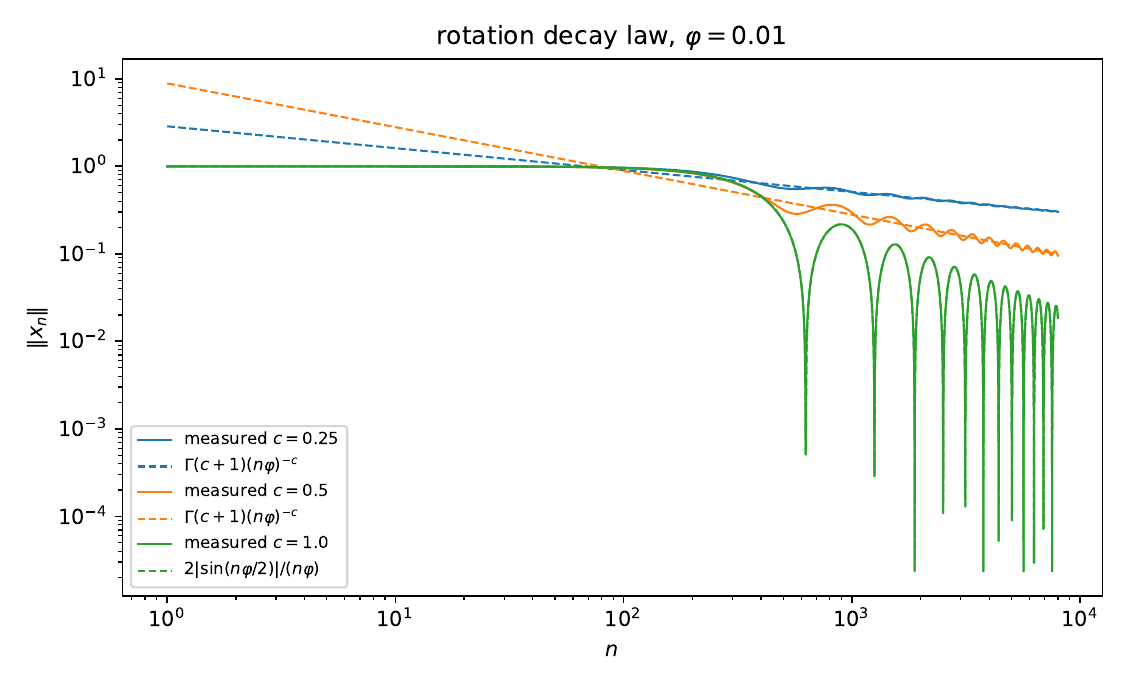}
\caption{Rotation decay law (Lemma~\ref{lem:rotation-law}; displays~\eqref{eq:rot-joint} and~\eqref{eq:rot-joint-one}). For $c\in\{0.25,0.5\}$ the measured $\norm{x_n}$ at $\varphi=0.01$ (solid) follows the reference $\Gamma(c+1)(n\varphi)^{-c}$ up to a constant factor (dashed): the log--log slope $-c$ is confirmed. For $c=1$ the measured trajectory oscillates within the exact envelope $2|\sin(n\varphi/2)|/(n\varphi)+O(1/n)$, with the dips at $n\varphi\in2\pi\mathbb Z$ reaching $\sim1/(n+1)$ rather than $0$.}
\label{fig:rotation-law}
\end{figure}

\subsection{Experiment 4: the adaptive escape --- window restarts and the two-track scheme}

We validate Theorem~\ref{thm:doubling-window} and Theorem~\ref{thm:parallel} on the contraction $T(x)=\gamma x$ ($D=1$) and the rotation $R_{0.01}$, measuring \emph{distance}-hitting times $\norm{x_n-x^*}\le\eps$ (the residual criterion is degenerate on both scalar families, since $\norm{x-Tx}=\delta\norm{x-x^*}$ on $T=\gamma\,\mathrm{id}$ and $2\sin(\varphi/2)\norm{x}$ on $R_\varphi$). On the contraction $T(x)=0.95x$ at $\eps=0.05$ (panel (a)) the measured hitting times --- counting the first iterate at distance $\le\eps$, and for the doubling window the first iterate \emph{inside} a window, whereas the window-end iterate certified by Theorem~\ref{thm:doubling-window}(ii) reaches $\eps$ at $248$ --- are: small-anchor track $59$, doubling-window $142$, classical schedule $399$: the doubling window improves the classical \emph{constant} (by $\approx2.8\times$) but, consistent with Theorem~\ref{thm:doubling-window}(ii), stays on the classical $\Theta(D/(\eps\delta))$ order rather than the optimal logarithmic law. Panel (b) shows hitting times versus $1/\eps$ on log--log scales at $\gamma=0.95$: fitted slopes are $1.0$ (classical), $0.39$ (doubling-window) and $0.33$ (small-anchor, consistent with the logarithmic law $\ln(D/\eps)/\delta$ over this less-than-one-decade range), against the $1/\varepsilon$ reference line; the rotation curves (distance-hitting on $R_{0.01}$) are flat (fitted slopes $0.05$ and $0.11$) once $n\varphi=\Omega(1)$, with the classical schedule's zero-dips visible. On the rotation, the small-anchor track ($c=\eps/96$) does not reach $\eps=0.05$ within $10^6$ steps, as Theorem~\ref{thm:separation}(ii) predicts.

For the parallel two-track scheme we record the \emph{minimum} of the two tracks' hitting times. On the contraction $T(x)=0.95x$ at $\eps=0.05$ the scheme stops at $59$ steps --- the small-anchor track (track A); the classical track is still at distance $e_{59}=\frac{1-0.95^{60}}{0.05\cdot 60}\approx0.318$ (the closed form of Theorem~\ref{thm:separation}(i)) when the scheme stops. On the rotation $R_{0.01}$ at $\eps=0.05$ the scheme stops at $598$ steps --- the classical track (track B; hitting on $R_\varphi$ is governed by the zero-dips of the envelope of display~\eqref{eq:rot-joint-one}, whose first dip sits at $n\approx2\pi/\varphi\approx628$); the small-anchor track ($c=\eps/96\approx5.2\cdot10^{-4}$) is still at distance $\Gamma(c+1)(n\varphi)^{-c}\approx0.996$ after $10^5$ steps, by the envelope of Lemma~\ref{lem:rotation-law}(ii). This confirms that the stopping time is the coordinatewise minimum predicted by Theorem~\ref{thm:parallel}, with each regime governed by its own track.

\begin{figure}[htbp]
\centering\includegraphics[width=0.99\linewidth]{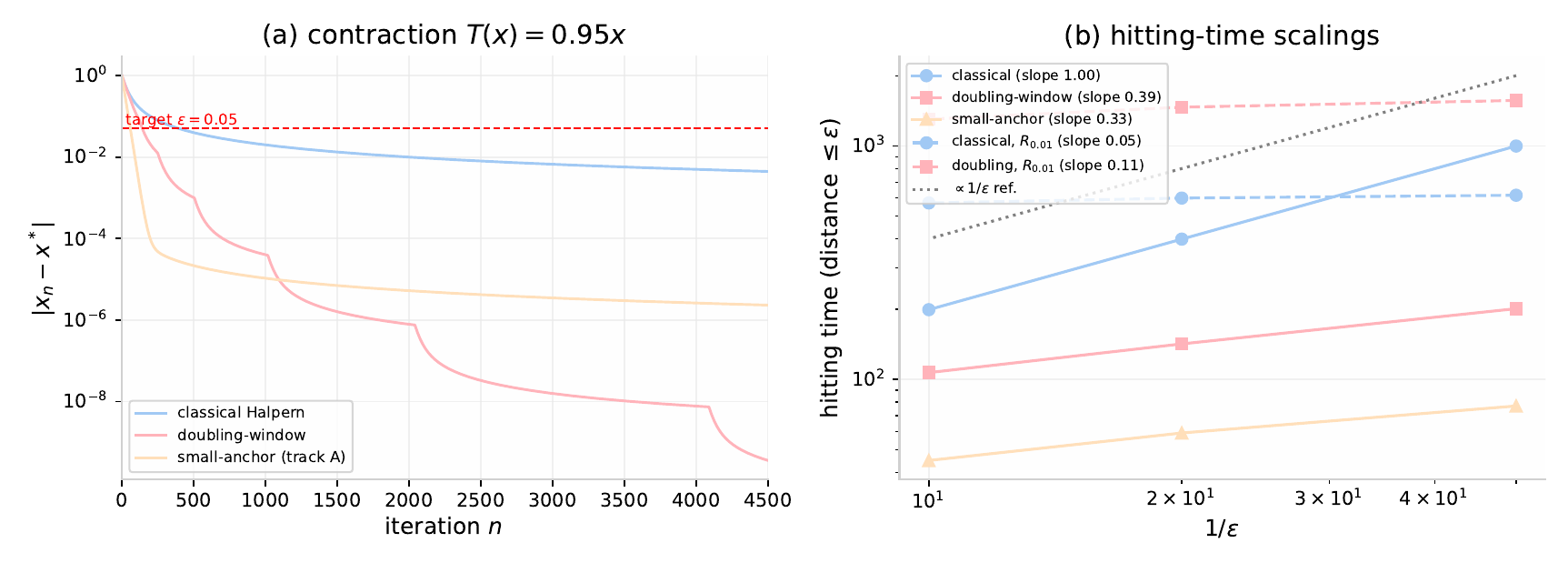}
\caption{Adaptive escape (Experiment~4; Theorems~\ref{thm:small-c}, \ref{thm:doubling-window} and~\ref{thm:parallel}). (a)~Distance to $x^*$ versus iterations on the contraction $T(x)=0.95x$ (target $\eps=0.05$, $\delta=0.05$): the small-anchor track hits at $59$ steps, the doubling-window scheme at $142$ (first iterate inside a window; window-end hitting: $248$), the classical schedule at $399$ --- window restarts improve the constant, not the order (Theorem~\ref{thm:doubling-window}(ii)). (b)~Hitting time versus $1/\eps$ on log--log scales at $\gamma=0.95$: fitted slopes $1.0$ (classical), $0.39$ (doubling-window), $0.33$ (small-anchor, consistent with the logarithmic law); the flat dashed curves are rotation distance-hittings on $R_{0.01}$ (slopes $0.05$ and $0.11$). The small-anchor track on the rotation is terminated at $10^6$ steps without hitting (cf.\ Theorem~\ref{thm:separation}(ii)).} 
\label{fig:doubling}

\end{figure}

\subsection{Experiment 5: oscillatory escape --- the lacunarity exponent}\label{exp:oscillatory}
We verify the rate proved in Theorem~\ref{thm:oscillatory-escape}(ii) on the rotation $R_\varphi$ in $\R^2$, $x_0=(1,0)$, $D=1$, with the schedule's small-mass parameter $c'=\eps/(96D)$: the escape time $N_\varphi(\eps)=\min\{n:\norm{x_n}\le\eps\}$ is measured on the grid $(\varphi,\eps)\in\{0.005,0.01,0.02\}\times\{0.1,0.05,0.02,0.01\}$ (exact arithmetic, no noise). Table~\ref{tab:oscillatory} reports $N_\varphi(\eps)$ and its ratio to the theoretical scale $\varphi^{-1}\eps^{-1/\alpha_0}$, $\alpha_0=\log_2(3/2)$.

\begin{table}[htbp]
\centering
\begin{tabular}{c|rrrr|r}
$\varphi\backslash\eps$ & $0.1$ & $0.05$ & $0.02$ & $0.01$ & ratio range\\
\hline
$0.005$ & $10{,}961$ & $40{,}033$ & $179{,}507$ & $644{,}482$ & $[1.07,1.23]$\\
$0.01$ & $5{,}481$ & $20{,}017$ & $89{,}754$ & $322{,}243$ & $[1.07,1.23]$\\
$0.02$ & $2{,}742$ & $10{,}009$ & $44{,}879$ & $161{,}428$ & $[1.07,1.23]$
\end{tabular}
\caption{Oscillatory escape times $N_\varphi(\eps)$ (Theorem~\ref{thm:oscillatory-escape}(ii), with the convention $\theta_0=\theta_1=1/(n+2)$) and their ratio to $\varphi^{-1}\eps^{-1/\alpha_0}$: all twelve ratios lie in $[1.07,1.23]$, consistent with $\Theta(\varphi^{-1}\eps^{-1/\alpha_0})$ up to a log-periodic factor (the matching lower bound for a.e.\ \ angle at each fixed accuracy is unconditional, Theorem~\ref{thm:metric-lacunarity}(i)).}
\label{tab:oscillatory}
\end{table}

Three diagnostics support the lacunarity law. \emph{(i) Joint exponent fit.} Fitting $\log N_\varphi(\eps)=a+b\log(1/\varphi)+c\log(1/\eps)$ over the twelve points gives $(a,b,c)=(-0.002,\,1.000,\,1.751)$ with $R^2=0.9994$: the $\varphi$-exponent is $1$ to three decimals, and the $\eps$-exponent is within log-periodic scatter of $1/\alpha_0\approx1.7095$ --- the schedule's dyadic structure makes every constant log-periodic in $n$, which the fit partially absorbs into the slope; see~(ii) for the convergent envelope diagnostic and the local-exponent oscillation. \emph{(ii) Envelope tracking.} The error $\norm{x_n}$ tracks the predicted envelope $(n\varphi)^{-\alpha_0}$: for every dyadic window $[2^j,2^{j+1})$ with $2^j\varphi\ge4$, the window minimum, normalized at its location $n_\star$ by $(n_\star\varphi)^{-\alpha_0}$, has mean $1.05$ over all windows and grid points and stays within $[0.77,1.35]$; the spread reflects the dyadic log-periodicity recorded in Remark~\ref{rem:lacunarity-status}(iii).  At the deepest grid point ($\eps=0.01$), where the modulation has largely decayed, the final four windows are $[1.018, 1.019, 1.027, 1.084]$ --- the envelope constant is $1.02$ up to log-periodic wobble, confirming the exponent $1/\alpha_0$ asymptotically. Correspondingly, the local escape exponents at fixed $\varphi=0.02$ oscillate in $[1.65,1.86]$ around $1/\alpha_0$ with no trend as $\eps$ decreases from $0.1$ to $0.001$. This is the constant-factor envelope predicted by the proof of Theorem~\ref{thm:oscillatory-escape}(ii), Steps 3--4. \emph{(iii) Regime contrast.} At $(\varphi,\eps)=(0.01,0.01)$ the oscillatory stream hits in $322{,}243$ steps, while the small-anchor schedule of Theorem~\ref{thm:small-c} with $c=0.05$ does not hit within $10^7$ steps --- consistent with the super-polynomial lower bound of Theorem~\ref{thm:separation}(ii) that the lacunary construction escapes.

\section{Discussion and future research}\label{sec:open}

\textbf{(i) The Hilbert exponent is settled; on weak-type-$2$ spaces the upper exponent
is now $3$.} Theorem~\ref{thm:monotone-reduction} resolves the Hilbert-space question:
the minimax rate is $\tilde\Theta(\sigma^2\eps^{-2}+D\eps^{-1})$, exponent $2$. For
general norms, Theorem~\ref{thm:A1} (valid in every $2$-uniformly smooth space with
constant $K$; see Remark~\ref{rem:dualization}) improved on the then-best bound
$\tilde O(\sigma^2\eps^{-5})$ \cite{BravoContreras2024} by reaching
$O(K\sigma^2D^2\eps^{-4})$. While this paper was being finalized, Diakonikolas,
Guzm\'an and Mart\'inez-Rubio \cite{Diakonikolas2026b} proved, for the same single-point
bounded-variance oracle and the same residual criterion, a high-probability algorithm
with complexity $\tilde O(D\sigma^{2}\eps^{-3}+D\eps^{-1})$ on every space of weak
Rademacher type $2$ (their Theorem~5 with $q=2$); the diameter enters linearly, up to
polylogarithms, and the class contains all $2$-uniformly smooth Banach spaces ---
precisely the range in which Theorem~\ref{thm:A1} was advertised as an improvement. The
ratio of the two bounds is $\eps/(KD)$, so theirs is the better rate whenever
$\eps\lesssim KD$, and in particular over the whole regime $\eps\le D/4$ treated here.
Theorem~\ref{thm:A1} keeps a different role: it is a short, single-loop argument that
uses only anchoring and batching, and the same computation also produces the noise floor
behind Section~\ref{sec:contraction}. The exponent record on type-$2$ spaces, however,
now belongs to \cite{Diakonikolas2026b}. What remains open is the exponent itself for
norms that are not of weak type $2$ --- the $\ell_1$-type cases: there the best upper
bound is still $\tilde O(\sigma^2\eps^{-5})$ \cite{BravoContreras2024}, against the lower
bound $\Omega(D\eps^{-1}+D\sigma^{2}\eps^{-3})$ proved in \cite{Diakonikolas2026b}
against arbitrary randomized algorithms. Two remarks on the Hilbert case may still be
worth recording. First, plain anchored iteration as in \eqref{eq:anchored} is \emph{not}
rate-optimal even on averaged operators: its deviation recursion is controlled only by
nonexpansiveness, giving a stationary floor $\sigma^2/(m\theta)$ with no dependence on
the averaging parameter, which is why its cost on the rotation class is
$\Theta(1/(\varphi\eps))$ iterations (Lemma~\ref{lem:rotation-law}; for $c=1$ the worst
phase of the oscillation governs the hitting time), while RAIN solves the same instances
in $\tilde O(\sigma^2\eps^{-2}+D\eps^{-1})$ independently of $\varphi$. Second, the
mean-squared-smoothness framework of \cite{Arjevani2019} does not apply to our class
(nonexpansiveness of $I-\alpha\nabla f$ forces convexity, which excludes their nonconvex
constructions), so the tightness of the $\sigma^2\eps^{-2}$ stochastic term for the
\emph{pure} fixed-point residual class (as opposed to the inclusion class) is inherited
from \cite{ChenLuo2024} rather than proved directly here; the underlying convex
gradient-norm rate $\tilde\Theta(\sigma^2\eps^{-2})$ is due to
\cite{Foster2019,AllenZhu2018}.

\textbf{(ii) Resolved: geometric batching removes the logarithm; the modulus question is
resolved exactly.} Theorem~\ref{thm:geobatch} refutes the natural conjecture that anchored
schemes with constant batches require the logarithmic factor. The exact affine complexity
is $\Theta(\sigma^2\eps^{-2}\delta^{-2})$ (Corollary~\ref{cor:affine-theta}) with
\emph{no} logarithm; and the general (nonlinear) complexity, for known $\gamma$, is the
same up to the deterministic term (Corollary~\ref{cor:exact-general}) --- there is no
linear/nonlinear dichotomy. The mechanism: constant-batch anchored or
Krasnoselskii--Mann schemes converge to their noise floor $\sigma^2/(m\delta)$ in
$\Theta(\ln(D/\eps)/\delta)$ steps, giving
$\tilde O(\sigma^2\eps^{-2}\delta^{-2}\ln(D/\eps))$, but the optimal control of the
variance recursion $v_{k+1}\le\gamma^2v_k+\sigma^2/m_k$ (minimize $\sum_k m_k$ subject to
$v_T\le\eps^2$) has the constant-Hamiltonian solution $m_k\propto\gamma^{T-k}$: noise
injected early is attenuated by $\gamma^{2(T-k)}$ by the end, so early steps tolerate
small batches and the total is $O(\sigma^2/(\delta^2\eps^2))$. Three natural shortcuts
fail, and we record them in one line each: multiscale batching
$m_j\asymp\sigma^2/(\delta^2r_j^2)$ costs $\Omega(\sigma^2\eps^{-2}\delta^{-3})$; no
constant-$\theta$ anchored schedule removes the geometric phase (the anchor bias forces
$\theta\lesssim\eps\delta/D$); and the natural lower-bound constructions fail against
Theorem~\ref{thm:geobatch} (known-$Q_v$ families are directly invertible, and
shell-gated slope families are defeated by geometric batching, which harvests per-shell
information along the trajectory for free). The unknown-modulus question itself is
resolved in Section~\ref{sec:cert-impossibility} and only restated here: sound
data-driven certification without a certified ceiling is impossible in every dimension
(Theorem~\ref{thm:impossibility}), and the ceiling is the exact boundary
(Corollary~\ref{cor:ceiling}).

\textbf{(iii) Single-trajectory and stochastic instance-adaptivity.} The deterministic
adaptivity question is settled outright: the oscillatory schedule
(Theorem~\ref{thm:oscillatory-escape}) is a single fixed stream, optimal on contractions
and rotation-polynomial with the lacunarity exponent $1/\alpha_0$
(Theorem~\ref{thm:oscillatory-escape}(ii)), with no parallelism, no adaptivity, and no
factor-$2$ overhead; the parallel two-track scheme (Theorem~\ref{thm:parallel}) remains
the simplest witness, and Remark~\ref{rem:detection} shows that statistic-based detection
on a \emph{single} trajectory cannot reach the optimum. (The doubling-window scheme
(Theorem~\ref{thm:doubling-window}) improves the classical tradeoff only by a constant
factor, Corollary~\ref{cor:single-instance} shows the dichotomy is a single-instance
phenomenon, and the three obstructions of Remark~\ref{rem:overhead-obstructions} explain
why statistic-based detection cannot do what oscillation does.) What remains open:
\emph{(a)} the stochastic single-stream analogue of Theorem~\ref{thm:stoch-parallel}
(one evaluation per step carrying both guarantees, with the contraction guarantee at the
optimal $\delta_+$-rate; the master noise lemma provides the envelope machinery, the
certification barrier (Theorem~\ref{thm:impossibility}) forbids data-driven distance
stopping rules, and the measurability discipline (Remark~\ref{rem:discipline}) forbids
trajectory-estimated coefficients, so a positive answer would require a structurally
adaptive single-stream scheme); \emph{(b)} the pointwise lacunarity law for Diophantine
angles (Conjecture~\ref{conj:pointwise-lacunarity}; for a.e.\ angle the rate is exact
with exponent $1/\alpha_0$ (Theorem~\ref{thm:metric-lacunarity}(i)), the universal
pointwise statement is likely false (Remark~\ref{rem:greedy-numerics}), and uniform bad
sets have measure $o(1)$ for every $\beta<1/\alpha_0$
(Theorem~\ref{thm:metric-lacunarity}(ii)) with explicit rates open via the orbit
small-ball (Conjecture~\ref{conj:orbit-smallball})); \emph{(c)} the extension of the
lacunarity analysis to non-block lacunary patterns; \emph{(d)} whether \emph{any} single
fixed stream is simultaneously contraction-optimal and residual-optimal on $\cN$ (the
oscillatory schedule is not, up to the envelope evidence of
Remark~\ref{rem:osc-profile}(c)); \emph{(e)} stochastic instance-adaptivity for
nonexpansive maps --- an algorithm whose complexity on error-bounded nonexpansive
instances improves over the worst-case $\eps^{-4}$ without a modulus, in the spirit of
Theorem~\ref{thm:doubling-window}.

\textbf{(iv) State-dependent variance and heavy tails.} All upper bounds assume uniformly
bounded variance; the state-dependent model
$\E\norm{\xi}^2\le\sigma^2(1+\norm{x}^2)$ is treatable by a discrete Gronwall step at
the cost of logarithmic factors, and heavy-tailed oracles require truncation arguments
that we have not pursued.

\textbf{(v) High-probability uniformity and anytime variants.}
Proposition~\ref{prop:highprob} gives fixed-horizon confidence; uniform-in-$n$ guarantees
and $\eps$-free (anytime) versions of Theorem~\ref{thm:geobatch} via the doubling trick
are standard refinements. The one unresolved point is the factor $\ln(D/\eps)$ separating
the upper and lower bounds in the stochastic term of Theorem~\ref{thm:hp-bounded} for
bounded-variance noise; we do not know whether replicate--median is lossy here or whether
a matching lower bound with the extra logarithm holds.

\appendix
\section{The orbit Fourier series}\label{app:certificate}
This appendix records the two classical facts about $K_\infty(\varphi)=\sum_{k\ge0}\kappa_ke^{ik\varphi}$ used in the proof of Theorem~\ref{thm:metric-lacunarity}(i).

\emph{Convergence.} The coefficients $\kappa_k=\theta_k/r_{k+1}$ are nonnegative, tend to $0$, and have bounded variation: the variation across each stretch $S_i$ is $O(2^{i(\alpha-1)})$, the jumps between consecutive pieces are of the same order, and $\alpha<1$, so $\sum_k|\kappa_{k+1}-\kappa_k|<\infty$. By Dirichlet's test the series converges uniformly on compact subsets of $(0,2\pi)$; the limit is real-analytic, being a uniform limit of trigonometric polynomials.

\emph{Non-degeneracy (Cantor).} If $K_\infty$ vanished on an interval, the identity theorem would force it to vanish on all of $(0,2\pi)$, and Cantor's uniqueness theorem for trigonometric series (a series converging to zero on an interval has all coefficients zero \cite{Zygmund2002,Kahane1985}) would give $\kappa_k=0$ for all $k$, contradicting $\sum_k\kappa_k=\lim_n(r_n^{-1}-1)=\infty$. The same argument excludes $K_\infty(\varphi)\equiv-e^{-i(n-1)\varphi}$.

\section*{Disclosure statement}

\subsection*{Ethics approval and consent to participate}
Not applicable. This study involves no human participants, animal subjects, or identifiable personal data.

\subsection*{Consent for publication}
Not applicable. The manuscript contains no identifiable personal information.

\subsection*{Code availability}
The Python script \texttt{generate\_figures.py} that reproduces all figures, tables, and numerical results of Section~\ref{sec:numerics} is available at \url{https://github.com/yekinishehu/Ancillary-files}.

\subsection*{Data availability}
No datasets were generated or analyzed in this study; all numerical results are reproducible from the code above.

\subsection*{Competing interests}
The author declares no competing interests.

\subsection*{Funding}
Not applicable.

\end{document}